\DeclareFontFamily{U}{rsfs}{} \DeclareFontShape{U}{rsfs}{n}{it}{<->
rsfs10}{} \DeclareSymbolFont{mscr}{U}{rsfs}{n}{it}
\DeclareSymbolFontAlphabet{\scr}{mscr}
\def\mathscr{\scr}
\begin{document}

\renewcommand{\nomname}{Glossary of Notation}
\renewcommand{\pagedeclaration}[1]{, #1}
\setlength{\nomitemsep}{0pt}

\def\e#1\e{\begin{equation}#1\end{equation}}
\def\ea#1\ea{\begin{align}#1\end{align}}
\def\eq#1{{\rm(\ref{#1})}}
\theoremstyle{plain}
\newtheorem{thm}{Theorem}[section]
\newtheorem{prop}[thm]{Proposition}
\newtheorem{lem}[thm]{Lemma}
\newtheorem{cor}[thm]{Corollary}
\newtheorem{quest}[thm]{Question}
\newtheorem{ass}[thm]{Assumption}
\theoremstyle{definition}
\newtheorem{dfn}[thm]{Definition}
\newtheorem{ex}[thm]{Example}
\newtheorem{rem}[thm]{Remark}

\newtheorem{conj}[thm]{Conjecture}

\numberwithin{equation}{section}
\def\dim{\mathop{\rm dim}\nolimits}
\def\supp{\mathop{\rm supp}\nolimits}
\def\cosupp{\mathop{\rm cosupp}\nolimits}
\def\id{\mathop{\rm id}\nolimits}
\def\Hess{\mathop{\rm Hess}\nolimits}
\def\Crit{\mathop{\rm Crit}}
\def\Sym{\mathop{\rm Sym}\nolimits}
\def\DR{\mathop{\rm DR}\nolimits}
\def\HP{\mathop{\rm HP}\nolimits}
\def\HN{\mathop{\rm HN}\nolimits}
\def\HC{\mathop{\rm HC}\nolimits}
\def\NC{\mathop{\rm NC}\nolimits}
\def\PC{\mathop{\rm PC}\nolimits}
\def\CC{\mathop{\rm CC}\nolimits}
\def\inf{{\rm inf}}
\def\St{\mathop{\bf St}\nolimits}
\def\Art{\mathop{\bf Art}\nolimits}
\def\dSt{\mathop{\bf dSt}\nolimits}
\def\dArt{\mathop{\bf dArt}\nolimits}
\def\dSch{\mathop{\bf dSch}\nolimits}
\def\cl{{\rm cl}}
\def\fib{\mathop{\rm fib}\nolimits}
\def\Ho{\mathop{\rm Ho}\nolimits}
\def\Ker{\mathop{\rm Ker}}
\def\Coker{\mathop{\rm Coker}}
\def\GL{\mathop{\rm GL}}
\def\bSpec{\mathop{\bs{\rm Spec}}\nolimits}
\def\Im{\mathop{\rm Im}}
\def\inc{\mathop{\rm inc}}
\def\Ext{\mathop{\rm Ext}\nolimits}
\def\qcoh{{\rm qcoh}}
\def\id{{\mathop{\rm id}\nolimits}}
\def\cHom{\mathbin{\mathcal{H}om}}

\def\Aut{\mathop{\rm Aut}}
\newcommand{\dd}{{\rm d}_{dR}}
\newcommand{\Pic}{\mathop{\rm Pic}}
\newcommand{\End}{\mathop{\rm End}\nolimits}
\newcommand{\hd}{\mathop{\rm hd}}
\newcommand{\Hilb}{\mathop{\rm Hilb}\nolimits}
\newcommand{\cs}{{\rm cs}}
\newcommand{\Con}{{\sst{\rm Con}}}
\newcommand{\bdim}{{\mathbin{\bf dim}\kern.1em}}
\newcommand{\Stab}{\mathop{\rm Stab}\nolimits}
\newcommand{\stab}{\mathop{\mathfrak{stab}}\nolimits}
\newcommand{\Quot}{\mathop{\rm Quot}}
\newcommand{\CF}{\mathop{\rm CF}\nolimits}
\newcommand{\SU}{\mathop{\rm SU}}
\newcommand{\SL}{\mathop{\rm SL}}
\newcommand{\U}{\mathop{\ts\rm U}}
\newcommand{\Gr}{\mathop{\rm Gr}}
\newcommand{\Mo}{\mathop{\text{M\"o}}}
\newcommand{\Ch}{\mathop{\rm Ch}}
\newcommand{\Tr}{\mathop{\rm Tr}}
\newcommand{\Eu}{\mathop{\rm Eu}\nolimits}
\newcommand{\ch}{\mathop{\rm ch}\nolimits}
\newcommand{\num}{{\rm num}}
\newcommand{\vir}{{\rm vir}}
\newcommand{\stp}{{\rm stp}}
\newcommand{\fr}{{\rm fr}}
\newcommand{\stf}{{\rm stf}}
\newcommand{\rsi}{{\rm si}}
\newcommand{\na}{{\rm na}}
\newcommand{\stk}{{\rm stk}}
\newcommand{\rss}{{\rm ss}}

\newcommand{\st}{{\rm st}}

\newcommand{\all}{{\rm all}}
\newcommand{\rk}{{\rm rk}}
\newcommand{\vi}{{\rm vi}}
\newcommand{\cha}{\mathop{\rm char}}
\newcommand{\ind}{{\rm \kern.05em ind}}
\newcommand{\Exp}{\mathop{\rm Exp}}
\newcommand{\Per}{\mathop{\rm Per}}
\newcommand{\cone}{\mathop{\rm cone}\nolimits}
\newcommand{\At}{\mathop{\rm At}\nolimits}
\newcommand{\VS}{\mathop{\rm Vect}\nolimits}
\newcommand{\Sp}{\mathop{\rm Sp}\nolimits}
\newcommand{\Rep}{\mathop{\rm Rep}\nolimits}
\newcommand{\Iso}{\mathop{\rm Iso}\nolimits}
\newcommand{\LCF}{\mathop{\rm LCF}\nolimits}
\newcommand{\SF}{\mathop{\rm SF}\nolimits}
\newcommand{\SFa}{\mathop{\rm SF}\nolimits_{\rm al}}
\newcommand{\SFai}{\mathop{\rm SF}\nolimits_{\rm al}^{\rm ind}}
\newcommand{\uSF}{\mathop{\smash{\underline{\rm SF\!}\,}}\nolimits}
\newcommand{\uSFi}{\mathop{\smash{\underline{\rm SF\!}\,}}\nolimits^{\rm ind}}
\newcommand{\oSF}{\mathop{\bar{\rm SF}}\nolimits}
\newcommand{\oSFa}{\mathop{\bar{\rm SF}}\nolimits_{\rm al}}
\newcommand{\oSFai}{{\ts\bar{\rm SF}{}_{\rm al}^{\rm ind}}}
\newcommand{\uoSF}{\mathop{\bar{\underline{\rm SF\!}\,}}\nolimits}
\newcommand{\uoSFa}{\mathop{\bar{\underline{\rm SF\!}\,}}\nolimits_{\rm al}}
\newcommand{\uoSFi}{\mathop{\bar{\underline{\rm SF\!}\,}}\nolimits_{\rm al}^{\rm
ind}}
\newcommand{\LSF}{\mathop{\rm LSF}\nolimits}
\newcommand{\LSFa}{\mathop{\rm LSF}\nolimits_{\rm al}}
\newcommand{\uLSF}{\mathop{\smash{\underline{\rm LSF\!}\,}}\nolimits}
\newcommand{\dLSF}{{\dot{\rm LSF}}\kern-.1em\mathop{}\nolimits}
\newcommand{\doLSF}{{\dot{\bar{\rm LSF}}}\kern-.1em\mathop{}\nolimits}
\newcommand{\duoLSF}{{\dot{\bar{\underline{\rm LSF\!}\,}}}\kern-.1em\mathop{}
\nolimits}
\newcommand{\ouLSF}{{\bar{\underline{\rm LSF\!}\,}}\kern-.1em\mathop{}
\nolimits}
\newcommand{\dLSFi}{{\dot{\rm LSF}}\kern-.1em\mathop{}\nolimits^{\rm ind}}
\newcommand{\dLSFa}{{\dot{\rm LSF}}\kern-.1em\mathop{}\nolimits_{\rm al}}
\newcommand{\doLSFa}{{\dot{\bar{\rm LSF}}}\kern-.1em\mathop{}\nolimits_{\rm al}}
\newcommand{\dLSFai}{{\dot{\rm LSF}}\kern-.1em\mathop{}\nolimits^{\rm ind}_{\rm
al}}
\newcommand{\duLSF}{{\dot{\underline{\rm LSF\!}\,}}\kern-.1em\mathop{}\nolimits}
\newcommand{\duLSFi}{{\dot{\underline{\rm LSF\!}\,}}\kern-.1em\mathop{}
\nolimits^{\rm ind}}
\newcommand{\oLSF}{\mathop{\bar{\rm LSF}}\nolimits}
\newcommand{\oLSFa}{\mathop{\bar{\rm LSF}}\nolimits_{\rm al}}
\newcommand{\oLSFai}{\mathop{\bar{\rm LSF}}\nolimits_{\rm al}^{\rm ind}}
\newcommand{\uoLSF}{\mathop{\bar{\underline{\rm LSF\!}\,}}\nolimits}
\newcommand{\uoLSFa}{\mathop{\bar{\underline{\rm LSF\!}\,}}\nolimits_{\rm al}}
\newcommand{\uoLSFi}{\mathop{\bar{\underline{\rm LSF\!}\,}}\nolimits_{\rm
al}^{\rm ind}}
\newcommand{\modCQ}{\text{\rm mod-$\C Q$}}
\newcommand{\modCQI}{\text{\rm mod-$\C Q/I$}}
\newcommand{\modKQ}{\text{\rm mod-$\K Q$}}
\newcommand{\modKQI}{\text{\rm mod-$\K Q/I$}}
\newcommand{\projKQ}{\text{\rm proj-$\K Q$}}
\newcommand{\projKQI}{\text{\rm proj-$\K Q/I$}}
\newcommand{\Obj}{\mathop{\rm Obj\kern .1em}\nolimits}
\newcommand{\fObj}{\mathop{\mathfrak{Obj}\kern .05em}\nolimits}
\newcommand{\sA}{{\mathbin{\mathscr A}}}
\newcommand{\sAs}{{\mathscr A}\kern-1.5pt{}_{\rm si}}
\newcommand{\sB}{{\mathbin{\mathscr B}}}
\newcommand{\sE}{{\mathbin{\mathscr E}}}
\newcommand{\sF}{{\mathbin{\mathscr F}}}
\newcommand{\sK}{{\mathbin{\mathscr K}}}
\newcommand{\sY}{{\mathbin{\mathscr Y}}}
\newcommand{\sX}{{\mathbin{\mathscr X}}}
\newcommand{\sG}{{\mathbin{\mathscr G}}}
\newcommand{\sV}{{\mathbin{\mathscr V}}}
\newcommand{\sW}{{\mathbin{\mathscr W}}}
\newcommand{\sS}{{\mathbin{\mathscr S}\kern-3pt{}}}
\newcommand{\bG}{{\mathbin{\mathbb G}}}
\newcommand{\bH}{{\mathbin{\mathbb H}}}
\newcommand{\Bi}{{\mathbin{\mathbb B}}}
\newcommand{\bCF}{{\mathbin{\mathbb{CF}}}}
\newcommand{\Biv}{{\mathbin{\mathcal B}}}
\def\Perf{\mathop{\rm Perf}\nolimits}
\def\Perfis{\mathop{\text{\rm Perf-is}}\nolimits}
\def\Lbis{\mathop{\text{\rm Lb-is}}\nolimits}
\def\coh{\mathop{\rm coh}\nolimits}
\def\Hom{\mathop{\rm Hom}\nolimits}
\def\Perv{\mathop{\rm Perv}\nolimits}
\def\Sch{\mathop{\rm Sch}\nolimits}
\def\Var{\mathop{\rm Var}\nolimits}
\def\red{{\rm red}}
\def\DM{\mathop{\rm DM}\nolimits}
\def\SO{\mathop{\rm SO}\nolimits}
\def\Spec{\mathop{\rm Spec}\nolimits}
\def\rank{\mathop{\rm rank}\nolimits}
\def\bs{\boldsymbol}
\def\alg{{\rm alg}}
\def\ge{\geqslant}
\def\le{\leqslant\nobreak}
\def\bA{{\mathbin{\mathbb A}}}
\def\bL{{\mathbin{\mathbb L}}}
\def\bT{{\mathbin{\mathbb T}}}
\def\bR{{\bs R}}
\def\bS{{\bs S}}
\def\bU{{\bs U}}
\def\bV{{\bs V}}
\def\bW{{\bs W}}
\def\bY{{\bs Y}}
\def\bZ{{\bs Z}}
\def\bX{{\bs X}}
\def\CP{{\mathbin{\mathbb{CP}}}}
\newcommand{\RP}{{\mathbin{\mathbb{RP}}}}
\def\cF{{\mathbin{\cal F}}}
\def\cG{{\mathbin{\cal G}}}
\def\cK{{\mathbin{\cal K}}}
\def\cS{{\mathbin{\cal S}\kern -0.1em}}
\def\cSz{{\mathbin{\cal S}\kern -0.1em}^{\kern .1em 0}}
\def\cE{{\mathbin{\cal E}}}
\def\cL{{\mathbin{\cal L}}}
\def\cH{{\mathbin{\cal H}}}
\def\cO{{\mathbin{\cal O}}}
\def\cP{{\mathbin{\cal P}}}
\def\cS{{\mathbin{\cal S}}}
\def\PV{{\mathbin{\cal{PV}}}}
\def\TS{{\mathbin{\cal{TS}}}}
\def\cW{{\mathbin{\cal W}}}
\def\cM{{\mathbin{\cal M}}}
\def\oM{{\mathbin{\smash{\,\,\overline{\!\!\mathcal M\!}\,}}}}
\def\O{{\mathbin{\cal O}}}
\def\cA{{\mathbin{\cal A}}}
\def\cB{{\mathbin{\cal B}}}
\def\cC{{\mathbin{\cal C}}}

\def\cD{{\mathbin{\scr D}}}

\def\PV{{\mathbin{\cal{PV}}}}
\def\bG{{\mathbin{\mathbb G}}}
\def\bD{{\mathbin{\mathbb D}}}
\def\bH{{\mathbin{\mathbb H}}}
\def\bP{{\mathbin{\mathbb P}}}
\def\A{{\mathbin{\mathcal A}}}

\def\PP{{\mathbin{\mathcal P}}}

\def\fE{{\mathbin{\mathfrak E}}}
\newcommand{\fF}{{\mathbin{\mathfrak F}}}
\newcommand{\fH}{{\mathbin{\mathfrak H}}}
\newcommand{\cQ}{{\mathbin{\mathcal Q}}}
\newcommand{\fN}{{\mathbin{\mathfrak N}}}
\newcommand{\fU}{{\mathbin{\mathfrak{U}\kern .05em}}}
\newcommand{\fV}{{\mathbin{\mathfrak V}}}
\newcommand{\fX}{{\mathbin{\mathfrak X}}}
\newcommand{\M}{{\mathbin{\mathcal M}}}
\newcommand{\fM}{{\mathbin{\mathfrak M}}}
\newcommand{\h}{{\mathbin{\mathfrak h}}}
\newcommand{\m}{{\mathbin{\mathfrak m}}}
\newcommand{\fR}{{\mathbin{\mathfrak R}}}
\newcommand{\fS}{{\mathbin{\mathfrak S}}}
\newcommand{\fExact}{\mathop{\mathfrak{Exact}\kern .05em}\nolimits}
\newcommand{\Vect}{{\mathbin{\mathcal{V}ect}}}
\newcommand{\fVect}{{\mathbin{\mathfrak{Vect}}}}
\newcommand{\Hol}{\mathbin{\mathcal{H}ol}}
\newcommand{\fHol}{{\mathbin{\mathfrak{Hol}}}}
\newcommand{\fW}{{\mathbin{\mathfrak W}}}
\newcommand{\SHom}{\mathbin{\mathcal{H}om}}
\newcommand{\SExt}{\mathcal{E}xt}
\newcommand{\dtensor}{\stackrel{L}{\otimes}}
\newcommand{\tr}{\mathop{\rm tr}\nolimits}
\newcommand{\Exal}{\mathop{\rm Exal}\nolimits}
\def\fG{{\mathbin{\mathfrak G}}}
\def\fU{\mathbin{\mathfrak U}}
\def\C{{\mathbin{\mathbb C}}}
\def\K{{\mathbin{\mathbb K}}}
\def\N{{\mathbin{\mathbb N}}}
\def\Q{{\mathbin{\mathbb Q}}}
\def\Z{{\mathbin{\mathbb Z}}}
\def\K{{\mathbin{\mathbb K}}}
\def\R{{\mathbin{\mathbb R}}}
\newcommand{\Aff}{{\mathbin{\mathbb A}}}
\newcommand{\bde}{\bar\delta}
\newcommand{\bdss}{\bar\delta_{\rm ss}}
\newcommand{\bep}{\bar\epsilon}
\def\al{\alpha}
\def\be{\beta}
\def\ga{\gamma}
\def\de{\delta}
\def\io{\iota}
\def\ep{\epsilon}
\def\la{\lambda}
\def\ka{\kappa}
\def\th{\theta}
\def\ze{\zeta}
\def\up{\upsilon}
\def\vp{\varphi}
\def\si{\sigma}
\def\om{\omega}
\def\De{\Delta}
\def\La{\Lambda}
\def\Th{\Theta}
\def\Tau{{\rm T}}
\def\Om{\Omega}
\def\Ga{\Gamma}
\def\Si{\Sigma}
\def\Up{\Upsilon}
\def\pd{\partial}
\def\db{{\bar\partial}}
\def\ts{\textstyle}

\def\sst{\scriptscriptstyle}
\def\sm{\setminus}
\def\w{\wedge}
\def\lt{\ltimes}
\def\sh{\sharp}
\def\bu{\bullet}
\def\op{\oplus}
\def\ot{\otimes}
\def\od{\odot}
\def\bigot{\bigotimes}
\def\boxt{\boxtimes}
\def\ov{\overline}
\def\ul{\underline}
\def\bigop{\bigoplus}
\def\iy{\infty}
\def\es{\emptyset}
\def\ra{\rightarrow}
\def\ab{\allowbreak}
\def\longra{\longrightarrow}
\def\hookra{\hookrightarrow}
\def\dashra{\dashrightarrow}
\def\rra{\rightrightarrows}
\def\Ra{\Rightarrow}
\def\Longra{\Longrightarrow}
\newcommand{\el}{{\mathbin{\ell\kern .08em}}}
\def\ha{{\ts\frac{1}{2}}}
\def\ban#1{\bigl\langle #1 \bigr\rangle}

\def\an{{\rm an}}

\newcommand{\gr}{\grave}
\def\t{\times}
\def\ci{\circ}
\def\ti{\tilde}
\def\d{{\rm d}}
\newcommand{\rd}{{\rm d}}
\def\od{\odot}
\def\bd{\boxdot}
\def\md#1{\vert #1 \vert}
\def\ms#1{\vert #1 \vert^2}
\def\nm#1{\Vert #1 \Vert}
\def\bnm#1{\big\Vert #1 \big\Vert}
\def\bmd#1{\big\vert #1 \big\vert}
\def\HM{\mathop{\rm HM}\nolimits}
\def\MHM{\mathop{\rm MHM}\nolimits}
\def\rat{\mathop{\bf rat}\nolimits}
\def\Rat{\mathop{\bf Rat}\nolimits}
\def\HV{{\mathbin{\cal{HV}}}}
\def\cS{{\mathbin{\cal S}}}
\def\cSz{{\mathbin{\cal S}\kern -0.1em}^{\kern .1em 0}}
\def\otL{{\kern .1em\mathop{\otimes}\limits^{\sst L}\kern .1em}}
\def\boxtL{{\kern .2em\mathop{\boxtimes}\limits^{\sst L}\kern .2em}}
\def\boxtT{{\kern .1em\mathop{\boxtimes}\limits^{\sst T}\kern .1em}}

\def\Sh{{\mathop{\rm Sh}\nolimits}}
\def\Kalg{{\text{\rm $\K$-alg}}}
\def\Kvect{{\text{\rm $\K$-vect}}}
\def\fIso{{\mathop{\mathfrak{Iso}}\nolimits}}
\def\bcM{{\mathbin{\bs{\cal M}}}}
\def\nai{{\text{\rm na\"\i}}}
\def\Lm{{\mathop{\text{\rm Lis-me}}\nolimits}}
\def\Le{{\mathop{\text{\rm Lis-\'et}}\nolimits}}
\def\Lian{{\mathop{\text{\rm Lis-an}}\nolimits}}
\def\otL{{\kern .1em\mathop{\otimes}\limits^{\sst L}\kern .1em}}
\def\boxtL{{\kern .2em\mathop{\boxtimes}\limits^{\sst L}\kern .2em}}
\def\Lmm{{\mathop{\text{\rm LM-\'em}}\nolimits}}
\def\cR{{\mathbin{\cal R}}}
\def\stm{{{\rm st},\hat\mu}}

\title{{\bf{Generalized Donaldson--Thomas theory \\
over fields $\K \neq \C$}}}
\author{\smallskip\\  
Vittoria Bussi \\
\small{The Mathematical Institute,}\\
\small{Andrew Wiles Building
Radcliffe Observatory Quarter,}\\
\small{Woodstock Road, Oxford, OX1 3LB, U.K.} \\
\small{E-mail: \tt bussi@maths.ox.ac.uk}\\
\smallskip}
\date{\small{}}
\maketitle

\begin{abstract}

\bigskip

{\it Generalized Donaldson--Thomas invariants\/} 
$\bar{DT}{}^\al(\tau)$ defined by Joyce and Song \cite{JoSo} are rational numbers which `count' both
$\tau$-stable and $\tau$-semistable coherent sheaves with Chern
character $\al$ on a Calabi--Yau 3-fold $X$, where $\tau$ denotes Gieseker
stability for some ample line bundle on $X$. 
The $\bar{DT}{}^\al(\tau)$ are defined for all classes $\al$, and are
equal to the classical $DT^\al(\tau)$ defined by Thomas \cite{Thom} when it is defined. 
They are unchanged under
deformations of $X$, and transform by a wall-crossing formula under
change of stability condition~$\tau$.
Joyce and Song use gauge theory and transcendental complex analytic
methods, so that their theory of generalized Donaldson--Thomas invariants is valid only in the complex case.
This also forces them to put constraints on the Calabi--Yau $3$-fold they can define 
generalized Donaldson--Thomas invariants for.

\smallskip

This paper will propose a new algebraic method 
extending the theory to algebraically closed fields $\K$ of characteristic zero,
and partly to triangulated categories and for non necessarily compact Calabi--Yau 
3-folds under some hypothesis. 

\smallskip

It will describe the local structure of the moduli stack
$\fM$ of (complexes of) coherent sheaves on $X,$ showing that an atlas for $\fM$ 
carries the structure of a $\GL(n,\K)$-invariant d-critical locus in the sense of \cite{
Joyc2}
and thus it 
may be written locally as the zero locus of a regular function 
defined on an \'etale neighborhood in the tangent space of $\fM$
and use this to deduce identities on the Behrend
function $\nu_\fM$. 

\smallskip

Moreover, when $\K = \C,$ \cite[Thm. 4.9]{JoSo} uses the integral Hodge conjecture result by Voisin for Calabi--Yau $3$-folds over $\C$ to show that the numerical Grothendieck group $K^{\rm num}(\coh(X))$ is unchanged under deformations of $X.$ This is important for the results that $\bar{DT}{}^\al(\tau)$ for $\al \in K^{\rm num}(\coh(X))$ are invariant under deformations of $X$, even to make sense. We will provide an algebraic proof of that result,  characterizing the numerical Grothendieck group of a Calabi--Yau $3$-fold in terms of a globally constant lattice described using the Picard scheme. 

\end{abstract}

\tableofcontents

\section*{Introduction} 
\markboth{Introduction}{Introduction}
\addcontentsline{toc}{section}{Introduction}
\label{dt1}

In the following we will summarize some background material on Donaldson--Thomas theory which permits to allocate our problem and state the main result. After that, we outline the contents of the sections. Expert readers can skip the first introductory part.

\bigskip

\begin{center} \textbf{Notations and conventions} \end{center}  

Let $\K$ be an algebraically closed field of
characteristic zero. A {\it Calabi--Yau\/
$3$-fold\/} is a smooth projective 3-fold $X$ over $\C$ or $\K$, with
trivial canonical bundle\index{canonical bundle} $K_X$ and $H^1(\cO_X)=0$. Fix a very
ample line bundle $\cO_X(1)$ on $X$, and let $\tau$ be
Gieseker stability on the abelian category of coherent sheaves $\coh(X)$ on $X$ with respect to  $\cO_X(1).$ 
If $E$ is a coherent sheaf on X then the class $[E] \in K^{\num}(\coh(X))$ 
is in effect the Chern character $\ch(E)$ of $E$ in the Chow ring $A^*(X)_{\Q}$ as in \cite{Fult}.
For a class $\al$ in the numerical Grothendieck group $K^\num(\coh(X))$, write $\M_\rss^\al(\tau),\M_\st^\al(\tau)$ for the coarse moduli
schemes of
$\tau$-(semi)stable sheaves $E$ with class $[E]=\al$. Then
$\M_\rss^\al(\tau)$ is a projective $\C$ or $\K$-scheme whose points correspond to S-equivalence classes of $\tau$-semistable sheaves, and $\M_\st^\al(\tau)$ is an open
subscheme of $\M_\rss^\al(\tau)$ whose points correspond to
isomorphism classes of $\tau$-stable sheaves.
Write $\fM$ for the moduli stack of coherent sheaves $E$ on $X$. It
is an Artin $\C$ or $\K$-stack, locally of finite
type and has affine
geometric stabilizers. For $\al\in
K^{\num}(\coh(X))$, write $\fM^\al$ for the open and closed substack of $E$
with $[E]=\al$ in $K^{\num}(\coh(X))$. Write $\fM_\rss^\al(\tau),
\fM_\st^\al(\tau)$ for the substacks of $\tau$-(semi)stable sheaves
$E$ in class $[E]=\al$, which are finite type open substacks of
$\fM^\al$. 

\index{Artin stack!affine geometric
stabilizers} 
\index{Artin stack!locally of finite type} 
\nomenclature[M al st]{$\M_\st^\al(\tau)$}{coarse moduli scheme of $\tau$-stable sheaves $E$ with class $[E]=\al$}
\nomenclature[M al rss]{$\M_\rss^\al(\tau)$}{coarse moduli scheme of $\tau$-semistable sheaves $E$ with class $[E]=\al$}
\index{Gieseker stability}
\nomenclature[Knum]{$K^\num(\coh(X))$}{numerical Grothendieck group of the abelian category $\coh(X)$} \index{Grothendieck group!numerical}

\bigskip

\begin{center} \textbf{Historical overview} \end{center}  

\index{Donaldson--Thomas 
invariants!original $DT^\al(\tau)$}\nomenclature[DTa]{$DT^\al(\tau)$}{original
Donaldson--Thomas invariants defined in \cite{Thom}} 
In 1998, Thomas \cite{Thom}, following his proposal with Donaldson \cite{DoTh}, 
motivates a {\it holomorphic Casson invariant} and defines the 
{\it Donaldson--Thomas invariants\/} $DT^\al(\tau)$
which are integers `counting' $\tau$-stable coherent sheaves with Chern character
$\al$ on a Calabi--Yau 3-fold $X$ over $\K,$ where $\tau$ 
 denotes Gieseker stability for some ample line bundle on $X$. 
Mathematically, and in `modern' terms, he found that $\M_\st^\al(\tau)$ is endowed with a symmetric obstruction theory
and defined
\begin{equation*}
DT^\al(\tau)\quad =\ts\displaystyle \int\limits_{\small{[\M_\st^\al(\tau)]^\vir}}\!\!\! 1
\end{equation*}
which is mathematical reflection of the heuristic that views $\M_\st^\al(\tau)$ as the critical locus of
the {\it holomorphic Chern-Simons functional} and the shadow of a more deeper `derived' geometry.
A crucial result is that the invariants are unchanged
under deformations of the underlying geometry of $X$. Finally we remark that 
the conventional definition of Thomas \cite{Thom} works only
for classes $\al$ containing no strictly $\tau$-semistable sheaves and this permits to work just with schemes rather than stacks as 
the stable moduli scheme itself already encodes all the information about the $\Ext$ groups, and thus about the tangent-obstruction complex of the moduli functor. 

\smallskip

In 2005, Behrend \cite{Behr} proved a {\it virtual Gauss--Bonnet theorem} 
which in particular yields that Donaldson--Thomas type invariants can be written as a weighted 
Euler characteristic $$DT^\al(\tau)=\chi\bigl(\M_\st^\al(\tau),
\nu_{\M_\st^\al(\tau)}\bigr)$$ of the stable moduli scheme
$\M_\st^\al(\tau)$ \nomenclature[M al tau]{$\M_\st^\al(\tau)$}{coarse moduli scheme of $\tau$-stable objects in class $\al$} by a constructible function $\nu_{\M_\st^\al(\tau)},$ as a consequence known in literature as the {\it Behrend function}. 
It depends only on the scheme structure of $\M_\st^\al(\tau),$ and it is convenient to think about it as a
multiplicity function. 
An important moral is that it is better to `count' points in a moduli
scheme by the weighted Euler characteristic rather than the unweighted one as
it often gives answers unchanged under
deformations of the underlying geometry.
It is worth to point out that this equation is local, and `motivic', and makes sense even for non-proper 
finite type $\K$-schemes. 
Anyway, using this formula to generalize the classical picture 
by defining the Donaldson--Thomas invariants as $\chi\bigl(\M_\rss^\al(\tau),
\nu_{\M_\rss^\al(\tau)}\bigr)$ when $\M_\rss^\al(\tau)\ne\M_\st^\al(\tau)$ is not a good idea as in the case there are  
strictly $\tau$-semistable sheaves, the moduli scheme $\M_\rss^\al(\tau)$ is no more a good model and suggest 
that schemes are no more `enough' to extend the theory. 

\smallskip

The crucial work by Behrend \cite{Behr} suggests that Donaldson--Thomas invariants 
can be written as motivic invariants, like those studied by Joyce in \cite{Joyc.1,Joyc.2,Joyc.3,Joyc.4,Joyc.5,Joyc.6}, and so it raises the possibility that one can extend the results of \cite{Joyc.1,Joyc.2,Joyc.3,Joyc.4,Joyc.5,Joyc.6} to Donaldson--Thomas invariants by including Behrend functions as weights.

\smallskip

\index{Donaldson--Thomas invariants!generalized
$\bar{DT}{}^\al(\tau)$} \nomenclature[DTb]{$\bar{DT}{}^\al(\tau)$}{generalized Donaldson--Thomas invariants defined in \cite{JoSo}}
Thus, in 2005, Joyce and Song \cite{JoSo}  proposed a theory of {\it generalized Donaldson--Thomas invariants\/}
$\bar{DT}{}^\al(\tau)$. They are rational numbers which `count' both
$\tau$-stable and $\tau$-semistable coherent sheaves with Chern
character $\al$ on a compact Calabi--Yau 3-fold $X$ over $\C$; strictly $\tau$-semistable sheaves must be
counted with complicated rational weights. The
$\bar{DT}{}^\al(\tau)$ are defined for all classes $\al$, and are
equal to $DT^\al(\tau)$ when it is defined. They are unchanged under
deformations of $X$, and transform by a wall-crossing formula under
change of stability condition~$\tau$. The theory is valid also for compactly supported coherent sheaves on {\it compactly embeddable} noncompact Calabi--Yau 3-folds in the complex analytic topology. 

\smallskip

To prove all this they study the local structure of the moduli stack \index{moduli stack!local structure}
$\fM$ of coherent sheaves on $X.$ They first show that $\fM$ is Zariski locally isomorphic to the moduli
stack $\fVect$ of algebraic vector bundles on $X$. Then they use {\it gauge
theory} on complex vector bundles and transcendental complex analytic
methods to show that an atlas for $\fM$ may be written locally in
the complex analytic topology as $\Crit(f)$ for $f:U\ra\C$ a \index{analytic topology}
holomorphic function on a complex manifold $U$. 
They use this to deduce identities on the Behrend function $\nu_\fM$ through the Milnor
fibre description of Behrend functions. These identities 
\bigskip
\begin{equation*}
\nu_{\fM}(E_1\op E_2)=(-1)^{\bar\chi([E_1],[E_2])}
\nu_{\fM}(E_1)\nu_{\fM}(E_2), 
\end{equation*}
\smallskip
\begin{equation*}
\displaystyle \int\limits_{\small{\begin{subarray}{l}  [\la]\in\mathbb{P}(\Ext^1(E_2,E_1)):\\
 \la\; \Leftrightarrow\; 0\ra E_1\ra F\ra E_2\ra
0\end{subarray}}}\!\!\!\!\! \!\!\!\! \!\!\!\! \!\!\!\! \nu_{\fM}(F)\,\rd\chi \quad - \!\!\!\! \!\!\!\! \!\!\!\! 
\displaystyle \int\limits_{\small{\begin{subarray}{l}[\mu]\in\mathbb{P}(\Ext^1(E_1,E_2)):\\
\mu\; \Leftrightarrow\; 0\ra E_2\ra D\ra E_1\ra
0\end{subarray}}}\!\!\!\!\! \!\!\!\! \!\!\!\! \!\!\!\! \nu_{\fM}(D)\,\rd\chi \;\;
= \;\; (e_{21}-e_{12})\;\;
\nu_{\fM}(E_1\op E_2),
\end{equation*}
\bigskip

where $e_{21}=\dim\Ext^1(E_2,E_1)$ and $e_{12}=\dim\Ext^1(E_1,E_2)$ for $E_1,E_2\in\coh(X),$ are crucial for the whole program of Joyce and Song, which is based on the idea that 
Behrend's approach should be integrated with Joyce's theory \cite{Joyc.1,Joyc.2,Joyc.3,Joyc.4,Joyc.5,Joyc.6}.
As the proof uses gauge theory and transcendental methods, it works only over~$\C$ and forces 
them to put constraints on the Calabi--Yau 3-fold they can define 
generalized Donaldson--Thomas invariants for.
Finally, in \cite[\S 4.5]{JoSo}, when $\K=\C,$ the Chern
character embeds $K^\num(\coh(X))$ in $H^{\rm
even}(X;\Q)$, and the Voisin Hodge conjecture result \cite{Vois} for Calabi--Yau over $\C$ 
completely characterize its image. 
They use this to show $K^\num(\coh(X))$ is
unchanged under deformations of $X$. This is important for the $\bar{DT}{}^\al(\tau)$ with $\al\in K^\num(\coh(X))$ to be invariant
under deformations of $X$ even to make sense. \index{deformation-invariance}\index{Chern character} 

\smallskip

In 2008 and 2010, with two subsequent papers \cite{KoSo1,KoSo}, Kontsevich and Soibelman also studied generalizations of Donaldson--Thomas invariants, both in the direction of motivic and categorified Donaldson--Thomas invariants.

\smallskip

In \cite{KoSo1}, they proposed a very general version of the theory, which, very roughly speaking, can be outlined saying that, 
supposing for the sake of simplicity that $\M_\st^\al(\tau)=\M_\rss^\al(\tau)$, their oversimplified idea is to define {\it motivic Donaldson--Thomas invariants} $$DT^\al_{\textrm{mot}}=\Upsilon(\M_\st^\al(\tau),\nu_{\textrm{mot}}),$$
where $\nu_{\textrm{mot}}$ is a complicated constructible function which we can refer to as the {\it motivic Behrend function} for a general motivic invariant $\Upsilon.$ 
Their construction is closely related to Joyce and Song's construction, even if they work in a more general context: they consider derived categories of coherent sheaves, Bridgeland stability conditions, and general motivic invariants, whereas Joyce and Song work with abelian categories of coherent sheaves, Gieseker stability, and the Euler characteristic. 
However, the price to work in a more general context is that most results depend on conjectures (motivic Behrend function identities, existence of orientation data, absence of poles).
In particular, Kontsevich and Soibelman's parallel passages of Joyce and Song's proof of the Behrend function identities 
\cite[\S 4.4 \& \S 6.3]{KoSo1} work over
a field $\K$ of characteristic zero, and say that the formal
completion $\hat\fM_{[E]}$ of $\fM$ at $[E]$ can be written in
terms of $\Crit(f)$ for $f$ a formal power series on
$\smash{\Ext^1(E,E)}$, with no convergence criteria. Their
analogue \cite[Conj.\,4]{KoSo1},
concerns the {\it motivic Milnor fibre} of the formal power series
$f$.
So the Behrend function identities are related to a conjecture of
Kontsevich and Soibelman \cite[Conj.\,4]{KoSo1} and its application
in \cite[\S 6.3]{KoSo1}, and could probably be deduced from it. 
Anyway, Joyce and Song's approach \cite{JoSo} is not wholly
algebro-geometric -- it uses gauge theory, and
transcendental complex analytic geometry methods. Therefore this
method will not suffice to prove the parallel conjectures in
Kontsevich and Soibelman \cite[Conj.\,4]{KoSo1}, which are supposed
to hold for general fields $\K$ as well as $\C$, and for general
motivic invariants of algebraic $\K$-schemes as well as for the \index{motivic invariant}
topological Euler characteristic. Recently, in 2012, Le Quy Thuong \cite{Thuong} provided a proof for 
this conjecture using some deep high technology results from motivic integration. 

\smallskip

In \cite{KoSo}, Kontsevich and Soibelman exposed the categorified version of Donaldson--Thomas theory. 
To fix ideas, suppose again $\M_\st^\al(\tau)=\M_\rss^\al(\tau).$ 
Following Thomas' argument \cite{Thom}, one can, heuristically, think of $\nu_{\M_\st^\al(\tau)}$ as the 
Euler characteristic of the {\it perverse sheaf of vanishing cycles} $\cal P$ of the holomorphic Chern-Simons functional.
Following this philosophy in which perverse sheaves are categorification of constructible functions, 
the hypercohomology 
$${\mathbb H}^*\bigl(\M_\st^\al(\tau);{\cal
P}\vert_{\M_\st^\al(\tau)}\bigr)$$ 
would be a natural cohomology 
group of $\M_\st^\al(\tau)$ whose Euler characteristic is the
Donaldson--Thomas invariant. Thus, the very basic idea in Kontsevich and Soibelman's paper 
is to define some kind of `generalized 
cohomology' for the moduli stack $\fM$ as a kind of Ringel--Hall algebra. 

\smallskip

In 2013, a sequence of five papers \cite{Joyc2,BBJ,BBDJS,BJM,BBBJ} developed a theory of d-critical loci, a new class of geometric objects by
Joyce, and uses this theory to apply powerful results of derived algebraic geometry as in \cite{Toen,Toen2,Toen3,Toen4,ToVe1,ToVe2,PTVV}
to Donaldson--Thomas theory. 
It is showed that the moduli stack of (complexes of) coherent sheaves on a Calabi--Yau 3-fold
carries the structure of an algebraic d-critical stack and it is given locally in the Zariski topology as the critical locus of a regular function. Moreover, using the notion of {\it orientation data}, they construct a natural perverse sheaf and a natural motive on the moduli stacks, thus answering a long-standing question in the problem of categorification. See \S\ref{ourpapers} for a detailed discussion.

\bigskip

\begin{center} \textbf{The main result and its implications} \end{center}  

Following Joyce and Song's proposal, the aim of this paper is to provide an 
extension of the theory of generalized Donaldson--Thomas invariants in \cite{JoSo}
to algebraically closed fields $\K$ of characteristic zero.
Our argument provides the algebraic analogue of \cite[Thm 5.5]{JoSo}, \cite[Thm 5.11]{JoSo} and \cite[Cor. 5.28]{JoSo} 
which are enough to extend \cite{JoSo} at least for compact Calabi--Yau 3-folds. Unfortunately, 
to extend the whole project to complexes of sheaves and to compactly 
supported sheaves on a noncompact quasi-projective Calabi--Yau 3-fold, we would need other results also from derived algebraic geometry which we do not have at the present. We hope to come back on this point in a future work. 

\smallskip

We will show that an atlas for $\fM$ near $[E]\in\fM(\K)$ may be written locally in
the \'etale topology as the zero locus $\d f^{-1}(0)$ for a $G$-invariant regular function $f$ 
defined on a \'etale neighborhood of $0\in U(\K)$ 
in the affine $\K$-space $\Ext^1(E,E),$  \index{\'etale topology}
where $G$ is a maximal torus of $\Aut(E).$ \index{reductive group!maximal}

\smallskip

Based on this picture, we give an algebraic proof of the Behrend function identities. 
We point out that our approach is actually valid much more generally for any stack which is locally a global quotient, and we actually do not use any particular properties of coherent sheaves on Calabi--Yau 3-folds.
In the past, the author tried
a picture in which the moduli stack of coherent sheaves was locally described as a zero locus of an algebraic almost closed 1-form in the sense of \cite{Behr}, 
which turned out later to be a wrong direction to follow.

\smallskip

Finally, we will study the deformation invariance properties of $\bar{DT}{}^\al(\tau)$ under changes of the underlying geometry of $X,$ 
characterizing a globally constant lattice containing the image through the Chern character of $K^{\num}(\coh(X))$ and
in which classes $\al$ vary. 

\smallskip

The implications are quite exciting and far-reaching.
Our algebraic method could lead to the extension of generalized Donaldson--Thomas theory to the derived categorical context. 
The plan to extend from abelian to derived categories the theory of Joyce and Song \cite{JoSo}
starts by reinterpreting the series of papers by Joyce
\cite{Joyc.1,Joyc.2,Joyc.3,Joyc.4,Joyc.5,Joyc.6,Joyc.7,Joyc.8} in this new general setup. In particular:

\smallskip

\begin{itemize}

\item[$(a)$]Defining configurations in triangulated categories $\mathcal{T}$ requires to replace the exact sequences by distinguished triangles.

\smallskip

\item[$(b)$]Constructing moduli stacks of objects and configurations in $\mathcal{T}$. Again, the theory of derived algebraic geometry \cite{Toen,Toen2,Toen3,Toen4,ToVe1,ToVe2,PTVV} can give us a satisfactory answer.

\smallskip

\item[$(c)$]Defining stability conditions on triangulated categories can be approached using  Bridgeland's results, and its extension by Gorodentscev et al., which combines Bridgeland's idea with Rudakov's definition for abelian categories \cite{Rud}. Since Joyce's stability conditions \cite{Joyc.3} are based on Rudakov, the modifications should be straightforward.

\end{itemize}

\begin{itemize}
\item[$(d)$]The `nonfunctoriality of the cone' in triangulated categories causes that the triangulated category versions of some operations on configurations are defined up to isomorphism, but not canonically, which yields that corresponding diagrams may be commutative, but not Cartesian as in the abelian case. In particular, one loses the associativity of the Ringel-Hall algebra of stack functions, which is a crucial object in Joyce and Song framework. 
We expect that derived Hall algebra approach of To\"en \cite{Toen3} resolve this issue. See also \cite{PL}.
\end{itemize}

\smallskip

We expect that a well-behaved theory of invariants counting $\tau$-semistable objects in triangulated categories in the style of Joyce's theory exists, and we hope to come back on it in a future work.

\bigskip

\begin{center} \textbf{Outstanding problems and recent research} \end{center}  

Donaldson--Thomas theory depicted in this picture is promising and
the literature based on the sketched milestones \cite{Thom,Behr,JoSo,KoSo1,KoSo} is vast. 
Although several interesting developments have been achieved, there are many outstanding problems and 
a whole final picture overcoming these problems and related conjectures is far reaching. 

\smallskip

In 2003, Maulik, Nekrasov, Okounkov and Pandharipande \cite{MNOP1,MNOP2} stated the celebrated {\it MNOP conjecture} in which Donaldson--Thomas invariants for sheaves of rank one have been conjectured to have deep connections with Gromov--Witten theory of Calabi--Yau 3-folds, but also with Gopakumar--Vafa invariants and Pandharipande--Thomas invariants \cite{PaTh}.
Even if there are some results on this conjectural equivalence of theories of curve counting invariants
(Bridgeland \cite{Brid2,Brid3}, Stoppa and Thomas \cite {StTh}, Toda \cite{Toda}), the MNOP conjecture is still unproved.
Moreover, very little is known about the `meaning'
of higher rank Donaldson--Thomas invariants.
In the same work, \cite[Conj.1]{MNOP1}, they formulated a conjecture on values of the virtual count of $\Hilb^dX$ 
(Donaldson--Thomas counting of dimension zero sheaves), that has now been established 
and different proofs are given
by Behrend and Fantechi \cite{BeFa2}, Levine and Pandharipande~\cite{LePa} and Li \cite{Li}.

\smallskip

In \cite[Questions 4.18,5.7,5.10,5.12,6.29]{JoSo} Joyce and Song pointed out some outstanding problems of their theory
and suggest new methods to deal with them. Some of those questions have been answered with new methods as in \cite{Joyc2,BBJ,BBDJS,BJM,BBBJ}.
However, the main limitation of Joyce and Song's approach is due to the fact that they work using gauge theory and transcendental complex analytic methods, which causes the theory is valid only over the complex numbers and puts restrictions on the Calabi--Yau which they can define the theory for, and they deal with abelian rather than triangulated categories. This limits the usefulness of their theory as, for many applications, especially to physics, one needs triangulated categories. 
 Moreover, in \cite[\S 6]{JoSo}, Joyce and Song, following Kontsevich and Soibelman \cite[\S 2.5
\& \S 7.1]{KoSo1}, and from ideas similar to Aspinwall--Morrison computation for a Calabi--Yau 3-fold,
defined the {\it BPS invariants} $\hat{DT}{}^\al(\tau),$ also 
generalizations of Donaldson--Thomas invariants, and 
conjectured to be integers for certain $\tau.$ There are some evidences on this fact \cite[\S 6]{JoSo}, but the problem is still open.
Finally, in \cite[\S 7]{JoSo}, they extended their generalized Donaldson--Thomas theory to abelian categories of representations of a quiver $Q$ with relations coming from a superpotential on Q, and connected their ideas with 
the already existing literature on noncommutative Donaldson--Thomas invariants and on
invariants counting quiver representations (just to cite some names: Bryan, Ginzburg, Hanany, Nagao, Nakajima, Reineke, Szendr\H oi, and Young). This is an active area of research in representation theory. 

\smallskip

There is a seething big area of research which aims to extend Donaldson--Thomas theory in the derived categorical 
framework. 
For a long time there was the problem to prove that the moduli space of complexes of sheaves 
can be given as a critical locus, similarly to the moduli space of sheaves. 
In 2006, Behrend and Getzler \cite{BeGe} announced a development in this direction, which various papers in literature refers to (e.g. Toda \cite{Toda,Toda2}), but the paper has not yet been published. 
It says that the formal potential function $f$ for the cyclic dg Lie algebra L coming from the Schur objects in the derived category of coherent sheaves on Calabi--Yau 3-folds can be made to be convergent over a local neighborhood of the origin.
In \cite[Conj.\,1.2]{Toda2}, Toda formulates the derived categorical analog of \cite[Thm.\,5.5]{JoSo} and then Hua announced in \cite{Hua}  a joint work with Behrend \cite{BeHua} 
about the construction of the derived moduli space of complexes of coherent sheaves. In \cite{Hua}, Hua gives a construction of the global Chern-Simons functions for toric Calabi--Yau stacks of dimension three using strong exceptional collections. The moduli spaces of sheaves on such stacks can be identified with critical loci of these functions.
Still in the direction of derived categorical context, Chang and Li \cite{ChangLi} defined recently a semi--perfect obstruction theory and used it to construct virtual cycles of moduli of derived objects on Calabi--Yau 3-folds. In an other paper with Kiem \cite{KiLi2}, Li studied stable objects in derived category using a `$\C^*$-intrinsic blowup' strategy.
Finally, in 2013, the author et al. in \cite{BBBJ} completely answered the issue of presenting the moduli stack as a critical locus, and this opens now the question about possibilities to extend the whole project in \cite{JoSo} to triangulated categories, the main difficult of which would be to provide a generalization of wall-crossing formulae from abelian to triangulated categories, in the style of Joyce \cite{Joyc.1,Joyc.2,Joyc.3,Joyc.4,Joyc.5,Joyc.6,Joyc.7,Joyc.8}.

\smallskip
 
This discussion enlightens the fact that beyond this theory there is some deeper `derived' geometry:
as the deformation theory of coherent sheaves concerns the $\Ext$ groups, one way to talk about different geometric structures on moduli spaces is to ask what information they store about the $\Ext$ groups. For instance, in Kontsevich and Soibelman's context, an interesting problem, among others, is finding what kind of geometric structure on moduli spaces of coherent sheaves on a Calabi--Yau 3-fold X would be the most appropriate for doing motivic and categorified Donaldson--Thomas theory. As a consequence, a natural question would be to ask if derived algebraic geometry has something again to say about a theory of Donaldson--Thomas invariants for Calabi--Yau m-folds for $m>3,$ and what is the most suitable geometric structure to develop the theory, see Corollary \ref{da5cor2}.

\smallskip

Finally, due to both many unproved conjectures and exciting results, Kontsevich and Soibelman's motivic and categorified theory brings to life a fervid area of research (just to cite some investigators: Behrend, Bryan, Davison, Dimca, Mozgovoy, Nagao and Szendr\H oi). 
In the present work, we will not discuss much more this area, but we will come back to Kontsevich and Soibelman's theory later.

\bigskip

\begin{center} \textbf{Outline of the paper} \end{center}  

The paper begins with a section of background material on obstruction theories
and conventional definition of Donaldson--Thomas theory, Behrend functions and Behrend's approach to 
Donaldson--Thomas theory and, finally, Joyce and Song's and Kontsevich and Soibelman's generalization of Donaldson--Thomas theory.
This mainly aims to provide 
a soft introduction to Donaldson--Thomas theory and more specifically to Joyce's theory
and the scenery in which the following sections take place. 

\medskip

Subsection \ref{dt2} will briefly recall material from \cite{BeFa}, \cite{LiTi} and then \cite{Thom}.
This should provide a general picture about obstruction theories and the classical Donaldson--Thomas invariants. 
To say that a scheme $X$ has an {\it obstruction theory} \index{obstruction theory}
means, very roughly speaking, that one is given locally on $\fX$ an
equivalence class of morphisms of vector bundles such that at each
point the kernel of the induced linear map of vector spaces is the
tangent space to $X$, and the cokernel is a space of
obstructions.
Following Donaldson and Thomas
\cite[\S 3]{DoTh}, Thomas \cite{Thom} motivates a holomorphic Casson invariant 
\index{holomorphic Casson invariant} counting bundles 
on a Calabi--Yau 3-fold. He develops the deformation theory necessary 
to obtain the virtual moduli cycles in moduli spaces of stable sheaves 
whose higher obstruction groups vanish, which allows him to define the holomorphic 
Casson invariant of a Calabi--Yau 3-fold X and prove it is deformation invariant.
Heuristically, the Donaldson--Thomas moduli space is the 
critical set of the holomorphic Chern--Simons functional \index{holomorphic Chern--Simons functional} and 
the Donaldson--Thomas invariant is a holomorphic analogue of the Casson invariant.

\medskip

Subsection \ref{dt3} provides a more eclectic presentation of the Behrend function\index{Behrend function}. 
The first part will review the microlocal approach
to defining it, with a discussion on the attempt to categorify Donaldson--Thomas theory. In particular the section describes
the bridge between perverse sheaves and vanishing cycles on one hand, 
and Milnor fibres and Behrend functions on the other. 
Thus, if $\fM$ is the Donaldson--Thomas moduli space of stable sheaves, 
one can, heuristically, think of $\nu_{\fM}$ as the Euler characteristic of the perverse sheaf of vanishing cycles 
of the holomorphic Chern-Simons functional.
Following this philosophy in which perverse sheaves are categorification of constructible functions, 
the section outline the categorification program for Donaldson--Thomas theory. 
Then, in the second part, 
the Euler characteristic weighted by the Behrend function is compared to the unweighted Euler characteristic, 
motivating the necessity to introduce the Behrend function as a multiplicity function.
Finally, some properties are listed, in particular the Behrend approach to the Donaldson--Thomas invariants 
as weighted Euler characteristics and the formula in the complex setting of the Behrend function through {\it linking numbers}, which guarantee 
a more useful expression also in the case it is not known if the scheme admitting a symmetric 
obstruction theory can locally be written as the critical locus of a regular function on a smooth scheme. 
This is done introducing the definition of almost closed $1$-forms. We point out that Pandharipande and Thomas \cite{PaTh} give examples which are zeroes
of almost closed 1-forms, but are not locally critical loci, and this is the main indication that almost closed 1-forms are not `enough' to develop our whole program.  

\medskip

Subsection \ref{dt4} combines some results of Joyce's series of papers
\cite{Joyc.1,Joyc.2,Joyc.3,Joyc.4,Joyc.5,Joyc.6} with Behrend's approach to Donaldson--Thomas theory and 
describes how Joyce and Song developed the theory of generalized Donaldson--Thomas invariants in \cite{JoSo}.
The idea behind the entire project is that one should insert the Behrend
function $\nu_{\fM}$ of the moduli stack $\fM$ of coherent sheaves as a weight in
the Joyce's program. A good introduction to the book is provided by Joyce in \cite{Joyc.8}.
Then, a concluding remark presents a sketch on Kontsevich and Soibelman's generalization of Donaldson--Thomas theory. 
As the present paper is mainly concentrated on Joyce and Song's approach, the remark focuses on analogies and differences 
between the two theories rather than going into a detailed explanation of Kontsevich and Soibelman's program, both because it is 
beyond the author's competence and it is not directly involved in the results presented here. 

\medskip

Sections \ref{dcr}--\ref{ourpapers} presents briefly the main application in Donaldson--Thomas theory
coming from the vast project developed in the series of papers \cite{Joyc2,BBJ,BBDJS,BJM,BBBJ}. 
We first summarize the theory of d-critical
schemes and stacks introduced by Joyce \cite{Joyc2}. They form a new class of spaces, which should be regarded as classical
truncations of the $-1$-shifted symplectic derived schemes of
\cite{PTVV}. They are simpler than their derived analogues.
In \cite{BBJ}, we prove a Darboux theorem for derived schemes with symplectic forms
of degree $k<0$, in the sense of Pantev, To\"en, Vaqui\'e and
Vezzosi \cite{PTVV}. More precisely, we show that a derived scheme
$\bX$ with symplectic form $\ti\om$ of degree $k$ is locally
equivalent to $(\bSpec A,\om)$ for $\bSpec A$ an affine derived
scheme in which the cdga $A$ has Darboux-like coordinates with
respect to which the symplectic form $\om$ is standard, and in which
the differential in $A$ is given by a Poisson bracket with a
Hamiltonian function $H$ of degree $k+1$.
When $k=-1$, this implies that a $-1$-shifted symplectic derived
scheme $(\bX,\ti\om)$ is Zariski locally equivalent to the derived
critical locus $\bs\Crit(H)$ of a regular function $H:U\ra\bA^1$ on
a smooth scheme $U$. We use this to show that the classical scheme
$X=t_0(\bX)$ has the structure of an {\it algebraic d-critical
locus}, in the sense of Joyce~\cite{Joyc2}.
In the sequels \cite{BBBJ,BBDJS,BJM} we extend these results to (derived) Artin stacks, and discuss applications to categorified and motivic Donaldson--Thomas theory of Calabi--Yau 3-folds.

\medskip

Section \ref{main.1} states our main results, including the description of the 
local structure of the moduli stack of coherent sheaves on a Calabi--Yau 3-fold, the Behrend function identities 
and the deformation invariance of the theory. 
The section explains why and where Joyce and Song use the restriction $\K=\C$ in \cite{JoSo} and how 
our results overcome this restriction: \S\ref{dt.1} provides algebraic 
analogues of \cite[Thm. 5.5]{JoSo} and \cite[Thm. 5.11]{JoSo}. 
Finally \S\ref{def} provides the analogue of \cite[Cor. 5.28]{JoSo} which yields the deformation invariance over $\K$ of the generalized Donaldson--Thomas invariants $\bar{DT}{}^\al(\tau)$ defined for classes $\al$ varying in a deformation invariant lattice $\Lambda_X$ described below in which the numerical Grothendieck group injects through the Chern character map.
The section culminates in Theorem \ref{mainthm} which summarizes all these ideas.

\medskip

Subsection \ref{dt.1} proves the Behrend function identities
\index{Behrend function!Behrend identities} above
using the existence of a $T$-equivariant $d$-critical chart in the sense of \cite{Joyc2}
for each given point $E$ of $\fM,$ where $T\subset G$ is a maximal torus in $G,$ a maximal torus of $\Aut(E).$ This gives us the local description of the stack as a critical locus for a $T$-invariant regular function $f$ defined on a smooth scheme $U\subset \Ext^1(E,E).$ This method is valid for every locally global quotient stack, and in particular it provides the required local description of the moduli stack (Theorem \ref{dt5thm2}).
 \index{moduli stack!local structure}
Note that we actually would not need the assumption of the local quotient structure if we wanted to restrict just to sheaves on Calabi--Yau 3-folds, as this would follow from the standard method for constructing coarse moduli schemes of semistable coherent sheaves as in Huybrechts and Lehn
\cite{HuLe2}. More precisely, one can find a `good' local atlas for $\fM$ which is a G-invariant, locally closed $\K$-subscheme in 
the Grothendieck's Quot Scheme\index{Quot scheme}
$\Quot_X\bigl(\K^{P(n)}\ot\cO_X(-n),P\bigr)$, explained in \cite[\S
2.2]{HuLe2}, which parametrizes quotients
$\K^{P(n)}\ot\cO_X(-n)\ab\twoheadrightarrow E'$, where $E'$ has
Hilbert polynomial $P,$ and which is acted on by the $\K$-group $\GL(P(n),\K).$
From \cite{JoSo} it turns out that the proof of the first Behrend identity is reduced to an identity between the Behrend function of the zero locus of $\d f$, which is a $\K^*$-scheme, and the Behrend function of the 
fixed part of this zero locus, that is
$$
\nu_{\d f^{-1}(0)}(p)=(-1)^{\dim(T_p \d f^{-1}(0))-\dim(T_p (\d f^{-1}(0))^T)}\nu_{(\d f^{-1}(0))^T}(p),
$$
where $p$ is a point in the $\K^*$-fixed point locus $(\d f^{-1}(0))^T.$ This relation is a generalization of
the result in \cite{BeFa2} to the case that $p$ is not necessarily an isolated fixed point of the action and $\K$ is a general algebraically closed field of characteristic zero.
This argument is a different approach from the one suggested in a work by Li--Qin \cite{LiQin}, where
there they use some properties of the Thom classes of vector bundles.
The first Behrend function identity over algebraically closed fields $\K$ of characteristic zero 
follows from a trick in the argument of the second Behrend function identity proof,
which is directly proved over $\K,$ and
is based on Theorem \ref{blowup}, which is the algebraic version of \cite[Thm 4.11]{JoSo}. 

\medskip

Subsection \ref{def} yields that
it is possible to extend \cite[Cor. 5.28]{JoSo} on the deformation invariance of the generalized Donaldson--Thomas invariants in the compact case to algebraically closed fields $\K$ of characteristic zero.
First of all, using existence results by Grothendieck and Artin, and smoothness and properness properties of the {\it relative Picard scheme} in a family of Calabi--Yau 3-folds, one proves that the Picard groups form a local system. 
Moreover, it is a local system with finite monodromy, so it can be made trivial after passing to a finite \'etale cover of the base scheme, as formulated in the Theorem which is the algebraic generalization of \cite[Thm. 4.21]{JoSo}, and which studies the monodromy of the Picard scheme in a family instead of the numerical Grothendieck group.
Finally, Theorem \ref{definv}, a substitute for \cite[Thm. 4.19]{JoSo}, which does not need 
the integral Hodge conjecture result by \cite{Vois} over Calabi--Yau 3-folds 
and which is valid over $\K,$ characterizes the numerical Grothendieck 
group of a Calabi--Yau 3-fold in terms of a globally constant lattice described using the Picard scheme:
\begin{equation*}
\La_X=\ts\bigl\{ (\la_0,\la_1,\la_2,\la_3) 
\textrm{ where } \la_0,\la_3\in\Q, \; \la_1\in\Pic(X)\ot_{\Z}\Q, \; \la_2\in \Hom(\Pic(X),\Q)
\textrm{ such that } 
\end{equation*}
\begin{equation*}
\la_0\in \Z,\;\>
\la_1\in \Pic(X)/ _{\textrm{torsion}}, \;
 \la_2-\ha\la_1^2\in  \Hom(\Pic(X),\Z),\;\> \la_3+\ts\frac{1}{12}\la_1
c_2(TX)\in \Z\bigr\},
\end{equation*}
where $\la_1^2$ is defined as the map $\al\in\Pic(X)\ra \frac{1}{2}c_1(\la_1)\cdot c_1(\la_1)\cdot c_1(\al)\in A^3(X)_{\Q}\cong \Q,$
and $\frac{1}{12}\la_1 c_2(TX)$ is defined as $\frac{1}{12}c_1(\la_1)\cdot c_2(TX)\in A^3(X)_{\Q}\cong\Q.$
Theorem \ref{definv} proves that $\La_X$ is deformation invariant and the Chern character gives an injective morphism
$\ch:K^\num(\coh(X))\!\hookra\!\La_X$.
Our $\bar{DT}{}^\al(\tau)$ will be defined for classes $\al\in\Lambda_X.$ 

\medskip

Section \ref{dt7} sketches some implications of the theory and proposes new ideas for further research, in particular in the direction of derived categorical framework 
trying to establish a theory of generalized Donaldson--Thomas invariants for objects in the derived category of coherent sheaves, and for non necessarily compact Calabi--Yau 3-folds.

\bigskip

\noindent{\bf Acknowledgements.} I would like to thank Tom Bridgeland, Daniel Huybrechts, 
Frances Kirwan, Jun Li, Balazs Szendr\H oi, Richard Thomas and Bertrand To\"en 
for useful discussions and especially my supervisor Dominic Joyce 
for his continuous support, for many enlightening suggestions and valuable discussions. 
This research is part of my D.Phil. project funded by an 
EPSRC Studentship.

\section{Donaldson--Thomas theory: background material}

This section should be conceived as background picture in which next new sections should be allocated.
The competent reader can skip directly to \S\ref{main.1}.

\subsection{Obstruction theories and Donaldson--Thomas type invariants}
\label{dt2}

This section will briefly recall material from \cite{BeFa}, \cite{LiTi} and then \cite{Thom}
which provide both important notions used in the sequel and a hopefully interesting picture of Donaldson--Thomas theory.

\subsubsection{Obstruction theories}
\label{dt2.1} \index{obstruction theory|(}

Suppose that $X$ is a subscheme of a smooth scheme $M,$ cut out by a section $s$ of a rank $r$
vector bundle $E\ra M.$ Then the  {\it expected dimension}, or virtual dimension, of $X$ is $n-r,$ the dimension it would have if the section $s$ was transverse. If it is not transverse, one wants to take a correct $(n-r)$-cycle on $X.$ As the section $s$ induces a cone in $E_{|_X},$ one may then intersect this cone with the zero section of $X$ inside $E$ to get a cycle of expected dimension
on $X.$ The key observation is that one works entirely on $X$ and not in the ambient scheme $M.$
The deformation theory of the moduli problem is often endowed with the infinitesimal version of 
$s:M\ra E$ on $X,$ namely the linearization of $s,$ yielding the following exact sequence:
\begin{equation*}
\xymatrix@C=20pt@R=10pt{0\ar[r] & TX \ar[r] & TM_{|_X}  \ar[r]^{\rd s} & E_{|_X} \ar[r] & Ob \ar[r] & 0, }
\end{equation*}
for some cokernel $Ob,$ which in the moduli problem becomes the {\it obstruction sheaf}.\index{obstruction sheaf}

\smallskip

Moduli spaces in algebraic geometry often have an expected dimension \index{virtual dimension}
at each point, which is a lower bound for the dimension at that
point. Sometimes it may not
coincide with the actual dimension of the moduli space and sometimes 
it is not possible to get a space of the expected dimension. 
When one has a moduli space $X$ one obtains 
{\it numerical invariants}\index{numerical invariants}
by integrating certain cohomology classes over the virtual moduli cycle,
a class of the expected dimension
in its Chow ring.

\smallskip

One example is the moduli space of torsion-free,
semi-stable vector bundles on a surface which yields the {\it Donaldson theory} \index{Donaldson invariants}  
and which provides a set of
differential invariants of 4-manifolds. Another one is 
the moduli space of stable maps from curves of genus $g$ to
a fixed projective variety which yields the {\it Gromov--Witten invariants}\index{Gromov--Witten invariants},
a kind of generalization of the classical enumerative invariant which counts
the number of algebraic curves with appropriate constraints in a variety. 
In both cases, these invariants are intersection theories on the moduli spaces, respectively, 
of vector bundles over the surfaces, and of stable maps from curves to a variety.  
The fundamental class is the core of an intersection theory. However, for Gromov--Witten invariants for example, one cannot take
the fundamental class of the whole moduli space directly. The virtual moduli cycle, roughly speaking, plays the role 
of the fundamental class in an appropriate  ``good'' intersection theory. 

\smallskip

\index{cycle!of correct dimension}\index{excess intersection theory}
A nice picture to start with is the following situation:  when the expected dimension does not
coincide with the actual dimension of the moduli space, one may view this as if the moduli space is a subspace of an `ambient' space cut out
by a set of `equations' whose vanishing loci do not meet
transversely. Such a situation is well understood in the following
setting described in the Introduction of \cite{LiTi}: let $X$, $Y$ and $W$ 
be smooth varieties, $X,Y\ra W$ and let $Z=X\times_W Y.$
Then $[X]\cdot [Y]$,
the intersection of the cycle $[X]$ and $[Y]$, is a
cycle in $A_* W$ of dimension $\dim X+\dim Y-\dim W$.
When $\dim Z=\dim X+\dim Y-\dim W$, then $[Z]=[X]\cdot [Y]$.
Otherwise, $[Z]$ may not be $ [X]\cdot[Y]$. The {\it excess intersection
theory} gives that one can find a cycle in $A_* Z$ 
so that it is $[X]\cdot[Y]$. One may view this cycle as the virtual
cycle of $Z$ representing $[X]\cdot[Y]$. 
Following Fulton--MacPherson's normal cone \index{normal cone!Fulton--MacPherson's construction}
construction (in \cite{Fult,FuMacP1,FuMacP2}), this cycle is the image of
the cycle of the normal cone to $Z$\ in $X$, denoted
by $C_{Z/X}$, under the Gysin homomorphism 
$s^*: A_*( C_{Y/W}\times
_YZ) \ra A_* Z$, where $s: Z\ra C_{Y/W}\times_YZ$\ is the
zero section. This theory does not apply directly to moduli
schemes, since, except for some isolated cases, it is impossible
to find pairs $X\ra W$\ and $Y\ra W$ for smooth $X,Y$ and $W$ so that $X\times_WY$\
is the moduli space and $[X]\cdot[Y]$ so defined is the
virtual moduli cycle one needs.

\smallskip

Behrend and Fantechi \cite{BeFa} and Li and Tian \cite{LiTi} give two different approaches to deal with this.  
Very briefly, the strategy to Li and Tian's approach in \cite{LiTi} is that rather than
trying to find an embedding of the moduli space
into some ambient space, they will
construct a cone in a vector bundle directly, say $C\subset V$,
over the moduli space and then define the virtual moduli 
cycle to be $s^*[C]$, where $s$ is the zero section of $V$.
The pair $C\subset V$ will be constructed
based on a choice of the {\it tangent-obstruction complex} \index{tangent-obstruction complex}
of the moduli functor. The construction commutes with Gysin maps and carries a good invariance property.

\smallskip

In \cite{BeFa} Behrend and Fantechi introduce the notion of {\it cone stacks} \index{cone!cone stack} 
over a scheme $X$ (or more generally for Deligne--Mumford stacks). These are \index{Deligne--Mumford stack}
Artin stacks which are locally the quotient of a cone by a vector
bundle acting on it. They call a cone {\it abelian} \index{cone!abelian} if it is defined as \nomenclature[SpecSym]{$\Spec\Sym \sF$}{abelian cone associated to a coherent sheaf $\sF$}
$\Spec\Sym \sF$, where $\sF$ is a coherent sheaf on $X$. Every cone is
contained as a closed subcone in a minimal abelian one, which is called
its {\it abelian hull}. \index{cone!abelian hull}The notions of being abelian and of abelian hull
generalize immediately to cone stacks.
Then, for a complex $E^\bu$ in the derived category $D(X)$ of quasicoherent sheaves on $X$
\nomenclature[DX]{$D(X)$}{derived category of quasicoherent sheaves on $X$}
which satisfies some suitable assumptions (denoted by
($*$), see Definition \ref{dt1def1}), there is an associated abelian cone
stack $h^1/h^0((E^\bu)^\vee)$. \nomenclature[hh]{$h^1/h^0((E^\bu)^\vee)$}{abelian cone stack associated to a complex $E^\bu$}
In particular the {\it cotangent complex} 
\index{cotangent complex} $L_X ^\bu$ of $X$  constructed by Illusie
\cite{Illu1} (a helpful review is given in Illusie \cite[\S
1]{Illu2})  satisfies condition ($*$), so one can define the
abelian cone stack ${\mathfrak N}_X:=h^1/h^0((L_X^\bu)^\vee)$, the {\it
intrinsic normal sheaf}. \index{intrinsic normal sheaf} 
\nomenclature[N X]{${\mathfrak N}_X$}{intrinsic normal sheaf over a scheme $X$}
More directly, ${\mathfrak N}_X$ is constructed as follows: \'etale locally on $X$, embed
an open set $U$ of $X$ in a smooth scheme $W$, and take the stack
quotient of the {\it normal sheaf} \index{normal sheaf} (viewed as abelian cone) $N_{U/W}$ by the
natural action of $TW_{|_{U}}$. One can glue these abelian cone stacks
together to get ${\mathfrak N}_X$.  The {\it intrinsic normal cone} \index{intrinsic normal cone} 
${\mathfrak C}_X$ is the closed \nomenclature[C X]{${\mathfrak C}_X$}{intrinsic normal cone associated to a scheme $X$}
subcone stack of ${\mathfrak N}_X$ defined by replacing
$N_{U/W}$ by the {\it normal cone} \index{normal cone} $C_{U/W}$ in the previous construction.
In particular, the intrinsic normal
sheaf ${\mathfrak N}_X$ of $X$ carries the obstructions for
deformations of affine $X$-schemes. With this motivation, they introduce the notion of
{\it obstruction theory} \index{obstruction theory!definition} for $X$. To say that $X$ has an obstruction theory
means, very roughly speaking, that one is given locally on $X$ an
equivalence class of morphisms of vector bundles such that at each
point the kernel of the induced linear map of vector spaces is the
tangent space to $X$, and the cokernel is a space of
obstructions. That is, this is an object $E^\bu$ in the
derived category together with a morphism $E^\bu\ra L_X^\bu$,
satisfying Condition ($*$) and such that the induced
map ${\mathfrak N}_X\ra h^1/h^0((E^\bu)^\vee)$ is a closed immersion.
One denotes the sheaf $h^1({E^\bu}^\vee)$ by $Ob,$ the obstruction sheaf of the obstruction theory. 
It contains the obstructions to the smoothness of $X.$ 
When an obstruction theory $E^\bu$ is {\it perfect}, \index{obstruction theory!perfect}
${\mathfrak E}=h^1/h^0((E^\bu)^\vee)$ is a vector bundle stack. 
Once an obstruction theory is given, with the additional technical assumption that it admits a global resolution, one can define a virtual fundamental class of the expected dimension:
one has a vector
bundle stack ${\mathfrak E}$ with a closed subcone stack ${\mathfrak C}_X$, and to define
the virtual fundamental class of $X$ with respect to $E^\bu$ one simply
intersects ${\mathfrak C}_\fX$ with the zero section of ${\mathfrak E}$. To get round of the problem of dealing with 
Chow groups for Artin stacks, Behrend and Fantechi choose to
assume that $E^\bu$ is globally given by a homomorphism of vector
bundles $F^{-1}\ra F^0$. Then ${\mathfrak C}_X$ gives rise to a cone $C$ in
$F_1={F^{-1}}^\vee$ and one intersects $C$ with the zero section of
$F_1$ (see \cite{Kre} for a statement without this assumption).
 
 \smallskip
 
So, recall the following definitions from Behrend and
Fantechi~\cite{Behr,BeFa,BeFa2}:

\begin{dfn} Let $Y$ be a $\K$-scheme, and $D(Y)$ the derived
category of quasicoherent sheaves on $Y$.
\begin{itemize}
\setlength{\itemsep}{0pt}
\setlength{\parsep}{0pt}
\item[{\bf(a)}]  A complex $E^\bu\in D(Y)$ is {\it perfect of perfect
amplitude contained in\/} $[a,b]$, if \'{e}tale locally on $Y$,
$E^\bu$ is quasi-isomorphic to a complex of locally free sheaves
of finite rank in degrees $a,a+1,\ldots,b$.
\item[{\bf(b)}] A complex $E^\bu\in D(Y)$ {\it satisfies
condition\/} $(*)$ if
\begin{itemize}
\setlength{\itemsep}{0pt}
\setlength{\parsep}{0pt}
\item[(i)] $h^i(E^\bu)=0$ for all $i>0$,
\item[(ii)] $h^i(E^\bu)$ is coherent for $i=0,-1$.
\end{itemize}
\item[{\bf(c)}] An {\it obstruction theory\/}\index{obstruction
theory!definition} for $Y$ is a morphism $\varphi:E^\bu\ra L_Y$ in
$D(Y)$, where $L_Y=L_{Y/\Spec\K}$ is the cotangent complex of
$Y$, and $E$ satisfies condition $(*)$, and $h^0(\varphi)$ is an
isomorphism, and $h^{-1}(\varphi)$ is an epimorphism.
\item[{\bf(d)}] An obstruction theory $\varphi:E^\bu\ra L_Y$ is called
{\it perfect\/}\index{obstruction theory!perfect} if $E^\bu$ is
perfect of perfect amplitude contained in $[-1,0]$.
\item[{\bf(e)}] A perfect obstruction theory $\varphi:E^\bu\ra L_Y$ on
$Y$ is called {\it symmetric\/}\index{obstruction
theory!symmetric}\index{symmetric obstruction theory!definition}
if there exists an isomorphism $\vartheta:E^\bu\ra E^{\bu\vee}[1]$,
such that $\vartheta^{\vee}[1]=\vartheta$. Here
$E^{\bu\vee}\!=\!R\SHom(E^\bu,\cO_Y)$ is the {\it dual\/} of
$E^\bu$, and $\vartheta^\vee$ the dual morphism of~$\vartheta$.
\item[{\bf(f)}] If moreover $Y$ is a scheme with  a $G$-action, where $G$ is an algebraic group, an {\it
equivariant }perfect obstruction theory \index{obstruction theory!equivariant} is a morphism $E^\bu\ra L_Y$ in \index{algebraic group}
the category $D(Y)^G$, which is a perfect obstruction theory as a 
\nomenclature[DG]{$D(X)^G$}{derived category of equivariant quasicoherent $\cO_X$-modules}
morphism in $D(Y)$ (this definition is originally due to
Graber--Pandharipande~\cite{GP}). Here $D(Y)^G$ denotes the derived category of the abelian
category of $G$-equivariant quasicoherent $\cO_Y$-modules. 
\item[{\bf(g)}] A {\it symmetric equivariant }obstruction theory (or an {\it
equivariant symmetric }obstruction theory) \index{obstruction theory!equivariant symmetric} is a pair $(E^\bu\ra L_Y,E^\bu\ra
E^{\bu\vee}[1])$ of morphisms in the category $D(Y)^G$, such that $E^\bu\ra
L_Y$ is an equivariant perfect obstruction theory and $\vartheta:E^\bu\ra
E^{\bu\vee}[1]$ is an isomorphism satisfying $\vartheta^\vee[1]=\vartheta$ in $D(Y)^G.$
Note that this is more than requiring that the obstruction theory be
equivariant and symmetric, separately, as said in \cite{BeFa2}.
\end{itemize}

If instead $Y\stackrel{\psi}{\longra}U$ is a morphism of
$\K$-schemes, so $Y$ is a $U$-scheme, we define {\it relative} \index{obstruction theory!relative}
perfect obstruction theories $\phi:E^\bu\ra L_{Y/U}$ in the
obvious way.
\label{dt1def1}
\end{dfn}

Behrend and Fantechi \cite[Th.~4.5]{BeFa} prove the following theorem, which both
explains the term obstruction theory and provides a criterion for
verification in practice:

\begin{thm} The
following two conditions are equivalent for $E^\bu \in D(Y)$
satisfying condition $(*)$.
\begin{itemize}
\setlength{\itemsep}{0pt}
\setlength{\parsep}{0pt}
\item[{\bf(a)}] The morphism $\phi: E^\bu \ra L_Y$ is an
obstruction theory.
\item[{\bf(b)}] Suppose that we are given a square-zero extension $\ov{T}$ of $T$
with ideal sheaf $J$, with $T,\ov T$ affine, and a morphism $g:T \ra
Y.$ The morphism $\phi$ induces an element $\phi^*(\om(g))\in
\Ext^1(g^*E^\bu, J)$ from $\om(g)\in\Ext^1(g^*L_Y, J)$ by
composition. Then $\phi^*(\om(g))$ vanishes if and only if there
exists an extension $\ov{g}$ of\/ $g$. If it vanishes, then the
set of extensions form a torsor under~$\Hom(g^*E^\bu,J)$.
\end{itemize}
\end{thm}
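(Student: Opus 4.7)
The plan is to reduce everything to the universal property of the cotangent complex, via the distinguished triangle determined by $\phi$, and then extract the purely cohomological content of condition (a) from the deformation-theoretic content of condition (b).

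First I would recall the fundamental theorem of Illusie identifying $L_Y$ as the universal object governing deformations: for the identity morphism $\id:L_Y\to L_Y$, condition (b) holds tautologically, i.e.\ for each $g:T\to Y$ with $T$ affine and each square-zero extension $\ov T\supset T$ with ideal $J$, there is a canonical class $\om(g)\in\Ext^1(g^*L_Y,J)$ whose vanishing is equivalent to the existence of a lift $\ov g:\ov T\to Y$, and when it vanishes, lifts form a torsor under $\Hom(g^*L_Y,J)$. This reduces the theorem to a statement about the map $\phi^*$ on the relevant $\Ext^0$ and $\Ext^1$.

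Next, for (a) $\Rightarrow$ (b), I would complete $\phi$ to a distinguished triangle $E^\bu\stackrel{\phi}{\to}L_Y\to C^\bu\to E^\bu[1]$ and observe, via the long exact sequence of cohomology sheaves, that the hypotheses ``$h^0(\phi)$ iso, $h^{-1}(\phi)$ epi'' are equivalent to $h^0(C^\bu)=h^{-1}(C^\bu)=0$, i.e.\ $C^\bu$ has cohomology concentrated in degrees $\le-2$. Pulling back by $g$ and applying $R\Hom(-,J)$ gives a long exact sequence of $\Ext$-groups. Since $T$ is affine and $J$ is quasi-coherent, the local-to-global spectral sequence for $\Ext$ degenerates to $\Ext^i(g^*C^\bu,J)=H^0(T,\SExt^i(g^*C^\bu,J))$, and the spectral sequence $\Ext^p(h^{-q}(g^*C^\bu),J)\Rightarrow\Ext^{p+q}(g^*C^\bu,J)$ together with the concentration in degrees $\le-2$ forces $\Ext^i(g^*C^\bu,J)=0$ for $i\le 1$. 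Hence $\phi^*:\Ext^0(g^*L_Y,J)\to\Ext^0(g^*E^\bu,J)$ is an isomorphism and $\phi^*:\Ext^1(g^*L_Y,J)\to\Ext^1(g^*E^\bu,J)$ is injective. Combining with the universal property of $L_Y$ transfers both the vanishing criterion and the torsor structure to $E^\bu$, establishing (b).

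For the converse (b) $\Rightarrow$ (a), I would argue by testing condition (b) against a sufficiently rich supply of pairs $(g,J)$. Varying $J$ over residue fields $\kappa(y)$ at closed points of $Y$ (and more generally over arbitrary quasi-coherent modules on affine opens $T\subset Y$), the bijection between torsors $\Hom(g^*L_Y,J)\cong\Hom(g^*E^\bu,J)$ induced by $\phi^*$, by a Yoneda argument, forces $h^0(\phi)$ to be an isomorphism. Similarly, the equivalence of vanishing of $\om(g)$ and of $\phi^*(\om(g))$, together with Illusie's fact that $\om(g)$ achieves every class in $\Ext^1(g^*L_Y,J)$ as $(\ov T,J)$ varies, forces $\phi^*$ to be injective on $\Ext^1$, which via the same long exact sequence translates into $h^{-1}(\phi)$ being an epimorphism.

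The technical heart of the argument is the combination of the cohomology-sheaf spectral sequence for $\Ext$ with the affineness of $T$ in Step 2, which allows the degree estimates on $C^\bu$ to be converted into vanishing of global $\Ext$-groups; I expect this to be the main obstacle, together with the careful choice of test data $(g,\ov T,J)$ needed in Step 3 to realize every class in $\Ext^1(g^*L_Y,J)$ as some obstruction class $\om(g)$, so that the hypothesis of (b) can be applied flexibly enough to recover the cohomological conditions in (a).
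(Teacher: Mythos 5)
The paper itself gives no proof of this statement: it is quoted from Behrend--Fantechi \cite[Th.~4.5]{BeFa}. Your overall strategy --- reduce to Illusie's universal property of $L_Y$, form the cone $C^\bu$ of $\phi$, and translate (a) into bijectivity of $\phi^*$ on $\Ext^0$ and injectivity on $\Ext^1$ --- is exactly the standard route, and your direction (a)$\Rightarrow$(b) is correct. (It can even be streamlined: once $C^\bu$ has cohomology in degrees $\le -2$, the vanishing of $\Hom(g^*C^\bu,J[i])$ for $i\le 1$ follows purely from the t-structure, since derived pullback preserves $D^{\le -2}$ and $J[i]\in D^{\ge -1}$; neither affineness of $T$ nor the local-to-global spectral sequence is needed there.)

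The gap is in the converse. You appeal to ``Illusie's fact that $\om(g)$ achieves every class in $\Ext^1(g^*L_Y,J)$ as $(\ov T,J)$ varies.'' This is not a fact for a general $g:T\ra Y$: the obstruction class is the composite of $g^*L_Y\ra L_T$ with the extension class $e(\ov T)\in\Ext^1(L_T,J)$, so the realizable classes are exactly the image of $\Ext^1(L_T,J)\ra\Ext^1(g^*L_Y,J)$. If, say, $T$ is smooth and affine, this group is zero, every square-zero extension of $T$ splits, all $\om(g)$ vanish, and condition (b) gives no information at all about $\phi^*$ on $\Ext^1$; so ``varying $(\ov T,J)$'' over such test data can never force injectivity. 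The repair is the precise choice of test morphisms: take $g$ to be the inclusion of an affine open $T\subseteq Y$ (or any \'etale affine $T\ra Y$). Then $g^*L_Y\cong L_T$, square-zero extensions of $T$ by $J$ are classified by $\Ext^1(L_T,J)=\Ext^1(g^*L_Y,J)$ with $\om(g)$ equal to the extension class, so every class is realized; (b) then gives injectivity of $\phi^*$ on $\Ext^1(g^*L_Y,J)$ for every quasi-coherent $J$ on every affine open, and together with bijectivity on $\Hom$ (from the trivial extensions) this yields $\Ext^1(g^*C^\bu,J)=\Hom\bigl(h^{-1}(C^\bu)\vert_T,J\bigr)=0$ for all such $J$, hence $h^{-1}(C^\bu)=0$, i.e.\ $h^0(\phi)$ is an isomorphism and $h^{-1}(\phi)$ an epimorphism. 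Two smaller points: for the $h^0$ step you must use that the $\Hom(g^*E^\bu,J)$-torsor structure in (b) is the one induced through $\phi^*$ (as literally stated, ``is a torsor'' is only a cardinality assertion), and skyscraper coefficients $\kappa(y)$ alone would not suffice to detect injectivity of $h^0(\phi)$ --- it is the arbitrary quasi-coherent $J$ on affine opens, which you also allow, that make the Yoneda argument close.
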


Some examples can be found in  \cite{BeFa2}: Lagrangian intersections, sheaves on Calabi--Yau $3$-folds, stable maps to Calabi--Yau $3$-folds. Next section will concentrate on Donaldson--Thomas obstruction theory as in  \cite{Thom}. \index{obstruction theory|)}

\subsubsection{Donaldson--Thomas invariants of Calabi--Yau 3-folds}
\label{dt2.2}\index{Calabi--Yau 3-fold|(}\index{Donaldson--Thomas
invariants!original $DT^\al(\tau)$|(}

{\it Donaldson--Thomas invariants\/} $DT^\al(\tau)$ were defined by
Richard Thomas \cite{Thom}, following a proposal of Donaldson and
Thomas~\cite[\S 3]{DoTh}.
They are the virtual counts of stable
sheaves on Calabi--Yau 3-folds $X.$
Starting from the formal picture in which a Calabi--Yau $n$-fold is
the complex analogue of an oriented real $n$-manifold, and a Fano with
a fixed smooth anticanonical divisor is the analogue of a manifold
with boundary, Thomas motivates a holomorphic Casson invariant counting
bundles on a Calabi--Yau 3-fold. He develops the deformation theory
necessary to obtain the virtual moduli cycles in moduli spaces of stable sheaves whose higher
obstruction groups vanish which allows to define the holomorphic Casson invariant of a
Calabi--Yau 3-fold $X$, prove it is deformation invariant, and
compute it explicitly in some examples. Thus, heuristically, the Donaldson--Thomas moduli space is the critical set
of the holomorphic Chern-Simons functional and the Donaldson--Thomas
invariant is a holomorphic analogue of the Casson invariant. 
\index{holomorphic Chern--Simons functional} \index{holomorphic Casson invariant}

\smallskip

Mathematically, Donaldson--Thomas invariants are constructed as follows.
Deformation theory gives rise to a perfect
obstruction theory \cite{BeFa} (or a tangent-obstruction complex \index{obstruction theory}
in the language of \cite{LiTi}) on the moduli space of stable sheaves
$\M_\st^\al(\tau).$ Recall that Thomas supposes $\M_\st^\al(\tau)=\M_\rss^\al(\tau),$ that is, there are no strictly semistable
sheaves\/ $E$ in class\/ $\al,$ which implies the properness of $\M_\st^\al(\tau).$
As Thomas points out in \cite{Thom}, the obstruction sheaf is
equal to $\Omega_{\M_\st^\al(\tau)}$, the sheaf of K\"ahler differentials, and hence
the tangents $T_{\M_\st^\al(\tau)}$ are dual to the obstructions. This expresses
a certain symmetry of the obstruction theory on $\M_\st^\al(\tau)$ and is a mathematical
reflection of the heuristic that views $\M_\st^\al(\tau)$ as the critical locus of
a holomorphic functional.
Associated to the perfect obstruction theory is the virtual
fundamental class\index{virtual moduli cycle}, an element of the Chow group $A_*(\M_\st^\al(\tau))$ of \index{Chow group}
algebraic cycles modulo rational equivalence on $\M_\st^\al(\tau).$  One implication \index{algebraic cycles}
of the symmetry of the obstruction theory is the fact that the virtual
fundamental class $[\M_\st^\al(\tau)]^\vir$ is of degree zero. It can hence be
integrated over the proper space of stable sheaves to an integer, the
Donaldson--Thomas invariant or `virtual count' of $\M_\st^\al(\tau)$
\e
DT^\al(\tau)\quad=\ts\displaystyle \int_{\small{[\M_\st^\al(\tau)]^\vir}}1.
\label{dt2eq1}
\e
In fact Thomas did not define invariants $DT^\al(\tau)$ counting
sheaves with fixed class $\al\in K^\num(\coh(X))$, but coarser
invariants $DT^P(\tau)$ \nomenclature[DTP]{$DT^P(\tau)$}{Donaldson--Thomas invariants counting sheaves with fixed Hilbert polynomial}
counting sheaves with fixed Hilbert
polynomial\index{Hilbert polynomial} $P(t)\in\Q[t]$. \nomenclature[PHilb]{$P(t)$}{Hilbert polynomial $\in \Q[t]$}
Thus
$$
\M_\rss^P(\tau)\;\; = \displaystyle\coprod_{\small{\al:P_\al=P}}\M_\rss^\al(\tau) \quad\leadsto\quad DT^P(\tau) \quad=  \!\!\!\!\!\!\!\!\!   \ts\displaystyle \sum_{\small{\al\in K^\num(\coh(X)):P_\al=P}}   \!\!\!\!\!\!\!\!\!  DT^\al(\tau),
$$ is the
relationship with Joyce and Song's version $DT^\al(\tau)$ reviewed in \S\ref{dt4}, where the r.h.s. has only finitely many nonzero terms in the sum.
Here, Thomas' main
result \cite[\S 3]{Thom}: \index{deformation-invariance}
\begin{thm} For each Hilbert polynomial $P(t),$ the invariant\/
$DT^P(\tau)$ is unchanged by continuous deformations of the
underlying Calabi--Yau $3$-fold~$X$ over $\K.$
\label{dt2thm1}
\end{thm}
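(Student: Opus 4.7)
My plan is to prove this by working in a family. Let $\pi:\sX\ra B$ be a smooth projective family of Calabi--Yau $3$-folds over a smooth connected $\K$-scheme $B,$ equipped with a relative polarization $\cO_{\sX/B}(1);$ since any two deformation-equivalent Calabi--Yau $3$-folds can be joined by a chain of such families, it suffices to show $DT^P(\tau)$ is constant along $B.$ By Maruyama's GIT construction in families, the relative moduli functor of Gieseker-semistable sheaves with Hilbert polynomial $P$ on the fibres is represented by a projective morphism $\pi_\M:\bcM_\rss^P(\tau)\ra B.$ Openness of stability ensures that the locus where $\M_\rss^P(\tau)=\M_\st^P(\tau)$ is Zariski-open in $B;$ after shrinking $B$ around any chosen basepoint I may assume $\bcM_\rss^P(\tau)=\bcM_\st^P(\tau),$ so $\pi_\M$ is projective with fibres the stable moduli schemes of the $X_b.$

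The next step is to construct a \emph{relative} perfect symmetric obstruction theory on $\pi_\M.$ Deformation theory of a stable sheaf $E$ on $X_b$ with fixed determinant is governed by the trace-free Ext groups: the tangent space is $\Ext^1(E,E)_0$ and the obstructions lie in $\Ext^2(E,E)_0.$ Triviality of $K_{X_b}$ together with Serre duality on the fibre yields $\Ext^2(E,E)_0\cong\bigl(\Ext^1(E,E)_0\bigr)^\vee,$ giving a relative two-term complex $\vp:E^\bu\ra \bL_{\bcM_\st^P(\tau)/B}$ which is perfect of amplitude $[-1,0]$ and symmetric in the sense of Definition \ref{dt1def1}(e), with virtual relative dimension zero; this is the relative version of the computation carried out in \cite{Thom}. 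Applying the Behrend--Fantechi machinery \cite{BeFa} then produces a relative virtual fundamental class
$$
\bigl[\bcM_\st^P(\tau)/B\bigr]^\vir\in A_0\bigl(\bcM_\st^P(\tau)/B\bigr).
$$

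Deformation invariance now reduces to the compatibility of virtual classes with base change. For each closed point $b\in B$ the inclusion $i_b:\{b\}\hookra B$ is a regular embedding (as $B$ is smooth), and the refined Gysin pullback satisfies
$$
i_b^{\,!}\bigl[\bcM_\st^P(\tau)/B\bigr]^\vir=\bigl[\M_\st^P(\tau)_b\bigr]^\vir
$$
by \cite{BeFa}. Since $\pi_\M$ is proper, pushforward sends the relative virtual class to an element of $A_0(B);$ on a smooth connected base the degree over every point of a zero-cycle class is constant, so $b\mapsto \int_{[\M_\st^P(\tau)_b]^\vir}1$ is independent of $b,$ which is precisely the claim.

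The main obstacle to watch is that properness of $\pi_\M$ depends on the hypothesis $\M_\st^P(\tau)=\M_\rss^P(\tau)$ being preserved along the deformation. Openness of stability handles this locally, but globally one may cross walls in $B$ along which strictly semistable sheaves appear, $\pi_\M$ ceases to be proper, and the integral \eqref{dt2eq1} becomes ill-defined. This is the intrinsic limitation of Thomas' original definition, and is exactly the obstruction that the generalized invariants $\bar{DT}{}^\al(\tau)$ and their extension over arbitrary $\K$ developed later in this paper are designed to overcome.
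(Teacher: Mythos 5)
Your argument is essentially the proof the paper relies on: Theorem \ref{dt2thm1} is quoted from Thomas \cite{Thom}, and his deformation-invariance argument is exactly this relative construction --- relative moduli space over the base, relative two-term (trace-free, which coincides with the full Ext's here since $H^1(\cO_X)=H^2(\cO_X)=0$) symmetric obstruction theory, the Behrend--Fantechi relative virtual class, compatibility with refined Gysin pullback, and properness under the standing hypothesis that no strictly semistable sheaves occur. The only slip is bookkeeping: the proper pushforward of the relative virtual class lies in $A_{\dim B}(B)\cong\Z\cdot[B]$ (it has relative virtual dimension zero), not $A_0(B)$, and constancy of the fibrewise degree then follows from $i_b^!\ci\pi_{\M *}=\pi_{\M,b\,*}\ci i_b^!$.
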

The same proof shows that $DT^\al(\tau)$ for $\al\in
K^\num(\coh(X))$ is deformation-invariant, {\it provided\/} it is known
that the group $K^\num(\coh(X))$ is deformation-invariant, so that
this statement makes sense. 
This issue is discussed in
\cite[\S 4.5]{JoSo}. There, it is shown that when $\K=\C$ one can describe
$K^\num(\coh(X))$ in terms of cohomology groups \index{Grothendieck group!numerical}
$H^*(X;\Z),$ $H^*(X;\Q)$, so that $K^\num(\coh(X))$ is manifestly
deformation-invariant, and therefore $DT^\al(\tau)$ is also
deformation-invariant.  \index{deformation-invariance}
Theorem \cite[Thm. 4.19]{JoSo} crucially uses the integral Hodge conjecture result by \cite{Vois} for Calabi--Yau 3-folds over $\C.$
\index{Hodge conjecture}\index{Picard scheme}
In \cite[Rmk 4.20(e)]{JoSo}, Joyce and Song propose to extend that description
over an algebraically closed base field $\K$ of
characteristic zero by replacing $H^*(X;\Q)$ by the {\it algebraic
de Rham cohomology\/}\index{algebraic de Rham cohomology} $H^*_{\rm
dR}(X)$ \nomenclature[HalgDR]{$H^*_{\rm dR}(X)$}{algebraic de Rham cohomology of a smooth projective $\K$-scheme $X$} 
of Hartshorne \cite{Hart1}. For $X$ a smooth projective
$\K$-scheme, $H^*_{\rm dR}(X)$ is a finite-dimensional vector space
over $\K$. There is a Chern character map \index{Chern character}
$\ch:K^\num(\coh(X))\hookra H^{\rm even}_{\rm dR}(X)$. In \cite[\S
4]{Hart1}, Hartshorne considers how $H^*_{\rm dR}(X_t)$ varies in
families $X_t:t\in T$, and defines a {\it Gauss--Manin connection}, which \index{Gauss--Manin connection}
makes sense of $H^*_{\rm dR}(X_t)$ being locally constant in~$t$.
In \S\ref{def}  we will use another idea to characterize the numerical Grothendieck group of a Calabi--Yau 3-fold in terms of a globally constant lattice described using the Picard scheme. \index{Picard scheme}

\smallskip

Next section will introduce the Behrend function and the work done by Behrend in \cite{Behr}, 
which has been crucial for the development of Donaldson--Thomas theory.  \index{Calabi--Yau 3-fold|)}\index{Donaldson--Thomas
invariants!original $DT^\al(\tau)$|)}

\subsection{Microlocal geometry and the Behrend function}
\label{dt3} \index{microlocal geometry|(}\index{Behrend function|(}

This section briefly explains Behrend's approach \cite{Behr} to Donaldson--Thomas invariants as 
Euler characteristics of moduli schemes weighted by the Behrend function.
It was introduced by Behrend \cite{Behr} for finite type $\C$-schemes
$X$; in \cite[\S 4.1]{JoSo} it has been generalized to Artin $\K$-stacks. Behrend functions are also defined for complex
analytic spaces $X_\an$, and the Behrend function of a $\C$-scheme
$X$ coincides with that of the underlying complex analytic
space~$X_\an$. 
The theory is also valid for $\K$-schemes acted on by a reductive linear algebraic group.
A good reference for this section, other than the original paper by Behrend \cite{Behr}, are \cite[\S4]{JoSo} and \cite{O} for the equivariant 
version.

\subsubsection{Microlocal approach to the Behrend function}
\label{dt3.1}

In \cite{Behr}, Behrend suggests a microlocal approach to the problem. The first part of the discussion 
describes how the Behrend function is defined while the second part, although not detailed and 
not directly involved in the rest of the paper, aim to give a more complete picture.

\paragraph{The definition of the Behrend function.}
\label{dt3.1.1}

Let $\K$ be an algebraically closed field of
characteristic zero, and $X$ a finite type $\K$-scheme. Suppose $X\hookra M$ is an embedding of $X$ as a closed
subscheme of a smooth $\K$-scheme $M$. Then one has a commutative diagram 
\e
\begin{gathered}
\xymatrix@C=70pt@R=20pt{ Z_*(X)  \ar[dr]_{c_0^M}   \ar[r]_\cong^\Eu & \CF_\Z(X)  \ar[d]^{c_0^{SM}} \ar[r]_\cong^{\Ch} & \cL_X(M) \ar[dl]^{0^!}\\
 & A_0(X) & }
\label{dt3eq1}
\end{gathered}
\e\index{algebraic cycles}\nomenclature[CF]{$\CF_\Z(X)$}{group of $\Z$-valued constructible functions on $X$ as in \cite{Joyc.1}}\index{constructible function}\index{local Euler obstruction}\nomenclature[Eu]{$\Eu$}{the `local Euler
obstruction', an isomorphism $Z_*(X)\ra\CF_\Z(X)$} 
where the two horizontal arrows are isomorphisms. Here $Z_*(X)$ 
denotes the group of {\it algebraic cycles\/} on $X$, as in Fulton \cite{Fult}, and $\CF_\Z(X)$ the group of $\Z$-valued constructible 
functions on $X$ in the sense of \cite{Joyc.1}.  The {\it local Euler
obstruction\/} is a group isomorphism 
$\Eu:Z_*(X)\ra\CF_\Z(X)$.
The local Euler obstruction was first
defined by MacPherson  \cite{MacP}  to solve the problem of existence of covariantly functorial Chern classes,
answering thus a Deligne--Grothendieck conjecture when $\K=\C$, using complex \index{Deligne--Grothendieck conjecture}
analysis, but Gonzalez--Sprinberg \cite{GS} provides an alternative algebraic
definition which works over any algebraically closed field $\K$ of
characteristic zero. It is the obstruction to extending 
a certain section of the tautological bundle on the {\it Nash blowup}. \index{Nash blowup}
More precisely, if $V$ is a prime cycle on $X$, the
constructible function $\Eu(V)$ is given by
\begin{equation*}
\ts\Eu(V):x\longmapsto\displaystyle \int_{\mu^{-1}(x)}c(\ti T)\cap
s(\mu^{-1}(x),\ti V),
\end{equation*}
where $\mu:\ti V\ra V$ is the Nash blowup of $V$, $\ti T$ the dual
of the universal quotient bundle, $c$ the total Chern class and $s$
the Segre class of the normal cone to a closed immersion. Kennedy
\cite[Lem. 4]{Kenn} proves that $\Eu(V)$ is
constructible.\index{constructible function} 

\smallskip

As pointed out in the next section, it is worth observing that independently, 
at about the same time, 
Kashiwara proved an {\it index theorem} over $\C$ for a 
holonomic $\mathcal{D}$-module relating its local Euler characteristic and 
the local Euler obstruction with respect to an appropriate stratification (see \cite{Gin} for details). 
It coincides with the one defined above and this is equivalent to saying that the diagram \eq{dt3eq2} below commutes. 
\index{index theorem!microlocal index theorem}\index{Euler characteristic}

\smallskip

Observe that this part of the diagram exists
without the  embedding into $M$ and is sufficient to give the definition of the Behrend function as follow.
Let $C_{X/M}$ be the {\it normal
cone\/} of $X$ in $M$, as in \cite[p.73]{Fult}, and $\pi:C_{X/M}\ra X$
the projection. As in \cite[\S 1.1]{Behr}, define a cycle
${\mathfrak C}_{X/M}\in Z_*(X)$ by
$$
{\mathfrak C}_{X/M}=\ts\displaystyle\sum_{C'}(-1)^{\dim\pi(C')}{\rm
mult}(C')\pi(C'),
$$
where the sum is over all irreducible components $C'$ of $C_{X/M}$.
It turns out that ${\mathfrak C}_{X/M}$ depends only on $X$, and not
on the embedding $X\hookra M$. Behrend \cite[Prop. 1.1]{Behr} proves
that given a finite type $\K$-scheme $X$, there exists a unique
cycle ${\mathfrak C}_X\in Z_*(X)$, such that for any \'etale map
$\vp:U\ra X$ for a $\K$-scheme $U$ and any closed embedding
$U\hookra M$ into a smooth $\K$-scheme $M$, one has
$\vp^*({\mathfrak C}_X)={\mathfrak C}_{U/M}$ in $Z_*(U)$. If $X$ is
a subscheme of a smooth $M$ one takes $U=X$ and get ${\mathfrak
C}_X={\mathfrak C}_{X/M}$. Behrend calls ${\mathfrak C}_X$ the {\it
signed support of the intrinsic normal cone}, or the {\it
distinguished cycle} of~$X$. 
For each finite type
$\K$-scheme $X$, define the {\it Behrend function} $\nu_X$ in
$\CF_{\Z}(X)$ by $\nu_X=\Eu({\mathfrak C}_X)$, as in Behrend~\cite[\S
1.2]{Behr}.\index{Behrend function!definition}\index{intrinsic normal cone!signed support}\index{distinguished cycle}

\medskip\index{conormal bundle} \index{characteristic cycle map} \nomenclature[char]{$\Ch$}{the characteristic cycle map $\Ch:\CF_{\Z}(U)\ra \cL(U)$}

For completeness, the section now describes the other side of the diagram \eq{dt3eq1}, which yields another possible way to define the Behrend function. Write $\cL_X(M)$ for the free abelian \index{Lagrangian cycle!conical Lagrangian cycle}
\nomenclature[L]{$\cL_X(M)$}{free abelian group generated by closed, 
irreducible, reduced, conical Lagrangian, $\K$-subvariety in $T^*M$ lying over cycles contained in $X$}
group generated by closed, irreducible, reduced, conical Lagrangian, $\K$-subvariety in $\Omega_M$ lying over cycles contained in $X.$ 
The isomorphism $\Ch:\CF_{\Z}(X)\ra\cL_X(M)$ maps a constructible function to
its characteristic cycle, which is a conic Lagrangian cycle on \index{characteristic cycle map}
$\Omega_M$ supported inside $X$ defined in the following way. 
Consider the commutative diagram of group isomorphisms that fits in the diagram \eq{dt3eq1}:
\e
\xymatrix@R=10pt@C=50pt{ Z_*(M) \ar[r]^\Eu \ar@/_.7pc/[rr]_L & \CF_{\Z}(M) \ar[r]^\Ch  & \cL(M).}
\label{dt6eqq1}
\e
Here $L:Z_*(M)\ra \cL(M)$ is 
defined on any prime cycle $V$ by $L:V\ra (-1)^{\dim(V)}\ell(V),$ where $\ell(V)$ is the closure of 
the conormal bundle of any nonsingular dense open subset 
of $V.$ Then $\Eu,$ $L$ are isomorphisms, 
and the {\it characteristic cycle map}
$\Ch:\CF_{\Z}(M)\ra \cL(M)\subset Z_{\dim M}(\Om_M)$ is defined to be
the unique isomorphism making (\ref{dt6eqq1}) commute. 
In the complex case Ginsburg \cite{Gin} describes the inverse of this 
map as {\it intersection multiplicity} between two conical Lagrangian cycles. 
This formula is crucial in \cite[\S 4.3]{Behr}, where Behrend gives an expression for the Behrend function 
in terms of linking numbers, which \index{linking number}
has a validity also in the case it is not known if a scheme admitting a symmetric obstruction theory can locally
be written as the critical locus of a regular function on a smooth scheme (Theorem \ref{dt6prop1}). See also \cite[Ex. 19.2.4]{Fult}.\index{characteristic cycle map!inverse}\index{intersection multiplicity}

\medskip\index{Chern-Mather class} \index{Schwartz-MacPherson Chern class}
\nomenclature[cm]{$c^M(V)$}{Mather class of an algebraic cycle $V$} \index{algebraic cycles}

The maps to $A_0(X)$ are the degree
zero  {\it Chern-Mather class}, the degree zero 
{\it Schwartz-MacPherson Chern class}, and the intersection with the zero
section, respectively. 
The Mather class is a homomorphism $c^M:Z_*(X)\ra A_*(X),$ whose definition is
a globalization of the construction of the local Euler obstruction.
One has
$c^M(V)=\mu_*\big(c(\widetilde T)\cap[\widetilde V]\big)\,,$ for a prime cycle $V$ of degree $p$ on $X$
with the same notation as above. For a the expression in terms of normal cones, see for example \cite[\S 1]{Sabb}.
Applying $c^M$ to the cycle ${\mathfrak C}_X$, one obtains the 
{\it Aluffi class} \index{Aluffi class} 
$\alpha_X=c^M({\mathfrak C}_X)\in A_*(X)$ defined in \cite{Alu}. 
\nomenclature[11]{$\al_X$}{Aluffi class of $X$}
If $X$ is smooth, its Aluffi class equals 
$\alpha_X=c(\Omega_X)\cap[X]\,.$

\medskip

Now given a symmetric obstruction theory on $X$, the {\it cone of curvilinear 
obstructions} $cv\hookrightarrow ob=\Omega_X$, pulls back to a cone in \index{cone!of curvilinear obstructions}
\nomenclature[cv]{cv}{cone of curvilinear obstructions}
$\Omega_{M_{|_{X}}}$ via the epimorphism $\Omega_{M_{|_{X}}}\ra \Omega_X.$  Via the
embedding $\Omega_{M_{|_{X}}}\hookrightarrow \Omega_M$ one obtains a conic
subscheme $C\hookrightarrow \Omega_M$, the {\it obstruction cone }for \index{cone!obstruction cone}
the embedding $X\hookrightarrow M$.  Behrend proves that the virtual fundamental class is
$[X]^\vir=0^![C]$. \nomenclature[csm]{$c^{SM}(f)$}{Schwartz-MacPherson Chern class of a constructible function $f$}
The key fact is that $C$ is {\it Lagrangian}. Because of this, there
exists a unique constructible function $\nu_X$ on $X$ such that
$\Ch(\nu_X)=[C]$ and $c_0^{SM}(\nu_X)=[X]^\vir$.  Then 
Theorem \ref{dt3thm4} below follows as an application of MacPherson's
theorem~\cite{MacP} (or equivalently from the microlocal index theorem of \index{index theorem!microlocal index theorem}
Kashiwara~\cite{KaSc}), which one can think of as a kind of generalization of
the {\it Gauss--Bonnet theorem} to singular schemes. \index{Gauss--Bonnet theorem}
See Theorem \ref{dt3thm4} below for its validity over $\K.$
The cycle ${\mathfrak C}_X$ such that $\Eu({\mathfrak C}_X)=\nu_X$ is as defined above, the (signed) support of the intrinsic
normal cone of $X$.
The Aluffi class $\alpha_X=c^M({\mathfrak C}_X)=c^{SM}(\nu_X)$ has thus the property that its degree zero
component is the virtual fundamental class of any symmetric
obstruction theory on $X.$ \index{virtual moduli cycle} \index{obstruction theory!symmetric}

\medskip\index{Zariski topology}\nomenclature[Xan]{$X_\an$}{complex analytic space $X_\an$
underlying $X$}
\index{Behrend function!algebraic} \index{Behrend function!analytic}

In the case $\K=\C$, using MacPherson's complex analytic definition
of the local Euler obstruction \cite{MacP}, the definition of
$\nu_X$ makes sense in the framework of complex analytic geometry,
and so Behrend functions can be defined for complex analytic
spaces $X_\an$.\index{complex analytic space} Thus, as in \cite[Prop. 4.2]{JoSo} one has
that if $X$ is a finite type $\K$-scheme, then the Behrend function $\nu_X$ is a well-defined\/ $\Z$-valued
constructible function on $X,$ in the Zariski topology. 
If $Y$ is a complex analytic space then the Behrend function $\nu_Y$ is a well-defined\/ $\Z$-valued locally
constructible function on $Y,$ in the analytic topology. \index{analytic topology}
Finally, if $X$ is a finite type $\C$-scheme, with
underlying complex analytic space $X_\an,$ then the algebraic
Behrend function $\nu_X$ and the analytic Behrend
function $\nu_{\smash{X_\an}}$ coincide. In particular,
$\nu_X$ depends only on the complex analytic space $X_\an$
underlying $X,$ locally in the analytic topology. 
Finally, the definition of Behrend functions is valid over $\K$-schemes, 
algebraic $\K$-spaces and Artin $\K$-stacks, locally of finite type (see \cite[Prop. 4.4]{JoSo}).
\index{Behrend function!of Artin stack}

\paragraph{Categorifying the theory.}
\label{dt3.1.2} \index{categorification|(} 

What follows will not be needed to understand the rest of the paper. We include this material both for completeness, as it underlies the theory of Behrend functions, and also 
because it is one of the main application of the whole program \cite{Joyc2,BBJ,BBDJS,BJM,BBBJ} explained in \S\ref{ourpapers}.

\medskip

For this paragraph, restrict to $\K=\C$ for simplicity. 
There exists a sophisticated modern theory of linear partial differential equations
on a smooth complex algebraic variety $X,$ sometimes called {\it microlocal analysis}, because it involves analysis
on the cotangent bundle $T^* X; $ this yields a theory which is invariant
with respect to the action of the whole group of canonical transformation 
of $T^* X$ while the usual theory is only invariant
under the subgroup induced by 
diffeomorphism of $X.$ It is sometimes called $\mathcal{D}$-{\it module theory,} 
\index{sheaf of rings of holomorphic linear partial differential operators of finite order} 
because it involves sheaves of modules
$\mathcal{M}$ over the sheaf of rings of holomorphic linear partial differential 
operators of finite order $\mathcal{D}=\mathcal{D}_X;$
\nomenclature[D]{$\mathcal{D}_X$}{sheaf of rings of holomorphic linear partial differential operators of finite order}
these rings are noncommutative, left and right Noetherian, and have finite global homological dimension.    
It is also sometimes called {\it algebraic analysis} because it involves such algebraic constructions
as $\Ext^i_\mathcal{D}(\mathcal{M},\mathcal{N}).$ The theory as it is known today 
grew out of the work done in the 1960s by the school of 
Mikio Sato in Japan. During the 1970's, one of the central themes 
in $\mathcal{D}$-module theory was David Hilbert's twenty-first problem, 
now called the \index{Riemann-Hilbert problem}
{\it Riemann-Hilbert problem.} A generalization of it may be stated as the 
problem to solve the {\it Riemann-Hilbert correspondence,} which, roughly speaking, 
\index{Riemann-Hilbert correspondence}
describes the nature of the correspondence between a system 
of differential equations and its solutions. A comprehensive reference is the 
book of Kashiwara and Shapira \cite{KaSc}, while an interesting eclectic vision 
on the subject is provided by Ginsburg  \cite{Gin}.  
One has the following commutative diagram:  
\e
\begin{gathered}
\xymatrix@C=60pt@R=25pt{ \textrm{(perverse) constructible sheaves}  \ar[d]_{\chi}    &  \ar[l]_\sim^{DR} \textrm{(regular) holonomic modules} \ar[d]^{SS}\\
\textrm{constructible functions}   \ar[r]_\sim^{\Ch}  &  \textrm{Lagrangian cycles in }T^* X.}
\label{dt3eq2}
\end{gathered}
\e
\index{constructible sheaf!perverse} \index{holonomic modules!regular} \index{characteristic cycle map}
\nomenclature[SS]{$SS$}{characteristic cycle map} \index{characteristic cycle} \index{characteristic variety}
Recall that here $SS$ denotes the {\it characteristic cycle map} which to a $\mathcal{D}$-module $\mathcal{M}$  
associates its  {\it characteristic cycle.} It is the formal linear combination
of irreducible components of the {\it characteristic variety} 
(the support of the graded sheaf gr$\mathcal{M}$ associated to $\mathcal{M}$)
counted with their multiplicities. It looks like $$SS(\mathcal{M})=\sum m_\al(\mathcal{M})\cdot \overline{T^*_{X_\al}X}$$ 
for a stratification $\{X_\al \}$ of $X,$ where $m_\al(\mathcal{M})$ are 
positive integers and $ \overline{T^*_{X_\al}X}$ is the 
closure of the conormal bundle $T^*_{X_\al}X.$
Each component of the characteristic variety has dimension at least $\dim(X).$ A  $\mathcal{D}$-module $\mathcal{M}$  is called 
{\it holonomic} if its characteristic variety is pure of dimension $\dim(X).$ To have also {\it regular singularities} 
means, very roughly speaking, that the system is determined by its principal symbol. 

\smallskip

So, to a holonomic system it has been associated an object of microlocal nature, the characteristic cycle. 
On the other side, the Riemann-Hilbert correspondence associates to an holonomic system $\mathcal{M}$ 
its {\it De Rham complex,} \index{De Rham complex} 
\nomenclature[DRM]{$\textrm{DR}\mathcal{M}$}{De Rham complex of an holonomic system}
$$
\xymatrix@C=40pt@R=10pt{\textrm{DR}(\mathcal{M}): \; 0 \ar[r] & \Om^0(\mathcal{M}) \ar[r]^{d\;} &  \Om^1(\mathcal{M}) \ar[r]^{\;d} & \ldots \ar[r]^{d\;\;\;\;\;\;\;\;} & \Om^{\dim(X)}(\mathcal{M}) \ar[r]^{\;\;\;\;\;\;\; d} & 0,}
$$
where $ \Om^p(\mathcal{M})$ is the sheaf of $\mathcal{M}$-valued $p$-forms on $X$ and $d$ is the differential defined by Cartan formula. 
As an object in the derived category it can be expressed as $\textrm{DR}(\mathcal{M})= \textrm{R}\SHom_{\mathcal{D}_X}(\cO_X,\mathcal{M})[\dim(X)].$
If $\mathcal{M}$ is holonomic, $\textrm{DR}(\mathcal{M})$ is constructible and determines $\mathcal{M}$ provided
that the latter has regular singularities. Recall the following definition (see also \cite[\S 4]{JoSo}):

\begin{dfn} Let $X$ be a complex analytic space. Consider sheaves of
$\Q$-modules $\cal C$ on $X$. Note that these are {\it not\/}
coherent sheaves, which are sheaves of $\cO_X$-modules. A sheaf
$\cal C$ is called {\it
constructible\/}\index{sheaf!constructible}\index{constructible
sheaf} if there is a locally finite stratification $X=\bigcup_{j\in
J}X_j$ of $X$ in the complex analytic topology, such that ${\cal 
C}\vert_{X_j}$ is a $\Q$-local system for all $j\in J$, and all the \index{analytic topology}
stalks ${\cal C}_x$ for $x\in X$ are finite-dimensional $\Q$-vector
spaces. A complex ${\cal C}^\bu$ of sheaves of $\Q$-modules on $X$
is called {\it constructible\/} if all its cohomology sheaves
$H^i({\cal C}^\bu)$ for $i\in\Z$ are constructible. \index{constructible complex}
Write $D^b_\Con(X)$\nomenclature[DbCon(X)]{$D^b_\Con(X)$}{bounded
derived category of constructible complexes on $X$} for the bounded
derived category of constructible complexes on $X$. It is a
triangulated category. By \cite[Thm. 4.1.5]{Dimc}, $D^b_\Con(X)$ is
closed under Grothendieck's ``six operations on
sheaves''\index{sheaf!Grothendieck's six operations}
$R\vp_*,R\vp_!,\vp^*,\vp^!,{\cal
RH}om,\smash{\mathop{\otimes}\limits^{\sst L}}$. The {\it perverse
sheaves\/} on $X$ are a particular abelian subcategory
$\Per(X)$\nomenclature[Per(X)]{$\Per(X)$}{abelian category of
perverse sheaves on $X$} in $D^b_\Con(X)$, which is the heart of a
t-structure on $D^b_\Con(X)$. So perverse sheaves are actually
complexes of sheaves, not sheaves, on $X$. The category $\Per(X)$ is
noetherian\index{noetherian}\index{abelian category!noetherian} and
locally artinian, and is artinian\index{artinian}\index{abelian
category!artinian} if $X$ is of finite type, so every perverse sheaf  \index{perverse sheaf}
has (locally) a unique filtration whose quotients are {\it simple} \index{perverse sheaf!simple}
perverse sheaves; and the simple perverse sheaves can be described
completely in terms of irreducible local systems on irreducible
subvarieties in~$X$.
\label{dt3def1}
\end{dfn}

Now, given a constructible sheaf ${\cal C}^\bu$ there is associated a constructible function on $X$: 
define a map \nomenclature[1wchX]{$\chi_X$}{constructible function on $X$ associated to a constructible sheaf}
$\chi_X: \Obj(D^b_\Con(X))\ra\CF_\Z^\an(X)$ by taking Euler
characteristics of the cohomology of stalks of complexes, given by
\begin{equation*}
\chi_X({\cal C}^\bu):x\longmapsto \ts\displaystyle \sum_{k\in\Z}(-1)^k\dim{\cal
H}^k({\cal C}^\bu)_x.
\end{equation*}
Since distinguished triangles in $D^b_\Con(X)$ give long exact
sequences on cohomology of stalks ${\cal H}^k(-)_x$, this $\chi_X$
is additive over distinguished triangles, and so descends to a group
morphism $\chi_X:K_0(D^b_\Con(X))\ra \CF_\Z^\an(X)$.
These maps $\chi_X:\Obj(D^b_\Con(X))\ra\CF_\Z^\an(X)$ and $\chi_X:
K_0(D^b_\Con(X))\ra \CF_\Z^\an(X)$ are surjective, since
$\CF_\Z^\an(X)$ is spanned by the characteristic functions of closed
analytic cycles $Y$ in $X$, and each such $Y$ lifts to a perverse
sheaf in $D^b_\Con(X)$. In category-theoretic terms, $X\mapsto
D^b_\Con(X)$ is a functor $D^b_\Con$ from complex analytic spaces to
triangulated categories, and $X\mapsto \CF_\Z^\an(X)$ is a functor
$\CF_\Z^\an$ from complex analytic spaces to abelian groups, and
$X\mapsto\chi_X$ is a natural transformation $\chi$ from $D^b_\Con$ \index{natural transformation}
to~$\CF_\Z^\an$.

\smallskip

For a holonomic $\mathcal{D}$-module
$\mathcal{M}$ one sets $\chi(x,\mathcal{M})=\chi(x,\textrm{DR}(\mathcal{M})).$  
Thus, if $\mathcal{M}$ is a regular holonomic $\mathcal{D}$-module on $X\subset M,$ with $M$ smooth, 
whose characteristic cycle is $[C_{X/M}]$, then
$$\nu_X(P)=\sum_i (-1)^i \dim_\C H^i_{\{P\}}(X,\mathcal{M}_{DR})\,,$$ for any point $P\in M$.  Here $H^i_{\{P\}}$ denotes cohomology with
supports in the subscheme $\{P\}\hookrightarrow M$ and
$\mathcal{M}_{DR}$ denotes the perverse sheaf associated to
\nomenclature[MDR]{$\mathcal{M}_{DR}$}{perverse sheaf associated to a 
regular holonomic $\mathcal{D}$-module $\mathcal{M}$ via the Riemann-Hilbert correspondence}
$\mathcal{M}$ via the Riemann-Hilbert correspondence, as incarnated,
for example, by the De~Rham complex $\textrm{DR}(\mathcal{M}).$ 
At the moment, Kai Behrend is attempting to give explicit constructions in some cases (see \cite{GeBaViLa}).

\smallskip

In the case $X$ is the critical scheme of a regular function $f$ on a smooth scheme $M,$ Behrend 
\cite{Behr} gives the following expression for the Behrend function due to Parusi\'nski and Pragacz \cite{PaPr}.
This formula has been crucial in  \cite{JoSo}. For the definition of the {\it Milnor fibres\/} 
\index{Milnor fibre} for holomorphic functions on complex \index{vanishing cycle}
analytic spaces and the a review on  {\it vanishing cycles\/} a survey paper on the subject is Massey \cite{Mass}, and
three books are Kashiwara and Schapira \cite{KaSc}, Dimca
\cite{Dimc}, and Sch\"urmann \cite{Schur}. Over the field $\C$,
Saito's theory of {\it mixed Hodge modules\/}\index{mixed Hodge module}
\cite{Sait} provides a generalization of the theory of perverse
sheaves with more structure, which may also be a context in which to
generalize Donaldson--Thomas theory. 

\begin{thm} Let\/ $U$ be a complex manifold of dimension $n,$ and\/ $f:U\ra\C$ a
holomorphic function, and define $X$ to be the complex analytic
space $\Crit(f)$ contained in
$U_0=f^{-1}(\{0\}).$ Then the
Behrend function $\nu_X$ of\/ $X$ is given by
\e
\nu_X(x)=(-1)^{\dim U}\bigl(1-\chi(MF_f(x))\bigr) \qquad\text{for
$x\in X$.}
\label{dt3eq3}
\e
Moreover, 
the perverse sheaf of vanishing
cycles\index{vanishing cycle!perverse sheaf}\index{perverse sheaf!of
vanishing cycles} $\phi_f(\underline{\Q}[n-1])$ on $U_0$ is
supported on $X,$ and
\nomenclature[1vphif]{$\phi_f$}{vanishing cycle functor on derived category of constructible sheaves}
\nomenclature[MF]{$MF_f(x)$}{Milnor fibre of a holomorphic function $f$ at point $x$}
\e
\chi_{U_0}\bigl(\phi_f(\underline{\Q}[n-1])\bigr)(x)=\begin{cases}
\nu_X(x), & x\in X, \\ 0, & x\in U_0\sm X, \end{cases}
\label{dt3eq4}
\e
where $\nu_X$ is the Behrend function of the complex analytic
space~$X$.
\label{dt3thm1}
\end{thm}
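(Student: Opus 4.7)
The plan is to prove both assertions essentially in parallel, by reducing the Behrend function computation to a stalk-level Euler characteristic computation for the perverse sheaf of vanishing cycles, and to identify characteristic cycles on both sides. The upshot is that \eq{dt3eq4} is the ``categorification'' of \eq{dt3eq3}, and once \eq{dt3eq4} is in place, \eq{dt3eq3} follows purely by the classical Milnor fibre stalk description.

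First, recall the classical identification of the stalks of $\phi_f(\underline{\Q}[n-1])$ with the reduced cohomology of the Milnor fibre (see e.g.\ Dimca \cite{Dimc} or Kashiwara--Schapira \cite{KaSc}): for every $x\in U_0$ there is a natural isomorphism
\e
{\cal H}^i\bigl(\phi_f(\underline{\Q}[n-1])\bigr)_x\cong \widetilde H{}^{i+n-1}\bigl(MF_f(x);\Q\bigr).
\label{stalkisom}
\e
Taking alternating sums over $i$ gives $\chi_{U_0}(\phi_f(\underline{\Q}[n-1]))(x)=(-1)^{n-1}\widetilde\chi(MF_f(x))=(-1)^n(1-\chi(MF_f(x)))$. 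If $x\in U_0\setminus X$, then $f$ is a submersion at $x$, so $MF_f(x)$ is contractible and $\chi(MF_f(x))=1$; hence the stalk vanishes and $\phi_f(\underline{\Q}[n-1])$ is supported on $X=\Crit(f)$. This establishes the support statement in \eq{dt3eq4} and reduces both assertions to showing that $\chi_{U_0}(\phi_f(\underline{\Q}[n-1]))(x)=\nu_X(x)$ for $x\in X$.

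Second, to prove this equality of constructible functions I would exploit the characteristic cycle map $\Ch$ of \eq{dt6eqq1}, which is injective. On one hand, $X=\Crit(f)$ is the scheme-theoretic intersection of the graph $\Gamma_{\rd f}\subset T^*U$ of the $1$-form $\rd f$ with the zero section, both of which are Lagrangians; this is precisely the situation in which the intrinsic normal cone has the distinguished Lagrangian cycle description recalled in \S\ref{dt3.1.1}, so that $\Ch(\nu_X)=\mathrm{SS}\bigl(\phi_f(\underline{\Q}[n-1])\bigr)$ up to a sign fixed by the normalization $(-1)^n(1-\chi)$. On the other hand, the classical computation of the characteristic cycle of the vanishing cycles perverse sheaf, carried out by Parusi\'nski--Pragacz \cite{PaPr} (and implicit in Ginsburg \cite{Gin} and Sch\"urmann \cite{Schur}), yields exactly this Lagrangian cycle. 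Combining these two identifications, the commutativity of \eq{dt3eq1}--\eq{dt6eqq1} forces
\begin{equation*}
\chi_{U_0}\bigl(\phi_f(\underline{\Q}[n-1])\bigr)\bigr|_X=\nu_X\qquad\text{in }\CF_\Z(X),
\end{equation*}
completing \eq{dt3eq4} and, via the stalk computation above, giving \eq{dt3eq3}.

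The main obstacle is the second step: the identification of the characteristic cycle of $\phi_f(\underline{\Q}[n-1])$ with the Lagrangian cycle intrinsic to $X=\Crit(f)$. This is not obvious from the definitions and relies on a nontrivial microlocal computation, which can be done either by reducing to nondegenerate (Morse) critical points where $MF_f$ is a sphere and then deforming, or by invoking the Dubson--Kashiwara index theorem. Once one grants this Lagrangian identification, the rest of the argument is purely formal and consists in chasing the two commutative diagrams \eq{dt3eq1} and \eq{dt6eqq1}. A useful sanity check is the nondegenerate case: if $x$ is an isolated nondegenerate critical point of $f$, then $MF_f(x)\simeq S^{n-1}$, so $\chi(MF_f(x))=1+(-1)^{n-1}$ and the right side of \eq{dt3eq3} reads $(-1)^n(-(-1)^{n-1})=1$, matching $\nu_X(x)=1$ for the reduced point $X=\{x\}$.
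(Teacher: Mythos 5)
The paper itself gives no proof of this statement: it is recalled verbatim as background from Behrend \cite{Behr}, who in turn attributes the key formula to Parusi\'nski and Pragacz \cite{PaPr}, so there is nothing internal to compare against except that citation. Measured against the route actually taken in the cited literature, your outline is correct and essentially the same: the stalk identification ${\cal H}^i(\phi_f(\underline{\Q}[n-1]))_x\cong\widetilde H^{i+n-1}(MF_f(x);\Q)$, the resulting Euler-characteristic computation $(-1)^n(1-\chi(MF_f(x)))$, and the support statement via contractibility of the Milnor fibre at non-critical points of $U_0$ are all standard and correctly executed, and the reduction of \eq{dt3eq3}--\eq{dt3eq4} to a single identity of conic Lagrangian cycles via injectivity of $\Ch$ and the Dubson--Kashiwara index theorem (commutativity of \eq{dt3eq2}) is valid. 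Be aware, though, that the entire nontrivial content sits in your second step, the on-the-nose equality of the characteristic cycle of $\phi_f(\underline{\Q}[n-1])$ with $\Ch(\nu_X)=L({\mathfrak C}_X)$ for $X=\Crit(f)$: this is precisely the Parusi\'nski--Pragacz/Ginsburg computation, which you cite rather than prove, and your phrase ``up to a sign fixed by the normalization'' is too loose for the $\Ch$-injectivity argument, which needs exact equality of cycles — the signs $(-1)^{\dim\pi(C')}$ in ${\mathfrak C}_{X/U}$ and $(-1)^{\dim V}$ in $L$ are exactly what make this exact, and your proposed alternative of reducing to Morse points ``and then deforming'' would not by itself determine the characteristic cycle. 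Since the paper also outsources this step, your treatment is on the same footing as the paper's; your Morse-point sanity check is correct.
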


Thus, if $X$ is the Donaldson--Thomas moduli space of stable sheaves,
one can, heuristically, think of $\nu_X$ as the {\it Euler characteristic
of the perverse sheaf of vanishing cycles of the holomorphic
Chern-Simons functional.}

\medskip

In \cite[Question 4.18, 5.7]{JoSo}, Joyce and Song ask the following question. 
\begin{quest}{\bf(a)} Let\/ $X$ be a Calabi--Yau\/ $3$-fold over\/
$\C,$ and write\/ $\M_\rsi$ 
\nomenclature[M al si]{$\M_\rsi$}{coarse moduli space of simple coherent sheaves on $X$}
\index{coherent sheaf!simple}
for the coarse moduli space of simple
coherent sheaves on\/ $X$. Does there exist a natural perverse \index{perverse sheaf}
sheaf\/ $\cal P$ on $\M_\rsi,$ with\/ $\chi_{\M_\rsi}({\cal
P})=\nu_{\M_\rsi},$ which is locally isomorphic to
$\phi_f(\underline{\Q}[\dim U-1])$ for $f,U$ as in  \cite[Thm. 5.4]{JoSo}?
\smallskip

\noindent{\bf(b)} Is there also some Artin stack version of\/ $\cal
P$ in\/ {\bf(a)} for the moduli stack\/ $\fM,$ locally isomorphic to 
$\phi_f(\underline{\Q}[\dim U-1])$ for $f,U$ as in Theorem
{\rm\ref{dt5thm1}} below?

\noindent{\bf(c)} Let $M$ be a complex manifold, $\om$ an almost closed holomorphic $(1,0)$-form on $M$ as defined below, and 
$X = \om^{-1}(0)$ as a complex analytic subspace of $M.$
Can one define a natural perverse sheaf $\cal P$ supported on $X$, with $\chi_X(\cal P)$ $= \nu_X$ , such that 
$\cal P\cong$ $\phi_f(\underline{\Q}[\dim U-1])$ when $\om=\rd f$ for $f:M\ra \C$ holomorphic? Are there generalizations to the algebraic setting?

\label{dt5quest1}
\end{quest}

One can also ask Question \ref{dt5quest1} for Saito's mixed
Hodge modules~\cite{Sait}.\index{mixed Hodge module}
If the answer to Question \ref{dt5quest1}(a) is
yes, it would provide a way of {\it categorifying\/} (conventional)
Donaldson--Thomas invariants $DT^\al(\tau)$. 
That is
${\mathbb H}^*\bigl(\M_\st^\al(\tau);{\cal
P}\vert_{\M_\st^\al(\tau)}\bigr)$ would be a natural cohomology \index{perverse sheaf!hypercohomology}
\nomenclature[HPM]{${\mathbb H}^*\bigl(\cal M; \cal P\bigr)$}{hypercohomology of a perverse sheaf $\cal P$ on a scheme $\cal M$}
group of the stable moduli scheme $\M_\st^\al(\tau)$ whose Euler characteristic is the
Donaldson--Thomas invariant. 
This question is also crucial for the programme of Kontsevich--Soibelman \cite{KoSo1} to extend
Donaldson--Thomas invariants of Calabi--Yau 3-folds to other motivic
invariants.\index{motivic invariant} 
as discussed in \cite[Remark 5.8]{JoSo}.\index{categorification|)} 
We will explain in \S\ref{ourpapers} how this question has been resolved.

\subsubsection{The Behrend function and its characterization}
\label{dt3.2}

Here we will point out some important remarks and properties of the Behrend function. 

\paragraph{Behrend function as a multiplicity function in the weighted Euler characteristic.}
\label{dt3.2.1} \index{Behrend function!multiplicity function|(} \index{Euler characteristic} 
It is worth to report here \cite[ \S 1.2]{JoSo} which provides a good way to think of Behrend functions as {\it multiplicity
functions}. If $X$ is a finite type $\C$-scheme then the Euler
characteristic $\chi(X)$ `counts' points without multiplicity, so
that each point of $X(\C)$ contributes 1 to $\chi(X)$. 
\nomenclature[xred]{$X^{\rm red}$}{underlying reduced $\C$-scheme of the $\C$-scheme $X$}
If $X^{\rm red}$ is the underlying reduced $\C$-scheme then $X^{\rm
red}(\C)=X(\C)$, so $\chi(X^{\rm red})=\chi(X)$, and $\chi(X)$ does
not see non-reduced behaviour in $X$. However, the {\it weighted Euler
characteristic} \index{Euler
characteristic!weighted} $\chi(X,\nu_X)$ `counts' each $x\in X(\C)$ weighted
by its multiplicity $\nu_X(x)$. The Behrend function $\nu_X$ detects
non-reduced behaviour, so in general $\chi(X,\nu_X)\ne\chi(X^{\rm
red},\nu_{X^{\rm red}})$. For example, let $X$ be the $k$-fold point
$\Spec\bigl(\C[z]/(z^k)\bigr)$ for $k\ge 1$. Then $X(\C)$ is a
single point $x$ with $\nu_X(x)=k$, so $\chi(X)=1=\chi(X^{\rm
red},\nu_{X^{\rm red}})$, but~$\chi(X,\nu_X)=k$.

\smallskip

An important moral of \cite{Behr} is that (at least in moduli
problems with symmetric obstruction theories, such as
Donaldson--Thomas theory) it is better to `count' points in a moduli
scheme $\M$ by the weighted Euler characteristic $\chi(\M,\nu_\M)$
than by the unweighted Euler characteristic $\chi(\M)$. One reason
is that $\chi(\M,\nu_\M)$ often gives answers unchanged under
deformations of the underlying geometry, but $\chi(\M)$ does not.
For example, consider the family of $\C$-schemes
$X_t=\Spec\bigl(\C[z]/(z^2-t^2)\bigr)$ for $t\in\C$. Then $X_t$ is
two reduced points $\pm t$ for $t\ne 0$, and a double point when
$t=0$. So as above we find that $\chi(X_t,\nu_{X_t})=2$ for all $t$,
which is deformation-invariant, but $\chi(X_t)$ is 2 for $t\ne 0$
and 1 for $t=0$, which is not deformation-invariant. \index{deformation-invariance}
\index{Behrend function!multiplicity function|)}

\paragraph{Properties of the Behrend function.}
\label{dt3.2.2}

Here are some important properties of Behrend functions. They are
proved by Behrend \cite[\S 1.2 \& Prop. 1.5]{Behr} when $\K=\C$, but
his proof is valid for general~$\K$.
\begin{thm} Let $X,Y$ be Artin $\K$-stacks locally of finite type. Then:
\begin{itemize}
\setlength{\itemsep}{0pt}
\setlength{\parsep}{0pt}
\item[{\rm(i)}] If\/ $X$ is smooth of dimension\/ $n$
then~$\nu_X\equiv(-1)^n$.
\item[{\rm(ii)}] If\/ $\vp:X\!\ra\! Y$ is smooth with
relative dimension $n$ then\/~$\nu_X\!\equiv\!(-1)^n\vp^*(\nu_Y)$.
\item[{\rm(iii)}] $\nu_{X\times Y}\equiv\nu_X\boxdot\nu_Y,$
where $(\nu_X\boxdot\nu_Y)(x,y)=\nu_X(x)\nu_Y(y)$.
\end{itemize}
\label{dt3thm3}
\end{thm}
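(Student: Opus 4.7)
The plan is to prove (i) directly from the definition $\nu_X=\Eu(\mathfrak{C}_X)$, then derive (iii) from the compatibility of the normal cone with Cartesian products together with the multiplicativity of the local Euler obstruction, and finally obtain (ii) from (i) and (iii) via the étale-local structure of smooth morphisms. The extension from schemes to Artin $\K$-stacks locally of finite type will be handled at the end by choosing smooth atlases and invoking the definition of the stacky Behrend function from \cite[\S 4.1]{JoSo}, in which (ii) is in any case essentially built into the characterization.

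For (i), take the identity embedding $X\hookrightarrow X$ into the smooth $\K$-scheme $X$. The normal cone $C_{X/X}$ is the zero section of the trivial rank-$0$ bundle, hence equals $X$ as a scheme with a single irreducible component mapping identically onto $X$, of dimension $n$ and multiplicity $1$. Therefore $\mathfrak{C}_X=(-1)^n[X]$ in $Z_*(X)$. Because $X$ is smooth, the local Euler obstruction of the prime cycle $[X]$ equals $1$ at every point (for instance, the Nash blowup of a smooth variety is the variety itself), so $\nu_X=\Eu(\mathfrak{C}_X)\equiv(-1)^n$.

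For (iii), choose étale-local closed embeddings $X\hookrightarrow M$ and $Y\hookrightarrow N$ with $M,N$ smooth; then $X\times Y\hookrightarrow M\times N$ is an embedding into a smooth $\K$-scheme and one has the product formula $C_{X\times Y/M\times N}\cong C_{X/M}\times_{\K} C_{Y/N}$. Every irreducible component of the product cone is the product $C'\times C''$ of irreducible components of the factors, with $\mathrm{mult}(C'\times C'')=\mathrm{mult}(C')\cdot\mathrm{mult}(C'')$ and $\dim\pi(C'\times C'')=\dim\pi_X(C')+\dim\pi_Y(C'')$, so signs and multiplicities assemble correctly to give $\mathfrak{C}_{X\times Y}=\mathfrak{C}_X\boxtimes\mathfrak{C}_Y$ in $Z_*(X\times Y)$. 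Combined with the multiplicativity of the local Euler obstruction under products, one obtains
\[
\nu_{X\times Y}=\Eu(\mathfrak{C}_X\boxtimes\mathfrak{C}_Y)=\Eu(\mathfrak{C}_X)\boxdot\Eu(\mathfrak{C}_Y)=\nu_X\boxdot\nu_Y.
\]
For (ii), any smooth morphism $\varphi:X\to Y$ of relative dimension $n$ is étale-locally isomorphic to a projection $Y\times\mathbb{A}^n\to Y$, and the construction of $\mathfrak{C}_X$ (hence of $\nu_X$) is étale-local by Behrend's uniqueness statement \cite[Prop.\,1.1]{Behr}. It therefore suffices to treat $\varphi=\mathrm{pr}_Y:Y\times\mathbb{A}^n\to Y$, where (iii) applied to $Y,\mathbb{A}^n$ and (i) applied to $\mathbb{A}^n$ give
\[
\nu_{Y\times\mathbb{A}^n}(y,a)=\nu_Y(y)\cdot\nu_{\mathbb{A}^n}(a)=(-1)^n\,\nu_Y(y)=(-1)^n(\varphi^*\nu_Y)(y,a).
\]
The stacky case follows by choosing smooth atlases $U\to X$ and $V\to Y$, pulling $\varphi$ back to a smooth morphism of schemes, and applying the scheme-level statement together with the pullback compatibility used to define $\nu$ on Artin stacks.

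The main obstacle is the multiplicativity $\Eu_{V_1\times V_2}=\Eu_{V_1}\boxdot\Eu_{V_2}$ of the local Euler obstruction under products. Over $\C$ this goes back to MacPherson \cite{MacP} via his analytic integral definition, but over a general algebraically closed field of characteristic zero one must use the algebraic definition of Gonzalez--Sprinberg \cite{GS} and verify that the Nash blowup, the dual of the universal quotient bundle, and the Segre class of the normal cone to the fibre all commute with Cartesian products. Once these functoriality statements are in place, the remaining content of the theorem reduces to straightforward bookkeeping about irreducible components, multiplicities, and signs of dimensions.
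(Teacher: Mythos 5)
Your proposal is correct and follows essentially the same route as the proof the paper relies on: the paper gives no argument of its own here, but cites Behrend \cite[\S 1.2 \& Prop.~1.5]{Behr} (whose proof works over general $\K$) together with Joyce--Song's definition of $\nu$ on Artin stacks, and your computation of ${\mathfrak C}_X=(-1)^n[X]$ for smooth $X$, the product formula for normal cones combined with multiplicativity of $\Eu$, and the \'etale-local reduction of a smooth morphism to a projection reproduce exactly that argument, with the stack case handled through the atlas characterization just as in \cite[\S 4.1]{JoSo}. The only small points to tidy are that a smooth morphism is \'etale-locally an \'etale map to $Y\t\bA^n$ followed by the projection rather than literally a projection (harmless, since ${\mathfrak C}_X$ and $\Eu$ are \'etale-local), and that the multiplicativity of the local Euler obstruction over general $\K$ of characteristic zero is already available via the algebraic treatments of Gonzalez--Sprinberg and Kennedy \cite{GS,Kenn}, so the ``obstacle'' you flag is a known result rather than a gap.
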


Let us recall \cite[Thm 4.11]{JoSo}. It is stated using the Milnor fibre, but its proof works
algebraically over $\K$.

\begin{thm} Let\/ $U$ be a smooth $\K$-variety, $f:U\ra \bA^1_{\K}$ a regular function over $U,$ and
$V$ a smooth $\K$-subvariety of $U,$
and\/ $v\in V\cap\Crit(f)$. Define $\ti U$ to be the
blowup of\/ $U$ along\/ $V,$ with blowup map $\pi:\ti U\ra U,$ and
set\/ $\ti f=f\ci\pi:\ti U\ra \bA^1_\K$. Then $\pi^{-1}(v)={\mathbb
P}(T_vU/T_vV)$ is contained in $\Crit(\ti f),$ and
$$
\nu_{\Crit(f)}(v)\quad =\,\displaystyle \int\limits_{w\in {\mathbb
P}(T_vU/T_vV)}\nu_{\Crit(\ti f)}(w)\,\rd\chi\quad+\quad (-1)^{\dim U-\dim V}\bigl(1-\dim U+\dim V\bigr)\nu_{\Crit(f\vert_V)}(v),
$$
where $w\mapsto\nu_{\Crit(f)}(w)$ is a constructible
function\index{constructible function} on ${\mathbb P}(T_vU/T_vV),$ and
the integral is the Euler characteristic of\/
${\mathbb P}(T_vU/T_vV)$ weighted by this.
\label{blowup}
\end{thm}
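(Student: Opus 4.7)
The plan has two parts: establishing the set-theoretic inclusion, then proving the numerical identity.

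For the inclusion $\pi^{-1}(v)\subset\Crit(\ti f)$, set $n=\dim U$ and $k=\dim V$. Working \'etale-locally, I would pick coordinates $(x_1,\ldots,x_k,y_1,\ldots,y_{n-k})$ on $U$ centred at $v$ in which $V=\{y_1=\cdots=y_{n-k}=0\}$. On each standard affine chart of the blowup (say $y_j=t$, $y_i=tz_i$ for $i\ne j$), every partial derivative of $\ti f$ in the new coordinates either contains an explicit factor of $t$ (when differentiating in a $z_i$-direction) or, at $t=0,\,x=0$, reduces to a $\K$-linear combination of first partials of $f$ at $v$. Since $v\in\Crit(f)$, these partials all vanish, and hence $d\ti f$ vanishes along $\pi^{-1}(v)\subset\{t=0,\,x=0\}$.

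For the numerical identity, both sides are \'etale-local invariants of the germ $(U,V,f,v)$ and compatible with smooth base change by Theorem \ref{dt3thm3}, so the identity can be verified in a formal-local model. I would use the algebraic Parusi\'nski--Pragacz description of the Behrend function of a critical locus, which over an algebraically closed field $\K$ of characteristic zero is encoded by the Denef--Loeser motivic Milnor fibre: for a regular $g:W\to\bA^1_\K$ on smooth $W$ and $w\in\Crit(g)$,
\begin{equation*}
\nu_{\Crit(g)}(w)=(-1)^{\dim W}\bigl(1-\chi(\cS_{g,w}^{\mathrm{mot}})\bigr),
\end{equation*}
with $\chi$ denoting the Euler characteristic realization. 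This is the algebraic substitute for \eqref{dt3eq3}, available in our generality thanks to the work of Denef--Loeser and Le Quy Thuong \cite{Thuong}.

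Next I would apply the Denef--Loeser motivic change of variables formula to the blowup $\pi:\ti U\to U$ with smooth centre $V$ and relative Jacobian ideal of order $n-k-1$ along the exceptional divisor. It expresses $\cS_{f,v}^{\mathrm{mot}}$ as a fibre integral of $\cS_{\ti f,w}^{\mathrm{mot}}$ over $w\in\pi^{-1}(v)\cong\mathbb{P}^{n-k-1}$, together with a residual contribution attributable to the transverse directions of $f$ along $V$, accounted for by $\cS_{f|_V,v}^{\mathrm{mot}}$. Taking Euler characteristic realizations term by term, applying the Parusi\'nski--Pragacz formula to each, and using $\chi(\mathbb{P}^{n-k-1})=n-k$, one obtains an identity of the form
\begin{equation*}
\nu_{\Crit(f)}(v)=\int_{\mathbb{P}(T_vU/T_vV)}\nu_{\Crit(\ti f)}(w)\,\rd\chi+C\cdot\nu_{\Crit(f|_V)}(v).
\end{equation*}
The main obstacle is then identifying the constant $C$ with $(-1)^{n-k}(1-n+k)$, which requires a careful accounting of the three normalization signs $(-1)^n,(-1)^n,(-1)^k$ from the Parusi\'nski--Pragacz formula, the constants $1$ appearing in each instance, and the value $\chi(\mathbb{P}^{n-k-1})=n-k$. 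As a sanity check, specialising to $f\equiv 0$ gives $\nu_{\Crit(f)}(v)=(-1)^n$ on the left and $(n-k)(-1)^n+(-1)^{n-k}(1-n+k)(-1)^k=(-1)^n$ on the right, confirming the predicted coefficient.
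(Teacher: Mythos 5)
Your set-theoretic inclusion argument is fine, and so is the overall plan of reducing the identity to a statement about (motivic) Milnor fibres via $\nu_{\Crit(g)}(w)=(-1)^{\dim W}\bigl(1-\chi(\cdot)\bigr)$; note for comparison that the paper does not reprove this result at all, but simply recalls \cite[Thm.~4.11]{JoSo} with the remark that the proof given there is algebraic and hence valid over $\K$.

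The core of your argument, however, has a genuine gap. The Denef--Loeser change of variables formula does not ``express $\mathcal{S}^{\mathrm{mot}}_{f,v}$ as a fibre integral of $\mathcal{S}^{\mathrm{mot}}_{\ti f,w}$ over $\pi^{-1}(v)$ together with a residual contribution accounted for by $\mathcal{S}^{\mathrm{mot}}_{f\vert_V,v}$'': it relates the motivic zeta function of $f$ at $v$ to a Jacobian-twisted arc integral on $\ti U$, and since $\ti f$ is not in normal crossings form on $\ti U$ this does not reorganize termwise into the $\mathcal{S}^{\mathrm{mot}}_{\ti f,w}$, let alone produce the $f\vert_V$ term (arcs lying in $V$ have measure zero, so no separate term arises that way). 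The appearance of $\nu_{\Crit(f\vert_V)}(v)$ with the precise coefficient is exactly the nontrivial content of the theorem. The standard mechanism producing it is the compatibility of nearby cycles with proper pushforward, $\psi_f\ci R\pi_*\simeq R\pi_*\ci\psi_{\ti f}$, combined with the blowup decomposition $R\pi_*\Q_{\ti U}\cong\Q_U\oplus\bigoplus_{i=1}^{n-k-1}\Q_V[-2i]$ (writing $n=\dim U$, $k=\dim V$), which on stalkwise Euler characteristics at $v$ gives
\begin{equation*}
\int_{w\in{\mathbb P}(T_vU/T_vV)}\chi\bigl(MF_{\ti f}(w)\bigr)\,\rd\chi\;=\;\chi\bigl(MF_f(v)\bigr)+(n-k-1)\,\chi\bigl(MF_{f\vert_V}(v)\bigr),
\end{equation*}
and substituting the $(-1)^{\dim}(1-\chi)$ formula of Theorem \ref{dt3thm1} (or its algebraic avatar) then yields the stated identity, signs and coefficient included. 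You instead assert the shape of the identity with an undetermined constant $C$ and fix $C$ by the example $f\equiv 0$; but that check only determines $C$ once one already knows the identity holds with a constant depending only on $n-k$, which is precisely what has not been established. A smaller point: the formula $\nu_{\Crit(g)}(w)=(-1)^{\dim W}\bigl(1-\chi(\mathcal{S}^{\mathrm{mot}}_{g,w})\bigr)$ over general algebraically closed $\K$ of characteristic zero needs its own justification (both sides are invariant under extension of algebraically closed fields, so a Lefschetz-principle argument suffices); the citation of \cite{Thuong} concerns the integral identity conjecture and is not the statement needed here.
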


One can see the next result as a kind of {\it virtual Gauss--Bonnet formula}. It is crucial for
Donaldson--Thomas theory. It is proved by Behrend
\cite[Th. 4.18]{Behr} when $\K=\C$, but his proof is valid for
general~$\K$. It depends crucially on \cite[Prop.\,1.12]{Behr}
which again depend on an application of MacPherson's theorem \cite{MacP} over $\C$
but valid over general $\K$ thanks to Kennedy \cite{Kenn} and the definition of the Euler characteristic 
over algebraically closed field $\K$ of characteristic zero given by Joyce \cite{Joyc.1}. 
See also an independent construction of the Schwartz--MacPherson Chern class given by Aluffi \cite{Alu1}.

\begin{thm} Let $X$ a proper $\K$-scheme with a symmetric
obstruction theory, and\/ $[X]^\vir\in A_0(X)$ the corresponding
virtual class. Then
\begin{equation*}
\ts\displaystyle \int_{[X]^\vir}1=\chi(X,\nu_X)\in\Z,
\end{equation*}
where $\chi(X,\nu_X)=\int_{X(\K)}\nu_X\,\rd\chi$ is the Euler
characteristic of\/ $X$ weighted by the Behrend function $\nu_X$
of\/ $X$. In particular, $ \int_{[X]^\vir}1$ depends only on the\/
$\K$-scheme structure of\/ $X,$ not on the choice of symmetric
obstruction theory.
\label{dt3thm4}
\end{thm}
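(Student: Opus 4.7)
The plan is to follow Behrend's original argument \cite[Th.~4.18]{Behr}, separating the steps that are purely algebraic (hence work over any $\K$) from the one step, the MacPherson index/Gauss--Bonnet identity, that was stated analytically and must be reinstalled algebraically using the tools of Kennedy \cite{Kenn} and Joyce \cite{Joyc.1}. The target identity factors as
\[
\ts\int_{[X]^\vir}\!1 \;=\; \deg c_0^{SM}(\nu_X) \;=\; \chi(X,\nu_X),
\]
so the proof splits into two independent pieces, each of which I would establish separately.

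First I would reproduce the identification $[X]^\vir=0^![C]=c_0^{SM}(\nu_X)$ in $A_0(X)$. Given a symmetric obstruction theory $\phi:E^\bu\to L_X$, the intrinsic normal cone $\mathfrak C_X$ sits inside the abelian cone stack $h^1/h^0((E^\bu)^\vee)$ and, after pulling back along an embedding $X\hookrightarrow M$ into a smooth $\K$-scheme, produces the obstruction cone $C\hookrightarrow\Omega_M$. The key point, which is purely algebraic and rests on the symmetry isomorphism $\vartheta:E^\bu\xrightarrow{\sim}E^{\bu\vee}[1]$ with $\vartheta^\vee[1]=\vartheta$, is that $C$ is a \emph{conic Lagrangian} cycle of dimension $\dim M$ supported on $X$. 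Via the algebraic characteristic cycle isomorphism $\Ch:\CF_\Z(X)\xrightarrow{\sim}\cL_X(M)$ in diagram \eq{dt3eq1}, there is then a unique constructible function $\mu_X\in\CF_\Z(X)$ with $\Ch(\mu_X)=[C]$, and by construction $\mu_X=\Eu(\mathfrak C_X)=\nu_X$. Commutativity of \eq{dt3eq1} then forces $0^![C]=c_0^{SM}(\nu_X)$, while the definition of the virtual class gives $[X]^\vir=0^![C]$. All inputs here (the cone stack, the Lagrangian property, Gonzalez--Sprinberg's algebraic Euler obstruction, the Nash blowup construction, $\Ch$) are available over any algebraically closed field of characteristic zero.

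Next, for proper $X$ I would prove the Gauss--Bonnet style formula
\[
\deg c_0^{SM}(f) \;=\; \chi(X,f)\qquad\text{for all }f\in\CF_\Z(X).
\]
Over $\C$ this is MacPherson's theorem on the functoriality of Chern classes \cite{MacP}. The algebraic version reduces to two inputs: (a) Kennedy's algebraic construction of the Schwartz--MacPherson transformation $c^{SM}:\CF_\Z\to A_*$ for algebraically closed $\K$ of characteristic zero, which is covariantly functorial under proper pushforward, and (b) Joyce's definition of the $\Z$-valued Euler characteristic $\chi(X,-):\CF_\Z(X)\to\Z$ by integration against the canonical measure on constructible sets in \cite{Joyc.1}, which for $\K=\C$ recovers the topological one and is likewise covariantly functorial. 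Both $f\mapsto \deg c_0^{SM}(f)$ and $f\mapsto\chi(X,f)$ are group homomorphisms $\CF_\Z(X)\to\Z$, so by additivity it suffices to check them on characteristic functions $\mathbf 1_Y$ of closed subvarieties $Y\subset X$; proper pushforward along $Y\hookrightarrow X$ then reduces the check to $Y=X$ smooth and irreducible, where $c_0^{SM}(\mathbf 1_X)=c_{\dim X}(TX)\cap[X]$ and the statement is the classical Gauss--Bonnet--Chern theorem over $\K$ (available algebraically). This is the algebraic analogue of \cite[Prop.~1.12]{Behr}.

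Combining the two steps gives $\int_{[X]^\vir}\!1=\deg c_0^{SM}(\nu_X)=\chi(X,\nu_X)$, and since the right-hand side depends only on the underlying $\K$-scheme $X$ (because $\nu_X$ does, by the uniqueness of $\mathfrak C_X$), the claim that the integral is independent of the choice of symmetric obstruction theory follows at once. The main technical obstacle is exactly step (b) of the second paragraph: verifying that Kennedy's algebraic Chern class transformation and Joyce's algebraically defined Euler characteristic are compatible in the precise sense required by MacPherson's identity, i.e.\ that the two homomorphisms $\CF_\Z(X)\to\Z$ really agree on each $\mathbf 1_Y$. Once one trusts the algebraic Gauss--Bonnet--Chern theorem on smooth proper $\K$-schemes together with proper covariance of both sides, the remaining book-keeping is routine, but all the delicate content sits in this compatibility, which is ultimately the reason the theorem transplants verbatim from $\C$ to arbitrary algebraically closed $\K$ of characteristic zero.
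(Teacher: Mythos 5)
Your proposal is correct and follows essentially the same route as the paper, which simply observes that Behrend's proof of \cite[Th.~4.18]{Behr} carries over to general $\K$: one identifies $[X]^\vir=0^![C]=c_0^{SM}(\nu_X)$ via the Lagrangian obstruction cone and the characteristic cycle isomorphism, and then invokes the MacPherson index formula, valid algebraically by Kennedy's construction of the Chern--Schwartz--MacPherson transformation together with Joyce's definition of the Euler characteristic over $\K$. The only cosmetic remark is that your reduction in step (b) can be streamlined (and the elided resolution-of-singularities step avoided) by applying Kennedy's proper covariance directly to the map $X\to\Spec\K$, which yields $\deg c_0^{SM}(f)=\chi(X,f)$ at once.
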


Theorem \ref{dt3thm4} implies that $DT^\al(\tau)$ in \eq{dt2eq1} is
given by \index{Donaldson--Thomas invariants!original $DT^\al(\tau)$}
\e
DT^\al(\tau)=\chi\bigl(\M_\st^\al(\tau),\nu_{\M_\st^\al(\tau)}\bigr).
\label{dt3eq5}
\e
There is a big difference between the two equations \eq{dt2eq1} and
\eq{dt3eq5} defining Donaldson--Thomas invariants. Equation
\eq{dt2eq1} is non-local, and non-motivic, and makes sense only if
$\M_\st^\al(\tau)$ is a proper $\K$-scheme. But \eq{dt3eq5} is
local, and (in a sense) motivic, and makes sense for arbitrary
finite type $\K$-schemes $\M_\st^\al(\tau)$. In fact, one could take
\eq{dt3eq5} to be the definition of Donaldson--Thomas invariants
even when $\M_\rss^\al(\tau)\ne\M_\st^\al(\tau)$, but 
in \cite [\S 6.5]{JoSo} Joyce and Song argued that this is not a good idea, as then $DT^\al(\tau)$
would not be unchanged under deformations of~$X$.
In \cite[\S 6.5]{JoSo} Joyce and Song say: 
\begin{quotation}
`Equation \eq{dt3eq5} was the inspiration for this book. 
It shows that Donaldson--Thomas invariants $DT^\al(\tau)$ can be written as
{\it motivic\/} invariants,\index{motivic invariant} like those studied
in \cite{Joyc.3,Joyc.4,Joyc.5,Joyc.6,Joyc.7}, and so it raises the
possibility that we can extend the results of
\cite{Joyc.3,Joyc.4,Joyc.5,Joyc.6,Joyc.7} to Donaldson--Thomas invariants
by including Behrend functions as weights.'
\end{quotation}

\paragraph{Almost closed 1-forms.}
\label{dt3.2.3} \index{almost closed $1$-form|(}

In \cite{PaTh} Pandharipande and Thomas give a counterexample to the idea that every scheme admitting a symmetric obstruction
theory can locally be written as the critical locus of a regular
function on a smooth scheme. This limits the usefulness of the above
formula for $\nu_X(x)$ in terms of the Milnor fibre.
Here is the more general approach due to Behrend \cite{Behr}, which
the author tried to use 
to give a strictly algebraic proof on the Behrend function identities,
but later this proof turned out to be not completely correct. 

\begin{dfn} Let $\K$ be an algebraically closed field, and $M$ a
smooth $\K$-scheme. Let $\om$ be an algebraic 1-form on $M$, that is, $\om\in
H^0(T^*M)$. Call $\om$ {\it almost closed\/}
if $\rd\om$ is a section of $I_\om\cdot\La^2T^*M$, where
$I_\om$ is the ideal sheaf of the zero locus $\om^{-1}(0)$ of $\om$.
Equivalently, $\rd\om\vert_{\om^{-1}(0)}$ is zero as a section of
$\La^2T^*M\vert_{\om^{-1}(0)}$. In (\'etale) local coordinates
$(z_1,\ldots,z_n)$ on $M$, if $$\om=f_1\rd z_1+\cdots+f_n\rd z_n,$$
then $\om$ is almost closed provided
\begin{equation*}
\frac{\pd f_j}{\pd z_k}\equiv\frac{\pd f_k}{\pd z_j} \;\>\mod
(f_1,\ldots,f_n).
\end{equation*}
\label{dt3def2}
\end{dfn}
Let $M$ be a smooth Deligne--Mumford stack and $\omega$ an almost \index{Deligne--Mumford stack}
closed 1-form on $M$ with zero locus $X=Z(\omega)$.
It is a general principle, that a section of a vector bundle defines a
perfect obstruction theory for the zero locus of the section. 
This obstruction theory is given by
\e
\begin{gathered}
\xymatrix@C=30pt@R=30pt{ [T_{M_{|_{X}}} \ar[r]^{d\circ\omega^\vee}\ar[d]_{\omega^\vee}
& \Omega_{M_{|_{X}}}]\ar[d]^1\\
 [I/I^2\ar[r]^{d} & \Omega_{M_{|_{X}}}]}
\label{dt3example}
\end{gathered}
\e

This obstruction theory is symmetric, in a canonical way, because
\index{obstruction theory!symmetric}
under the assumption that $\omega$ is almost closed one has that
$d\circ\omega^\vee$ is self-dual, as a homomorphism of vector bundles
over $X$.  

\smallskip

Behrend \cite[Prop. 3.14]{Behr} proves a kind of converse of that, by a proof
valid for general~$\K$, which says that, at least locally, every symmetric obstruction
theory is given in this way by an almost closed $1$-form. 

\begin{prop} Let\/ $\K$ be an algebraically closed field, and\/ $X$
a $\K$-scheme with a symmetric obstruction theory. Then $X$ may be
covered by Zariski open sets $Y\subseteq X$ such that there exists a
smooth\/ $\K$-scheme $M,$ an almost closed\/ $1$-form $\om$ on $M,$
and an isomorphism of\/ $\K$-schemes\/~$Y\cong\om^{-1}(0)$.
\label{dt3prop5}
\end{prop}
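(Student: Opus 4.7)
The plan is to reconstruct an almost closed $1$-form from the data of the symmetric obstruction theory, essentially reading its partial derivatives off the symmetric map $\phi:E^{-1}\to E^0$. The statement is Zariski local, so fix a closed point $x\in X$ and work in an affine Zariski neighbourhood. Because $E^\bu$ is perfect of amplitude $[-1,0]$, I may represent it locally as a complex of trivial vector bundles $[E^{-1}\stackrel{\phi}{\longra}E^0]$; the symmetry isomorphism $\vartheta:E^\bu\to E^{\bu\vee}[1]$ then provides an identification $E^{-1}\cong(E^0)^\vee$ making $\phi$ self-dual. Setting $n:=\rank E^0$, I next choose a closed immersion $i:X\hookra M$ into a smooth affine $\K$-scheme $M$ of dimension exactly $n$. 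This is possible because $\dim T_xX=\dim h^0((E^\bu)^\vee)|_x\le n$: start from any closed embedding of $X$ into some affine space and cut down by general elements of the ideal of $X$ until the ambient dimension drops to $n$.

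With $M$ fixed, the obstruction theory morphism $E^\bu\to L_X$ is represented by a commutative square
\[
\xymatrix@R=14pt@C=30pt{E^{-1}\ar[r]^{\phi}\ar[d]_{\alpha} & E^0\ar[d]^{\beta}\\ I/I^2\ar[r]^{\rd} & \Omega_M|_X,}
\]
where $I\subset\O_M$ is the ideal sheaf of $X$, and by the obstruction theory axioms $h^0(\beta)$ is an isomorphism and $\alpha$ is surjective onto $h^{-1}$ of the right-hand complex. Both $E^0$ and $\Omega_M|_X$ are locally free of rank $n$ surjecting onto $\Omega_X$; by further adjusting the choice of representative (exploiting that two different resolutions of $L_X$ in degrees $[-1,0]$ of equal rank are related by chain-level homotopy equivalences over $X$), I can arrange that $\beta$ is an isomorphism of locally free sheaves and that the self-duality of $\phi$ transported through $\beta$ coincides with the tautological pairing between $T_M|_X\cong(\Omega_M|_X)^\vee$ and $\Omega_M|_X$. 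After this identification $\alpha$ becomes a surjection $T_M|_X\twoheadrightarrow I/I^2$.

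Next I build $\om$ by lifting $\alpha$. Since $T_M$ is locally free, the $\O_X$-linear surjection $\alpha:T_M|_X\twoheadrightarrow I/I^2\subset\O_X$ lifts, after shrinking $M$, to an $\O_M$-linear map $\widetilde\alpha:T_M\to I\subset\O_M$, equivalently to a $1$-form $\om\in H^0(M,\Omega_M)$ whose coefficients lie in $I$. In \'etale local coordinates $(z_1,\ldots,z_n)$ on $M$, write $\om=\sum f_i\,\rd z_i$ with $f_i\in I$. Then $\om^{-1}(0)$ is cut out by $(f_1,\ldots,f_n)\subseteq I$, and since $\alpha$ is surjective the classes $[f_i]$ generate $I/I^2$, so Nakayama's lemma gives $(f_1,\ldots,f_n)=I$ locally at $x$; after further shrinking, $\om^{-1}(0)=X$ as closed subschemes.

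Finally, the symmetry of $\phi$ forces $\om$ to be almost closed. By construction $\phi:T_M|_X\to\Omega_M|_X$ factors as $T_M|_X\stackrel{\widetilde\alpha|_X}{\longra}\O_X\stackrel{\rd}{\longra}\Omega_M|_X$, so in the chosen coordinates its matrix is $(\pd_jf_i)$ reduced modulo $I$. The self-duality of $\phi$ inherited from $\vartheta$ therefore reads $\pd_jf_i\equiv\pd_if_j\pmod{I}$ for all $i,j$, which is precisely the almost-closed condition of Definition \ref{dt3def2}. The main technical obstacle I expect is the bookkeeping in the second paragraph: carrying out the replacement of representatives of $L_X$ so that the natural map $\beta:E^0\to\Omega_M|_X$ becomes an honest isomorphism of vector bundles compatible with the given symmetry $\vartheta$, rather than only an isomorphism on $h^0$, requires a careful combined choice of the embedding $i:X\hookra M$ and of the chain-level resolutions, and it is this compatibility that ultimately converts the derived-categorical symmetry into the pointwise symmetry of the Jacobian~$(\pd_jf_i)\bmod I$.
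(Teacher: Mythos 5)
Your overall strategy is exactly the route of Behrend's argument, which the paper does not reprove but simply cites as \cite[Prop.\,3.14]{Behr}: represent the obstruction theory by a square over a local embedding, identify $E^0$ with $\Omega_M\vert_X$, lift the degree $-1$ component to a $1$-form with coefficients in the ideal $I$, get $\om^{-1}(0)=Y$ by Nakayama, and read almost-closedness off the self-duality. However, there is a genuine gap at the step you yourself flag as ``bookkeeping'', and it is more than bookkeeping. The symmetry $\vartheta:E^\bu\ra E^{\bu\vee}[1]$ is an isomorphism in the derived category only: for an arbitrary local presentation $[E^{-1}\stackrel{\phi}{\ra}E^0]$ its components $\vartheta^{-1},\vartheta^0$ need not be isomorphisms of vector bundles, and the identity $\vartheta^\vee[1]=\vartheta$ holds only up to a chain homotopy $h:E^0\ra (E^0)^\vee$. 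So your opening claim that $\vartheta$ ``provides an identification $E^{-1}\cong(E^0)^\vee$ making $\phi$ self-dual'' is not available as stated. Making $\beta$ an isomorphism by homotopies (possible, since $\rank E^0=\dim M$ and $\mathrm{im}\,\beta+\mathrm{im}\,\d$ is everything) does not cure this: using only $\vartheta^{-1}$ to identify $E^{-1}$ with $T_M\vert_X$, the transported map satisfies $\ti\phi-\ti\phi^\vee=\pm\,\ti\phi^\vee h\,\ti\phi$, so in your coordinates $\pd_i f_j-\pd_j f_i$ is congruent modulo $I$ not to $0$ but to an entry of $(\pd f)^{t}H(\pd f)$, a product of terms $\pd_k f_l$ which are not themselves in $I$. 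Hence the final congruence $\pd_i f_j\equiv\pd_j f_i \pmod{(f_1,\ldots,f_n)}$, i.e.\ the almost-closed condition of Definition \ref{dt3def2}, does not follow from the derived-level symmetry alone.

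The missing idea is to normalize the presentation \emph{before} transporting anything, rather than fixing an arbitrary presentation and taking $\dim M=\rank E^0$. Work with a presentation minimal at the chosen point $x$, i.e.\ $\phi\ot k(x)=0$, so that $\rank E^0=\dim T_xX$; for such minimal two-term complexes any quasi-isomorphism is componentwise invertible near $x$, the homotopy corrections $h\phi$ and $\phi^\vee h$ vanish at $x$, and one may replace $\vartheta$ by its symmetrization $\tfrac12\bigl(\vartheta+\vartheta^\vee[1]\bigr)$ (characteristic $\ne 2$), which is strictly self-dual at the chain level and still invertible near $x$. Then take the embedding minimal as well, $\dim M=\dim T_xX$: the map $\beta$ is automatically an isomorphism near $x$ (no homotopy adjustment needed), $\alpha$ is surjective onto $I/I^2$ by the argument you sketch (here one really uses both that $h^0(\varphi)$ is an isomorphism and that $h^{-1}(\varphi)$ is an epimorphism), and your remaining steps — lifting $\alpha$ to a $1$-form with coefficients in $I$, Nakayama, and strict self-duality giving $\pd_i f_j\equiv\pd_j f_i\bmod I$ — go through exactly as you wrote them. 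With that repair your argument becomes the standard proof.
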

Restricting to $\K=\C$, Behrend \cite[Prop. 4.22]{Behr} gives an
expression for the Behrend function of the zero locus of an almost
closed 1-form as a {\it linking number}\index{linking number}. It is possible to use it to give an algebraic 
proof of the first Behrend identity over $\C.$

\begin{prop} Let\/ $M$ be a smooth scheme and\/ $\om$ an almost
closed $1$-form on $M,$ and let\/ $Y=\om^{-1}(0)$ 
be the scheme-theoretic zero locus of\/ $\om$. Fix\/ $p$ a closed point in $Y$, choose
\'etale coordinates $(x_1,\ldots,x_n)$ on $M$ around $p$ with\/ $(x_1,\ldots,x_n,p_1,\ldots,
p_n)$ the associated canonical coordinates for $T^*M.$ Write $\om=\displaystyle\sum_{i=1}^{n}f_i \rd x_i$
in these coordinates. One can identify
$T^*M$ near $p$ with\/~$\C^{2n}$.
Then for all\/ $\eta\in\C$ and\/ $\ep\in\R$ with\/
$0<\md{\eta}\ll\ep\ll 1$ one has
\e
\nu_Y(p)=L_{{\cal S}_\ep}\bigl(\Ga_{\eta^{-1}\om}\cap{\cal S}_\ep,
\De\cap {\cal S}_\ep\bigr),
\label{dt6eq8}
\e
where \nomenclature[sphere]{${\cal S}_\ep$}{sphere of radius $\ep$ in $\C^{2n}$}
\nomenclature[1b]{$\Ga_{\eta^{-1}\om}$}{graph of $\eta^{-1}\om$ for $\om$ almost closed $1$-form and $\eta\in\C$}
\nomenclature[1c]{$\De$}{graph of the section given by the square of the distance function}
\nomenclature[LS]{$L_{{\cal S}_\ep}(\,,\,)$}{linking number of two disjoint, closed, oriented $(n-1)$-submanifolds in ${\cal S}_\ep$}
\begin{itemize}
\setlength{\itemsep}{0pt}
\setlength{\parsep}{0pt}
\item ${\cal S}_\ep\!=\!\bigl\{(x_1,\ldots,p_n)\!\in\!\C^{2n}:
\ms{x_1}\!+\!\cdots\!+\!\ms{p_n}\!=\!\ep^2\bigr\}$ is the sphere of
radius $\ep$ in $\C^{2n},$
\item  $\Ga_{\eta^{-1}\om}$ is the graph of\/
$\eta^{-1}\om$ regarded locally as a complex submanifold of\/
$\C^{2n}$ of real dimension $2n$ oriented so that $M\longra \Omega_M$ is orientation preserving 
and defined by the equations $\{\eta p_i=f_i(x)\},$
\item $\De=\bigl\{(x_1,\ldots,p_n)\!\in\!\C^{2n}:p_j\!=\!\bar x_j,$
$j\!=\!1,\ldots,n\bigr\},$ i.e. the image of the smooth map $M\longra\Omega_M$ 
given by the section $\rd\varrho$ of $\Omega_M,$ with 
$$\varrho=\displaystyle \sum_{i}  x_{i} \bar x_{i}+\displaystyle \sum_{i}  p_{i} \bar p_{i}$$ the square of the distance function
defined on $\Omega_M$ by the choice of coordinates of real dimension $2n,$ 
\item  $L_{{\cal S}_\ep}(\,,\,)$ is the
linking number of two disjoint, closed, oriented\/
$(n\!-\!1)$-submanifolds in~${\cal S}_\ep$.
\end{itemize}
\label{dt6prop1}
\end{prop}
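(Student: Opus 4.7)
The plan is to characterize $\nu_Y$ via its image under the characteristic cycle map $\Ch:\CF_\Z(Y)\xrightarrow{\sim}\cL_Y(M)$ from \eq{dt3eq1}, and then recover the value $\nu_Y(p)$ from this Lagrangian cycle by a linking number pairing. First I would identify the characteristic cycle $\Ch(\nu_Y)$ with the obstruction cone $C\subset\Omega_M$ associated to the symmetric obstruction theory \eq{dt3example}. Since $\nu_Y=\Eu(\mathfrak{C}_Y)$ by definition and $\Ch\ci\Eu=L$ by commutativity of \eq{dt6eqq1}, this amounts to showing that the signed conormal cycle $L(\mathfrak{C}_Y)$ agrees with $[C]$. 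The almost-closedness of $\om$ is crucial here: it ensures that the closure of $\Ga_\om$ in a neighbourhood of $Y$, together with the cone it degenerates to over $Y$, is Lagrangian in $\Omega_M$, and its signed multiplicities coincide with those of $L(\mathfrak{C}_Y)$.

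Next I would show that on the small sphere $\cS_\ep$ the slice $[C]\cap\cS_\ep$ is homologous, within the space of cycles on $\cS_\ep$ disjoint from $\De\cap\cS_\ep$, to $\Ga_{\eta^{-1}\om}\cap\cS_\ep$ for $0<|\eta|\ll\ep\ll 1$. Writing $\om=\sum f_i\,\rd x_i$, the graph $\Ga_{\eta^{-1}\om}$ is cut out in $\C^{2n}$ by the equations $\{\eta p_i=f_i(x)\}$; as $\eta\to 0$ with $\ep$ fixed, this family of smooth complex submanifolds degenerates toward $C$ along the cotangent fibres over $Y$. A direct check shows that $\Ga_{\eta^{-1}\om}$ meets $\De=\{p_j=\bar x_j\}$ only at solutions of $\eta\bar x_j=f_j(x)$, which, since $f_j$ vanishes at $p$, forces $|x|=O(|\eta|^{1/(k-1)})$ for some $k\ge 2$; consequently the two cycles are disjoint on $\cS_\ep$ whenever $|\eta|\ll\ep$, and the resulting continuous deformation gives the required homology.

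With these two identifications, the conclusion becomes a microlocal intersection-theoretic computation. The cycle $\De$, being the graph of the exact $1$-form $\rd\varrho$, is a Lagrangian representative near $p$ of the conormal class to $\{p\}$ in $\Omega_M$. Pairing this conormal class against $\Ch(\nu_Y)=[C]$ computes $\Ch^{-1}$ evaluated at $p$, via the Ginsburg description of the inverse characteristic cycle map as an intersection multiplicity of Lagrangian cycles at an isolated point. In the complex analytic category, the intersection multiplicity of two Lagrangian cycles at an isolated point is computed by restricting to a small sphere around that point and taking the linking number of the resulting real $(2n-1)$-dimensional cycles inside the $(4n-1)$-sphere $\cS_\ep$, which yields formula \eq{dt6eq8}.

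The principal obstacle I expect is the careful accounting of orientations and signs. Three sign conventions must be reconciled: the factor $(-1)^{\dim V}$ in the definition of $L$ from \eq{dt6eqq1}, the factor $(-1)^{\dim\pi(C')}$ appearing in the signed support $\mathfrak{C}_Y$ of the intrinsic normal cone, and the complex orientations on $\Ga_{\eta^{-1}\om}$ and on $\De$ pinned down by the requirement that the section map $M\to\Omega_M$ be orientation preserving. To navigate this, I would first verify the special case $\om=\rd f$, where $Y=\Crit(f)$ and \eq{dt6eq8} must reduce to the Milnor fibre formula \eq{dt3eq3} of Theorem \ref{dt3thm1}, fixing all signs by comparison with the vanishing cycle computation; then extend to a general almost closed $\om$ using the deformation argument above, together with the fact that the Behrend function of an almost closed zero locus is locally constant along families of almost closed $1$-forms sharing the same scheme-theoretic vanishing locus.
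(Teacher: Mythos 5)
The paper does not actually prove Proposition \ref{dt6prop1}: it is quoted as background from Behrend \cite[Prop.~4.22]{Behr}, with only the remark that $\De$ is merely a real submanifold, so there is no proof in this paper to compare against step by step. Your outline is, in substance, a reconstruction of Behrend's own microlocal argument: identify $\Ch(\nu_Y)$ with the obstruction cone attached to the almost closed form (this is exactly where almost-closedness enters, making the limiting conic cycle Lagrangian), evaluate at $p$ by pairing with the graph $\De$ of $\rd\varrho$ via Ginsburg's intersection-multiplicity description of the inverse of the characteristic cycle map, and then replace the conic cycle by the nearby graph $\Ga_{\eta^{-1}\om}$, converting the local intersection number into a linking number on $\cS_\ep$. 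As a strategy this is the right one, and it is essentially the original one.

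Two steps are weaker than you present them. First, the disjointness estimate: from $\eta\bar x_j=f_j(x)$ and $f_j(p)=0$ you cannot conclude $|x|=O(|\eta|^{1/(k-1)})$; when $Y$ is positive-dimensional, $|f(x)|$ admits no lower bound by a power of $|x|$, so intersection points of $\Ga_{\eta^{-1}\om}$ with $\De$ do not cluster at the origin for that naive reason. The correct order of quantifiers is to choose $\ep$ first, by a curve-selection or stratification argument, so that the support of the limiting conic cycle meets $\De$ only over $p$ inside the closed $\ep$-ball, and only then take $|\eta|\ll\ep$; this is what makes both the disjointness on $\cS_\ep$ and the homotopy of cycles legitimate. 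Second, your closing plan to fix signs by verifying the exact case $\om=\rd f$ against Theorem \ref{dt3thm1} and then ``extending by deformation'' through almost closed forms with the same zero scheme is not available as a reduction: by the Pandharipande--Thomas examples recalled in the paper, the zero locus of an almost closed $1$-form need not be a critical locus at all, and there is in general no family of almost closed forms with fixed zero scheme joining $\om$ to an exact form (the invoked ``local constancy of the Behrend function along such families'' is also vacuous, since $\nu_Y$ depends only on the scheme $Y$). Since your main argument already treats general $\om$ directly, the comparison with the case $\rd f$ should only be used to pin down the universal sign constants, not to carry the general case.
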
 

We remark here that $\Delta$ is not a complex submanifold, but only a real submanifold. Thus, there 
are no good generalizations of $\Delta$ to other fields $\K.$

\index{almost closed $1$-form|)}

\index{microlocal geometry|)}\index{Behrend function|)}

\subsection{Generalizations of Donaldson--Thomas theory}
\label{dt4}

Next it will be briefly reviewed how the theory of generalized Donaldson--Thomas invariants has been developed,
starting from the series of papers  \cite{Joyc.1,Joyc.2,Joyc.3,Joyc.4,Joyc.5,Joyc.6,Joyc.7} about constructible functions,
stack functions, Ringel--Hall algebras, counting invariants for
Calabi--Yau 3-folds, and wall-crossing and then summarizing the main results in \cite{JoSo} 
including the definition of generalized Donaldson--Thomas invariants
$\bar{DT}{}^\al(\tau) \in\Q$, their deformation-invariance, and
wall-crossing formulae under change of stability condition~$\tau$. 
In the sequel, there are two paragraphs on 
statements and a sketch of proofs of
the theorems \cite[Thm 5.5]{JoSo} and \cite[Thm 5.11]{JoSo} on which this paper is concentrated.  
We conclude with a brief and rough remark on Kontsevich and Soibelman's parallel approach to Donaldson--Thomas theory 
\cite{KoSo1}, focusing more on analogies and differences with Joyce and Song's construction \cite{JoSo} rather than going into a detailed exposition. This choice is due to the fact that for the present paper we do not need it.
\index{constructible function}
\index{stack function}
\index{Ringel--Hall algebra}
\index{counting invariants for Calabi--Yau 3-folds}
\index{wall crossing}
\index{deformation-invariance}
\index{configurations}

\subsubsection[Brief sketch of background from
$\text{\cite{Joyc.1,Joyc.2,Joyc.3,Joyc.4,Joyc.5,Joyc.6,Joyc.7}}$]{Brief
sketch of background from \cite{Joyc.1,Joyc.2,Joyc.3,Joyc.4,Joyc.5,Joyc.6,Joyc.7}}
\label{dt4.1}

Here it will be recalled a few important ideas from \cite{Joyc.1,Joyc.2,Joyc.3,Joyc.4,
Joyc.5,Joyc.6,Joyc.7}. They deal with {\it Artin stacks} rather than coarse moduli \index{Artin stack}
schemes,\index{coarse moduli scheme}\index{moduli scheme!coarse} as in
\cite{Thom}. Let $X$ be a Calabi--Yau
3-fold over $\C$, and write $\fM$ for the moduli stack of all
coherent sheaves $E$ on $X$. It is an Artin $\C$-stack.

\smallskip

\index{stack function}
\nomenclature[SFfF]{$\SF(\fF)$}{vector space of `stack functions' on an Artin stack $\fF$, defined using representable 1-morphisms}
The ring of {\it stack functions} $\SF(\fM)$ in \cite{Joyc.2} is 
basically the Grothendieck group $K_0(\mathop{\rm Sta}_\fM)$ of the \index{2-category} \index{Grothendieck group}
2-category $\mathop{\rm Sta}_\fM$ of stacks over $\fM$. That is,
\nomenclature[Sta]{$\mathop{\rm Sta}_\fM$}{2-category of stacks over $\fM$}
$\SF(\fM)$ is generated by isomorphism classes $[(\fR,\rho)]$ of
representable 1-morphisms $\rho:\fR\ra\fM$ for $\fR$ a finite type
Artin $\C$-stack, with the relation \index{1-morphism!representable} \index{Artin stack!of finite type}
\begin{equation*}
[(\fR,\rho)]=[(\fS,\rho\vert_\fS)]+[(\fR\sm\fS,\rho\vert_{\fR\sm\fS})]
\end{equation*}
when $\fS$ is a closed $\C$-substack of $\fR$. In \cite{Joyc.2} Joyce studies different kinds
of stack function spaces with other choices of generators and
relations, and operations on these spaces. These include projections 
\nomenclature[1pivi]{$\Pi^\vi_n$}{projection to stack functions of `virtual rank $n$'}
$\Pi^\vi_n:\SF(\fM)\ra\SF(\fM)$ to stack functions of {\it virtual rank} 
$n$, which act on $[(\fR,\rho)]$ by modifying $\fR$ depending on
its stabilizer groups. \index{stabilizer group} \index{virtual rank}\index{stack function!virtual rank}
\nomenclature[SFai]{$\SFai(\fM)$}{Lie subalgebra of $\SFa(\fM)$ of stack functions `supported on virtual indecomposables'}
\index{virtual indecomposable}\index{stack
function!supported on virtual indecomposables} 

In \cite[\S 5.2]{Joyc.4} he defines a {\it Ringel--Hall} type
algebra\index{Ringel--Hall algebra} $\SFa(\fM)$ of stack
\nomenclature[SFal]{$\SFa(\fM)$}{Ringel--Hall algebra of stack functions with with algebra stabilizers}
functions\index{stack function!with algebra stabilizers} {\it with algebra
stabilizers} on $\fM$, with an associative, non-commutative
multiplication $*$ and in \cite[\S 5.2]{Joyc.4} he defines a Lie
subalgebra $\SFai(\fM)$ of stack functions {\it supported on virtual indecomposables}.
In \cite[\S
6.5]{Joyc.4} he defines an explicit Lie algebra $L(X)$ to be the
\nomenclature[LX]{$L(X)$}{Lie algebra depending on a Calabi--Yau $3$-fold $X$}
\nomenclature[1laal]{$\la^\al$}{basis element of Lie algebra $L(X)$}
$\Q$-vector space with basis of symbols $\la^\al$ for $\al\in
K^\num(\coh(X))$, with Lie bracket
\e
[\la^\al,\la^\be]=\bar\chi(\al,\be)\la^{\al+\be},
\label{dt4eq1}
\e
for $\al,\be\in K^\num(\coh(X))$, where $\bar\chi(\,,\,)$ is the
\nomenclature[1wchbar]{$\bar\chi(\,,\,)$}{Euler form on $K^\num(\coh(X))$}
{\it Euler form} \index{Euler form} on $K^\num(\coh(X))$ defined as follows: 
\e
\bar{\chi}([E],[F])=\displaystyle\sum_{i\geq 0} (-1)^i \dim\Ext^i(E,F)
\label{eu}
\e
for all $E,F\in \coh(X).$
As $X$ is a
Calabi--Yau 3-fold, $\bar\chi$ is antisymmetric, so \eq{dt4eq1}
satisfies the Jacobi identity and makes $L(X)$ into an
infinite-dimensional Lie algebra over~$\Q$.

Then in \cite[\S 6.6]{Joyc.4} Joyce defines a 
\nomenclature[1vPsSfL]{$\Psi$}{Lie algebra morphism $\Psi:\SFai(\fM)\ra L(X)$}
{\it Lie algebra morphism\/} $\Psi:\SFai(\fM)\ra L(X)$, which, roughly speaking, is of the form
\e
\Psi(f)\;\;\;\;=\ts\displaystyle \sum_{\al\in K^\num(\coh(X))}\chi^{\rm stk}
\bigl(f\vert_{\fM^\al}\bigr)\la^{\al},
\label{dt4eq2}
\e
where $f=\sum_{i=1}^mc_i[(\fR_i,\rho_i)]$ is a stack function on
$M$, and $\fM^\al$ is the substack in $\fM$ of sheaves $E$ with
\nomenclature[Mal]{$\fM^\al$}{the substack in $\fM$ of sheaves $E$ with class $\al$}
class $\al$, and $\chi^{\rm stk}$ is a kind of stack-theoretic Euler
\nomenclature[1wchistk]{$\chi^{\rm stk}$}{stack-theoretic Euler characteristic} \index{Euler characteristic!stack-theoretic}
characteristic. But in fact the definition of $\Psi$, and the proof
that $\Psi$ is a Lie algebra morphism, are highly nontrivial, and
use many ideas from \cite{Joyc.1,Joyc.2,Joyc.4}, including those of
`virtual rank' and `virtual indecomposable'.\index{virtual
indecomposable} The problem is that the obvious definition of
$\chi^{\rm stk}$ usually involves dividing by zero, so defining
\eq{dt4eq2} in a way that makes sense is quite subtle. The proof
that $\Psi$ is a Lie algebra morphism uses {\it Serre duality} and the \index{Serre duality}
assumption that $X$ is a Calabi--Yau 3-fold.

\smallskip

\nomenclature[M bal ssalt]{$\fM_\rss^\al(\tau)$}{open, finite type substack  
in $\fM$ of $\tau$-semistable sheaves $E$ in class $\al$, for all $\al\in K^\num(\coh(X))$}
\nomenclature[M bal stalt]{$\fM_\st^\al(\tau)$}{open, finite type substack  
in $\fM$ of $\tau$-stable sheaves $E$ in class $\al$, for all $\al\in K^\num(\coh(X))$}
Now let $\tau$ be a stability condition on $\coh(X)$, such as
Gieseker stability. Then one has open, finite type substacks \index{Gieseker stability}
$\fM_\rss^\al(\tau),\fM_\st^\al(\tau)$ in $\fM$ of
$\tau$-(semi)stable sheaves $E$ in class $\al$, for all $\al\in
K^\num(\coh(X))$. Write $\bar\de_\rss^\al(\tau)$ for the \index{stack function!characteristic function}
\nomenclature[1cc]{$\bar\de_\rss^\al(\tau)$}{element of the Ringel--Hall Lie algebra $\SFa(\fM)$ that ÔcountsÕ $\tau$-
semistable objects in class $\al$}
characteristic function of $\fM_\rss^\al(\tau)$, in the sense of
stack functions \cite{Joyc.2}. Then
$\bar\de_\rss^\al(\tau)\in\SFa(\fM)$. In \cite[\S 8]{Joyc.5}, Joyce
defines elements $\bar\ep^\al(\tau)$ in $\SFa(\fM)$ by
\nomenclature[1ccc]{$\bar\ep^\al(\tau)$}{element of the Ringel--Hall Lie algebra $\SFai(\fM)$ that ÔcountsÕ $\tau$-
semistable objects in class $\al$}
\e
\bar\ep^\al(\tau)\quad= \!\!\!\!\!\!\!
\sum_{\begin{subarray}{l}\small{n\ge 1,\;\al_1,\ldots,\al_n\in
K^\num(\coh(X)):}\\
\small{\al_1+\cdots+\al_n=\al,\; \tau(\al_i)=\tau(\al),\text{ all
$i$}}\end{subarray}} \!\!\!
\frac{(-1)^{n-1}}{n}\,\,\bar\de_\rss^{\al_1}(\tau)*\bar
\de_\rss^{\al_2}(\tau)* \cdots*\bar\de_\rss^{\al_n}(\tau),
\label{dt4eq3}
\e
where $*$ is the Ringel--Hall multiplication in $\SFa(\fM)$. Then
\cite[Thm. 8.7]{Joyc.5} shows that $\bar\ep^\al(\tau)$ lies in the Lie
subalgebra $\SFai(\fM)$, a nontrivial result.
Thus one can apply the Lie algebra morphism $\Psi$ to
$\bar\ep^\al(\tau)$. In \cite[\S 6.6]{Joyc.6} he defines invariants
$J^\al(\tau)\in\Q$ for all $\al\in K^\num(\coh(X))$ by
\nomenclature[Jalt]{$J^\al(\tau)$}{invariant counting 
$\tau$-semistable sheaves in class $\al$ on a CalabiÐYau $3$-fold, introduced in \cite{Joyc.7}}
\e
\Psi\bigl(\bar\ep^\al(\tau)\bigr)=J^\al(\tau)\la^\al.
\label{dt4eq4}
\e

\smallskip

These $J^\al(\tau)$ are rational numbers `counting' \index{invariant $J^\al(\tau)$}
$\tau$-semistable sheaves $E$ in class $\al$. When
$\M_\rss^\al(\tau)=\M_\st^\al(\tau)$ then
$J^\al(\tau)=\chi(\M_\st^\al(\tau))$, that is, $J^\al(\tau)$ is the
na\"\i ve Euler characteristic of the moduli space \index{Euler characteristic!na\"\i ve}
$\M_\st^\al(\tau)$. This is {\it not\/} weighted by the Behrend
function $\nu_{\M_\st^\al(\tau)}$, and so in general does not
coincide with the Donaldson--Thomas invariant $DT^\al(\tau)$
in~\eq{dt4eq1}. \index{Donaldson--Thomas invariants!original $DT^\al(\tau)$}
\nomenclature[1wchinaive]{$\chi^{\textrm{na}}(C)$}{na\"\i ve Euler characteristic of a constructible set $C$ in a stack as in \cite{Joyc.1}}
As the $J^\al(\tau)$ do not include Behrend functions, they do not
count semistable sheaves with multiplicity, and so they will not in
general be unchanged under deformations of the underlying
Calabi--Yau 3-fold, as Donaldson--Thomas invariants are. However,
the $J^\al(\tau)$ do have very good properties under change of
stability condition. In \cite{Joyc.6} Joyce shows that if $\tau,\ti\tau$
are two stability conditions on $\coh(X)$, then it is possible to write
$\bar\ep^\al(\ti\tau)$ in terms of a (complicated) explicit formula
involving the $\bar\ep^\be(\tau)$ for $\be\in K^\num(\coh(X))$ and
the Lie bracket in~$\SFai(\fM)$.
Applying the Lie algebra morphism $\Psi$ shows that
$J^\al(\ti\tau)\la^\al$ may be written in terms of the
$J^\be(\tau)\la^\be$ and the Lie bracket in $L(X)$, and hence
\cite[Thm. 6.28]{Joyc.6} yields an explicit transformation law for the
$J^\al(\tau)$ under change of stability condition. In \cite{Joyc.7}
he shows how to encode invariants $J^\al(\tau)$ satisfying a
transformation law in generating functions on a complex manifold of
stability conditions, which are both holomorphic and continuous,
despite the discontinuous wall-crossing behaviour of the
$J^\al(\tau)$. 

\subsubsection[Summary of the main results from $\text{\cite{JoSo}}$]{Summary
of the main results from \cite{JoSo}} \index{Donaldson--Thomas invariants!generalized $\bar{DT}{}^\al(\tau)$|(}
\label{dt4.2}

The basic idea behind the project developed in  \cite{JoSo} is that the
Behrend function $\nu_\fM$ of the moduli stack $\fM$ of coherent \index{Behrend function}
sheaves in $X$ should be inserted  as a weight in the programme of
\cite{Joyc.1,Joyc.2,Joyc.3,Joyc.4,Joyc.5,Joyc.6,Joyc.7} summarized in
\S\ref{dt4.1}. Thus one will obtain weighted versions $\ti\Psi$ of
the Lie algebra morphism $\Psi$ of \eq{dt4eq2}, and
$\bar{DT}{}^\al(\tau)$ of the counting invariant $J^\al(\tau)\in\Q$
in \eq{dt4eq4}. Here is how this is worked out in  \cite{JoSo}.

Joyce and Song define a modification $\ti L(X)$ of the Lie algebra $L(X)$ above,
\nomenclature[LXt]{$\ti L(X)$}{Lie algebra depending on a Calabi--Yau $3$-fold $X$, variant of $L(X)$}
the $\Q$-vector space with basis of symbols $\ti \la^\al$ for
\nomenclature[1lati]{$\ti \la^\al$}{basis element of Lie algebra $\ti L(X)$}
$\al\in K^\num(\coh(X))$, with Lie bracket
\begin{equation*}
[\ti\la^\al,\ti\la^\be]=(-1)^{\bar\chi(\al,\be)}
\bar\chi(\al,\be)\ti \la^{\al+\be},
\end{equation*}
which is \eq{dt4eq2} with a sign change. Then they define a {\it Lie
algebra morphism\/} $\ti\Psi:\SFai(\fM)\ra\ti L(X)$. Roughly
speaking this is of the form
\nomenclature[1vPsiti]{$\ti\Psi$}{Lie algebra morphism $\ti\Psi:\SFai(\fM)\ra\ti L(X)$}
\e
\ti\Psi(f)\;\;\;\;=\ts\displaystyle \sum_{\al\in K^\num(\coh(X))}\chi^{\rm stk}
\bigl(f\vert_{\fM^\al},\nu_{\fM}\bigr)\ti \la^{\al},
\label{dt4eq5}
\e
that is, in \eq{dt4eq2} we replace the stack-theoretic Euler
characteristic $\chi^{\rm stk}$ with a stack-theoretic Euler  
\index{Euler characteristic!stack-theoretic} \index{Euler characteristic!weighted}
characteristic weighted by the Behrend function $\nu_{\fM}$.
The proof that $\ti\Psi$ is a Lie algebra morphism combines the
proof in \cite{Joyc.4} that $\Psi$ is a Lie algebra morphism with the
two {\it Behrend function identities} \index{Behrend function!Behrend identities} \eq{dt6eq1}--\eq{dt6eq2}
proved in \cite[thm. 5.11]{JoSo} and reported below.
Proving \eq{dt6eq1}--\eq{dt6eq2} requires a deep understanding of
the local structure of the moduli stack $\fM$, which is of interest \index{moduli stack!local structure}
in itself. First they show using a composition of
{\it Seidel--Thomas twists} by $\cO_X(-n)$ for $n\gg 0$ that $\fM$ is \index{Seidel--Thomas twist}
locally 1-isomorphic to the moduli stack $\fVect$ of vector bundles \index{vector bundle!moduli stack}
on $X$. Then they prove that near $[E]\in\fVect(\C)$,
an atlas for $\fVect$ can be written locally in the complex analytic \index{analytic topology}
topology in the form $\Crit(f)$ for $f:U\ra\C$ a holomorphic
function on an open set $U$ in $\Ext^1(E,E)$. These $U,f$ are {\it
not algebraic}, they are constructed using gauge theory on the \index{gauge theory}
complex vector bundle $E$ over $X$ and transcendental methods. 
Finally, they deduce \eq{dt6eq1}--\eq{dt6eq2} using
the Milnor fibre expression \eq{dt3eq3} for Behrend functions \index{Milnor fibre}
applied to these~$U,f$.

\smallskip

Before going on with the review of Joyce and Song's program, it is worth to stop 
for a while on some details about \cite[Thm 5.5]{JoSo} and \cite[Thm 5.11]{JoSo}, the statements of the theorems and how they prove it. 
This will be useful later on in \S\ref{dt5}.

\paragraph[Gauge theory and transcendental complex analysis from
$\text{\cite{JoSo}}$]{Gauge theory and transcendental complex analytic geometry from \cite{JoSo}.}
\label{dt5.1} \index{gauge theory}
In \cite[Thm. 5.5]{JoSo} Joyce and Song give a local characterization of an atlas\index{Artin stack!atlas}
for the moduli stack $\fM$ as the critical points of a holomorphic
function on a complex manifold. The statement and a sketch of its proof are reported below. 
Some background references are Kobayashi \cite[\S
VII.3]{Koba}, L\"ubke and Teleman \cite[\S 4.1 \& \S 4.3]{LuTe},
Friedman and Morgan \cite[\S 4.1--\S 4.2]{FrMo} and Miyajima \cite{Miya}.

\index{moduli stack}
\begin{thm} Let\/ $X$ be a Calabi--Yau $3$-fold over\/ $\C,$ and\/
$\fM$ the moduli stack of coherent sheaves on\/ $X$. Suppose\/ $E$
is a coherent sheaf on\/ $X,$ so that\/ $[E]\in\fM(\C)$. Let\/ $G$
be a maximal reductive subgroup\index{reductive group!maximal} in
$\Aut(E),$ and\/ $G^{\sst\C}$ its complexification. Then\/
$G^{\sst\C}$ is an algebraic $\C$-subgroup of\/ $\Aut(E),$ a maximal
reductive subgroup,\index{reductive group!maximal} and\/
$G^{\sst\C}=\Aut(E)$ if and only if\/ $\Aut(E)$ is reductive.
There exists a quasiprojective $\C$-scheme $S,$ an action of\/
$G^{\sst\C}$ on $S,$ a point\/ $s\in S(\C)$ fixed by $G^{\sst\C},$
and a $1$-morphism of Artin $\C$-stacks $\Phi:[S/G^{\sst\C}]\ra\fM,$ \index{1-morphism}
which is smooth of relative dimension $\dim\Aut(E)-\dim G^{\sst\C},$ \index{quotient stack}
where $[S/G^{\sst\C}]$ is the quotient stack, such that\/
$\Phi(s\,G^{\sst\C})=[E],$ the induced morphism on stabilizer groups \index{stabilizer group}
$\Phi_*:\Iso_{[S/G^{\sst\C}]}(s\,G^{\sst\C})\ra\Iso_{\fM}([E])$ is
the natural morphism $G^{\sst\C}\hookra\Aut(E)\cong\Iso_{\fM}([E]),$
and\/ $\rd\Phi\vert_{s\,G^{\sst\C}}:T_sS\cong T_{s\,G^{\sst\C}}
[S/G^{\sst\C}]\ra T_{[E]}\fM\cong \Ext^1(E,E)$ is an isomorphism. \index{versal family}
Furthermore, $S$ parametrizes a formally versal family $(S,{\cal
D})$ of coherent sheaves on $X,$ equivariant under the action of\/
$G^{\sst\C}$ on $S,$ with fibre\/ ${\cal D}_s\cong E$ at\/ $s$. If\/
$\Aut(E)$ is reductive then $\Phi$ is \'etale.

Write $S_\an$ for the complex analytic space \index{complex analytic space} underlying the
$\C$-scheme $S$. Then there exists an open neighbourhood\/ $U$ of\/
$0$ in\/ $\Ext^1(E,E)$ in the analytic topology, a holomorphic \index{analytic topology}
function $f:U\ra\C$ with\/ $f(0)=\rd f\vert_0=0,$ an open
neighbourhood\/ $V$ of\/ $s$ in $S_\an,$ and an isomorphism of
complex analytic spaces $\Xi:\Crit(f)\ra V,$ such that\/ $\Xi(0)=s$
and\/ $\rd\Xi\vert_0:T_0\Crit(f)\ra T_sV$ is the inverse of\/
$\rd\Phi\vert_{s\,G^{\sst\C}}:T_sS\ra\Ext^1(E,E)$. Moreover we can
choose $U,f,V$ to be $G^{\sst\C}$-invariant, and\/ $\Xi$ to be
$G^{\sst\C}$-equivariant.
\label{dt5thm1}
\end{thm}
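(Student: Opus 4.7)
The plan is to prove the theorem in two distinct stages, reflecting its two-part structure: a purely algebraic construction of the quotient stack atlas $[S/G^{\sst\C}]\to\fM$, and an analytic identification of $S_{\an}$ near $s$ with a critical locus via gauge theory.

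For the algebraic part, I would proceed via the Quot scheme construction of moduli of sheaves as in Huybrechts--Lehn \cite{HuLe2}. Choose $n\gg 0$ so that $E(n)$ is globally generated with $H^i(E(n))=0$ for $i>0$ and $h^0(E(n))=P(n)$. Then $E$ corresponds to a point $q$ in $Q=\Quot_X(\cO_X(-n)^{P(n)},P)$, and $\GL(P(n),\C)$ acts on $Q$ with stabilizer at $q$ naturally isomorphic to $\Aut(E)$. The standard presentation identifies an open substack of $\fM^P$ with $[Q^\circ/\GL(P(n),\C)]$, where $Q^\circ$ is the open locus where $H^0(E'(n))\to \C^{P(n)}$ is an iso and $H^i(E'(n))=0$ for $i>0$. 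I would take $S$ to be a $G^{\sst\C}$-invariant locally closed subscheme of $Q$ passing through $q$, obtained by slicing transversally to the $\GL(P(n),\C)$-orbit through $q$ (using Luna's slice theorem applied to the reductive subgroup $G^{\sst\C}\subset\GL(P(n),\C)$). The smoothness of $\Phi:[S/G^{\sst\C}]\to\fM$ and the computation of relative dimension $\dim\Aut(E)-\dim G^{\sst\C}$ then follow from standard slice arguments, and formal versality of the family $(S,\cD)$ with fibre $E$ at $s$ follows from the fact that $\rd\Phi$ induces an iso on tangent spaces $T_sS\cong \Ext^1(E,E)$.

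For the analytic identification, I would pass from coherent sheaves to $\db$-operators on a fixed smooth complex vector bundle $F\to X$ underlying $E$, as in \cite{Koba,LuTe,FrMo}. Write $\cA$ for the affine space of $\db$-operators on $F$; integrable operators (those with $\db^2=0$) form a complex analytic subspace cut out by the Maurer--Cartan equation of the DGLA $(A^{0,*}(X,\End F),\db,[\cdot,\cdot])$ controlling deformations of $E$. The complexified gauge group $\cG^{\sst\C}=\Aut(F)$ acts, and the moduli stack $\fVect$ near $[E]$ is locally modelled on the quotient stack of the MC locus by $\cG^{\sst\C}$. A standard Kuranishi/slice argument produces a $G^{\sst\C}$-equivariant finite-dimensional complex-analytic slice $U\subset \Ext^1(E,E)$ (kernel of $\db$ acting on $A^{0,1}(X,\End F)$) such that the MC locus $\cap\cG^{\sst\C}$-orbit is locally biholomorphic to the zero locus of a holomorphic $\Ext^2(E,E)$-valued map $\kappa:U\to\Ext^2(E,E)$, the Kuranishi map.

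The crucial input is now the Calabi--Yau condition. Serre duality with $K_X\cong\cO_X$ gives a perfect pairing $\Ext^1(E,E)\times\Ext^2(E,E)\to\C$, which lifts to a $G^{\sst\C}$-invariant cyclic structure on the DGLA $A^{0,*}(X,\End F)$ via $\int_X\tr(\alpha\w\beta)\w\Omega_X$. Applying this pairing to the Kuranishi map produces a holomorphic function $f:U\to\C$ with $f(0)=\rd f|_0=0$ whose critical locus is precisely $\kappa^{-1}(0)$; concretely $f$ is a finite-dimensional truncation of the holomorphic Chern--Simons functional $\tfrac{1}{2}\int\tr(\alpha\w\db\alpha)\w\Omega_X+\tfrac{1}{6}\int\tr(\alpha\w[\alpha,\alpha])\w\Omega_X+\cdots$. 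Equivariance of all constructions under the maximal compact of $G^{\sst\C}$, together with averaging, then yields $G^{\sst\C}$-invariance of $f$ and $G^{\sst\C}$-equivariance of the resulting isomorphism $\Xi:\Crit(f)\to V$ with $V\subset S_\an$ an open neighbourhood of $s$.

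The main obstacle is verifying that the resulting Kuranishi model really is a critical locus, rather than merely the zero locus of a holomorphic map. This is exactly where Serre duality and the cyclic DGLA (equivalently, cyclic $L_\infty$) structure enter essentially: without them one can only write the moduli space as $\kappa^{-1}(0)$ for an arbitrary $\Ext^2(E,E)$-valued holomorphic map, and one cannot extract a scalar potential $f$. Making this step $G^{\sst\C}$-equivariant requires some care, since one must choose the Kuranishi slice and the homotopy transferring the cyclic $L_\infty$-structure from the infinite-dimensional DGLA to $\Ext^*(E,E)$ in a $G^{\sst\C}$-equivariant way; this can be arranged by averaging the harmonic projection over the compact form of $G^{\sst\C}$ and using its holomorphic extension to $G^{\sst\C}$. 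As stressed in the introduction, this entire analytic step is fundamentally transcendental (it relies on Hodge theory and elliptic regularity on $X$), which is precisely why the result is initially proved only over $\K=\C$ and why the paper will need a new algebraic substitute in \S\ref{dt.1}.
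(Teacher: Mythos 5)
Your two-stage strategy --- a Quot-scheme/Luna-slice construction of the atlas $S$ with its $G^{\sst\C}$-action, followed by a gauge-theoretic Kuranishi argument in which the Calabi--Yau condition and Serre duality turn the obstruction data into a scalar potential $f$, with equivariance obtained by averaging over the maximal compact subgroup --- is essentially the Joyce--Song strategy that this paper recalls in \S\ref{dt5.1}. Your route to $f$ (contracting the Kuranishi map against the cyclic pairing transferred to $\Ext^*(E,E)$) rather than directly taking $f=CS\vert_U$ for a finite-dimensional complex submanifold $U\subset\sA$ preserved by a finite-dimensional subgroup $G^{\sst\C}\subset\sG$, as in \cite[\S 9]{JoSo}, is a legitimate variation and buys nothing essentially different.

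There is, however, a genuine gap at the start of your analytic stage: you propose to pass to $\db$-operators on ``a fixed smooth complex vector bundle $F\to X$ underlying $E$''. For a general coherent sheaf $E$ --- torsion sheaves, or torsion-free sheaves that are not locally free --- no such underlying smooth bundle exists, and the affine space $\sA$ of semiconnections on $F$ does not control deformations of $E$ in $\coh(X)$. This is exactly why Joyce--Song devote a separate, nontrivial algebraic step (\cite[\S 8]{JoSo}, recalled in \S\ref{dt5.1}) to showing that $\fM$ near $[E]$ is locally 1-isomorphic, as an Artin $\C$-stack, to the moduli stack $\fVect$ of algebraic vector bundles near $[E']$, where $E'$ is obtained from $E$ by shifts and a composition of Seidel--Thomas twists by $\cO_X(-n)$ for $n\gg 0$; one must also check this identification is compatible with the claims about stabilizer groups, tangent spaces $\Ext^1(E,E)$, and the $G^{\sst\C}$-actions, so that the critical chart produced for $E'$ transports back to $E$. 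As written, your argument establishes the theorem only for locally free $E$; to cover arbitrary coherent sheaves you need to add (or invoke) this reduction.
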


In  \cite{JoSo}, Theorem \ref{dt5thm1} gives Joyce and Song the possibility to use the Milnor fibre\index{Milnor fibre}  
formula \eq{dt3eq3} for the Behrend function of $\Crit(f)$ to study the Behrend
function $\nu_\fM$, crucially used in proving Behrend identities. \index{Behrend function!Behrend identities}
The proof of Theorem \ref{dt5thm1} comes in two parts. First it is shown
in \cite[\S 8]{JoSo} that $\fM$ near $[E]$ is locally isomorphic, as
an Artin $\C$-stack, to the moduli stack $\fVect$ of  {\it algebraic vector
bundles\/}\index{vector bundle!algebraic}  on $X$ near $[E']$ for some vector bundle $E'\ra X$.
The proof uses algebraic geometry, and is valid for $X$ a
Calabi--Yau $m$-fold for any $m>0$ over any algebraically closed
field $\K$. The local morphism $\fM\ra\fVect$ is the composition of
shifts and $m$  {\it Seidel--Thomas
twists\/}\index{Seidel--Thomas twist} by $\cO_X(-n)$ for~$n\gg 0$.
Thus, it is enough to prove Theorem \ref{dt5thm1} with $\fVect$ in
place of $\fM$. This is done in \cite[\S 9]{JoSo} using gauge theory
on vector bundles over $X$. An interesting motivation for this 
approach could be found in  \cite[\S 3]{DoTh} and \cite[\S 2]{Thom}. 
Let $E\ra X$ be a fixed complex (not holomorphic)
vector bundle over $X$. Write $\sA$ for the infinite-dimensional \index{semiconnection}
\nomenclature[As]{$\sA$}{affine space of smooth semiconnections on a vector bundle}
affine space of smooth {\it semiconnections} ($\db$-operators) on $E$, and
$\sG$ for the infinite-dimensional Lie group of  \index{gauge theory!gauge group}
\nomenclature[G]{$\sG$}{gauge group of smooth gauge transformations of a vector bundle}
{\it smooth gauge transformations} of $E$. Then $\sG$ acts on $\sA$, and $\sB=\sA/\sG$
\nomenclature[B]{$\sB$}{space of gauge-equivalence classes of semiconnections on a vector bundle}
is the space of gauge-equivalence classes of semiconnections on~$E$.
Fix $\db_E$ in $\sA$ coming from a holomorphic vector bundle
structure on $E$. Then points in $\sA$ are of the form $\db_E+A$ for
$A\in C^\iy\bigl(\End(E)\ot_\C \La^{0,1}T^*X\bigr)$, and $\db_E+A$
makes $E$ into a holomorphic vector bundle if $F_A^{0,2}=\db_EA+A\w
A$ is zero in $\smash{C^\iy\bigl(\End(E)\ot_\C
\La^{0,2}T^*X\bigr)}$. Thus, the moduli space (stack) of holomorphic
vector bundle structures on $E$ is isomorphic to $\{\db_E+A\in\sA:
F_A^{0,2}=0\}/\sG$. In  \cite{Thom}, it is observed that when $X$ is a Calabi--Yau
3-fold, there is a natural holomorphic function $CS:\sA\ra\C$ called
the {\it holomorphic Chern--Simons functional}, invariant under
\nomenclature[CS]{$CS$}{holomorphic Chern--Simons functional}
$\sG$ up to addition of constants, such that
$\{\db_E+A\in\sA:F_A^{0,2}=0\}$ is the critical locus of $CS$. Thus,
$\fVect$ is (informally) locally the critical points of a
holomorphic function $CS$ on an infinite-dimensional complex stack
$\sB=\sA/\sG$. To prove Theorem \ref{dt5thm1} Joyce and Song show that one can
find a finite-dimensional complex submanifold $U$ in $\sA$ and a
finite-dimensional complex Lie subgroup $G^{\sst\C}$ in $\sG$
preserving $U$ such that the theorem holds with~$f=CS\vert_U$.
These $U,f$ are {\it not algebraic}, they are constructed using gauge theory on the
complex vector bundle $E$ over $X$ and transcendental methods.  

\paragraph[The Behrend function identities from
$\text{\cite{JoSo}}$]{The Behrend function identities from \cite{JoSo}.}
\label{dtBehid} \index{Behrend function!Behrend identities|(}
In \cite[Thm. 5.11]{JoSo} Behrend
function identities are proven: they are the crucial step to 
define the Lie algebra morphism $\ti\Psi$ below and then the generalized Donaldson--Thomas invariants:

\begin{thm} Let\/ $X$ be a Calabi--Yau $3$-fold over\/ $\C,$ and\/
$\fM$ the moduli stack of coherent sheaves on\/ $X$. The
Behrend function $\nu_{\fM}:
\fM(\C)\ra\Z$ is a natural locally constructible function on $\fM$.
For all\/ $E_1,E_2\in\coh(X),$ it satisfies:
\bigskip
\begin{equation}
\nu_{\fM}(E_1\op E_2)=(-1)^{\bar\chi([E_1],[E_2])}
\nu_{\fM}(E_1)\nu_{\fM}(E_2), 
\label{dt6eq1}
\end{equation}
\smallskip
\begin{equation}
\displaystyle \int\limits_{\small{\begin{subarray}{l}  [\la]\in\mathbb{P}(\Ext^1(E_2,E_1)):\\
 \la\; \Leftrightarrow\; 0\ra E_1\ra F\ra E_2\ra
0\end{subarray}}}\!\!\!\!\! \!\!\!\! \!\!\!\! \!\!\!\! \nu_{\fM}(F)\,\rd\chi \quad - \!\!\!\! \!\!\!\! \!\!\!\! 
\displaystyle \int\limits_{\small{\begin{subarray}{l}[\mu]\in\mathbb{P}(\Ext^1(E_1,E_2)):\\
\mu\; \Leftrightarrow\; 0\ra E_2\ra D\ra E_1\ra
0\end{subarray}}}\!\!\!\!\! \!\!\!\! \!\!\!\! \!\!\!\! \nu_{\fM}(D)\,\rd\chi \;\;
= \;\; (e_{21}-e_{12})\;\;
\nu_{\fM}(E_1\op E_2),
\label{dt6eq2}
\end{equation}
\bigskip

where $e_{21}=\dim\Ext^1(E_2,E_1)$ and $e_{12}=\dim\Ext^1(E_1,E_2)$ for $E_1,E_2\in\coh(X).$
Here\/ $\bar\chi([E_1],[E_2])$ in \eq{dt6eq1} is the Euler form as in \eq{eu}, \index{Euler form}
and in \eq{dt6eq2} the correspondence between\/
$[\la]\in\mathbb{P}(\Ext^1(E_2,E_1))$ and\/ $F\in\coh(X)$ is that\/
$[\la]\in\mathbb{P}(\Ext^1(E_2,E_1))$ lifts to some\/
$0\ne\la\in\Ext^1(E_2,E_1),$ which corresponds to a short exact
sequence\/ $0\ra E_1\ra F\ra E_2\ra 0$ in\/ $\coh(X)$ in the usual
way. The function $[\la]\mapsto\nu_{\fM}(F)$ is a constructible
function\/ $\mathbb{P}(\Ext^1(E_2,E_1))\ra\Z,$ and the integrals in
\eq{dt6eq2} are integrals of constructible functions using the Euler
characteristic as measure. \index{constructible function} \index{Euler characteristic}
\label{dt6thm1}
\end{thm}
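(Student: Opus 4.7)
The plan is to combine the local étale presentation of $\fM$ around $[E_1\oplus E_2]$ with a torus-equivariant analysis and the blowup formula of Theorem \ref{blowup}. Applying Theorem \ref{dt5thm1}, in its algebraic $T$-equivariant form established in \S\ref{dt.1}, to $E:=E_1\oplus E_2$ produces a smooth $\K$-scheme $U$, a torus $T\subset\Aut(E_1)\t\Aut(E_2)\subset\Aut(E)$ acting on $U$, a $T$-invariant regular $f:U\ra\bA^1_\K$ vanishing to second order at a fixed point $0\in U^T$, and a smooth morphism $[\Crit(f)/T]\ra\fM$ sending $0\cdot T\mapsto[E]$, whose derivative is the canonical $T$-equivariant identification $T_0U\cong\Ext^1(E,E)=\bigoplus_{i,j=1}^2\Ext^1(E_i,E_j)$. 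The two summands $\Aut(E_i)$ provide two commuting cocharacters which I will exploit separately for the two identities.

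For \eqref{dt6eq1}, consider the cocharacter $\la_1:\mathbb{G}_m\hookra T$ acting trivially on $E_1$ and by scalar on $E_2$. Its weight decomposition on $\Ext^1(E,E)$ is zero on the diagonal summands $\Ext^1(E_i,E_i)$ and non-zero on the off-diagonal ones, so the fixed subscheme $U^{\la_1}$ is, étale-locally at $0$, the product of the individual Kuranishi charts for $E_1$ and $E_2$ obtained by applying Theorem \ref{dt5thm1} to each summand; in particular $f\vert_{U^{\la_1}}$ is the sum of the two individual potentials. The key algebraic Behrend-function localization identity for $\mathbb{G}_m$-actions on critical loci, proved in \S\ref{dt.1},
\e
\nu_{\d f^{-1}(0)}(0)=(-1)^{\dim T_0\,\d f^{-1}(0)-\dim T_0\,(\d f^{-1}(0))^{\la_1}}\,\nu_{(\d f^{-1}(0))^{\la_1}}(0),
\label{locid}
\e
together with the product property $\nu_{A\t B}=\nu_A\boxdot\nu_B$ of Theorem \ref{dt3thm3}, yields $\nu_{\fM}(E_1\op E_2)=(-1)^n\nu_{\fM}(E_1)\nu_{\fM}(E_2)$ with $n=\dim\Ext^1(E_1,E_2)+\dim\Ext^1(E_2,E_1)$. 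Serre duality on the Calabi--Yau $3$-fold $X$ gives $\dim\Ext^i(E_1,E_2)=\dim\Ext^{3-i}(E_2,E_1)$; a short parity computation against $\bar\chi([E_1],[E_2])=\sum_i(-1)^i\dim\Ext^i(E_1,E_2)$ reduces $n$ to $\bar\chi([E_1],[E_2])$ modulo $2$, proving \eqref{dt6eq1}.

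For \eqref{dt6eq2}, I keep the same equivariant chart and invoke Theorem \ref{blowup}. Blow up $U$ along the smooth subvariety $V\subset U$ cut out étale-locally by $\Ext^1(E_1,E_1)\op\Ext^1(E_2,E_2)$; the exceptional fibre over $0$ is the projective space $\mathbb{P}\bigl(\Ext^1(E_2,E_1)\op\Ext^1(E_1,E_2)\bigr)$, which contains the two linear subspaces $\mathbb{P}(\Ext^1(E_2,E_1))$ and $\mathbb{P}(\Ext^1(E_1,E_2))$. A point $[\la]\in\mathbb{P}(\Ext^1(E_2,E_1))$ maps under the composition $\Crit(\tilde f)\ra\fM$ to the isomorphism class of the extension $F$ corresponding to $\la$, and similarly for $[\mu]\in\mathbb{P}(\Ext^1(E_1,E_2))$ and $D$; so $\nu_{\Crit(\tilde f)}$ pulls back to $\nu_{\fM}(F)$ and $\nu_{\fM}(D)$ respectively on these projective subspaces, while it vanishes on the complement (no genuine extension direction there). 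Plugging into the formula of Theorem \ref{blowup} expresses $\nu_{\fM}(E_1\op E_2)$ as the sum of the two integrals appearing in \eqref{dt6eq2} plus a boundary term $(\dim U-\dim V-1)(-1)^{\dim U-\dim V}\nu_{\Crit(f\vert_V)}(0)$; identity \eqref{dt6eq1} already proved, applied on $V$, identifies $\nu_{\Crit(f\vert_V)}(0)$ with $\nu_{\fM}(E_1\op E_2)$, and rearranging with $\dim U-\dim V=e_{12}+e_{21}$ exactly produces the combinatorial factor $(e_{21}-e_{12})$ on the right-hand side.

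The hard part will be the localization identity \eqref{locid} for the Behrend function at a non-isolated $\mathbb{G}_m$-fixed point of a critical locus over an arbitrary algebraically closed field $\K$ of characteristic zero. Over $\C$ this is an easy consequence of the $S^1$-symmetry of the Milnor fibre and \eqref{dt3eq3}, but algebraically one must derive it from the d-critical locus structure of \cite{Joyc2} together with Theorem \ref{blowup}, which is precisely what \S\ref{dt.1} carries out by a deformation to the $T$-fixed locus and an inductive application of the blowup formula. Once \eqref{locid} is in place, the weight-space bookkeeping under Serre duality for the first identity and the blowup/exceptional-divisor geometry for the second are purely algebraic and require no transcendental input, in contrast to the gauge-theoretic argument of \cite[\S 5.2]{JoSo}.
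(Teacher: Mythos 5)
There is a genuine gap, concentrated in the two places where your plan reorganizes the hard content. Your route to \eq{dt6eq2} would fail as described: after blowing up $U$ along the fixed locus $V$, the only available morphism to the moduli stack is $\xi\ci\pi:\Crit(\ti f)\ra\fM$, and $\pi$ collapses the whole exceptional fibre over $0$ to $0\in U$, so every point $[\la]\in\bP(\Ext^1(E_2,E_1))$ in that fibre maps to $[E_1\op E_2]$, \emph{not} to the extension $F$; the identification of $\nu_{\Crit(\ti f)}$ there with $\nu_{\fM}(F)$ is therefore unsupported. The claim that $\nu_{\Crit(\ti f)}$ ``vanishes on the complement'' of the two linear subspaces is also not a pointwise fact; what is true is that the $\chi$-integral over the free $\bG_m$-orbits vanishes, which is a localization statement you would still have to prove. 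Finally, your single symmetric blowup can only output the \emph{sum} of the two integrals with a coefficient symmetric in $e_{12},e_{21}$, whereas \eq{dt6eq2} is a \emph{difference} with the antisymmetric coefficient $(e_{21}-e_{12})$; no rearrangement using \eq{dt6eq1} repairs this. The paper instead works at the points $(0,0,0,\ep_{21})$ and $(0,0,\ep_{12},0)$ of $U$ itself, which genuinely map to $[F]$ and $[D]$ by Proposition \ref{dt10prop2}, performs two separate blowups of $U'=\{\ep_{21}\ne 0\}$ along $\{\ep_{12}=0\}$ and of $U''=\{\ep_{12}\ne 0\}$ along $\{\ep_{21}=0\}$, identifies the two exceptional Behrend functions through the common smooth projection $\Pi$ to the subscheme $Q$ of $W\subset\Ext^1(E_1,E_1)\t\Ext^1(E_2,E_2)\t(L_{12}\ot L_{21})$ as in \eq{dt6eq29}, and then \emph{subtracts} \eq{dt6eq26} and \eq{dt6eq27}; it is this subtraction, cancelling the $Q$-integral (the cross-term you tried to discard), that produces $(e_{21}-e_{12})$.

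For \eq{dt6eq1} there are two further problems. The localization identity you call the hard part is not proved but cited from \S\ref{dt.1}, i.e.\ from the proof of the very theorem under discussion, so the argument is circular as written; moreover the paper does not establish it first and then deduce \eq{dt6eq1} — it obtains the needed statement \eq{dt6eq7} only at the end, by adding the blowup identity \eq{dt6eq25} to the same identity applied with $\Ext^1(E_2,E_1)\op\K$ in place of $\Ext^1(E_2,E_1)$ (the dimension-augmentation trick), so your logical order cannot be carried out without an independent proof (over $\C$ one can fall back on the $S^1$/Milnor-fibre argument of Joyce--Song, but then the argument is no longer algebraic and the failure of your second identity remains). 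Second, your parity claim is wrong: Serre duality gives $e_{12}+e_{21}\equiv\bar\chi([E_1],[E_2])+\dim\Hom(E_1,E_2)+\dim\Hom(E_2,E_1)\pmod 2$, so $(-1)^{e_{12}+e_{21}}$ is not $(-1)^{\bar\chi([E_1],[E_2])}$ in general. The missing parity is exactly $\dim\Aut(E_1\op E_2)-\dim\Aut(E_1)-\dim\Aut(E_2)=\dim\Hom(E_1,E_2)+\dim\Hom(E_2,E_1)$, the discrepancy of relative dimensions of the smooth atlas morphisms, which the paper tracks through \eq{dt6eq5}--\eq{dt6eq6} and which you dropped when passing from the critical charts to $\fM$.
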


Joyce and Song prove Theorem \ref{dt6thm1} using Theorem \ref{dt5thm1} and the
Milnor fibre description of Behrend functions from ~\S\ref{dt4}. 
They apply Theorem \ref{dt5thm1} to $E=E_1\op E_2$, 
and take the maximal reductive subgroup $G$ of $\Aut(E)$ to contain \index{reductive group!maximal}
the subgroup $\bigl\{\id_{E_1}+\la\id_{E_2}: \la\in\U(1)\bigr\}$, so
that $G^{\sst\C}$ contains $\bigl\{\id_{E_1}+\la\id_{E_2}:\la\in
\bG_m\bigr\}$. Equations \eq{dt6eq1} and \eq{dt6eq2} are proved by a
kind of localization using this $\bG_m$-action on~$\Ext^1(E_1\op
E_2,E_1\op E_2)$.
More precisely, Theorem \ref{dt5thm1} gives an atlas for $\fM$ near $E$ as $\Crit(f)$ near $0$, \index{Milnor fibre}
where $f$ is a holomorphic function defined near $0$ on ~$\Ext^1(E_1\op
E_2,E_1\op E_2)$ and $f$ is invariant under the action of $T=\bigl\{\id_{E_1}+\la\id_{E_2}: \la\in\U(1)\bigr\}$ on
$\Ext^1(E_1\op E_2,E_1\op E_2)$ by conjugation. The fixed points of $T$ on $\Ext^1(E_1\op
E_2,E_1\op E_2)$ are $\Ext^1(E_1,E_1)\op\Ext^1(E_2,E_2)$ and heuristically one can says that
the restriction of $f$ to these fixed points is $f_1 + f_2,$
where $f_j$ is defined near $0$ in $\Ext^1(E_j,E_j)$ and $\Crit(f_j)$ is an atlas for $\fM$ near $E_j$. \index{moduli stack!atlas}
The Milnor fibre $MF_f(0)$ is invariant under $T$, so by localization one has 
$$\chi (MF_f(0))=\chi(MF_f(0)^T)=\chi(MF_{f_1 + f_2}(0)).$$
A product property of Behrend functions, which may be seen as a 
kind of {\it Thom-Sebastiani theorem}, gives \index{Thom-Sebastiani theorem}
$$1-\chi(MF_{f_1 + f_2}(0))=(1-\chi(MF_{f_1}(0)))(1-\chi(MF_{f_2}(0))).$$
Then the identity \eq{dt6eq1} follows from Theorem \ref{dt3thm1}:
$$\nu_\fM (E)=(-1)^{	\dim\Ext^1(E,E)-\dim\Hom(E,E)}(1-\chi (MF_f(0))),$$
and the analogues for $E_1$ and $E_2$. 
Equation \eq{dt6eq2} uses a more involved argument to do with the Milnor fibres of $f$ 
at non-fixed points of the $U(1)$-action.
The proof of Theorem 
\ref{dt6thm1} uses gauge theory, and transcendental complex analytic \index{gauge theory}
geometry methods, and is valid only over~$\K=\C$.
However, as pointed out in \cite[Question 5.12]{JoSo}, Theorem \ref{dt6thm1} makes sense as a statement in
algebraic geometry, for Calabi--Yau 3-folds over \index{field $\K$} $\K$. \index{constructible function} \index{conical Lagrangian cycle}
\index{Behrend function!Behrend identities|)}

\medskip

In \cite[\S 5]{JoSo}, Joyce and Song 
then define {\it generalized Donaldson--Thomas invariants\/}
$\bar{DT}{}^\al(\tau)\in\Q$ by\index{Donaldson--Thomas
invariants!generalized $\bar{DT}{}^\al(\tau)$}
\e
\ti\Psi\bigl(\bar\ep^\al(\tau)\bigr)=-\bar{DT}{}^\al(\tau)\ti
\la^\al,
\label{dt4eq8}
\e
as in \eq{dt4eq4}. When $\M_\rss^\al(\tau)=\M_\st^\al(\tau)$ then
$\bar\ep^\al(\tau)=\bar\de_\rss^\al(\tau)$, and \eq{dt4eq5} gives
\e
\ti\Psi\bigl(\bar\ep^\al(\tau)\bigr)=\chi^{\rm stk}\bigl(
\fM_\st^\al (\tau),\nu_{\fM_\st^\al(\tau)}\bigr)\ti\la^\al.
\label{dt4eq9}
\e
The projection $\pi:\fM_\st^\al(\tau)\ra\M_\st^\al(\tau)$ from the
moduli stack to the coarse moduli scheme\index{coarse moduli
scheme}\index{moduli scheme!coarse} is smooth of dimension $-1$, so
$\nu_{\fM_\st^\al(\tau)}=-\pi^*(\nu_{\M_\st^\al(\tau)})$ by (ii) in
\S\ref{dt3.2.2}, and comparing \eq{dt3eq5}, \eq{dt4eq8}, \eq{dt4eq9}
shows that $\bar{DT}{}^\al(\tau)=DT^\al(\tau)$. But the new
invariants $\bar{DT}{}^\al(\tau)$ are also defined for $\al$ with
$\M_\rss^\al(\tau)\ne\M_\st^\al(\tau)$, when conventional
Donaldson--Thomas invariants $DT^\al(\tau)$ are not defined. 
\index{Donaldson--Thomas invariants!original $DT^\al(\tau)$}

\smallskip

Thanks to Theorem \ref{dt5thm1} and Theorem \ref{dt6thm1},  
$\ti\Psi$ is a Lie algebra morphism \cite[\S 5.3]{JoSo}, thus the change of stability
condition formula for the $\bar\ep^\al(\tau)$ in \cite{Joyc.6}
implies a formula for the elements $-\bar{DT}{}^\al(\tau)\ti
\la^\al$ in $\ti L(X)$, and thus a transformation law for the
invariants $\bar{DT}{}^\al(\tau)$, \index{wall crossing}
using combinatorial coefficients. \index{Euler form}
\nomenclature[V]{$V(I,\Ga,\ka;\tau,\ti\tau)$}{combinatorial coefficient used in wall-crossing formulae}

\smallskip\nomenclature[PI]{$PI^{\al,n}(\tau')$}{invariants counting stable pairs $s:\cO_X(-n)\ra E$}
\nomenclature[PT]{$PT_{n,\beta}$}{PandharipandeÐThomas invariants}\index{Pandharipande--Thomas invariants}\index{stable pair}
\nomenclature[M al stp]{$\M_\stp^{\al,n}(\tau')$}{the moduli space of stable pairs $s:\cO_X(-n)\ra X$ with $[E]=\al$}

To study the new invariants $\bar{DT}{}^\al(\tau)$, it is
helpful to introduce another family of invariants
$PI^{\al,n}(\tau')$,\index{stable pair invariants
$PI^{\al,n}(\tau')$} similar to Pandharipande--Thomas invariants
\cite{PaTh}. Let $n\gg 0$ be
fixed. A {\it stable pair} is a nonzero
morphism $s:\cO_{X}(-n)\ra E$ in $\coh(X)$ such that $E$ is
$\tau$-semistable, and if $\Im s\subset E'\subset E$ with $E'\ne E$
then $\tau([E'])<\tau([E])$. For $\al\in K^\num(\coh(X))$ and $n\gg
0$, the moduli space $\M_\stp^{\al,n}(\tau')$ of stable pairs
$s:\cO_X(-n)\ra X$ with $[E]=\al$ is a fine moduli
scheme,\index{fine moduli scheme}\index{moduli scheme!fine} which is
proper and has a symmetric obstruction theory.\index{symmetric
obstruction theory}\index{obstruction theory!symmetric} Joyce and Song  define
\e
\ts PI^{\al,n}(\tau')\;\;\;\;=\displaystyle \int_{\small{[\M_\stp^{\al,n}(\tau')]^\vir}}1\;\;\;\;=\;\;\;\;
\chi\bigl( \M_\stp^{\al,n}(\tau'),\nu_{\M_\stp^{\al,n}(\tau')}
\bigr)\in\Z,
\label{dt4eq11}
\e
where the second equality follows from Theorem \ref{dt3thm4}. By a similar
proof to that for Donaldson--Thomas invariants in \cite{Thom}, Joyce and Song
find that $PI^{\al,n}(\tau')$ is unchanged under deformations of the
underlying Calabi--Yau 3-fold~$X$. \index{deformation-invariance}
By a wall-crossing proof similar to that
for $\bar{DT}{}^\al(\tau)$, they show that $PI^{\al,n}(\tau')$ can be written in
terms of the $\bar{DT}{}^\be(\tau).$  As $PI^{\al,n}(\tau')$ is
deformation-invariant, one deduces from this relation by induction on
$\rank\al$ with $\dim\al$ fixed that $\bar{DT}{}^\al(\tau)$ is also
deformation-invariant. \index{deformation-invariance}

\smallskip

The pair invariants $PI^{\al,n}(\tau')$ are a useful tool for
computing the $\bar{DT}{}^\al(\tau)$ in examples in \cite[\S 6]{JoSo}. The
method is to describe the moduli spaces $\M_\stp^{\al,n}(\tau')$
explicitly, and then use \eq{dt4eq11} to compute
$PI^{\al,n}(\tau')$, and their relation with $\bar{DT}{}^\al(\tau)$ to deduce the values of
$\bar{DT}{}^\al(\tau)$. Their point of view is that the
$\bar{DT}{}^\al(\tau)$ are of primary interest, and the
$PI^{\al,n}(\tau')$ are secondary invariants, of less interest in
themselves.

\paragraph[Motivic Donaldson--Thomas invariants: Kontsevich and Soibelman's approach from $\text{\cite{KoSo1}}$]{Motivic Donaldson--Thomas invariants: Kontsevich and Soibelman's approach from \cite{KoSo1}.} \index{Donaldson--Thomas invariants!motivic|(}
\label{KoSo}

Kontsevich and Soibelman in \cite{KoSo1} also studied generalizations of Donaldson--Thomas invariants.
They work in a more general context but their results are in great part based on conjectures. They consider derived categories of coherent sheaves, Bridgeland stability conditions \cite{Brid1}, and general motivic invariants, whereas Joyce and Song work with abelian categories of coherent sheaves, Gieseker stability, and the Euler characteristic.
Kontsevich and Soibelman's motivic functions in the equivariant setting \cite[\S 4.2]{KoSo1}, motivic Hall algebra \cite[\S 6.1]{KoSo1},
motivic quantum torus \cite[\S 6.2]{KoSo1} and their algebra morphism to define Donaldson--Thomas invariants \cite[Thm.\,8]{KoSo1} all
have an analogue in Joyce and Song's program. 

\medskip

It is worth to note here some points (see \cite[\S 1.6]{JoSo} for the entire discussion).
\begin{itemize}
\setlength{\itemsep}{0pt}
\setlength{\parsep}{0pt}
\item[{\bf(a)}] Joyce was probably the first to approach Donaldson--Thomas type invariants in an abstract categorical setting. He developed the technique of motivic stack functions and understood the relevance of motives to the counting problem \cite{Joyc.1,Joyc.2,Joyc.3,Joyc.4,Joyc.5,Joyc.6}. The main limitation of his approach was due to the fact that he worked with abelian rather than triangulated categories. For many applications, especially to physics, one needs triangulated categories. The more recent theory of Joyce and Song \cite{JoSo} fixes some of these gaps and fits well with the general philosophy of \cite{KoSo1} (and actually Joyce and Song use some ideas from Kontsevich and Soibelman). 
They deal with concrete examples of categories (e.g. the category of coherent sheaves) and construct numerical invariants via Behrend approach. It is difficult to prove that they are in fact invariants of triangulated categories which is manifest in \cite{KoSo1}.
\item[{\bf(b)}] Kontsevich and Soibelman write their wall-crossing formulae in terms of products in a pro-nilpotent Lie group
while Joyce and Song's formulae are written in terms of combinatorial coefficients. 
\item[{\bf(c)}] Equations \eq{dt6eq1}--\eq{dt6eq2} are related to a conjecture of
Kontsevich and Soibelman \cite[Conj.\,4]{KoSo1} and its application
in \cite[\S 6.3]{KoSo1}, and could probably be deduced from it. Joyce and Song got the idea of proving \eq{dt6eq1}--\eq{dt6eq2} by
localization using the $\bG_m$-action on $\Ext^1(E_1\op E_2, E_1\op
E_2)$ from \cite{KoSo1}. However, Kontsevich and Soibelman approach
\cite[Conj.\,4]{KoSo1} via formal power series and non-Archimedean 
geometry. Their analogue concerns the `motivic Milnor fibre' \index{motivic Milnor fibre} of the formal power series $f$. 
Instead, in Theorem \ref{dt5thm1} Joyce and Song in effect first prove
that they can choose the formal power series to be convergent, and
then use ordinary differential geometry and Milnor fibres.
\item[{\bf(d)}] While Joyce's series of papers \cite{Joyc.1,Joyc.2,Joyc.3,Joyc.4,Joyc.5,Joyc.6} develops the difficult idea of `virtual rank' 
and `virtual indecomposables', Kontsevich and Soibelman have no analogue of these. They come up against the problem (specialization from virtual Poincar\'e polynomial to Euler characteristic)
this technology was designed to solve in the `absence of poles conjecture' \cite[\S7]{KoSo1}.  
\end{itemize}\index{non-Archimedean geometry}\index{formal power series}

Section \ref{dt7} proposes new ideas for further research also in the direction of Kontsevich and Soibelman's paper \cite{KoSo1}.  

\index{Donaldson--Thomas invariants!motivic|)}
\index{Donaldson--Thomas invariants!generalized $\bar{DT}{}^\al(\tau)$|)}

\section{D-critical loci}
\label{dcr}

We summarizes the theory of d-critical
schemes and stacks introduced by Joyce \cite{Joyc.2}.
There are two versions of the theory, complex analytic and algebraic
d-critical loci, sometimes we give results for both
the versions simultaneously, otherwise just
briefly indicate the differences between the two, referring to \cite{Joyc.2}
for details.

\subsection{D-critical schemes}
\label{dcr.1}

Let $X$ be a complex analytic space or a $\K$-scheme. Then \cite[Th.~2.1 \& Prop.~2.3]{Joyc.2}
associates a natural sheaf $\cS_X$ to $X$, such that, very
briefly, sections of $\cS_X$ parametrize different ways of writing
$X$ as $\Crit(f)$ for $U$ a complex manifold or smooth $\K$-scheme
and $f:U\ra\C$ holomorphic or $f:U\ra\bA^1$ regular. 
We refer to \cite[Th.~2.1 \& Prop.~2.3]{Joyc.2} for details. 
Just to give a bit more clear idea, we point out the following:

\begin{rem} Suppose we have $U$ a complex manifold, $f:U\ra\C$ an
holomorphic, and $X=\Crit(f)$, as a closed complex analytic subspace
of $U$. Write $i:X\hookra U$ for the inclusion, and
$I_{X,U}\subseteq i^{-1}(\O_U)$ for the sheaf of ideals vanishing on
$X\subseteq U$. Then
we obtain a natural section $s\in H^0(\cS_X)$. Essentially
$s=f+I_{\d f}^2$, where $I_{\d f}\subseteq\O_U$ is the ideal
generated by $\d f$. Note that $f\vert_X=f+I_{\d f}$, so $s$
determines $f\vert_X$. Basically, $s$ remembers all of the
information about $f$ which makes sense intrinsically on $X$, rather
than on the ambient space~$U$.
\label{dc2ex1}
\end{rem}

Following \cite[Def.~2.5]{Joyc.2} we define algebraic d-critical
loci:

\begin{dfn} An ({\it algebraic\/}) {\it d-critical locus\/} over a
field $\K$ is a pair $(X,s)$, where $X$ is a $\K$-scheme and $s\in
H^0(\cSz_X)$, such that for each $x\in X$, there exists a Zariski
open neighbourhood $R$ of $x$ in $X$, a smooth $\K$-scheme $U$, a
regular function $f:U\ra\bA^1=\K$, and a closed embedding
$i:R\hookra U$, such that $i(R)=\Crit(f)$ as $\K$-subschemes of $U$,
and $\io_{R,U}(s\vert_R)=i^{-1}(f)+I_{R,U}^2$. We call the quadruple
$(R,U,f,i)$ a {\it critical chart\/} on~$(X,s)$.
If $U'\subseteq U$ is a Zariski open, and
$R'=i^{-1}(U')\subseteq R$, $i'=i\vert_{R'}:R'\hookra U'$, and
$f'=f\vert_{U'}$, then $(R',U',f',i')$ is a critical chart on
$(X,s)$, and we call it a {\it subchart\/} of $(R,U,f,i)$, and we write~$(R',U',f',i')\subseteq (R,U,f,i)$.

\smallskip

Let $(R,U,f,i),(S,V,g,j)$ be critical charts on $(X,s)$, with
$R\subseteq S\subseteq X$. An {\it embedding\/} of $(R,U,f,i)$ in
$(S,V,g,j)$ is a locally closed embedding $\Phi:U\hookra V$ such
that $\Phi\ci i=j\vert_R$ and $f=g\ci\Phi$. As a shorthand we write
$\Phi: (R,U,f,i)\hookra(S,V,g,j)$. If $\Phi:(R,U,f,i)\hookra
(S,V,g,j)$ and $\Psi:(S,V,g,j)\hookra(T,W,h,k)$ are embeddings, then
$\Psi\ci\Phi:(R,U,i,e)\hookra(T,W,h,k)$ is also an embedding.

\smallskip

A {\it morphism\/} $\phi:(X,s)\ra (Y,t)$ of d-critical loci
$(X,s),(Y,t)$ is a $\K$-scheme morphism $\phi:X\ra Y$ with
$\phi^\star(t)=s$. This makes d-critical loci into a category.

\label{sa3def1}
\end{dfn}

\begin{rem} {\bf(a)} For $(X,s)$ to be a (complex analytic or
algebraic) d-critical locus places strong local restrictions on the
singularities of $X$. For example, Behrend \cite{Behr} notes that if
$X$ has reduced local complete intersection singularities then
locally it cannot be the zeroes of an almost closed 1-form on a
smooth space, and hence not locally a critical locus, and
Pandharipande and Thomas \cite{PaTh} give examples which are zeroes
of almost closed 1-forms, but are not locally critical loci.
\smallskip

\noindent{\bf(b)} If
$X=\Crit(f)$ for holomorphic $f:U\ra\C$, then $f\vert_{X^\red}$ is
locally constant, and we can write $f=f^0+c$ uniquely near $X$ in
$U$ for $f^0:U\ra\C$ holomorphic with $\Crit(f^0)=X=\Crit(f)$,
$f^0\vert_{X^\red}=0$, and $c:U\ra\C$ locally constant
with~$c\vert_{X^\red}=f\vert_{X^\red}$.
Defining d-critical loci using $s\in H^0(\cSz_X)$
corresponds to remembering only the function $f^0$ near $X$ in $U$,
and forgetting the locally constant function
$f\vert_{X^\red}:X^\red\ra\C$. 
\smallskip

\noindent{\bf(c)} In \cite[ex. 2.16]{Joyc.2}, Joyce shows a case in which the
algebraic d-critical locus remembers more information, locally, than
the symmetric obstruction theory. In \cite[ex. 2.17]{Joyc.2}, Joyce 
shows that the (symmetric) obstruction theory remembers global, non-local
information which is forgotten by the algebraic d-critical locus.
\smallskip

\noindent{\bf(e)} One could think about critical charts as Kuranishi neighbourhoods on a topological space, and embeddings as analogous to coordinate changes between Kuranishi neighbourhoods.
\end{rem}

Here are~\cite[Prop.s 2.8, 2.30, Th.s 2.20, 2.28,
Def.~2.31,  Rem 2.32 \& Cor.~2.33]{Joyc.2}:

\begin{prop} Let\/ $\phi:X\ra Y$ be a smooth morphism of\/
$\K$-schemes. Suppose $t\in H^0(\cSz_Y),$ and set\/
$s:=\phi^\star(t)\in H^0(\cSz_X)$. If\/ $(Y,t)$ is a d-critical
locus, then\/ $(X,s)$ is a d-critical locus, and\/
$\phi:(X,s)\ra(Y,t)$ is a morphism of d-critical loci. Conversely,
if also $\phi:X\ra Y$ is surjective, then $(X,s)$ a d-critical locus
implies $(Y,t)$ is a d-critical locus.
\label{sa3prop1}
\end{prop}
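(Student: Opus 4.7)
The strategy is local: we argue Zariski-locally near chosen points $x \in X$ (and $y \in Y$ in the converse), using the standard fact that a smooth morphism of $\K$-schemes Zariski-locally factors as an étale morphism followed by a projection $Z \times \bA^d \ra Z$. The pullback $\phi^\star$ on sections of $\cSz$ is functorial and compatible with critical charts by its construction, so matching sections reduces to matching charts.

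\textbf{Forward implication.} Fix $x \in X$, set $y = \phi(x)$, and pick a critical chart $(S,V,g,j)$ on $(Y,t)$ with $y \in S$. Let $R \subseteq \phi^{-1}(S)$ be an open neighbourhood of $x$ on which $\phi|_R$ is smooth of some relative dimension $d$. Using the local factorisation of $\phi|_R$, I would construct a smooth $\K$-scheme $U$, a smooth morphism $\pi : U \ra V$ of relative dimension $d$, and a closed embedding $i : R \hookra U$ with $\pi \ci i = j \ci \phi|_R$ and $i(R) = \pi^{-1}(j(S))$ scheme-theoretically: in the projection case take $U = V \times \bA^d$, $\pi = \mathrm{pr}_V$, $i = j \times \id$; the étale case is handled by lifting the étale morphism $R \ra S$ to an étale morphism of ambient smooth spaces. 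Setting $f = g \ci \pi$ and using that $\pi^\ast \Omega_V \hookra \Omega_U$ by smoothness of $\pi$, one has $\d f = \pi^\ast(\d g)$, hence
\begin{equation*}
\Crit(f) \;=\; \pi^{-1}(\Crit(g)) \;=\; \pi^{-1}(j(S)) \;=\; i(R)
\end{equation*}
as subschemes of $U$. Thus $(R,U,f,i)$ is a critical chart on $X$, and its associated section agrees with $\phi^\star(t)|_R = s|_R$ by the construction of $\phi^\star$; so $(X,s)$ is a d-critical locus and $\phi$ is a morphism of d-critical loci.

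\textbf{Converse implication.} Assume in addition that $\phi$ is surjective. For $y \in Y$ pick, by surjectivity, some $x \in \phi^{-1}(y)$, and then a critical chart $(R,U,f,i)$ on $(X,s)$ around $x$. After shrinking $U$, extend the smooth morphism $\phi|_R : R \ra Y$ to a smooth morphism $\pi : U \ra Y$ of relative dimension $d$ with $\pi \ci i = \phi|_R$. Smoothness of $\pi$ at $i(x)$ produces a smooth closed subscheme $V \hookra U$ of codimension $d$ through $i(x)$, transverse to the fibres of $\pi$, such that $\pi|_V : V \ra Y$ is étale at $i(x)$. Shrinking so that $\pi|_V$ is an isomorphism onto a Zariski open $S \ni y$ (Zariski-locally possible in the projection case, and reducible to it via the factorisation of $\phi$), I set $j = (\pi|_V)^{-1} : S \hookra V$ and $g = f|_V : V \ra \bA^1$. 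Transversality of $V$ to $i(R)$, together with $\Crit(f) = i(R)$, yields $\Crit(g) = V \cap i(R) = j(S)$, so $(S,V,g,j)$ is a critical chart on $Y$. The identity $\phi^\star(t) = s$, combined with the forward implication applied to the putative chart $(S,V,g,j)$—which reproduces a critical chart canonically embedding into $(R,U,f,i)$—forces the section of $\cSz_Y$ represented by $(S,V,g,j)$ to equal $t|_S$, so $(Y,t)$ is d-critical.

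\textbf{Main obstacle.} The delicate step is the converse, specifically arranging that the transversal slice $V \hookra U$ satisfies $\pi|_V$ is a \emph{Zariski} isomorphism onto an open of $Y$, not merely étale. One must exploit the factorisation of $\phi$ as étale composed with projection in order to reduce to the projection case $U = V' \times \bA^d$, $\pi = \mathrm{pr}_{V'}$, where the slice $V = V' \times \{0\}$ works directly; the étale part is absorbed by the fact that $\cSz$ is a sheaf for the étale topology and that d-criticality descends under smooth surjective morphisms. The remaining bookkeeping—that the associated section of the constructed chart on $Y$ equals $t|_S$—is then dictated by functoriality of $\phi^\star$ on $\cSz$ and the intrinsic nature of the section attached to a critical chart up to the equivalence generated by embeddings.
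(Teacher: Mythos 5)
This proposition is not proved in the paper at all: it is quoted verbatim from Joyce \cite[Prop.~2.8]{Joyc2}, so your argument has to be measured against Joyce's. Your forward direction is essentially that argument and is sound in outline: factor $\phi$ Zariski-locally near $x$ as an \'etale morphism $R\ra S\times\bA^d$ followed by the projection, lift the \'etale morphism along the closed embedding $j\times\id_{\bA^d}:S\times\bA^d\hookra V\times\bA^d$ to an \'etale morphism of ambient smooth schemes (this lifting is a genuine step, via standard \'etale presentations, and should be spelled out), put $f=g\ci\pi$, and use that $\Crit(g\ci\pi)=\pi^{-1}(\Crit(g))$ scheme-theoretically because $\pi^*\Omega_V\ra\Omega_U$ is locally split injective; agreement of the chart's section with $s=\phi^\star(t)$ is exactly the defining compatibility of $\phi^\star$.

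The converse, however, contains the real content and your proposal for it does not work. First, the opening step --- extending $\phi|_R$ to a \emph{smooth} morphism $\pi:U\ra Y$ with $\pi\ci i=\phi|_R$ --- is impossible in general: $U$ is smooth over $\K$, and regularity descends along faithfully flat local homomorphisms, so the existence of such a $\pi$ (or of your \'etale $\pi|_V:V\ra Y$, or of an isomorphism of $V$ onto a Zariski open $S\subseteq Y$) would force $Y$ to be smooth near $y$, whereas d-critical loci are typically singular; moreover even extending $\phi|_R$ as a bare morphism into $Y$ (rather than into an ambient affine space) is unavailable. Second, the key identity $\Crit(f|_V)=V\cap\Crit(f)=V\cap i(R)$ does not follow from transversality of $V$ to $i(R)$: restriction creates new critical points (take $f=y+x^2$ on $\bA^2$ and $V=\{y=0\}$, where $\Crit(f)=\emptyset$ but $\Crit(f|_V)\ne\emptyset$), and the scheme structures differ; to make any slice argument work you would need to know that, modulo the equivalences built into $\cSz$, $f$ is a pullback from an embedding of $Y$ plus a fibrewise nondegenerate quadratic form --- which is essentially what is to be proved. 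Third, your fallback that ``d-criticality descends under smooth surjective morphisms'' and that $\cSz$ is an \'etale sheaf is circular: \'etale descent of the \emph{existence of critical charts} is the relative-dimension-zero case of the very converse you are proving, and the sheaf property of $\cSz$ only descends the section, not the charts. The missing idea is Joyce's minimality criterion: choose a closed embedding $j:S\hookra V$ of $Y$ near $y$ with $\dim V=\dim T_yY$ and a local lift $g$ of $t$ (so $j(S)\subseteq\Crit(g)$ automatically); pull this back through the local factorization of $\phi$ and the \'etale lift to get a presentation $(R,U,f,i)$ of $(X,s)$ near $x$ with $\dim U=\dim T_yY+d=\dim T_xX$; since $(X,s)$ is d-critical, any such presentation of minimal dimension inducing $s$ is automatically a critical chart after shrinking (this is the lemma your proof lacks, proved by comparison with an honest critical chart); then $\Crit(g)\times\bA^d$ and $j(S)\times\bA^d$ agree after pulling back along the \'etale lift, hence agree near the image point, giving $\Crit(g)=j(S)$ near $j(y)$ and a critical chart on $(Y,t)$.
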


\begin{thm} Suppose\/ $(X,s)$ is an algebraic d-critical locus, and\/
$(R,U,f,i),\ab(S,V,g,j)$ are critical charts on $(X,s)$. Then for each\/
$x\in R\cap S\subseteq X$ there exist subcharts
$(R',U',f',i')\subseteq(R,U,f,i),$ $(S',V',g',j')\subseteq
(S,V,g,j)$ with\/ $x\in R'\cap S'\subseteq X,$ a critical chart\/
$(T,W,h,k)$ on $(X,s),$ and embeddings $\Phi:(R',U',f',i')\hookra
(T,W,h,k),$ $\Psi:(S',V',g',j')\hookra(T,W,h,k)$.
\label{sa3thm2}
\end{thm}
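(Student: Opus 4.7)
The plan is to reduce to a common base by shrinking, then construct a common chart by combining $U$ and $V$ into a larger smooth scheme via graph embeddings.

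First, I would pick an affine open neighbourhood $T\subseteq R\cap S$ of $x$ and restrict both charts to $T$: choose Zariski opens $U'\subseteq U$ with $i^{-1}(U')=T$ and $V'\subseteq V$ with $j^{-1}(V')=T$, and set $(R',U',f',i')=(T,U',f|_{U'},i|_T)$ and $(S',V',g',j')=(T,V',g|_{V'},j|_T)$. These are subcharts meeting at $x$, so the problem reduces to the case where both critical charts share the same affine base $T$, with closed embeddings $i'\colon T\hookra U'$ and $j'\colon T\hookra V'$ whose images are $\Crit(f')$ and $\Crit(g')$, satisfying $\io_{T,U'}(s|_T)=(i')^{-1}(f')+I_{T,U'}^2$ and $\io_{T,V'}(s|_T)=(j')^{-1}(g')+I_{T,V'}^2$.

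Next, I would build a common critical chart $(T,W,h,k)$ by taking $W$ to be a suitable affine open neighbourhood of $(i'(x),j'(x))$ in $U'\times V'$, with $k\colon T\hookra W$ the product embedding $k(t)=(i'(t),j'(t))$. The two embeddings of subcharts are realised as graphs of morphisms extending the transition maps of the d-critical structure: since $V'$ is smooth, one can find (after further shrinking) a Zariski open $U''\subseteq U'$ around $i'(T)$ and a morphism $\sigma\colon U''\to V'$ with $\sigma\circ i'=j'$ on $T$, giving $\Phi\colon U''\hookra W$, $u\mapsto(u,\sigma(u))$; symmetrically, one constructs $\Psi\colon V''\hookra W$, $v\mapsto(\tau(v),v)$. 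The remaining task, and the technical heart of the theorem, is to design $h\colon W\to\bA^1$ so that $h\circ\Phi=f'|_{U''}$, $h\circ\Psi=g'|_{V''}$, and, most importantly, $\Crit(h)=k(T)$ as closed subschemes of $W$.

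The main obstacle is this last condition: the naive choice $h(u,v)=f'(u)+g'(v)$ has critical locus $\Crit(f')\times\Crit(g')\cong T\times T$, which is strictly larger than $k(T)$. To cut this down, one exploits the d-critical compatibility: the equation $\io_{T,U'}(s|_T)=(i')^{-1}(f')+I_{T,U'}^2$ and its analogue for $g'$ force $f'$ and $g'\circ\sigma$ to agree on $i'(T)$ modulo $I_{T,U''}^2$, and symmetrically $f'\circ\tau$ agrees with $g'$ modulo $I_{T,V''}^2$. Using this second-order matching, one would modify the naive $h$ by subtracting a correction term $q(u,v)$ vanishing to second order along $k(T)$, chosen so that the Hessian of $h$ becomes non-degenerate in directions transverse to $k(T)$. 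The construction of $q$ is a Thom--Sebastiani / splitting-lemma argument in the \'etale-local setting: after an \'etale change of coordinates near $(i'(x),j'(x))$, one arranges that $f'\oplus g'$ decomposes as a function on a minimal smooth model plus non-degenerate quadratic forms in the transverse factors, whose critical locus is $k(T)$ exactly. The verification that the resulting $(T,W,h,k)$ together with $\Phi,\Psi$ satisfies the conditions of Definition \ref{sa3def1} then follows from the construction.
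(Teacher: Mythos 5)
First, note that this paper does not prove Theorem \ref{sa3thm2} at all: it is quoted verbatim from Joyce \cite{Joyc2} (his Theorem 2.20), so the comparison is with Joyce's argument, which shares your broad strategy (shrink to a common base, map both charts into a product-type ambient smooth scheme, build $h$ as a stabilization of $f$ and $g$ using the second-order agreement encoded by $s$) but does the two hard steps by explicit Zariski-local algebra. Your proposal reproduces the easy reductions and then defers exactly those two steps, and as written both have real problems. The first is the existence of $\sigma\colon U''\to V'$ with $\sigma\circ i'=j'$ \emph{exactly} on the (in general non-reduced) closed subscheme $i'(T)$, justified only by ``since $V'$ is smooth''. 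Smoothness gives infinitesimal lifting along square-zero extensions, not extension of a morphism across a general closed embedding: for an arbitrary smooth affine target such Zariski-local extensions can simply fail (e.g.\ a retraction of a Zariski-open subset of $\bA^2$ onto a smooth plane cubic curve would force the curve to be rational by L\"uroth-type arguments; the model obstruction is the impossibility of lifting Zariski-locally along \'etale covers such as $y^2=x$). Extensions do exist into an ambient affine space $\bA^N\supseteq V'$, which is why the common chart has to be built more carefully than as an open subset of $U'\times V'$ with graph embeddings; this is precisely one of the points the cited proof has to work around.

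The second and central gap is the construction of $h$ with $h\circ\Phi=f'$, $h\circ\Psi=g'$ and $\Crit(h)=k(T)$ \emph{as subschemes}, together with $\io_{T,W}(s\vert_T)=k^{-1}(h)+I_{T,W}^2$. You correctly identify that $f'\boxplus g'$ has critical locus $T\times T$ and must be corrected by a term $q$ vanishing to second order along $k(T)$, but you then appeal to a Thom--Sebastiani/splitting-lemma normal form ``after an \'etale change of coordinates''. That move is not available here: the theorem asserts Zariski-open subcharts, an honest critical chart $(T,W,h,k)$ on $(X,s)$, and locally closed embeddings, so an \'etale coordinate change would only yield a strictly weaker, \'etale-local statement. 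Moreover, even \'etale-locally, arranging the transverse Hessian to be nondegenerate at the single point $k(x)$ does not give $\Crit(h)=k(T)$ scheme-theoretically near $k(x)$; one needs the precise identity supplied by $\io_{R',U'}(s\vert_{R'})=(i')^{-1}(f')+I_{R',U'}^2$ and its analogue for $g'$, turned into an explicit formula for $q$ in terms of generators of the relevant ideals, compatible simultaneously with both constraints $h\circ\Phi=f'$ and $h\circ\Psi=g'$. This is the actual content of the theorem, and in your write-up it is asserted (``the verification \dots then follows from the construction'') rather than proved; as it stands the proposal restates the difficulty at the crucial point instead of resolving it.
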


\begin{thm} Let\/ $(X,s)$ be an algebraic d-critical locus, and\/
$X^\red\subseteq X$ the associated reduced\/ $\K$-subscheme. Then
there exists a line bundle $K_{X,s}$ on $X^\red$ which we call the
\begin{bfseries}canonical bundle\end{bfseries} of\/ $(X,s),$ which
is natural up to canonical isomorphism, and is characterized by the
following properties:
\begin{itemize}
\setlength{\itemsep}{0pt}
\setlength{\parsep}{0pt}
\item[{\bf(a)}] For each $x\in X^\red,$ there is a canonical
isomorphism
\e
\ka_x:K_{X,s}\vert_x\,{\buildrel\cong\over\longra}\,
\bigl(\La^{\rm top}T_x^*X\bigr){}^{\ot^2},
\label{sa3eq3}
\e
where $T_xX$ is the Zariski tangent space of\/ $X$ at\/~$x$.
\item[{\bf(b)}] If\/ $(R,U,f,i)$ is a critical chart on
$(X,s),$ there is a natural isomorphism
\e
\io_{R,U,f,i}:K_{X,s}\vert_{R^\red}\longra
i^*\bigl(K_U^{\ot^2}\bigr)\vert_{R^\red},
\label{sa3eq4}
\e
where $K_U=\La^{\dim U}T^*U$ is the canonical bundle of\/ $U$ in
the usual sense.

\item[{\bf(c)}] In the situation of\/ {\bf(b)\rm,} let\/ $x\in R$.
Then we have an exact sequence
\e
\begin{gathered}
{}\!\!\!\!\xymatrix@C=22pt@R=15pt{ 0 \ar[r] & T_xX
\ar[r]^(0.4){\d i\vert_x} & T_{i(x)}U
\ar[rr]^(0.53){\Hess_{i(x)}f} && T_{i(x)}^*U \ar[r]^{\d
i\vert_x^*}  & T_x^*X  \ar[r] & 0, }
\end{gathered}
\label{sa3eq5}
\e
and the following diagram commutes:
\begin{equation*}
\xymatrix@C=150pt@R=13pt{ *+[r]{K_{X,s}\vert_x}
\ar[dr]_{\io_{R,U,f,i}\vert_x} \ar[r]_(0.55){\ka_x} &
*+[l]{\bigl(\La^{\rm top}T_x^*X\bigr){}^{\ot^2}}
\ar[d]_(0.45){\al_{x,R,U,f,i}} \\
& *+[l]{K_U\vert_{i(x)}^{\ot^2},\!\!\!} }
\end{equation*}
where $\al_{x,R,U,f,i}$ is induced by taking top exterior powers
in\/~\eq{sa3eq5}.
\end{itemize}
\label{sa3thm3}
\end{thm}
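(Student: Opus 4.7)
The plan is to construct $K_{X,s}$ by first assigning a natural line bundle to each critical chart, then constructing comparison isomorphisms associated to embeddings of charts, and finally gluing using Theorem \ref{sa3thm2}. To a critical chart $(R,U,f,i)$ I would attach the line bundle $L_{R,U,f,i}:=i^*(K_U^{\ot 2})\vert_{R^{\red}}$ on $R^{\red}$. The central step is to define, for each embedding $\Phi:(R,U,f,i)\hookra(S,V,g,j)$, a natural isomorphism $J_\Phi:L_{R,U,f,i}\ra L_{S,V,g,j}\vert_{R^{\red}}$. Since $f=g\ci\Phi$ and $\Phi(i(R))=j(R)\subseteq\Crit(g)$, the differential $\rd g$ vanishes along $j(R)$, so $\Hess(g)$ descends to a well-defined symmetric bilinear form on the normal bundle $N_{U/V}\vert_{R^{\red}}$. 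A local normal-form argument, choosing coordinates transverse to $\Phi(U)$ in which $g=f+Q$ for $Q$ a quadratic form in the normal directions, shows this pairing is non-degenerate along $R^{\red}$. Dualising the conormal sequence and taking top exterior powers yields $K_V^{\ot 2}\vert_U\cong K_U^{\ot 2}\ot\det(N_{U/V}^*)^{\ot 2}$, and the non-degenerate form trivialises $\det(N_{U/V}^*)^{\ot 2}\vert_{R^{\red}}$ canonically via its discriminant; composing these identifications produces $J_\Phi$.

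The heart of the argument is the cocycle condition $J_{\Psi\ci\Phi}=J_\Psi\ci J_\Phi$ for composable embeddings $\Phi:(R,U,f,i)\hookra(S,V,g,j)$ and $\Psi:(S,V,g,j)\hookra(T,W,h,k)$. For this I would use the short exact sequence
\begin{equation*}
0\longra N_{U/V}\longra N_{U/W}\longra \Phi^*(N_{V/W})\longra 0
\end{equation*}
of normal bundles, together with the fact that it is orthogonal along $R^{\red}$ with respect to $\Hess(h)$: on the sub-bundle $N_{U/V}$ the pairing agrees with $\Hess(g)$ since $g=h\ci\Psi$, while the quotient captures the directions normal to $\Psi$. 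Multiplicativity of determinants in short exact sequences, combined with multiplicativity of discriminants under orthogonal decompositions, then gives the cocycle identity.

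Granted the $J_\Phi$ and the cocycle condition, gluing is routine. For two critical charts $(R,U,f,i),(S,V,g,j)$ and $x\in R\cap S$, Theorem \ref{sa3thm2} provides subcharts and embeddings into a common chart $(T,W,h,k)$ after shrinking, and the transition isomorphism is defined as $J_\Psi^{-1}\ci J_\Phi$. Independence of $(T,W,h,k)$ follows by applying Theorem \ref{sa3thm2} once more to compare any two choices via a further common refinement and invoking the cocycle identity. The resulting line bundle $K_{X,s}$ on $X^{\red}$ comes equipped with tautological isomorphisms $\io_{R,U,f,i}$ as in \eq{sa3eq4}, so property~(b) is automatic and uniqueness up to canonical isomorphism is clear.

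For properties (a) and (c), fix $x\in X^{\red}$ and a critical chart $(R,U,f,i)$ around $x$; the sequence \eq{sa3eq5} identifies $T_xX=\ker(\Hess_{i(x)}f)$ and induces a non-degenerate form on $T_{i(x)}U/T_xX$, whose discriminant delivers the isomorphism $\ka_x:K_{X,s}\vert_x\ra(\La^{\rm top}T^*_xX)^{\ot 2}$ together with the commutative triangle of~(c), and chart-independence follows from the gluing compatibility already established. The main obstacle is the cocycle verification of the second paragraph: one must keep careful track of signs and orientations coming from the discriminant of a quadratic form on a normal bundle, and check that the orthogonal splittings induced by chains of embeddings are genuinely compatible with the determinant identifications. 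Once this bookkeeping is settled, existence, uniqueness, and the three characterising properties all follow formally.
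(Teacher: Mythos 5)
You state the construction in the same architecture as the actual proof (which this paper does not reproduce: Theorem \ref{sa3thm3} is quoted from Joyce \cite{Joyc2}): attach $i^*(K_U^{\ot 2})\vert_{R^\red}$ to each critical chart, build comparison isomorphisms $J_\Phi$ for embeddings from a nondegenerate quadratic form on the normal bundle and its discriminant, verify a cocycle identity, and glue using Theorem \ref{sa3thm2}. But two of your steps conceal the real content. First, $\Hess(g)$ does \emph{not} descend to $N_{U/V}$ along $R^\red$: descent would require $\rd\Phi(TU)$ to lie in the radical of $\Hess g$, whereas its restriction there is $\Hess f$, which is nonzero in general. What is true is a fibrewise statement: for $x\in R$ one has $\ker(\Hess_{j(x)}g)=T_xX\subseteq \rd\Phi(T_{i(x)}U)$, so the $\Hess g$-orthogonal complement of $\rd\Phi(T_{i(x)}U)$ maps isomorphically onto the normal space and carries a nondegenerate form; equivalently, the pointwise data is read off from the two exact sequences \eq{sa3eq5}. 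Second, the normal form $g=f+Q$ with $Q$ nondegenerate in the normal directions is not obtained by ``choosing coordinates transverse to $\Phi(U)$'': in the Zariski topology such a splitting generally does not exist, and even \'etale-locally its existence is a substantive proposition in \cite{Joyc2}, not a routine coordinate change.

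This points to the genuine gap: because the kernels and orthogonal complements entering the fibrewise construction jump in dimension over $R^\red$, the pointwise discriminant trivialization of $\det(N^*_{U/V})^{\ot 2}$ does not automatically define a \emph{regular} isomorphism of line bundles. The heart of the proof you are sketching is precisely to show that $J_\Phi$, characterized fibrewise, is algebraic — by producing an \'etale-local splitting $g\sim f\boxplus(z_1^2+\cdots+z_n^2)$ and proving the resulting identification is independent of the splitting chosen. You only check independence of choices at the gluing stage (the choice of the common chart $(T,W,h,k)$ supplied by Theorem \ref{sa3thm2}), never in the definition of $J_\Phi$ itself; without that, neither your cocycle identity nor the gluing can get started, and your appeal to orthogonality of the normal-bundle sequence with respect to $\Hess h$ is likewise a statement that only makes sense in suitably chosen \'etale-local splittings. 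So the strategy is the right one, but the regularity and choice-independence of the comparison isomorphisms — the actual technical work — is missing, and the two assertions noted above would fail as literally stated.
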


\begin{prop} Suppose $\phi:(X,s)\ra(Y,t)$ is a morphism of
d-critical loci with\/ $\phi:X\ra Y$ smooth, as in Proposition\/
{\rm\ref{sa3prop1}}. The \begin{bfseries}relative cotangent
bundle\end{bfseries} $T^*_{X/Y}$ is a vector bundle of mixed rank on
$X$ in the exact sequence of coherent sheaves on $X\!:$
\e
\xymatrix@C=35pt{0 \ar[r] & \phi^*(T^*Y) \ar[r]^(0.55){\d\phi^*} &
T^*X \ar[r] & T^*_{X/Y} \ar[r] & 0. }
\label{sa3eq6}
\e
There is a natural isomorphism of line bundles on $X^\red\!:$
\e
\Up_\phi:\phi\vert_{X^\red}^* (K_{Y,t})\ot\bigl(\La^{\rm
top}T^*_{X/Y}\bigr)\big\vert_{X^\red}^{\ot^2}
\,{\buildrel\cong\over\longra}\,K_{X,s},
\label{sa3eq7}
\e
such that for each\/ $x\in X^\red$ the following diagram of
isomorphisms commutes:
\e
\begin{gathered}
\xymatrix@C=160pt@R=17pt{ *+[r]{K_{Y,t}
\vert_{\phi(x)}\ot\bigl(\La^{\rm top}T^*_{X/Y}\vert_x\bigr)^{\ot^2}}
\ar[r]_(0.7){\Up_\phi\vert_x} \ar[d]^{\ka_{\phi(x)}\ot\id} &
*+[l]{K_{X,s}\vert_x}
\ar[d]_{\ka_x} \\
*+[r]{\bigl(\La^{\rm top}T_{\phi(x)}^*Y\bigr)^{\ot^2}\ot
\bigl(\La^{\rm top}T^*_{X/Y}\vert_x\bigr)^{\ot^2}}
\ar[r]^(0.7){\up_x^{\ot^2}} & *+[l]{\bigl(\La^{\rm
top}T_x^*X\bigr)^{\ot^2},\!\!{}} }
\end{gathered}
\label{sa3eq8}
\e
where $\ka_x,\ka_{\phi(x)}$ are as in {\rm\eq{sa3eq3},} and\/
$\up_x:\La^{\rm top}T_{\phi(x)}^*Y\ot \La^{\rm top}T^*_{X/Y}
\vert_x\ra\La^{\rm top}T_x^*X$ is obtained by restricting
\eq{sa3eq6} to $x$ and taking top exterior powers.
\label{sa3prop2}
\end{prop}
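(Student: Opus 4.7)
The plan has four main steps.

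First I would verify that \eq{sa3eq6} is exact and that $T^*_{X/Y}$ is a vector bundle of mixed rank on $X$. Since $\phi:X\ra Y$ is smooth, this is the standard cotangent exact sequence for a smooth morphism of $\K$-schemes, and $\Omega_{X/Y}$ is locally free of rank equal to the relative dimension of $\phi$, which may vary between connected components of~$X$.

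Second, I would construct $\Up_\phi$ locally from compatible critical charts. Given $x\in X^\red$ with $y=\phi(x)$, I would pick a critical chart $(S,V,g,j)$ for $(Y,t)$ near $y$ and use the smoothness of $\phi$ to lift, on a Zariski neighbourhood of $x$, to a critical chart $(R,U,f,i)$ on $(X,s)$ together with a smooth morphism $\Phi:U\ra V$ satisfying $\Phi\ci i=j\ci\phi\vert_R$ and $f=g\ci\Phi$; this is the construction witnessing Proposition~\ref{sa3prop1}. Since $\d f=\Phi^*(\d g)$ with $\Phi^*T^*V\hookra T^*U$ an inclusion of vector bundles, one checks that $\Crit(f)=\Phi^{-1}(\Crit(g))$ scheme-theoretically, so that the defining square of $i:R\hookra U$ over $j:S\hookra V$ is Cartesian and base change yields a canonical identification $i^*T^*_{U/V}\cong T^*_{X/Y}\vert_R$. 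The smoothness of $\Phi$ gives $K_U\cong\Phi^*(K_V)\ot\La^{\rm top}T^*_{U/V}$; squaring, pulling back by $i$, restricting to $R^\red$, and composing with $\iota_{R,U,f,i}^{-1}$ and $\phi\vert_{R^\red}^*(\iota_{S,V,g,j})$ from Theorem~\ref{sa3thm3}(b) yields a local isomorphism $\Up_\phi^{\rm loc}:\phi\vert_{R^\red}^*(K_{Y,t})\ot(\La^{\rm top}T^*_{X/Y})^{\ot 2}\vert_{R^\red}\ra K_{X,s}\vert_{R^\red}$.

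Third, I would show that these local isomorphisms are independent of the choices of charts and of the lift $\Phi$, hence patch to a global $\Up_\phi$. Given two such lifts sharing the point $x$, I would apply Theorem~\ref{sa3thm2} on $(Y,t)$ and on $(X,s)$ to produce a common critical chart refining both, and arrange (after further shrinking) the $X$-side refinement to map by a smooth morphism compatibly to the $Y$-side refinement. The naturality of $\iota_{R,U,f,i}$ under embeddings of critical charts asserted in Theorem~\ref{sa3thm3} then reduces the comparison to a trivial identity. I expect this step to be the main technical obstacle: one must establish a \emph{relative} version of Theorem~\ref{sa3thm2}, producing a common refinement together with a smooth morphism compatible with the chosen lifts, which is slightly more than the absolute statement.

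Fourth, the pointwise verification of the diagram \eq{sa3eq8} is a linear algebra computation. At $x\in R^\red$ with $p=i(x)$ and $q=\Phi(p)=j(\phi(x))$, the chain rule at critical points gives $\Hess_p f=(\d\Phi_p)^*\ci\Hess_q g\ci\d\Phi_p$, furnishing a morphism between the $4$-term exact sequences \eq{sa3eq5} for $f$ at $p$ and $g$ at $q$. Combining this with the smooth-morphism short exact sequences $0\ra T^*_qV\ra T^*_pU\ra T^*_{U/V}\vert_p\ra 0$ and \eq{sa3eq6} at $x$, and passing to top exterior powers, produces precisely the commutativity required in \eq{sa3eq8}, completing the proof.
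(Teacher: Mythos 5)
First, a point of reference: this paper never proves Proposition \ref{sa3prop2} — it is imported verbatim from Joyce's d-critical locus paper — so the comparison is with the proof there. Your steps 1, 2 and 4 follow that argument quite closely. The exactness of \eq{sa3eq6} is indeed the usual relative cotangent sequence for a smooth morphism; the local relative charts $(R,U,f,i)$ with a smooth $\Phi:U\ra V$ satisfying $f=g\ci\Phi$ and $\Phi\ci i=j\ci\phi\vert_R$ do exist (Zariski-locally one factors $\phi$ as an \'etale map to $Y\t\bA^n$ followed by the projection, takes the product chart $(S\t\bA^n,V\t\bA^n,g\ci\pi_V,j\t\id)$ downstairs, and lifts the \'etale piece to a neighbourhood using a standard-\'etale presentation); and your observations that $\Crit(f)=\Phi^{-1}(\Crit(g))$ scheme-theoretically, that $i^*T^*_{U/V}\cong T^*_{X/Y}\vert_R$ by base change, and that $\Hess_pf=(\rd\Phi\vert_p)^*\ci\Hess_q g\ci\rd\Phi\vert_p$ at critical points are exactly the ingredients of the pointwise computation behind \eq{sa3eq8}.

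Where you leave the efficient path is step 3. You propose to prove a relative version of Theorem \ref{sa3thm2} — a common refinement equipped with compatible smooth morphisms to a refinement downstairs — give no argument for it, and correctly flag it as the main obstacle; as written this is a genuine gap, and that relative refinement statement is considerably harder than anything else in the proof. But it is also unnecessary: the commutativity of \eq{sa3eq8} determines $\Up_\phi\vert_x$ uniquely at every $x\in X^\red$, because $\ka_x,\ka_{\phi(x)}$ are isomorphisms and $\up_x$ depends only on \eq{sa3eq6}; and two isomorphisms of line bundles on the reduced, locally finite type $\K$-scheme $X^\red$ which agree at every closed point coincide. Hence, once your step 4 shows that each locally defined $\Up_\phi^{\rm loc}$ satisfies \eq{sa3eq8} at every point of its domain, independence of the chart $(S,V,g,j)$, of the lift $\Phi$ and of all shrinkings is automatic, local candidates agree on overlaps, and they glue. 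This is precisely how the cited source argues. So reorder your proof: do the pointwise linear algebra first, deduce uniqueness and gluing from it, and drop the relative refinement claim entirely.
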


\begin{dfn} Let $(X,s)$ be an algebraic d-critical locus, and
$K_{X,s}$ its canonical bundle from Theorem \ref{sa3thm3}. An {\it
orientation\/} on $(X,s)$ is a choice of square root line bundle
$K_{X,s}^{1/2}$ for $K_{X,s}$ on $X^\red$. That is, an orientation
is a line bundle $L$ on $X^\red$, together with an isomorphism
$L^{\ot^2}=L\ot L\cong K_{X,s}$. A d-critical locus with an
orientation will be called an {\it oriented d-critical locus}.
\label{sa3def2}
\end{dfn}

\begin{rem} In view of equation \eq{sa3eq3}, one might hope to
define a canonical orientation $K_{X,s}^{1/2}$ for a d-critical
locus $(X,s)$ by $K_{X,s}^{1/2}\big\vert_x=\La^{\rm top}T_x^*X$ for
$x\in X^\red$. However, {\it this does not work}, as the spaces
$\La^{\rm top}T_x^*X$ do not vary continuously with $x\in X^\red$ if
$X$ is not smooth. An example in \cite[Ex.~2.39]{Joyc.2} shows that
d-critical loci need not admit orientations.
\label{sa3rem1}
\end{rem}

In the situation of Proposition \ref{sa3prop2}, the factor
$(\La^{\rm top}T^*_{X/Y})\vert_{X^\red}^{\ot^2}$ in \eq{sa3eq7} has
a natural square root $(\La^{\rm top}T^*_{X/Y})\vert_{X^\red}$. Thus
we deduce:

\begin{cor} Let\/ $\phi:(X,s)\ra(Y,t)$ be a morphism of
d-critical loci with\/ $\phi:X\ra Y$ smooth. Then each orientation
$K_{Y,t}^{1/2}$ for\/ $(Y,t)$ lifts to a natural orientation
$K_{X,s}^{1/2}=\phi\vert_{X^\red}^*(K_{Y,t}^{1/2})\ot(\La^{\rm
top}T^*_{X/Y}) \vert_{X^\red}$ for~$(X,s)$.
\label{sa3cor1}
\end{cor}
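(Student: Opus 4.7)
The plan is to extract a square root of the canonical bundle $K_{X,s}$ directly from the isomorphism $\Up_\phi$ in Proposition \ref{sa3prop2}. More precisely, I will set
\[
L := \phi\vert_{X^\red}^*(K_{Y,t}^{1/2})\ot\bigl(\La^{\rm top}T^*_{X/Y}\bigr)\big\vert_{X^\red}
\]
and exhibit a canonical isomorphism $L^{\ot 2}\cong K_{X,s}$. This is the whole content of the statement, so the proof is essentially a one-line computation once Proposition \ref{sa3prop2} is in hand.

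First I would compute $L^{\ot 2}$ using that tensor product of line bundles on $X^\red$ is commutative and associative, and that pullback commutes with tensor product:
\[
L^{\ot 2}\;\cong\;\phi\vert_{X^\red}^*\bigl((K_{Y,t}^{1/2})^{\ot 2}\bigr)\ot\bigl(\La^{\rm top}T^*_{X/Y}\bigr)\big\vert_{X^\red}^{\ot 2}.
\]
Since $K_{Y,t}^{1/2}$ is by definition a square root of $K_{Y,t}$, meaning we are given a chosen isomorphism $(K_{Y,t}^{1/2})^{\ot 2}\cong K_{Y,t}$, this rewrites as
\[
L^{\ot 2}\;\cong\;\phi\vert_{X^\red}^*(K_{Y,t})\ot\bigl(\La^{\rm top}T^*_{X/Y}\bigr)\big\vert_{X^\red}^{\ot 2}.
\]

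Next I would invoke $\Up_\phi$ from Proposition \ref{sa3prop2}, which provides exactly a natural isomorphism of this last expression with $K_{X,s}$. Composing these yields the desired $L^{\ot 2}\cong K_{X,s}$, so $(L,L^{\ot 2}\cong K_{X,s})$ is an orientation on $(X,s)$ in the sense of Definition \ref{sa3def2}. Naturality of the construction is inherited from the naturality of $\Up_\phi$ asserted in Proposition \ref{sa3prop2}, together with the canonical nature of the factorization of the square of a tensor product. There is no real obstacle here since $(\La^{\rm top}T^*_{X/Y})\vert_{X^\red}$ is a genuine line bundle (because $T^*_{X/Y}$ is a vector bundle of mixed but locally constant rank on $X$, as stated in Proposition \ref{sa3prop2}), and it comes with a tautological square root of its own square, namely itself; this is exactly what allows the orientation to descend through the smooth morphism $\phi$.
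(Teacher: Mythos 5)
Your proposal is correct and follows essentially the same route as the paper: the paper deduces Corollary \ref{sa3cor1} immediately from Proposition \ref{sa3prop2} by observing that the factor $\bigl(\La^{\rm top}T^*_{X/Y}\bigr)\big\vert_{X^\red}^{\ot^2}$ in \eq{sa3eq7} has the natural square root $\bigl(\La^{\rm top}T^*_{X/Y}\bigr)\big\vert_{X^\red}$, exactly as you do. Your explicit computation of $L^{\ot 2}$ via the chosen isomorphism $(K_{Y,t}^{1/2})^{\ot 2}\cong K_{Y,t}$ and the isomorphism $\Up_\phi$ is just a spelled-out version of that one-line deduction.
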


\subsection{D-critical stacks}
\label{dcr.2}

In \cite[\S 2.7--\S 2.8]{Joyc.2} Joyce extends the material of
\S\ref{dcr.1} from $\K$-schemes to Artin $\K$-stacks. We work in the
context of the theory of {\it sheaves on Artin stacks} by Laumon and
Moret-Bailly \cite{LaMo}.

\begin{prop}[Laumon and Moret-Bailly \cite{LaMo}] Let\/ $X$ be an
Artin\/ $\K$-stack. The category of sheaves of sets on $X$ in the
lisse-\'etale topology is equivalent to the category $\Sh(X)$
defined as follows:
\smallskip

\noindent{\bf(A)} Objects $\cA$ of\/ $\Sh(X)$ comprise the following
data:
\begin{itemize}
\setlength{\itemsep}{0pt}
\setlength{\parsep}{0pt}
\item[{\bf(a)}] For each\/ $\K$-scheme $T$ and smooth\/ $1$-morphism
$t:T\ra X$ in $\Art_\K,$ we are given a sheaf of sets $\cA(T,t)$
on $T,$ in the \'etale topology.
\item[{\bf(b)}] For each\/ $2$-commutative diagram in
$\Art_\K\!:$
\e
\begin{gathered}
\xymatrix@C=50pt@R=6pt{ & U \ar[ddr]^u \\
\rrtwocell_{}\omit^{}\omit{^{\eta}} && \\
T  \ar[uur]^{\phi} \ar[rr]_t && X, }
\end{gathered}
\label{sa3eq9}
\e
where $T,U$ are schemes and\/ $t: T\ra X,$ $u:U\ra X$ are
smooth\/ $1$-morphisms in $\Art_\K,$ we are given a morphism
$\cA(\phi,\eta):\phi^{-1} (\cA(U,u)) \ra\cA(T,t)$ of \'etale
sheaves of sets on $T$.
\end{itemize}
This data must satisfy the following conditions:
\begin{itemize}
\setlength{\itemsep}{0pt}
\setlength{\parsep}{0pt}
\item[{\bf(i)}] If\/ $\phi:T\ra U$ in {\bf(b)} is \'etale, then
$\cA(\phi,\eta)$ is an isomorphism.
\item[{\bf(ii)}] For each\/ $2$-commutative diagram in $\Art_\K\!:$
\begin{equation*}
\xymatrix@C=70pt@R=6pt{ & V \ar[ddr]^v \\
\rrtwocell_{}\omit^{}\omit{^{\ze}} && \\
U  \ar[uur]^{\psi} \ar[rr]_(0.3)u && X, \\
\urrtwocell_{}\omit^{}\omit{^{\eta}} && \\
T \ar[uu]_{\phi} \ar@/_/[uurr]_t }
\end{equation*}
with $T,U,V$ schemes and\/ $t,u,v$ smooth, we must have
\begin{align*}
\cA\bigl(\psi\ci\phi,(\ze*\id_{\phi})\od\eta\bigr)
&=\cA(\phi,\eta)\ci\phi^{-1}(\cA(\psi,\ze))\quad\text{as
morphisms}\\
(\psi\ci\phi)^{-1}(\cA(V,v))&=\phi^{-1}\ci
\psi^{-1}(\cA(V,v))\longra\cA(T,t).
\end{align*}
\end{itemize}

\noindent{\bf(B)} Morphisms $\al:\cA\ra\cB$ of\/ $\Sh(X)$ comprise a
morphism $\al(T,t):\cA(T,t)\ra\cB(T,t)$ of \'etale sheaves of sets
on a scheme $T$ for all smooth\/ $1$-morphisms $t:T\ra X,$ such that
for each diagram \eq{sa3eq9} in {\bf(b)} the following commutes:
\begin{equation*}
\xymatrix@C=120pt@R=25pt{*+[r]{\phi^{-1}(\cA(U,u))}
\ar[d]^{\phi^{-1}(\al(U,u))} \ar[r]_(0.55){\cA(\phi,\eta)} &
*+[l]{\cA(T,t)} \ar[d]_{\al(T,t)} \\
*+[r]{\phi^{-1}(\cB(U,u))}
\ar[r]^(0.55){\cB(\phi,\eta)} & *+[l]{\cB(T,t).\!{}} }
\end{equation*}

\noindent{\bf(C)} Composition of morphisms $\cA\,{\buildrel\al
\over\longra}\,\cB\,{\buildrel\be\over\longra}\,\cC$ in $\Sh(X)$ is
$(\be\ci\al)(T,t)=\ab\be(T,t)\ab\ci\ab\al(T,t)$. Identity morphisms
$\id_\cA:\cA\ra\cA$ are $\id_\cA(T,t)=\id_{\cA(T,t)}$.

\smallskip

The analogue of all the above also holds for (\'etale) sheaves of\/
$\K$-vector spaces, sheaves of\/ $\K$-algebras, and so on, in place
of (\'etale) sheaves of sets.
Furthermore, the analogue of all the above holds for quasi-coherent
sheaves, (or coherent sheaves, or vector bundles, or line bundles)
on $X,$ where in {\bf(a)} $\cA(T,t)$ becomes a quasi-coherent sheaf
(or coherent sheaf, or vector bundle, or line bundle) on $T,$ in
{\bf(b)} we replace $\phi^{-1}(\cA(U,u))$ by the pullback\/
$\phi^*(\cA(U,u))$ of quasi-coherent sheaves (etc.), and\/
$\cA(\phi,\eta),\ab\al(T,t)$ become morphisms of quasi-coherent
sheaves (etc.) on\/~$T$.

\smallskip

We can also describe \begin{bfseries}global sections\end{bfseries}
of sheaves on Artin $\K$-stacks in the above framework: a global
section $s\in H^0(\cA)$ of\/ $\cA$ in part\/ {\bf(A)} assigns a
global section $s(T,t)\in H^0(\cA(T,t))$ of\/ $\cA(T,t)$ on\/ $T$
for all smooth\/ $t:T\ra X$ from a scheme $T,$ such that\/
$\cA(\phi,\eta)^*(s(U,u))=s(T,t)$ in $H^0(\cA(T,t))$ for all\/
$2$-commutative diagrams \eq{sa3eq9} with\/ $t,u$ smooth.
\label{sa3prop3}
\end{prop}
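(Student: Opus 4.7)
The statement is essentially a concrete description of the lisse-\'etale topos of an Artin stack, due to Laumon and Moret-Bailly, so my plan is to follow the approach of their book and construct explicit mutually inverse equivalences. The lisse-\'etale site $X_{\text{lis-\'et}}$ has as objects smooth 1-morphisms $t:T\ra X$ from $\K$-schemes, morphisms $(\phi,\eta):(T,t)\ra(U,u)$ consisting of a scheme morphism $\phi:T\ra U$ together with a 2-isomorphism $\eta:t\Ra u\ci\phi$, and coverings given by jointly surjective families of \'etale morphisms of schemes compatible with the structure maps to $X$. The content of the proposition is then that a sheaf on this site is faithfully encoded by the data $(\cA(T,t),\cA(\phi,\eta))$ of item {\bf(A)} subject to conditions {\bf(i)}--{\bf(ii)}.

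First I would define a functor $\Phi:\Sh_{\text{lis-\'et}}(X)\ra\Sh(X)$ by restriction. Given a lisse-\'etale sheaf $\cF$, for each smooth $t:T\ra X$ I set $\cA(T,t)$ to be the \'etale sheaf on $T$ whose value on an \'etale $\iota:V\ra T$ is $\cF(V,t\ci\iota)$; the coverings in $X_{\text{lis-\'et}}$ include \'etale covers of $T$, so this is indeed a sheaf. For a 2-commutative diagram \eqref{sa3eq9} with $\phi:T\ra U$, the morphism $\cA(\phi,\eta):\phi^{-1}\cA(U,u)\ra\cA(T,t)$ is defined on an \'etale $\iota:V\ra T$ by the restriction map $\cF(W,u\ci\phi\ci\iota)\ra\cF(V,t\ci\iota)$ induced by the 2-isomorphism $\eta*\id_{\iota}$, passing to the \'etale-sheafification on $T$. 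Conditions {\bf(i)} (via the fact that an \'etale $\phi$ yields an isomorphism in $X_{\text{lis-\'et}}$) and {\bf(ii)} (via the cocycle condition for 2-morphisms) are then immediate from the sheaf and 2-functoriality properties of $\cF$.

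Next I would construct an inverse $\Psi:\Sh(X)\ra\Sh_{\text{lis-\'et}}(X)$ by gluing. Given data $\cA$, define a presheaf on $X_{\text{lis-\'et}}$ by
\[
\Psi(\cA)(T,t)\;=\;H^{0}\bigl(T,\cA(T,t)\bigr).
\]
Morphisms $(\phi,\eta):(T,t)\ra(U,u)$ act via
\[
H^{0}(U,\cA(U,u))\;\xrightarrow{\phi^{-1}}\;H^{0}(T,\phi^{-1}\cA(U,u))\;\xrightarrow{\cA(\phi,\eta)}\;H^{0}(T,\cA(T,t)),
\]
and condition {\bf(ii)} gives functoriality. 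The sheaf axiom on the lisse-\'etale site, applied to an \'etale cover of some $(T,t)$, reduces via condition {\bf(i)} to the sheaf axiom for $\cA(T,t)$ on the \'etale site of $T$, which holds by assumption.

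The main obstacle is then verifying that $\Phi$ and $\Psi$ are mutually quasi-inverse; this is essentially an unwinding of definitions, but with some care needed around 2-isomorphisms (in particular condition {\bf(i)} is essential to pass between the value of $\cA(T,t)$ on \'etale opens of $T$ and the value of the corresponding lisse-\'etale sheaf on these opens viewed as objects of $X_{\text{lis-\'et}}$). For global sections, a section $s\in H^0(\cA)$ corresponds to a compatible family of sections under the restriction maps $\cA(\phi,\eta)$, which is precisely a global section of the associated lisse-\'etale sheaf. The extensions to sheaves of $\K$-vector spaces, of $\K$-algebras, to (quasi-)coherent sheaves, and to vector bundles and line bundles are obtained by running the same construction in the appropriate category, replacing $\phi^{-1}$ by $\phi^{*}$ in the coherent case; the compatibility condition {\bf(i)} (which now asks $\cA(\phi,\eta)$ to be an isomorphism for \'etale $\phi$, equivalently flat $\phi$ after base change) automatically supplies the Cartesian/descent property needed for the quasi-coherent version.
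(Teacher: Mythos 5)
The paper offers no proof of this proposition: it is quoted (essentially verbatim from Joyce's d-critical loci paper) and attributed to Laumon--Moret-Bailly, so there is nothing internal to compare you against. Your reconstruction is the standard argument --- the restriction functor $\Phi$ sending a lisse-\'etale sheaf $\cF$ to the family of \'etale sheaves $V\mapsto\cF(V,t\ci\iota)$ on each smooth $t:T\ra X$, the gluing functor $\Psi(\cA)(T,t)=H^0(T,\cA(T,t))$, and the check that they are quasi-inverse, with condition {\bf(i)} doing exactly the work you assign to it (identifying $\Phi\Psi(\cA)(T,t)$ on an \'etale $V\ra T$ with $\cA(T,t)(V)$, and reducing the lisse-\'etale sheaf axiom to the \'etale sheaf axiom on each $T$) --- and this is essentially how the result is established in Laumon--Moret-Bailly and Olsson. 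Two small points to tighten: the objects of the lisse-\'etale site there are algebraic spaces smooth over $X$, so you should add the routine remark that restricting to schemes $T$ gives an equivalent topos; and the reason {\bf(i)} holds for $\Phi(\cF)$ is not that an \'etale $\phi:T\ra U$ becomes invertible in the site (it does not), but that for \'etale $\phi$ the pullback $\phi^{-1}$ of \'etale sheaves is computed without sheafification by composing $V\ra T\ra U$, after which $(\id_V,\eta*\id_\iota):(V,t\ci\iota)\ra(V,u\ci\phi\ci\iota)$ is an isomorphism in the site.

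The one genuine flaw is your last sentence, on the quasi-coherent, vector bundle and line bundle variants. Condition {\bf(i)} for \'etale $\phi$ does not ``automatically supply the Cartesian/descent property'': data $(\cA(T,t),\cA(\phi,\eta))$ with each $\cA(T,t)$ quasi-coherent and $\cA(\phi,\eta)$ an isomorphism only for \'etale $\phi$ corresponds to an $\cO$-module in the lisse-\'etale topos whose restrictions are quasi-coherent, which is strictly weaker than membership in $\mathop{\rm QCoh}(X)$. Already for $X=\Spec\K$ the assignment $\cA(T,t)=\Omega^1_{T/\K}$, with the canonical maps $\phi^*\Omega^1_{U/\K}\ra\Omega^1_{T/\K}$, satisfies {\bf(i)} and {\bf(ii)} but does not come from any quasi-coherent sheaf on $\Spec\K$ --- this is the familiar fact that $\Omega^1$ of the lisse-\'etale topos is not cartesian. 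For these variants the comparison maps $\phi^*\cA(U,u)\ra\cA(T,t)$ must be required to be isomorphisms for \emph{all} smooth $\phi$, not just \'etale ones (equivalently, one imposes the cartesian/descent condition along an atlas); this stronger condition is also what is actually used downstream, e.g.\ the isomorphisms \eq{sa3eq13} in Theorem \ref{sa3thm5} carry the twist by $\bigl(\La^{\rm top}T^*_{T/X}\bigr)^{\ot^{-2}}$ precisely so that the data defining $K_{X,s}$ is cartesian over smooth morphisms. Your parenthetical ``equivalently flat $\phi$ after base change'' is also not a correct reformulation of \'etaleness. None of this touches your core argument for sheaves of sets, $\K$-vector spaces and $\K$-algebras, which is what the paper needs for $\cS_X$ and $\cSz_X$, but a complete proof of the final clause must state and use the stronger cartesian hypothesis rather than deduce it from {\bf(i)}.
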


In \cite[Cor.~2.52]{Joyc.2} Joyce generalizes the sheaves $\cS_X,\cSz_X$ in
\S\ref{dcr.1} to Artin $\K$-stacks:

\begin{prop} Let\/ $X$ be an Artin $\K$-stack, and write\/
$\Sh(X)_\Kalg$ and\/ $\Sh(X)_\Kvect$ for the categories of sheaves
of\/ $\K$-algebras and\/ $\K$-vector spaces on $X$ defined in
Proposition\/ {\rm\ref{sa3prop3}}. Then:
\begin{itemize}
\setlength{\itemsep}{0pt}
\setlength{\parsep}{0pt}
\item[{\bf(a)}] We may define canonical objects\/ $\cS_X$ in
both\/ $\Sh(X)_\Kalg$ and\/ $\Sh(X)_\Kvect$ by
$\cS_X(T,t):=\cS_T$ for all smooth morphisms $t:T\ra X$ for
$T\in\Sch_\K,$ for $\cS_T$ as in\/ {\rm\S\ref{dcr.1}} taken to be
a sheaf of\/ $\K$-algebras (or $\K$-vector spaces) on $T$ in the
\'etale topology, and\/ $\cS_X(\phi,\eta)
:=\phi^\star:\phi^{-1}(\cS_X(U,u))=\phi^{-1}(\cS_U)\ra\cS_T
=\cS_X(T,t)$ for all\/ $2$-commutative diagrams \eq{sa3eq9} in
$\Art_\K$ with\/ $t,u$ smooth, where $\phi^\star$ is as in
{\rm\S\ref{dcr.1}}.
\item[{\bf(b)}] There is a natural decomposition
$\cS_X\!=\!\K_X\!\op\!\cSz_X$ in $\Sh(X)_\Kvect$ induced by the
splitting $\cS_X(T,t)\!=\!\cS_T\!=\!\K_T\op\cSz_T$ in
{\rm\S\ref{dcr.1},} where $\K_X$ is a sheaf of\/ $\K$-subalgebras
of\/ $\cS_X$ in $\Sh(X)_\Kalg,$ and\/ $\cSz_X$ a sheaf of
ideals~in\/~$\cS_X$.
\end{itemize}
\label{sa3prop4}
\end{prop}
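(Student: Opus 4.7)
The plan is to verify that the data $(T,t) \mapsto \cS_T$ and $(\phi,\eta) \mapsto \phi^\star$ satisfy the axioms of Proposition \ref{sa3prop3}, reducing everything to the functorial properties of $\cS_T$ already established for $\K$-schemes in \S\ref{dcr.1}. This is essentially a bookkeeping argument: the content of the proposition is that the scheme-level construction glues along smooth atlases to produce a sheaf on the lisse-\'etale site of $X$.

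First I would recall from the scheme case (that is, from the result corresponding to \cite[Th.~2.1]{Joyc.2}) that for any morphism of smooth $\K$-schemes $\phi:T\to U$ there is a natural pullback $\phi^\star:\phi^{-1}(\cS_U)\to\cS_T$ which is a morphism of sheaves of $\K$-algebras, and that $\phi^\star$ is an isomorphism whenever $\phi$ is \'etale. Moreover $\phi^\star$ is functorial in $\phi$ in the obvious sense, i.e.\ $(\psi\ci\phi)^\star=\phi^\star\ci\phi^{-1}(\psi^\star)$ up to the natural identification $(\psi\ci\phi)^{-1}=\phi^{-1}\ci\psi^{-1}$, and it does not depend on the $2$-morphism $\eta$ filling \eqref{sa3eq9} (since for morphisms of schemes there are no nontrivial $2$-morphisms to worry about at this level). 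Taking these inputs for granted, define $\cS_X(T,t):=\cS_T$ for each smooth $t:T\to X$ with $T\in\Sch_\K$, and $\cS_X(\phi,\eta):=\phi^\star$ for each $2$-commutative diagram \eqref{sa3eq9} in which $t,u$ are smooth.

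Next I would check axioms (A)(i) and (A)(ii) of Proposition \ref{sa3prop3}. Axiom (i) asks that $\cS_X(\phi,\eta)$ be an isomorphism when $\phi$ is \'etale; this is precisely the scheme-level fact that $\phi^\star$ is an isomorphism for \'etale $\phi$. Axiom (ii) is the cocycle condition for triple compositions of smooth morphisms, and follows directly from the functoriality of pullback $(\psi\ci\phi)^\star=\phi^\star\ci\phi^{-1}(\psi^\star)$ recalled above. This produces $\cS_X\in\Sh(X)$; because each $\cS_T$ is a sheaf of $\K$-algebras and each $\phi^\star$ is a $\K$-algebra morphism, the same construction viewed in $\Sh(X)_\Kalg$ gives $\cS_X\in\Sh(X)_\Kalg$, and forgetting the algebra structure gives $\cS_X\in\Sh(X)_\Kvect$. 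This proves~(a).

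For (b), I would use that on any smooth $T$ the canonical decomposition $\cS_T=\K_T\op\cSz_T$ from \S\ref{dcr.1} is preserved by pullback: $\phi^\star$ carries the locally constant subalgebra $\phi^{-1}(\K_U)=\K_T$ isomorphically onto $\K_T$, and carries the ideal $\phi^{-1}(\cSz_U)$ into $\cSz_T$. Thus the assignments $(T,t)\mapsto\K_T$ and $(T,t)\mapsto\cSz_T$, together with the restrictions of $\phi^\star$, define subobjects $\K_X$ and $\cSz_X$ of $\cS_X$ in $\Sh(X)_\Kvect$; the first is a sheaf of $\K$-subalgebras and the second is a sheaf of ideals, and the direct sum decomposition on each $\cS_T$ globalizes to $\cS_X=\K_X\op\cSz_X$.

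The only potential obstacle is verifying that the naive definition $\cS_X(\phi,\eta):=\phi^\star$ really is independent of the $2$-morphism $\eta$ (and well-defined for general $2$-commutative rather than strictly commutative diagrams), and that the cocycle condition of axiom (ii) holds on the nose rather than merely up to canonical isomorphism. Both reduce, however, to the corresponding statements for the construction of $\cS_T$ in \S\ref{dcr.1}, where one explicitly describes $\phi^\star$ on sections of the form $f+I_{\d f}^2$ by $\phi^\star(f+I_{\d f}^2)=f\ci\phi+I_{\d(f\ci\phi)}^2$; so no genuinely new input beyond the scheme case is required.
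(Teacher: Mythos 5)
Your proposal is correct and coincides with the intended argument: the paper states Proposition \ref{sa3prop4} without proof, quoting \cite[Cor.~2.52]{Joyc2}, where it is established by exactly this bookkeeping --- setting $\cS_X(T,t)=\cS_T$ and $\cS_X(\phi,\eta)=\phi^\star$, checking axioms (i) and (ii) of Proposition \ref{sa3prop3} from the functoriality $(\psi\ci\phi)^\star=\phi^\star\ci\phi^{-1}(\psi^\star)$ and the fact that $\phi^\star$ is an isomorphism for \'etale $\phi$, and obtaining (b) because $\phi^\star$ respects the splitting $\cS_T=\K_T\op\cSz_T$. The only wording to correct is that $T,U$ are smooth \emph{over} $X$ but need not be smooth $\K$-schemes, so you should invoke the scheme-level pullback and splitting results of \S\ref{dcr.1} for arbitrary $\K$-schemes (they do hold in that generality) rather than for ``morphisms of smooth $\K$-schemes''.
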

Here \cite[Def. 2.53]{Joyc.2} is the generalization of Definition
\ref{sa3def1} to Artin stacks.

\begin{dfn} A {\it d-critical stack\/} $(X,s)$ is an Artin
$\K$-stack $X$ and a global section $s\in H^0(\cSz_X)$, where
$\cSz_X$ is as in Proposition \ref{sa3prop4}, such that
$\bigl(T,s(T,t)\bigr)$ is an algebraic d-critical locus in the sense
of Definition \ref{sa3def1} for all smooth morphisms $t:T\ra X$
with~$T\in\Sch_\K$.
\label{sa3def3}
\end{dfn}

Here is a convenient way to understand
d-critical stacks $(X,s)$ in terms of d-critical structures on an
atlas $t:T\ra X$ for~$X$ from \cite[Prop.~2.54]{Joyc.2}.

\begin{prop} Let\/ $X$ be an Artin $\K$-stack, and\/ $t:
T\ra X$ a smooth atlas for $X$. Then $T\t_{t,X,t}T$ is equivalent to
a $\K$-scheme $U$ as $t$ is representable and\/ $T$ a scheme, so we
have a $2$-Cartesian diagram
\e
\begin{gathered}
\xymatrix@C=90pt@R=21pt{ *+[r]{U} \ar[d]^{\pi_1} \ar[r]_(0.3){\pi_2}
\drtwocell_{}\omit^{}\omit{^{\eta}} &
*+[l]{
T} \ar[d]_t \\
*+[r]{T} \ar[r]^(0.7)t & *+[l]{X} }
\end{gathered}
\label{sa3eq10}
\e
in $\Art_\K,$ with\/ $\pi_1,\pi_2:U\ra T$ smooth morphisms in
$\Sch_\K$. Also $T,U,\pi_1,\pi_2$ can be naturally completed to a
smooth groupoid in $\Sch_\K,$ and\/ $X$ is equivalent in $\Art_\K$
to the associated groupoid stack\/ $[U\rra T]$.
\begin{itemize}
\setlength{\itemsep}{0pt}
\setlength{\parsep}{0pt}
\item[{\bf(i)}] Let\/ $\cS_X$ be as in Proposition\/
{\rm\ref{sa3prop4},} and\/ $\cS_T,\cS_U$ be as in
{\rm\S\ref{dcr.1},} regarded as sheaves on $T,U$ in the \'etale
topology, and define $\pi_i^\star:\pi_i^{-1}(\cS_T)\ra \cS_U$ as
in {\rm\S\ref{dcr.1}} for $i=1,2$. Consider the map
$t^*:H^0(\cS_X)\ra H^0(\cS_T)$ mapping $t^*:s\mapsto s(T,t)$.
This is injective, and induces a bijection
\e
t^*:H^0(\cS_X)\,{\buildrel\cong\over\longra}\,\bigl\{s'\in
H^0(\cS_T):\text{$\pi_1^\star(s')= \pi_2^\star(s')$ in
$H^0(\cS_U)$}\bigr\}.
\label{sa3eq11}
\e
The analogue holds for $\cSz_X,\cSz_T,\cSz_U$.
\item[{\bf(ii)}] Suppose $s\in H^0(\cSz_X),$ so that\/ $t^*(s)\in
H^0(\cSz_T)$ with\/ $\pi_1^\star\ci t^*(s)=\pi_2^\star\ci
t^*(s)$. Then $(X,s)$ is a d-critical stack if and only if\/
$\bigl(T,t^*(s)\bigr)$ is an algebraic d-critical locus, and
then\/ $\bigl(U,\pi_1^\star\ci t^*(s)\bigr)$ is also an
algebraic d-critical locus.
\end{itemize}
\label{sa3prop5}
\end{prop}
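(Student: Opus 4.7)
The proposition is essentially a descent statement for the sheaves $\cS_X,\cSz_X$ on an Artin stack combined with the smooth descent/ascent property for d-critical structures from Proposition~\ref{sa3prop1}. Throughout, the key input is that $t:T\ra X$ is a smooth atlas, so it is representable and smooth surjective, and hence $\{t:T\ra X\}$ is a covering in the lisse-\'etale site. Thus sections of any lisse-\'etale sheaf on $X$ are determined by descent data with respect to~$T\t_X T\rra T$.

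For part \textbf{(i)}, my approach is to identify the right-hand side of \eq{sa3eq11} with the equalizer computing $H^0(\cS_X)$ in the lisse-\'etale topology. Injectivity of $t^*$ comes from the fact that any smooth $u:U'\ra X$ from a scheme admits a smooth surjective base change: form $V=T\t_XU'$ with smooth projections $p_T:V\ra T$ and $p_{U'}:V\ra U'$, where $p_{U'}$ is smooth surjective since $t$ is; then $s(U',u)$ is determined by $p_{U'}^\star(s(U',u))=\cS_X(p_T,\eta)(p_T^{-1}(s(T,t)))$ via \'etale descent along $p_{U'}$. This shows $t^*$ is injective. Conversely, given $s'\in H^0(\cS_T)$ with $\pi_1^\star(s')=\pi_2^\star(s')$, the same recipe produces a section $s(U',u)$ for every smooth $u:U'\ra X$, and one checks using the $2$-Cartesian diagram $V\t_{U'}V\cong U\t_X U'$ that the hypothesis $\pi_1^\star(s')=\pi_2^\star(s')$ on $U$ is exactly the descent condition needed to glue over $p_{U'}$. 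Checking compatibility $\cA(\phi,\eta)$ for arbitrary $2$-commutative diagrams reduces to the same diagram chase, and gives a well-defined global section $s\in H^0(\cS_X)$ with $t^*(s)=s'$. The same argument works verbatim for $\cSz_X$ using the splitting in Proposition~\ref{sa3prop4}(b).

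For part \textbf{(ii)}, the forward direction is immediate from Definition~\ref{sa3def3}: if $(X,s)$ is a d-critical stack then $(T,s(T,t))=(T,t^*(s))$ is an algebraic d-critical locus by definition (applied to the smooth $t$), and similarly $(U,s(U,t\ci\pi_1))=(U,\pi_1^\star\ci t^*(s))$ is an algebraic d-critical locus applied to the smooth composition $t\ci\pi_1:U\ra X$. For the converse, assume $(T,t^*(s))$ is an algebraic d-critical locus; we must verify that $(T',s(T',t'))$ is an algebraic d-critical locus for every smooth $t':T'\ra X$ from a $\K$-scheme $T'$. Form the $2$-Cartesian square
\[
\xymatrix@C=40pt@R=14pt{ W \ar[r]^(0.45){q} \ar[d]_{p} \drtwocell_{}\omit^{}\omit{^{\ze}} & T' \ar[d]^{t'} \\ T \ar[r]^{t} & X, }
\]
where $p:W\ra T$ is smooth (base change of $t'$) and $q:W\ra T'$ is smooth surjective (base change of~$t$). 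By compatibility of the global section $s$, we have $p^\star(t^*(s))=(t\ci p)^\star(s)=(t'\ci q)^\star(s)=q^\star(s(T',t'))$ in $H^0(\cSz_W)$. Applying Proposition~\ref{sa3prop1} to the smooth morphism $p$ shows $(W,p^\star(t^*(s)))$ is an algebraic d-critical locus; rewriting this as $(W,q^\star(s(T',t')))$ and applying the converse half of Proposition~\ref{sa3prop1} to the smooth surjective morphism $q$ yields that $(T',s(T',t'))$ is an algebraic d-critical locus, as required. Finally, the statement that $(U,\pi_1^\star\ci t^*(s))$ is a d-critical locus is just a special case of Proposition~\ref{sa3prop1} applied to the smooth morphism~$\pi_1:U\ra T$.

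\textbf{Main obstacle.} The only nontrivial step is the converse direction of (ii), whose technical heart is the simultaneous use of both implications in Proposition~\ref{sa3prop1}: one first \emph{pulls back} the d-critical structure from $T$ up to the fiber product $W$, and then \emph{descends} it along the smooth surjection $q:W\ra T'$. The compatibility $p^\star(t^*(s))=q^\star(s(T',t'))$ on $W$, which makes this two-step manoeuvre legitimate, is precisely the content of the descent bijection \eq{sa3eq11} established in part (i); without that identification the argument would not close up. Part (i) itself is conceptually routine smooth descent, but requires a careful unravelling of the definitions in Proposition~\ref{sa3prop3} to ensure that the candidate section constructed from $s'$ is genuinely natural under all $2$-commutative diagrams, not merely the defining one~\eq{sa3eq10}.
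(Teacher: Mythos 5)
The paper itself gives no proof of this proposition — it is quoted from Joyce's d-critical loci paper \cite{Joyc2} — so the comparison is with the argument intended there, and your proposal follows essentially that route. Part (ii) is exactly the intended manoeuvre: form $W=T\t_X T'$, use the compatibility $p^\star(t^*(s))=s(W,t\ci p)=s(W,t'\ci q)=q^\star(s(T',t'))$, and then apply the pullback half of Proposition \ref{sa3prop1} to the smooth $p$ and the descent half to the smooth surjective $q$. The one place to tighten is part (i): $p_{U'}:V\ra U'$ is smooth surjective but not \'etale, so ``\'etale descent along $p_{U'}$'' is not literally available; what you are really using is that $\cS$ (and $\cSz$) satisfies the equalizer property for smooth surjective covers of schemes, which follows from the \'etale sheaf property together with the fact that smooth surjections admit sections \'etale-locally and the functoriality of the pullbacks $\phi^\star$ under arbitrary (not just smooth or \'etale) morphisms. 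Once that smooth-descent fact is stated or quoted, your construction of $s(U',u)$, the cocycle check on $V\t_{U'}V\cong U\t_XU'$, and hence the bijection \eq{sa3eq11}, go through as written.
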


In \cite[Ex.~2.55]{Joyc.2} we consider quotient stacks $X=[T/G]$.

\begin{ex} Suppose an algebraic $\K$-group $G$ acts on a $\K$-scheme
$T$ with action $\mu:G\t T\ra T$, and write $X$ for the quotient
Artin $\K$-stack $[T/G]$. Then as in \eq{sa3eq10} there is a natural
2-Cartesian diagram
\begin{equation*}
\xymatrix@C=110pt@R=21pt{ *+[r]{G\t T} \ar[d]^{\pi_T}
\ar[r]_(0.3){\mu} \drtwocell_{}\omit^{}\omit{^{\eta}} &
*+[l]{T} \ar[d]_t \\
*+[r]{T} \ar[r]^(0.6)t & *+[l]{X=[T/G],\!{}} }
\end{equation*}
where $t:T\ra X$ is a smooth atlas for $X$. If $s'\in H^0(\cSz_T)$
then $\pi_1^\star(s')=\pi_2^\star(s')$ in \eq{sa3eq11} becomes
$\pi_T^\star(s')=\mu^\star(s')$ on $G\t T$, that is, $s'$ is
$G$-invariant. Hence, Proposition \ref{sa3prop5} shows that
d-critical structures $s$ on $X=[T/G]$ are in 1-1 correspondence
with $G$-invariant d-critical structures $s'$ on~$T$.
\label{sa3ex}
\end{ex}

Here \cite[Th.~2.56]{Joyc.2} is an analogue of Theorem~\ref{sa3thm3}.

\begin{thm} Let\/ $(X,s)$ be a d-critical stack. Using the
description of quasi-coherent sheaves on $X^\red$ in Proposition
{\rm\ref{sa3prop3}} there is a line bundle $K_{X,s}$ on the
reduced\/ $\K$-substack\/ $X^\red$ of\/ $X$ called the
\begin{bfseries}canonical bundle\end{bfseries} of\/ $(X,s),$
unique up to canonical isomorphism, such that:
\begin{itemize}
\setlength{\itemsep}{0pt}
\setlength{\parsep}{0pt}
\item[{\bf(a)}] For each point\/ $x\in X^\red\subseteq X$ we
have a canonical isomorphism
\e
\ka_x:K_{X,s}\vert_x\,{\buildrel\cong\over\longra}\,
\bigl(\La^{\rm top}T_x^*X\bigr)^{\ot^2} \ot\bigl(\La^{\rm
top}\fIso_x(X)\bigr)^{\ot^2},
\label{sa3eq12}
\e
where $T_x^*X$ is the Zariski cotangent space of\/ $X$ at\/ $x,$
and\/ $\fIso_x(X)$ the Lie algebra of the isotropy group
(stabilizer group) $\Iso_x(X)$ of\/ $X$ at\/~$x$.
\item[{\bf(b)}] If\/ $T$ is a $\K$-scheme and\/ $t:
T\ra X$ a smooth $1$-morphism, so that\/ $t^\red:T^\red\ra
X^\red$ is also smooth, then there is a natural isomorphism of
line bundles on $T^\red\!:$
\e
\Ga_{T,t}:K_{X,s}(T^\red,t^\red)\,{\buildrel\cong\over\longra}\,
K_{T,s(T,t)}\ot \bigl(\La^{\rm top}T^*_{
T/X}\bigr)\big\vert_{T^\red}^{\ot^{-2}}.
\label{sa3eq13}
\e
Here $\bigl(T,s(T,t)\bigr)$ is an algebraic d-critical locus by
Definition\/ {\rm\ref{sa3def3},} and\/ $K_{T,s(T,t)}\ra T^\red$
is its canonical bundle from Theorem\/~{\rm\ref{sa3thm3}}.
\item[{\bf(c)}] If\/ $t:T\ra X$ is a smooth
$1$-morphism, we have a distinguished triangle
in~$D_{\qcoh}(T)\!:$
\e
\xymatrix@C=40pt{ t^*(\bL_X) \ar[r]^(0.55){\bL_t} & \bL_T \ar[r]
& T^*_{T/X} \ar[r] & t^*(\bL_X)[1], }
\label{sa3eq14}
\e
where $\bL_T,\bL_X$ are the cotangent complexes of\/ $T,X,$
and\/ $T^*_{T/X}$ the relative cotangent bundle of\/ $t: T\ra
X,$ a vector bundle of mixed rank on\/ $T$. Let\/ $p\in
T^\red\subseteq T,$ so that\/ $t(p):=t\ci p\in X$. Taking the
long exact cohomology sequence of\/ \eq{sa3eq14} and restricting
to $p\in T$ gives an exact sequence
\e
0 \longra T^*_{t(p)}X \longra T^*_pT \longra T^*_{ T/X}\vert_p
\longra \fIso_{t(p)}(X)^* \longra 0.
\label{sa3eq15}
\e
Then the following diagram commutes:
\begin{equation*}
\xymatrix@!0@C=108pt@R=50pt{*+[r]{K_{X,s}\vert_{t(p)}}
\ar[d]^{\ka_{t(p)}} \ar@{=}[r] & K_{X,s}(T^\red,t^\red)\vert_p
\ar[rr]_(0.3){\Ga_{T,t}\vert_p} && *+[l]{K_{T,s(T,t)}\vert_p\ot
\bigl(\La^{\rm top}T^*_{\smash{
T/X}}\bigr)\big\vert_p^{\ot^{-2}}} \ar[d]_{\ka_p\ot\id} \\
*+[r]{\bigl(\La^{\rm top}T_{t(p)}^*X\bigr)^{\ot^2}\!\!\ot\!\bigl(\La^{\rm
top}\fIso_{t(p)}(X)\bigr)^{\ot^2}} \ar[rrr]^(0.54){\al_p^2} &&&
*+[l]{\bigl(\La^{\rm top}T^*_pT\bigr)^{\ot^2}\!\!\ot\!
\bigl(\La^{\rm top}T^*_{T/X}\bigr) \big\vert_p^{\ot^{-2}},} }
\end{equation*}
where $\ka_p,\ka_{t(p)},\Ga_{T,t}$ are as in {\rm\eq{sa3eq3},
\eq{sa3eq12}} and\/ {\rm\eq{sa3eq13},} respectively, and\/
$$\al_p:\La^{\rm top}T_{t(p)}^*X\ot\La^{\rm
top}\fIso_{t(p)}(X)\,{\buildrel\cong\over\longra}\,\La^{\rm
top}T^*_pT\ot\La^{\rm top}T^*_{T/X}\vert^{-1}_p$$ is induced by
taking top exterior powers in\/~\eq{sa3eq15}.
\end{itemize}
\label{sa3thm5}
\end{thm}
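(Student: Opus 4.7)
The plan is to construct $K_{X,s}$ as a line bundle in the lisse-\'etale site of $X^\red$ using the descent description of Proposition \ref{sa3prop3}. For each smooth $1$-morphism $t:T\to X$ from a $\K$-scheme $T$, the pair $(T,s(T,t))$ is an algebraic d-critical locus by Definition \ref{sa3def3}, so Theorem \ref{sa3thm3} supplies a canonical line bundle $K_{T,s(T,t)}$ on $T^\red$. Since $t$ is smooth, the relative cotangent complex is a vector bundle (of mixed rank) $T^*_{T/X}$, and I set
\begin{equation*}
K_{X,s}(T^\red,t^\red)\;:=\;K_{T,s(T,t)}\otimes\bigl(\La^{\rm top}T^*_{T/X}\bigr)\big\vert_{T^\red}^{\ot^{-2}},
\end{equation*}
which automatically forces the isomorphism $\Ga_{T,t}$ of \eqref{sa3eq13} to exist by construction, so that (b) holds tautologically.

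Next I would supply the transition data. Given a $2$-commutative diagram \eqref{sa3eq9} with $t,u$ smooth, the $1$-morphism $\phi:T\to U$ is automatically smooth (as $t\simeq u\circ\phi$ is smooth and $u$ is smooth and representable), and $\phi$ is a morphism of d-critical loci $(T,s(T,t))\to (U,s(U,u))$. Proposition \ref{sa3prop2} produces a canonical isomorphism
\begin{equation*}
\Up_\phi:\phi\vert_{T^\red}^*(K_{U,s(U,u)})\ot\bigl(\La^{\rm top}T^*_{T/U}\bigr)\big\vert_{T^\red}^{\ot^{2}}\,{\buildrel\cong\over\longra}\,K_{T,s(T,t)}.
\end{equation*}
Combining this with the short exact sequence of relative cotangent bundles
$0\to\phi^*(T^*_{U/X})\to T^*_{T/X}\to T^*_{T/U}\to 0$
and its induced isomorphism on top exterior powers yields a canonical isomorphism $\phi^*(K_{X,s}(U^\red,u^\red))\cong K_{X,s}(T^\red,t^\red)$. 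Condition (i) of Proposition \ref{sa3prop3} is automatic when $\phi$ is \'etale (then $T^*_{T/U}=0$ and $\Up_\phi$ is the pullback isomorphism), and the cocycle condition (ii) reduces, after cancelling the $\La^{\rm top}T^*_{T/U}$ and $\La^{\rm top}T^*_{U/X}$ factors against each other, to compatibility of $\Up_\psi\circ\phi^*(\Up_\cdot)$ with $\Up_{\psi\circ\phi}$, which follows from the naturality clause already built into Proposition \ref{sa3prop2}.

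For property (a), take any smooth atlas $t:T\to X$ through a chosen lift $p\in T^\red$ of $x\in X^\red$, and apply the scheme-level fiber identification \eqref{sa3eq3} for $(T,s(T,t))$ at $p$. The long exact cohomology sequence \eqref{sa3eq15} of \eqref{sa3eq14} identifies $T^*_{T/X}\vert_p$ up to the two-term complex $[T_x^*X\to T_p^*T\to T^*_{T/X}\vert_p\to \fIso_x(X)^*]$, so taking top exterior powers and squaring produces the required isomorphism $\ka_x$ of \eqref{sa3eq12} after combining with $\ka_p$. Independence of the choice of $(T,t,p)$ follows from the cocycle condition proved above applied to a common smooth refinement. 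Property (c) is then, by definition of $K_{X,s}(T^\red,t^\red)$, precisely the compatibility between $\ka_p$ (for the scheme $T$) and $\al_p$ (the determinant of \eqref{sa3eq15}) tensored with the identity on the relative factor, which commutes by inspection.

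The main obstacle will be bookkeeping the cocycle condition for the transition isomorphisms: one must verify that the two canonical identifications of $\La^{\rm top}T^*_{T/X}$ arising from the filtration $T\to U\to V\to X$ (and similarly for the iterated $\Up$) agree, and that the signs/orientations induced by the short exact sequences of cotangent bundles match the ones coming from Proposition \ref{sa3prop2} iterated. This is a diagram chase involving determinants of short exact sequences of vector bundles, which is where one must be careful that everything really is natural up to \emph{canonical} (and not merely up to sign) isomorphism. Once this verification is in place, Proposition \ref{sa3prop3} assembles the data into the desired line bundle $K_{X,s}$ on $X^\red$, and uniqueness up to canonical isomorphism follows from the uniqueness in Theorem \ref{sa3thm3}.
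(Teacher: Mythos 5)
Your overall strategy is the natural (and intended) one: the paper does not prove this result but quotes it from Joyce \cite[Th.~2.56]{Joyc2}, and Joyce's construction likewise takes $K_{X,s}(T^\red,t^\red)=K_{T,s(T,t)}\ot(\La^{\rm top}T^*_{T/X})\vert_{T^\red}^{\ot^{-2}}$, forced by (b), and descends it using Theorem \ref{sa3thm3}, Proposition \ref{sa3prop2}, and the pointwise characterizations to get canonicity, with (a) and (c) extracted from the four-term sequence \eq{sa3eq15} exactly as you indicate. So the architecture of your proposal is sound.

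However, there is a genuine gap at the heart of the descent step: your claim that in a $2$-commutative diagram \eq{sa3eq9} with $t,u$ smooth the morphism $\phi:T\ra U$ is ``automatically smooth'' is false. Smoothness of $u\ci\phi$ and of $u$ forces nothing on $\phi$: take $X=\Spec\K$ with the zero d-critical structure, $T=\Spec\K$, $U=\bA^1$, and $\phi$ the inclusion of the origin, or $T=U=\bA^1$ with $\phi:x\mapsto x^2$; in both cases $t,u$ are smooth but $\phi$ is not. Proposition \ref{sa3prop3} demands transition data $\cA(\phi,\eta)$ for \emph{all} such $\phi$ (indeed parts (a) and (c) of the theorem amount to restricting along non-smooth maps, namely points of $T$), whereas Proposition \ref{sa3prop2} only supplies $\Up_\phi$ when $\phi$ is smooth, so as written your construction does not produce a sheaf on the lisse-\'etale site. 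Two standard repairs: either descend along a single atlas using the groupoid presentation of Proposition \ref{sa3prop5}, where the relevant maps $\pi_1,\pi_2:T\t_XT\ra T$ and the projections of the triple fibre product \emph{are} smooth (being base changes of $t$), and verify the cocycle condition there; or, for general $\phi$, define the transition isomorphism by factoring through the graph $\Ga_\phi=(\id_T,\phi):T\ra T\t_XU$, whose projections to $T$ and $U$ are smooth and where $\Ga_\phi$ is a section of the first projection, so that $\phi^*(K'_U)\cong\Ga_\phi^*(K'_{T\t_XU})\cong K'_T$. In either case the compatibility/cocycle checks you defer cannot be waved through by a ``naturality clause'' of Proposition \ref{sa3prop2} (no such clause is stated); they should be verified by evaluating at points of the reduced schemes and invoking the commutative square \eq{sa3eq8}, since isomorphisms of line bundles on reduced schemes agreeing at all points coincide. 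With that smoothness claim removed and the descent reorganized along one of these lines, the rest of your argument for (a), (b), (c) goes through.
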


Here \cite[Def.~2.57]{Joyc.2} is the analogue of
Definition~\ref{sa3def2}:

\begin{dfn} Let $(X,s)$ be a d-critical stack, and $K_{X,s}$
its canonical bundle from Theorem \ref{sa3thm5}. An {\it
orientation\/} on $(X,s)$ is a choice of square root line bundle
$K_{X,s}^{1/2}$ for $K_{X,s}$ on $X^\red$. That is, an orientation
is a line bundle $L$ on $X^\red$, together with an isomorphism
$L^{\ot^2}=L\ot L\cong K_{X,s}$. A d-critical stack with an
orientation will be called an {\it oriented d-critical stack}.
\label{sa3def4}
\end{dfn}

Let $(X,s)$ be an oriented d-critical stack. Then for each smooth
$t:T\ra X$ we have a square root $K_{X,s}^{1/2} (T^\red,t^\red)$.
Thus by \eq{sa3eq13}, $K_{X,s}^{1/2}(T^\red,t^\red)\ot (\La^{\rm
top}\bL_{\smash{T/X}})\vert_{T^\red}$ is a square root for
$K_{T,s(T,t)}$. This proves~\cite[Lem.~2.58]{Joyc.2}:

\begin{lem} Let\/ $(X,s)$ be a d-critical stack. Then an
orientation $K_{X,s}^{1/2}$ for $(X,s)$ determines a canonical
orientation\/ $K_{T,s(T,t)}^{1/2}$ for the algebraic
d-critical locus $\bigl(T,s(T,t)\bigr),$ for all smooth\/ $t: T\ra
X$ with\/ $T$ a $\K$-scheme.
\label{sa3lem1}
\end{lem}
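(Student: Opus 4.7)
The plan is to construct the square root line bundle for $K_{T,s(T,t)}$ directly from the given data, by combining the square root of $K_{X,s}$ with a tautological square root arising from the relative cotangent bundle of the smooth morphism $t:T\to X$. The existence of the orientation $K_{X,s}^{1/2}$ means, via the framework of Proposition \ref{sa3prop3}, that for each smooth $1$-morphism $t:T\to X$ with $T$ a $\K$-scheme we are given a line bundle $K_{X,s}^{1/2}(T^\red,t^\red)$ on $T^\red$ together with an isomorphism $K_{X,s}^{1/2}(T^\red,t^\red)^{\ot 2}\cong K_{X,s}(T^\red,t^\red)$, compatible with all pullbacks along $2$-commutative squares as in \eq{sa3eq9}.

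First I would fix such a smooth atlas-style morphism $t:T\to X$ and observe that, since $t$ is smooth, the relative cotangent complex $\bL_{T/X}$ in \eq{sa3eq14} is concentrated in degree $0$ and represented by the locally free sheaf $T^*_{T/X}$, of possibly mixed rank on connected components of $T$. Its top exterior power $\La^{\rm top}T^*_{T/X}$ is thus a well-defined line bundle on $T$, whose restriction to $T^\red$ gives an \emph{a priori} square root of $(\La^{\rm top}T^*_{T/X})\big\vert_{T^\red}^{\ot 2}$. Combining this tautological square root with the given $K_{X,s}^{1/2}(T^\red,t^\red)$, define
\[
K_{T,s(T,t)}^{1/2}\;:=\;K_{X,s}^{1/2}(T^\red,t^\red)\,\ot\,(\La^{\rm top}T^*_{T/X})\big\vert_{T^\red}.
\]
Squaring this definition and invoking the canonical isomorphism $\Ga_{T,t}$ of \eq{sa3eq13} (in its inverse form) gives an isomorphism
\[
\bigl(K_{T,s(T,t)}^{1/2}\bigr)^{\ot 2}\;\cong\;K_{X,s}(T^\red,t^\red)\ot(\La^{\rm top}T^*_{T/X})\big\vert_{T^\red}^{\ot 2}\;\cong\;K_{T,s(T,t)},
\]
so that the line bundle constructed above genuinely is a square root for the canonical bundle of the algebraic d-critical locus $\bigl(T,s(T,t)\bigr)$ of Theorem \ref{sa3thm3}, hence an orientation in the sense of Definition \ref{sa3def2}.

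The step I expect to require the most care is the \emph{canonicity} of this construction, i.e.\ checking that the assignment $t\mapsto K_{T,s(T,t)}^{1/2}$ is compatible with pullback along $2$-commutative diagrams \eq{sa3eq9}, so that it really defines an object in the relevant sheaf-of-line-bundles category rather than merely a disparate collection of line bundles. For this one compares the cotangent complex functoriality for a composition $U\xrightarrow{\phi}T\xrightarrow{t}X$ with the sheaf-theoretic pullback behaviour of $K_{X,s}$ built into Proposition \ref{sa3prop4} and Theorem \ref{sa3thm5}(b),(c); the pointwise diagram in part (c) ensures that at each $p\in T^\red$ the identification of the two square roots is compatible with $\ka_p$, $\ka_{t(p)}$, and the exact sequence \eq{sa3eq15}. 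Once canonicity is verified for the two-step composition, the general case of an iterated tower of smooth morphisms follows immediately, and the orientation $K_{T,s(T,t)}^{1/2}$ on each $\bigl(T,s(T,t)\bigr)$ is seen to be determined functorially by~$K_{X,s}^{1/2}$.
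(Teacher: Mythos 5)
Your construction $K_{T,s(T,t)}^{1/2}:=K_{X,s}^{1/2}(T^\red,t^\red)\ot(\La^{\rm top}T^*_{T/X})\vert_{T^\red}$, squared and identified with $K_{T,s(T,t)}$ via the isomorphism $\Ga_{T,t}$ of \eq{sa3eq13}, is exactly the argument the paper gives in the paragraph preceding the lemma. The extra discussion of compatibility with pullbacks is harmless but not needed for the statement, which only asks for a canonical orientation on each $\bigl(T,s(T,t)\bigr)$ separately.
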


\subsection{Equivariant d-critical loci}
\label{eqdcr}

Here we summarizes some results about group actions on algebraic d-critical loci from \cite{Joyc2}.

\begin{dfn} Let $(X,s)$ be an algebraic d-critical locus over $\K$, and $\mu:G\t X\ra X$ an action of an algebraic $\K$-group $G$ on the $\K$-scheme $X$. We also write the action as $\mu(\ga):X\ra X$ for $\ga\in G$. We say that $(X,s)$ is $G$-{\it invariant\/} if $\mu(\ga)^\star(s)=s$ for all $\ga\in G$, or equivalently, if $\mu^\star(s)=\pi_X^\star(s)$ in $H^0(\cSz_{G\t X})$, where $\pi_X:G\t X\ra X$ is the projection.

\smallskip

Let $\chi:G\ra\bG_m$ be a morphism of algebraic $\K$-groups, that is, a character of $G$, where $\bG_m=\K\sm\{0\}$ is the multiplicative group. We say that $(X,s)$ is $G$-{\it equivariant, with character\/} $\chi,$ if $\mu(\ga)^\star(s)=\chi(\ga)\cdot s$ for all $\ga\in G$, or equivalently, if $\mu^\star(s)=(\chi\ci\pi_G)\cdot(\pi_X^\star(s))$ in $H^0(\cSz_{G\t X})$, where $H^0(\O_G)\ni\chi$ acts on $H^0(\cSz_{G\t X})$ by multiplication, as $G$ is a smooth $\K$-scheme.

\smallskip

Suppose $(X,s)$ is $G$-invariant or $G$-equivariant, with $\chi=1$ in the $G$-invariant case. We call a critical chart $(R,U,f,i)$ on $(X,s)$ with a $G$-action $\rho:G\t U\ra U$ a $G$-{\it equivariant critical chart\/} if $R\subseteq X$ is a $G$-invariant open subscheme, and $i:R\hookra U$, $f:U\ra\bA^1$ are equivariant with respect to the actions $\mu\vert_{G\t R},\rho,\chi$ of $G$ on $R,U,\bA^1$, respectively.

\smallskip

We call a subchart $(R',U',f',i')\subseteq(R,U,f,i)$ a $G$-{\it equivariant subchart\/} if $R'\subseteq R$ and $U'\subseteq U$ are $G$-invariant open subschemes. Then $(R',U',f',i'),\rho'$ is a $G$-equivariant critical chart, where~$\rho'=\rho\vert_{G\t U'}$.
\label{dc2def7}
\end{dfn}

Note that $X$ may not be covered by $G$-equivariant critical charts without extra assumptions on~$X,G$.
We will restrict to the case when $G$ is a torus, with a `good' action on~$X$:

\begin{dfn} Let $X$ be a $\K$-scheme, $G$ an algebraic $\K$-torus, and $\mu:G\t X\ra X$ an action of $G$ on $X$. We call $\mu$ a {\it good action\/} if $X$ admits a Zariski open cover by $G$-invariant affine open $\K$-subschemes $U\subseteq X$.
\label{dc2def8}
\end{dfn}

A torus-equivariant d-critical locus $(X,s)$ admits an open cover by equivariant critical charts if and only if the torus action is good:

\begin{prop} Let\/ $(X,s)$ be an algebraic d-critical locus which is invariant or equivariant under the action $\mu:G\t X\ra X$ of an algebraic torus $G$.
\smallskip

\noindent{\bf(a)} If\/ $\mu$ is good then for all\/ $x\in X$ there exists a $G$-equivariant critical chart\/ $(R,U,f,i),\rho$ on $(X,s)$ with\/ $x\in R,$ and we may take $\dim U=\dim T_xX$.
\smallskip

\noindent{\bf(b)} Conversely, if for all\/ $x\in X$ there exists a $G$-equivariant critical chart\/ $(R,U,f,i),\rho$ on $(X,s)$ with\/ $x\in R,$ then $\mu$ is good.
\label{dc2prop14}
\end{prop}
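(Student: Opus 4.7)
The plan is to prove the two directions separately, with (b) being the easier direction that uses Sumihiro's theorem on equivariant affine opens, and (a) requiring an equivariant embedding construction followed by an equivariant lifting of the d-critical datum using reductivity of the torus.

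For (b), I would argue as follows. Suppose every $x\in X$ lies in a $G$-equivariant critical chart $(R,U,f,i),\rho$. Then $U$ is a smooth $\K$-scheme equipped with an action of the torus $G$, hence in particular a normal variety with a torus action. By Sumihiro's theorem, every point of $U$ admits a $G$-invariant affine open neighbourhood. Applying this to $i(x)\in U$, we get a $G$-invariant affine open $U'\subseteq U$ containing $i(x)$; setting $R'=i^{-1}(U')$ gives a $G$-invariant open subscheme of $X$ containing $x$ which is closed in $U'$, hence affine. Varying $x\in X$, the subschemes $R'$ form a Zariski open cover of $X$ by $G$-invariant affine opens, so $\mu$ is good.

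For (a), fix $x\in X$. By goodness, there is a $G$-invariant affine open $R_0\subseteq X$ containing $x$. Since $R_0$ is affine, its coordinate ring $\cO(R_0)$ is a locally finite (rational) $G$-module, so one may choose a finite-dimensional $G$-submodule $V^*\subset\cO(R_0)$ generating $\cO(R_0)$ as a $\K$-algebra; this produces a $G$-equivariant closed embedding $j:R_0\hookra V$ into the affine $G$-space $V=\Spec\Sym V^*$. Applying Definition \ref{sa3def1} to $s\vert_{R_0}\in H^0(\cSz_{R_0})$, there exists, after possibly shrinking, a (non-equivariant) critical chart $(R,U_0,f_0,i_0)$ at $x$. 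Pulling back via $j$ and using the comparison of charts provided by Theorem \ref{sa3thm2}, we may arrange that $R\subseteq R_0$ is $G$-invariant and that $\iota_{R,V}(s\vert_R)\in H^0(\cO_V/I_{R,V}^2)$ is represented locally by some $f_0\in\cO(V)$. The section $\iota_{R,V}(s\vert_R)$ is $G$-semi-invariant with character $\chi$ (equal to $1$ in the invariant case) because $s$ is. Since $G$ is a torus and hence reductive, the short exact sequence of $G$-modules
\[
0 \longra I_{R,V}^2 \longra \cO(V) \longra \cO(V)/I_{R,V}^2 \longra 0
\]
splits into isotypic components, so we may lift $\iota_{R,V}(s\vert_R)$ to a $G$-semi-invariant $f\in\cO(V)$ of character $\chi$. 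By construction $g:=f_0-f\in I_{R,V}^2$, so $\d g\in I_{R,V}\cdot\Om_V$, whence $\d f\equiv\d f_0$ modulo $I_{R,V}\cdot\Om_V$. A standard Nakayama-type argument then shows that in a $G$-invariant open neighbourhood $U\subseteq V$ of $i(x)$ (such a neighbourhood exists by Sumihiro's theorem applied to $V$), the ideals $(\d f)$ and $(\d f_0)$ define the same subscheme, so $\Crit(f\vert_U)=i(R)$ scheme-theoretically. This produces a $G$-equivariant critical chart $(R,U,f\vert_U,i)$ at $x$.

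To achieve the bound $\dim U=\dim T_xX$, I would perform an equivariant shrinking along smooth directions. The stabilizer $G_x\subseteq G$ acts on $T_{i(x)}U$ preserving the subspace $T_xX=\ker\Hess_{i(x)}f$, and by reductivity of $G_x$ there is a $G_x$-invariant complement. Using this complement and an equivariant formal Morse lemma (the analogue over the torus of the classical reduction used in \cite{Joyc2}), one splits off a smooth $G$-invariant factor and cuts down $U$ to a $G$-invariant smooth subscheme of dimension $\dim T_xX$, while preserving the equivariance of $f$ and $i$. The main obstacle is this last dimension-reduction step: away from fixed points of $G$ on $X$, one must ensure that the slice transverse to the $G$-orbit can itself be chosen equivariantly smooth and of the correct minimal dimension, which is exactly where one must combine Sumihiro's theorem, the reductivity of $G$, and the fact that any $G$-equivariant surjection of $G$-modules admits an equivariant section to retain both equivariance and minimality simultaneously.
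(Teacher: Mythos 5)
Your part (b) is correct and is essentially the intended argument (note that the paper you are reading states Proposition \ref{dc2prop14} without proof, citing \cite{Joyc2}): Sumihiro's theorem applied to the smooth, hence normal, $G$-scheme $U$ gives a $G$-invariant affine open $U'\ni i(x)$; since $i(R)=\Crit(f)$ is closed in $U$, the $G$-invariant open $R'=i^{-1}(U')$ is isomorphic to a closed subscheme of the affine scheme $U'$, hence affine, and these $R'$ cover $X$, so $\mu$ is good.

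Part (a), however, has a genuine gap at the step where you pass from the equivariant lift $f$ to the claim $\Crit(f\vert_U)=i(R)$. From $g=f_0-f\in I_{R,V}^2$ one only gets $\d g\in I_{R,V}\cdot\Om_V$, i.e.\ each $\pd g/\pd x_j\in I_{R,V}$; this yields $(\d f)\subseteq I_{R,V}$ but \emph{not} $(\d f)=(\d f_0)$ near $i(x)$, and no Nakayama-type argument can repair this in general. Two representatives of the same section of $\cSz_X$ can have different critical loci: on $V=\bA^1$ with $R=\{0\}$, $I=(z)$, the functions $z^2$ and $0$ differ by an element of $I^2$, yet $\Crit(z^2)=\{0\}$ while $\Crit(0)=\bA^1$. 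The statement ``any representative of $\io_{R,V}(s\vert_R)$ gives a critical chart near $i(x)$'' is only true when the embedding is \emph{minimal}, i.e.\ $\dim V=\dim T_xX$ (then, for $g\in I^2$, the first-order terms of $\d g$ at $i(x)$ vanish because the Hessian of a chart function is zero there, so the two ideals agree near $i(x)$); your $V$, built from an arbitrary finite-dimensional generating $G$-submodule of $\cO(R_0)$, is typically far from minimal. Moreover, your appeal to Theorem \ref{sa3thm2} does not by itself produce an $f_0\in\cO(V)$ with $\Crit(f_0)=j(R)$, so there is nothing to compare against. Finally, the closing ``equivariant formal Morse lemma'' cannot substitute for minimality: splitting off the nondegenerate quadratic part is available only formally or \'etale-locally, and is equivariant only for the stabiliser $G_x$, not for $G$ on a $G$-invariant Zariski open, so it does not yield the required $G$-equivariant chart with $\dim U=\dim T_xX$. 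The proof in \cite{Joyc2} avoids all of this by arranging minimality of the equivariant embedding \emph{before} lifting: since $G$ is a torus, $\cO(R_0)$ is spanned by weight vectors, so one can choose weight vectors whose differentials at $x$ span $T^*_xX$ and obtain, after an equivariant shrinking using goodness, a $G$-equivariant closed embedding minimal at $x$; only then does linear reductivity of the torus give a semi-invariant representative $f$ of character $\chi$, and minimality forces $\Crit(f)=i(R)$ near $i(x)$.
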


\section{Derived symplectic structures in Donaldson--Thomas theory}
\label{ourpapers}

We are now going to use derived algebraic geometry from \cite{PTVV} and summarize the main results from the sequel \cite{BBJ,BBDJS,BJM,BBBJ}
and their consequences in Donaldson--Thomas theory. Some of them will not be used 
to prove our main results stated in \S\ref{main.1}, but we will expose them as they contribute 
to a whole picture of the theory.

\subsection{Symplectic derived schemes and critical loci}

Here we summarizes the main results from \cite{BBJ}. The following is \cite[Thm. 5.18]{BBJ}.

\begin{thm} Let\/ $\bX$ be a derived\/ $\K$-scheme with\/
$k$-shifted symplectic form $\ti\om$ for $k<0,$ and\/ $x\in\bX$.
Then there exists a standard form cdga $A$ over $\K$ which is
minimal at\/ $p\in\Spec H^0(A)$ in the sense of \cite[\S 4]{BBJ}, a $k$-shifted symplectic form\/
$\om$ on $\bSpec A,$ and a morphism $\bs f:\bU=\bSpec A\ra\bX$
with\/ $\bs f(p)=x$ and\/ $\bs f^*(\ti\om)\sim\om,$ such that
if\/ $k$ is odd or divisible by $4,$ then $\bs f$ is
a Zariski open inclusion, and\/ $A,\om$ are in Darboux form, 
and if\/ $k\equiv 2\mod 4,$ then $\bs f$ is
\'etale, and\/ $A,\om$ are in strong Darboux form, as 
in \cite[\S 5]{BBJ}.
\label{sa2thm3}
\end{thm}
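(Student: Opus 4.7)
The plan is to combine an existence result for standard-form local models of derived schemes with a Darboux-style inductive normalization of the closed $k$-shifted $2$-form, performed coordinate-wise on a cdga presentation.

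First, I would produce a local cdga model. Using general results on derived schemes, there is a standard form cdga $A$ (built by iteratively freely adjoining generators in non-positive degrees) and a morphism $\bs f:\bSpec A\ra\bX$ which is either a Zariski open inclusion or an \'etale neighbourhood of $x$, with $\bs f(p)=x$ for some $p\in\Spec H^0(A)$. By substituting out any generator in degree $i<0$ whose differential at $p$ is nonzero, and iterating, one can further arrange that $A$ is \emph{minimal} at $p$, i.e.\ $\d_A\vert_p=0$. Then $\bs f^*(\ti\om)$ is a closed $k$-shifted $2$-form on $\bSpec A$, which, in the shifted de Rham complex of $A$, decomposes as $\om=\om^0+\om^1+\om^2+\cdots$ with $\om^j$ of de Rham weight $j$.

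Next, I would put the leading component $\om^0$ in normal form at $p$. Nondegeneracy of $\om^0$ identifies, at $p$, the space of generators of $A$ in degree $i$ with the dual of the generators in degree $k-i$ for each $i$, giving a perfect pairing between them. When $k$ is odd, these pairings connect genuinely distinct degrees and can be diagonalized Zariski-locally by a linear change of generators. When $k\equiv 0\pmod 4$, the self-pairing on degree $k/2$ generators is alternating and admits a symplectic basis over $\K$, again Zariski-locally. When $k\equiv 2\pmod 4$, the self-pairing on degree $k/2$ generators is symmetric, and diagonalization requires extracting square roots of units, which accounts for the passage to an \'etale neighbourhood. In all cases one obtains Darboux-type coordinates on generators bringing $\om^0$ to the explicit standard form of \cite[\S 5]{BBJ}.

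The core of the argument is then the inductive elimination of the higher terms $\om^j$ for $j\ge 1$ and the simultaneous placement of the cdga differential in Hamiltonian form. Using the total closedness equation $(\d_{\mathrm{dR}}+\d_A)\om=0$ together with nondegeneracy of $\om^0$, one solves, order by order in de Rham weight, for a primitive absorbing $\om^j$ into a redefinition of generators. More precisely, at each stage the obstruction to removing the $j$-th term lies in a cohomology group that is killed because contraction with $\om^0$ yields an isomorphism between derivations of $A$ and shifted $1$-forms; one constructs the required change of generators, checks it preserves standard form and minimality at $p$, and passes to the next order. Closedness and nondegeneracy together force the cdga differential to be identified with the Poisson bracket $\{H,-\}$ for a uniquely determined Hamiltonian $H$ of degree $k+1$, via the usual relation $\d_{\mathrm{dR}}H=\iota_{\d_A}\om$.

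The hardest step will be controlling the inductive construction while maintaining standard form, minimality at $p$, and (in the case $k\equiv 2\pmod 4$) compatibility with the \'etale extension needed to symmetrically diagonalize the degree $k/2$ pairing. Closedness only supplies a primitive up to coboundary, so at each stage there is a genuine homotopical ambiguity that must be resolved coherently; making all these choices compatibly, and verifying that the resulting $\bs f^*(\ti\om)$ is equivalent (rather than merely cohomologous) to the standard model $\om$, is the principal technical burden. Once this is in place, the theorem follows by reading off the Darboux or strong Darboux form of $(A,\om)$ from the normalized data.
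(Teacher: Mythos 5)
Note first that the paper you are being compared against does not actually prove Theorem \ref{sa2thm3}: it is quoted verbatim from \cite[Th.~5.18]{BBJ}, so your proposal has to be measured against the proof given there. Your scaffolding (a standard form cdga model minimal at $p$, the weight decomposition $\om=\om^0+\om^1+\cdots$, the linear-algebra normalization of $\om^0$ at $p$, and the explanation of why $k\equiv 2\mod 4$ forces an \'etale rather than Zariski-local statement) matches the structure of that proof. The genuine gap is in your central step, the elimination of the components $\om^j$, $j\ge 1$, and the passage to an exact Darboux form. You propose a Moser-type induction: absorb each $\om^j$ into a redefinition of generators, with the obstruction lying in a cohomology group ``killed because contraction with $\om^0$ yields an isomorphism.'' You never identify this group or prove it vanishes, and the mechanism is not the right one: in the Zariski-local algebraic setting there is no flow to integrate, and in \cite{BBJ} the higher weight terms are removed with no change of generators at all, by a homotopy built from the Euler vector field of the weight grading on the de Rham complex of $A$, whose relevant eigenvalues are invertible precisely because $k<0$. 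Nondegeneracy of $\om^0$ plays no role in that step. A symptom that your mechanism is off target: your argument would apparently go through for $k=0$, where the statement is false in this form, whereas the true proof visibly breaks there.

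The second, related gap is that your normalization of $\om^0$ is only a pointwise statement at $p$, while the Darboux form requires $\om^0$ to equal the constant-coefficient expression $\sum_j\dd y_j\,\dd x_j$ (in the appropriate degrees) on the whole chart. In \cite{BBJ} this is achieved by first using the same weight homotopy to make $\om^0$ simultaneously $\d_A$-closed and $\dd$-closed, then running an induction over the internal degrees of the generators to construct the coordinates $x_j,y_j$; the Hamiltonian $H$ of degree $k+1$ with $\d_A=\{H,-\}$ and the classical master equation is then produced by contracting with the Euler vector field, again using $k<0$. Your appeal to ``$\dd H=\iota_{\d_A}\om$'' presupposes the exactness of $\iota_{\d_A}\om^0$, which is exactly the point that needs proof. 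So while the outline is recognizable, the two load-bearing steps are asserted rather than proved, and the tool that actually carries them (the weight/Euler-vector-field homotopy, which is where $k<0$ enters) is missing from your proposal.
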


Let $Y$ be a Calabi--Yau $m$-fold over $\K$,
that is, a smooth projective $\K$-scheme with $H^i(\O_Y)=\K$ for
$i=0,m$ and $H^i(\O_Y)=0$ for $0<i<m$. Suppose $\cM$ is a classical
moduli $\K$-scheme of simple coherent sheaves in $\coh(Y)$, where we
call $F\in\coh(Y)$ {\it simple\/} if $\Hom(F,F)=\K$. More generally,
suppose $\cM$ is a moduli $\K$-scheme of simple complexes of
coherent sheaves in $D^b\coh(Y)$, where we call $F^\bu\in
D^b\coh(Y)$ {\it simple\/} if $\Hom(F^\bu,F^\bu)=\K$ and
$\mathop{\rm Ext}^{<0}(F^\bu,F^\bu)=0$. Such moduli spaces $\cM$ are
only known to be algebraic $\K$-spaces in general, but we assume
$\cM$ is a $\K$-scheme.
Then $\cM=t_0(\bs\cM)$, for $\bs\cM$ the corresponding derived
moduli $\K$-scheme. To make $\cM,\bs\cM$ into schemes rather than
stacks, we consider moduli of sheaves or complexes with fixed
determinant. Then Pantev et al.\ \cite[\S 2.1]{PTVV} prove $\bs\cM$
has a $(2-m)$-shifted symplectic structure $\om$, so Theorem
\ref{sa2thm3} shows that $(\bs\cM,\om)$ is Zariski locally modelled
on $(\bSpec A,\om)$, and $\cM$ is Zariski locally modelled on $\Spec
H^0(A)$. In the case $m=3$, so that $k=-1$, we get \cite[Cor. 5.19]{BBJ}:

\begin{cor} Suppose $Y$ is a Calabi--Yau\/ $3$-fold over a field\/
$\K,$ and\/ $\cM$ is a classical moduli $\K$-scheme of simple
coherent sheaves, or simple complexes of coherent sheaves, on $Y$.
Then for each\/ $[F]\in\cM,$ there exist a smooth\/ $\K$-scheme $U$
with\/ $\dim U=\dim\Ext^1(F,F),$ a regular function $f:U\ra\bA^1,$
and an isomorphism from $\Crit(f)\subseteq U$ to a Zariski open
neighbourhood of\/ $[F]$ in\/~$\cM$.
\label{da5cor1}
\end{cor}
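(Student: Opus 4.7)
\medskip

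\noindent\textbf{Proof plan.} The strategy is to lift the classical moduli scheme $\cM$ to its derived enhancement $\bs\cM$, produce a shifted symplectic form via \cite{PTVV}, apply the Darboux-type Theorem~\ref{sa2thm3}, and then take the classical truncation.

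First I would invoke the construction of the derived moduli $\K$-scheme $\bs\cM$ of simple coherent sheaves (or simple perfect complexes with fixed determinant) on $Y$, satisfying $t_0(\bs\cM)\cong\cM$, whose cotangent complex at $[F]$ has cohomology computed by $\Ext^\bu(F,F)$. Since $Y$ is a Calabi--Yau $3$-fold, Pantev--To\"en--Vaqui\'e--Vezzosi \cite[\S 2.1]{PTVV} equip $\bs\cM$ with a canonical $(2-m)$-shifted symplectic form $\ti\om$, which for $m=3$ is $k$-shifted symplectic with $k=-1$.

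Next I would apply Theorem~\ref{sa2thm3} at the point $x=[F]\in\bs\cM$. Since $k=-1$ is odd, we obtain a standard form cdga $A$ minimal at some $p\in\Spec H^0(A)$, a $-1$-shifted symplectic form $\om$ on $\bU=\bSpec A$ in Darboux form, and a Zariski open inclusion $\bs f:\bU\hookra\bs\cM$ with $\bs f(p)=[F]$ and $\bs f^*(\ti\om)\sim\om$. The $-1$-shifted Darboux form described in \cite[\S 5]{BBJ} means that $A$ has coordinates $(x_1,\dots,x_n)$ in degree $0$ and $(y_1,\dots,y_n)$ in degree $-1$, with $\om=\sum_i \d x_i\w \d y_i$, and the differential of $A$ is given by Poisson bracket with a Hamiltonian $H$ of degree $k+1=0$, i.e.\ a regular function $f=H:U\ra\bA^1$ on the smooth affine $\K$-scheme $U=\Spec\K[x_1,\dots,x_n]$. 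In other words $\bU\simeq\bs\Crit(f)$ as $-1$-shifted symplectic derived schemes. Taking $t_0$, the Zariski open inclusion $\bs f$ descends to a Zariski open inclusion $t_0(\bU)\cong\Crit(f)\hookra\cM$ sending $p\mapsto[F]$.

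It remains to check the dimension count $\dim U=\dim\Ext^1(F,F)$. This is where the minimality of $A$ at $p$ is essential: by the minimality condition of \cite[\S 4]{BBJ}, the cotangent complex $\bL_{\bU}|_p$ has zero internal differentials, so the number $n=\dim U$ of degree $0$ generators of $A$ equals $\dim H^0(\bL_{\bU}|_p)$. Since $\bs f$ is an open inclusion, $\bL_{\bU}|_p\simeq\bL_{\bs\cM}|_{[F]}$, and for a simple sheaf (or simple complex with fixed determinant) $F$ on the Calabi--Yau $3$-fold $Y$, the trace-free deformation complex yields $H^0(\bL_{\bs\cM}|_{[F]})^\vee\cong\Ext^1(F,F)$. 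Hence $\dim U=\dim\Ext^1(F,F)$ as required.

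The main conceptual obstacle is the verification of the dimension formula: one must be careful about whether one works with all of $\Ext^\bu(F,F)$ or its trace-free part, and whether the stabilizer $\bG_m\subset\Aut(F)$ has been rigidified in passing from the moduli stack to the moduli scheme. In the simple/fixed-determinant setup this is standard, and minimality then ties $\dim U$ to exactly $\dim\Ext^1(F,F)$. The rest of the argument is a direct translation of the $k=-1$ Darboux normal form into the classical language of critical loci via the functor~$t_0$.
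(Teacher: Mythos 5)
Your proposal follows essentially the same route as the paper: pass to the derived moduli scheme $\bs\cM$ with $t_0(\bs\cM)=\cM$, use the $(2-m)$-shifted symplectic structure of Pantev--To\"en--Vaqui\'e--Vezzosi (so $k=-1$ for $m=3$), apply the Darboux Theorem~\ref{sa2thm3} (with $k$ odd giving a Zariski open inclusion $\bs f:\bSpec A\ra\bs\cM$ in Darboux form), and truncate to realise a Zariski open neighbourhood of $[F]$ as $\Crit(f)$, with $\dim U=\dim\Ext^1(F,F)$ coming from minimality of $A$ at $p$ together with $\bs f(p)=[F]$. This is exactly the argument the paper gives (quoting \cite[Cor.~5.19]{BBJ}), with your unwinding of the minimality condition via $H^0(\bL_{\bs\cM}\vert_{[F]})$ being a correct expansion of the paper's terser statement.
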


Here $\dim U=\dim\Ext^1(F,F)$ comes from $A$ minimal at $p$ and $\bs
f(p)=[F]$ in Theorem \ref{sa2thm3}. This is a new result in Donaldson--Thomas theory. 
We already explained that when $\K=\C$ and
$\cM$ is a moduli space of simple coherent sheaves on $Y$, using
gauge theory and transcendental complex methods, Joyce and Song
\cite[Th.~5.4]{JoSo} prove that the underlying complex analytic
space $\cM^\an$ of $\cM$ is locally of the form $\Crit(f)$ for $U$ a
complex manifold and $f:U\ra\C$ a holomorphic function. Behrend and
Getzler announced the analogue of \cite[Th.~5.4]{JoSo} for moduli of
complexes in $D^b\coh(Y)$, but the proof has not yet appeared. Over
general $\K$, as in Kontsevich and Soibelman \cite[\S 3.3]{KoSo1}
the formal neighbourhood $\hat\cM_{[F]}$ of $\cM$ at any $[F]\in\cM$
is isomorphic to the critical locus $\Crit(\hat f)$ of a formal
power series $\hat f$ on $\Ext^1(F,F)$ with only cubic and higher
terms.

\smallskip

Here are \cite[Thm. 6.6 \& Cor. 6.7]{BBJ}:

\begin{thm} Suppose\/ $(\bX,\ti\om)$ is a $-1$-shifted symplectic
derived\/ $\K$-scheme, and let\/ $X=t_0(\bX)$ be the associated
classical\/ $\K$-scheme of\/ ${\bX}$. Then $X$ extends uniquely to
an algebraic d-critical locus\/ $(X,s),$ with the property that
whenever\/ $(\bSpec A,\om)$ is a  $-1$-shifted symplectic derived\/
$\K$-scheme in Darboux form with Hamiltonian $H\in A(0),$ as in 
\cite[Ex.s 5.8 \& 5.15]{BBJ}, and\/ $\bs
f:\bSpec A\ra\bX$ is an equivalence in $\dSch_\K$ with a Zariski
open derived\/ $\K$-subscheme\/ $\bR\subseteq\bX$ with\/ $\bs
f^*(\ti\om)\sim\om,$ writing\/ $U=\Spec A(0),$ $R=t_0(\bR),$
$f=t_0(\bs f)$ so that\/ $H:U\ra\bA^1$ is regular and\/
$f:\Crit(H)\ra R$ is an isomorphism, for $\Crit(H)\subseteq U$ the
classical critical locus of\/ $H,$ then $(R,U,H,f^{-1})$ is a
critical chart on\/~$(X,s)$.
The canonical bundle $K_{X,s}$ from Theorem\/ {\rm\ref{sa3thm3}} is
naturally isomorphic to the determinant line bundle
$\det(\bL_{\bX})\vert_{X^\red}$ of the cotangent complex\/
$\bL_{\bX}$ of\/~$\bX$.
\label{da6thm4}
\end{thm}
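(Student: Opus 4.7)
The plan is to use the Darboux theorem (Theorem \ref{sa2thm3}) to reduce the construction to a local gluing problem, then invoke Proposition \ref{sa3prop1}/Theorem \ref{sa3thm2} (compatibility of critical charts) to produce a well-defined global section of $\cSz_X$.

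\textbf{Step 1 (local picture).} By Theorem \ref{sa2thm3} applied at every point of $\bX$, there is a Zariski open cover of $\bX$ by derived subschemes $\bR_i\subseteq \bX$ equipped with equivalences $\bs f_i:\bSpec A_i\xrightarrow{\sim}\bR_i$ where $(\bSpec A_i,\om_i)$ is in Darboux form with Hamiltonian $H_i\in A_i(0)$, and with $\bs f_i^*(\ti\om)\sim\om_i$. Passing to truncations we obtain a Zariski open cover $\{R_i\}$ of $X=t_0(\bX)$ together with identifications $t_0(\bs f_i):\Crit(H_i)\xrightarrow{\cong}R_i$, where $U_i:=\Spec A_i(0)$ is smooth and $H_i:U_i\to\bA^1$ is regular. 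Each quadruple $(R_i,U_i,H_i,t_0(\bs f_i)^{-1})$ is, by construction, a candidate critical chart; via Remark \ref{dc2ex1} it produces a local section $s_i:=H_i+I_{R_i,U_i}^2\in H^0(\cSz_{R_i})$.

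\textbf{Step 2 (gluing).} The key point is that the $s_i$ agree on double overlaps $R_{ij}=R_i\cap R_j$. By Theorem \ref{sa3thm2}, after possibly shrinking to subcharts around each point, any two critical charts can be embedded into a common third critical chart; once this is available, one sees that the two candidate sections coincide, because $\cSz_X$ is defined intrinsically on $X$ and records precisely the residue $H+I_{\d H}^2$ that is invariant under embeddings of critical charts (this is exactly the content of \cite[Prop.~2.3]{Joyc2} / Remark \ref{dc2ex1}). To obtain the common embedding in our setting, the strategy is to compare the two Darboux presentations of the $-1$-shifted symplectic derived scheme $\bR_{ij}$: after further refinement in the derived category and using minimality of the Darboux charts (so that the dimension of $U$ matches $\dim T_xX$ at each point), the two presentations embed, as derived schemes, into a larger Darboux chart, and the Hamiltonian of the larger chart restricts to $H_i$ and $H_j$ up to the ambiguities absorbed in $\cSz$. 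The $s_i$ therefore glue to a unique global section $s\in H^0(\cSz_X)$, and by Definition \ref{sa3def1} the pair $(X,s)$ is an algebraic d-critical locus with each $(R_i,U_i,H_i,t_0(\bs f_i)^{-1})$ a critical chart. Uniqueness of $(X,s)$ is automatic: any other $s'$ with the required property must agree with $s_i$ on each $R_i$, hence with $s$ globally.

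\textbf{Step 3 (canonical bundle).} For a derived critical locus $\bs\Crit(H)\subseteq U$ with $U$ smooth, the cotangent complex is quasi-isomorphic to $\bigl[\bT_U\xrightarrow{\Hess H}\bL_U\bigr]$ placed in degrees $-1,0$, so
\[
\det(\bL_{\bs\Crit(H)})\;\cong\;\det(\bL_U)\otimes\det(\bT_U)^{-1}\big|_{\Crit(H)}\;\cong\;K_U^{\otimes 2}\big|_{\Crit(H)}.
\]
Pulling back along $\bs f_i$, we obtain an isomorphism $\det(\bL_{\bX})|_{R_i^\red}\cong i_i^*(K_{U_i}^{\otimes 2})|_{R_i^\red}$, which by Theorem \ref{sa3thm3}(b) is exactly the local description of $K_{X,s}|_{R_i^\red}$. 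Composing with the isomorphism $\io_{R_i,U_i,H_i,i_i}^{-1}$ of \eqref{sa3eq4} yields a local isomorphism $\det(\bL_\bX)|_{R_i^\red}\cong K_{X,s}|_{R_i^\red}$. To show these patch into a global isomorphism, one verifies on overlaps that the transition is the identity: this reduces to the compatibility of the Darboux-chart trivializations of $\det(\bL_\bX)$ with the transition isomorphisms between the chart-dependent trivializations of $K_{X,s}$ given in Theorem \ref{sa3thm3}(c), which in turn follows from the exact triangle relating $\bL_\bX$ and the $\bL_{U_i}$ through the smooth map $\bs f_i$, exactly as in Proposition \ref{sa3prop2}.

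\textbf{Expected main obstacle.} The genuine technical difficulty is Step 2: checking that the class of $H$ modulo $I_{\d H}^2$ is an invariant of the $-1$-shifted symplectic derived scheme, rather than of the particular Darboux presentation. Equivalently, one must show that two Darboux presentations of the same $-1$-shifted symplectic derived scheme can, after localization, be compared through an embedding of critical charts inducing the identity on the underlying d-critical datum. This requires unpacking the (in general non-canonical) equivalences between Darboux presentations provided by Theorem \ref{sa2thm3} and tracking the Hamiltonian through the comparison, and it is here that the $k=-1$ hypothesis (so that Darboux form is Zariski local, not merely \'etale) is essential.
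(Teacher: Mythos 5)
This theorem is not proved in the present paper at all: it is quoted verbatim from \cite[Th.~6.6 \& Cor.~6.7]{BBJ}, so the only comparison available is with the strategy of that source, which your outline does follow at the top level (cover $X$ by truncations of Darboux charts from Theorem \ref{sa2thm3}, define local sections $s_i=i_i^{-1}(H_i)+I_{R_i,U_i}^2$, glue, then identify $K_{X,s}$ chart by chart with $\det(\bL_{\bX})$).

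The difficulty is that your Step 2 — the only genuinely hard step, as you yourself say — is not an argument but a restatement of what must be proved. Two specific problems. First, the appeal to Theorem \ref{sa3thm2} is circular: that theorem compares critical charts \emph{on an already given d-critical locus} $(X,s)$, so it presupposes exactly the global section $s$ you are trying to construct; it cannot be used to show that the candidate sections $s_i,s_j$ agree on $R_i\cap R_j$. Second, the substitute you offer (``the two Darboux presentations embed, as derived schemes, into a larger Darboux chart, and the Hamiltonian of the larger chart restricts to $H_i$ and $H_j$ up to the ambiguities absorbed in $\cSz$'') is precisely the assertion that $H+I_{\d H}^2$ is an invariant of $(\bX,\ti\om)$ and not of the chosen Darboux presentation. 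In \cite{BBJ} this is the substantial content of the proof: one must compare two Darboux presentations through an equivalence of cdgas compatible with the $-1$-shifted symplectic forms and track what this does to the Hamiltonian, using the structural identities of the Darboux form (in particular the relation between the differential of $A$, the Poisson bracket with $H$, and the ideal $I_{\d H}$); nothing in your sketch supplies this comparison, and ``minimality of the charts'' alone does not give it. A related omission: to land in $H^0(\cSz_X)$ rather than $H^0(\cS_X)$ you need $H_i$ to vanish on $R_i^{\red}$ (or to argue that the locally constant discrepancies are killed by passing to $\cSz$), which again comes from the Darboux-form identities and is not addressed. Finally, the overlap check in your Step 3 for $K_{X,s}\cong\det(\bL_{\bX})\vert_{X^{\red}}$ relies on the same chart-comparison machinery, so it inherits the gap: the local isomorphisms $\det(\bL_{\bX})\vert_{R_i^{\red}}\cong i_i^*(K_{U_i}^{\ot^2})\vert_{R_i^{\red}}\cong K_{X,s}\vert_{R_i^{\red}}$ are fine, but showing the transitions are the identity requires knowing how two Darboux presentations of the same overlap are related, which is again the unproven Step 2.
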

We can think of Theorem \ref{da6thm4} as defining a {\it truncation
functor}
\e
\begin{split}
F:\bigl\{&\text{category of $-1$-shifted symplectic derived
$\K$-schemes $(\bX,\om)$}\bigr\}\\
&\longra\bigl\{\text{category of algebraic d-critical loci
$(X,s)$ over $\K$}\bigr\},
\end{split}
\label{da6eq3}
\e
where the morphisms $\bs f:(\bX,\om)\ra(\bY,\om')$ in the first line
are (homotopy classes of) \'etale maps $\bs f:\bX\ra\bY$ with $\bs
f^*(\om')\sim\om$, and the morphisms $f:(X,s)\ra (Y,t)$ in the
second line are \'etale maps $f:X\ra Y$ with~$f^*(t)=s$.
In \cite[Ex.~2.17]{Joyc2} Joyce gives an example of $-1$-shifted
symplectic derived schemes $(\bX,\om),(\bY,\om')$, both global
critical loci, such that $\bX,\bY$ are not equivalent as derived
$\K$-schemes, but their truncations $F(\bX,\om),F(\bY,\om')$ are
isomorphic as algebraic d-critical loci. Thus, the functor $F$ in
\eq{da6eq3} is not full.

\smallskip

Suppose again $Y$ is a Calabi--Yau 3-fold over $\K$ and $\cM$ a classical
moduli $\K$-scheme of simple coherent sheaves in $\coh(Y)$. Then
Thomas \cite{Thom} defined a natural {\it perfect obstruction
theory\/} $\phi:\cE^\bu\ra\bL_\cM$ on $\cM$ in the sense of Behrend
and Fantechi \cite{BeFa}, and Behrend \cite{Behr} showed that
$\phi:\cE^\bu\ra\bL_\cM$ can be made into a {\it symmetric
obstruction theory}. More generally, if $\cM$ is a moduli
$\K$-scheme of simple complexes of coherent sheaves in $D^b\coh(Y)$,
then Huybrechts and Thomas \cite{HuTh} defined a natural symmetric
obstruction theory on~$\cM$.
Now in derived algebraic geometry $\cM=t_0(\bs\cM)$ for $\bs\cM$ the
corresponding derived moduli $\K$-scheme, and the obstruction theory
$\phi:\cE^\bu\ra\bL_\cM$ from \cite{HuTh,Thom} is
$\bL_{t_0}:\bL_{\bs\cM}\vert_\cM\ra\bL_\cM$. Pantev et al.\ \cite[\S
2.1]{PTVV} prove $\bs\cM$ has a $-1$-shifted symplectic structure
$\om$, and the symmetric structure on $\phi:\cE^\bu\ra\bL_\cM$ from
\cite{Behr} is $\om^0\vert_\cM$. So as for Corollary \ref{da5cor1},
Theorem \ref{da6thm4} implies:

\begin{cor} Suppose $Y$ is a Calabi--Yau\/ $3$-fold over\/ $\K,$
and\/ $\cM$ is a classical moduli\/ $\K$-scheme of simple coherent
sheaves in $\coh(Y),$ or simple complexes of coherent sheaves in
$D^b\coh(Y),$ with perfect obstruction theory\/
$\phi:\cE^\bu\ra\bL_\cM$ as in Thomas\/ {\rm\cite{Thom}} or
Huybrechts and Thomas\/ {\rm\cite{HuTh}}. Then $\cM$ extends
naturally to an algebraic d-critical locus $(\cM,s)$. The canonical
bundle $K_{\cM,s}$ from Theorem\/ {\rm\ref{sa3thm3}} is naturally
isomorphic to $\det(\cE^\bu)\vert_{\cM^\red}$.
\label{da6cor1}
\end{cor}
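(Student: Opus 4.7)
The plan is essentially to reduce the statement to Theorem \ref{da6thm4}, using the derived enhancement of $\cM$ together with the $-1$-shifted symplectic structure constructed by Pantev, To\"en, Vaqui\'e and Vezzosi \cite{PTVV}. First I would invoke the existence of a derived moduli $\K$-scheme $\bs\cM$ with $t_0(\bs\cM)\cong \cM$, for both the sheaf case (Thomas \cite{Thom}) and the complex case (Huybrechts--Thomas \cite{HuTh}), realized as an open substack of the derived moduli stack of perfect complexes on $Y$ with fixed determinant. Simplicity of objects together with the fixed-determinant condition ensures that $\bs\cM$ is a derived $\K$-scheme (rather than merely a derived Artin stack), so that Theorem \ref{da6thm4} is directly applicable.

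Next, by \cite[\S 2.1]{PTVV}, the Calabi--Yau structure on $Y$, of dimension $m=3$, endows $\bs\cM$ with a canonical $k$-shifted symplectic form $\ti\om$ with $k=2-m=-1$. Thus $(\bs\cM,\ti\om)$ is a $-1$-shifted symplectic derived $\K$-scheme in the sense of \cite{BBJ}, and Theorem \ref{da6thm4} applies to produce a canonical algebraic d-critical locus structure $(\cM,s)$ on $\cM=t_0(\bs\cM)$, with critical charts $(R,U,H,f^{-1})$ on $(\cM,s)$ obtained from local Darboux presentations $(\bSpec A,\om)$ of $(\bs\cM,\ti\om)$ and Hamiltonian $H\in A(0)$ as in Theorem \ref{sa2thm3}.

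For the canonical bundle statement, Theorem \ref{da6thm4} yields a natural isomorphism
\[
K_{\cM,s}\;\cong\;\det(\bL_{\bs\cM})\big\vert_{\cM^\red}.
\]
To conclude, I would identify $\bL_{\bs\cM}\vert_\cM$ with $\cE^\bu$. This is the content of the construction of the obstruction theories in \cite{Thom} and \cite{HuTh}: the morphism $\phi:\cE^\bu\ra \bL_\cM$ is, up to canonical quasi-isomorphism, the restriction map $\bL_{t_0}:\bL_{\bs\cM}\vert_\cM\ra\bL_\cM$ coming from the inclusion $t_0:\cM\hookra\bs\cM$, and in particular $\cE^\bu\simeq \bL_{\bs\cM}\vert_\cM$ in the relevant degrees. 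Taking determinants and restricting further to $\cM^\red$ then gives
\[
K_{\cM,s}\;\cong\;\det(\bL_{\bs\cM})\vert_{\cM^\red}\;\cong\;\det(\cE^\bu)\vert_{\cM^\red},
\]
which is the asserted isomorphism.

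The only genuinely delicate point is the compatibility between the symmetric obstruction theory of Behrend \cite{Behr} / Huybrechts--Thomas \cite{HuTh} and the $-1$-shifted symplectic structure of \cite{PTVV} at the derived level: one must check that the two constructions of the symmetric pairing on $\cE^\bu$ agree, so that the determinant line bundle appearing in Theorem \ref{da6thm4} coincides with $\det(\cE^\bu)\vert_{\cM^\red}$ on the nose. This is the main technical step, but it is essentially encoded in the PTVV construction, where $\om$ is built from Serre duality on $Y$ and its classical shadow $\om^0\vert_\cM$ recovers precisely the symmetry isomorphism $\vartheta:\cE^\bu\ra(\cE^\bu)^\vee[1]$. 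Granted this, the corollary follows immediately from Theorem \ref{da6thm4}, and reduces the analytic/gauge-theoretic local description of \cite{JoSo} and Corollary \ref{da5cor1} to a purely algebraic statement over arbitrary $\K$.
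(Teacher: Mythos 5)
Your proposal is correct and follows essentially the same route as the paper: identify $\cM=t_0(\bs\cM)$ for the derived moduli scheme (fixed determinant, simple objects), use the $-1$-shifted symplectic structure of \cite[\S 2.1]{PTVV}, apply Theorem \ref{da6thm4} to get $(\cM,s)$ with $K_{\cM,s}\cong\det(\bL_{\bs\cM})\vert_{\cM^\red}$, and conclude via the identification of $\phi:\cE^\bu\ra\bL_\cM$ with $\bL_{t_0}:\bL_{\bs\cM}\vert_\cM\ra\bL_\cM$, noting as the paper does that the symmetric structure of \cite{Behr,HuTh} is $\om^0\vert_\cM$.
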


\subsection{Categorification using perverse sheaves and motives}

Here we summarizes the main results from \cite{BBDJS}.
This particular section is not really used in the sequel, but it completes the discussion started 
in \S\ref{dt3.1.2}.
The following theorems are \cite[Cor. 6.10 \& Cor. 6.11]{BBDJS}:

\begin{thm} Let\/ $(\bs X,\om)$ be a $-1$-shifted symplectic
derived scheme over\/ $\C$ in the sense of Pantev et al.\
{\rm\cite{PTVV},} and\/ $X=t_0(\bs X)$ the associated classical\/
$\C$-scheme. Suppose we are given a square root\/
$\smash{\det(\bL_{\bs X})\vert_X^{1/2}}$ for $\det(\bL_{\bs
X})\vert_X$. Then we may define $P_{\bs X,\om}^\bu\in\Perv(X),$
uniquely up to canonical isomorphism, and isomorphisms $\Si_{\bs
X,\om}:P_{\bs X,\om}^\bu\ra \bD_X(P_{\bs X,\om}^\bu),$ $\Tau_{\bs
X,\om}:P_{\bs X,\om}^\bu\ra P_{\bs X,\om}^\bu$.
The same applies for $\cD$-modules and mixed Hodge modules on $X,$
and for $l$-adic perverse sheaves and\/ $\cD$-modules on $X$ if\/
$\bs X$ is over $\K$ with\/~$\mathop{\rm char}\K=0$.
\label{sm6cor2}
\end{thm}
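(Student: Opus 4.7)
My plan is to deduce Theorem \ref{sm6cor2} from a general gluing theorem for perverse sheaves on oriented algebraic d-critical loci, by translating the hypotheses via the truncation functor of Theorem \ref{da6thm4}.

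\textbf{Step 1: Translation to algebraic d-critical data.} By Theorem \ref{da6thm4}, the classical truncation $X = t_0(\bs X)$ of a $-1$-shifted symplectic derived scheme $(\bs X,\om)$ carries a canonical algebraic d-critical structure $s \in H^0(\cSz_X)$, and the canonical bundle of Theorem \ref{sa3thm3} satisfies $K_{X,s} \cong \det(\bL_{\bs X})\vert_{X^\red}$. A choice of square root $\det(\bL_{\bs X})\vert_X^{1/2}$ restricts to a square root of $K_{X,s}$ on $X^\red$, i.e.\ an orientation of $(X,s)$ in the sense of Definition \ref{sa3def2}. It therefore suffices to construct $P^\bu_{X,s}$ and the involutive isomorphisms $\Si, \Tau$ from an oriented algebraic d-critical locus $(X,s,K_{X,s}^{1/2})$, and then set $P^\bu_{\bs X,\om} := P^\bu_{X,s}$.

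\textbf{Step 2: Local model on a critical chart.} For any critical chart $(R,U,f,i)$ on $(X,s)$, the local model will be the shifted perverse sheaf of vanishing cycles
\e
\PV_{U,f}^\bu \;:=\; \phi_f^p\bigl(\underline{\Q}_U[\dim U - 1]\bigr)\big\vert_R,
\label{pvloc}
\e
which by Theorem \ref{dt3thm1} is supported on $R = \Crit(f) \cap U$ and whose pointwise Euler characteristic recovers the Behrend function $\nu_X\vert_R$. On $\PV_{U,f}^\bu$ one has the canonical Verdier self-duality and monodromy automorphisms coming from $\phi_f^p$. The task is to glue the family $\{\PV_{U_\al,f_\al}^\bu\}$ indexed by a cover of $X$ by critical charts into a global object on $X$.

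\textbf{Step 3: Gluing via stabilization and the orientation.} Given two critical charts $(R_\al,U_\al,f_\al,i_\al)$ with $R_\al \subseteq R_\be$, Theorem \ref{sa3thm2} allows me to pass to subcharts and embed both into a common critical chart $(T,W,h,k)$. Thom--Sebastiani for vanishing cycles then produces an isomorphism between the stabilizations of $\PV_{U_\al,f_\al}^\bu$ and $\PV_{U_\be,f_\be}^\bu$ inside $\PV_{W,h}^\bu$, but \emph{only up to a sign} — equivalently, up to a $\Z/2$-torsor whose twisting is canonically identified, via Theorem \ref{sa3thm3}(c) and the isomorphism $\io_{R,U,f,i}$ of \eq{sa3eq4}, with the principal $\Z/2$-bundle of square roots of $K_{X,s}$ over the overlap. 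The chosen orientation $K_{X,s}^{1/2}$ is precisely a global section of this $\Z/2$-torsor, so it trivializes the sign ambiguity and upgrades the local isomorphisms to a genuine descent datum. A standard \v Cech argument on the lisse-\'etale site then produces a perverse sheaf $P^\bu_{X,s}$ on $X$, unique up to canonical isomorphism, with $P^\bu_{X,s}\vert_R \cong \PV_{U,f}^\bu$ compatibly with embeddings of critical charts.

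\textbf{Step 4: The involutions $\Si$ and $\Tau$, and variant coefficients.} Verdier duality and the monodromy automorphism of $\phi_f^p$ are natural for embeddings of critical charts and compatible with stabilization, so they descend to global isomorphisms $\Si_{\bs X,\om}: P^\bu_{\bs X,\om} \ra \bD_X(P^\bu_{\bs X,\om})$ and $\Tau_{\bs X,\om}: P^\bu_{\bs X,\om} \ra P^\bu_{\bs X,\om}$. The replacement of $\Q$-perverse sheaves by $\cD$-modules, mixed Hodge modules, or $\el$-adic perverse sheaves is formal: in each setting the vanishing cycle functor $\phi_f$ exists with the same formal properties (Verdier/Riemann--Hilbert/Saito duality, monodromy), and the sign-ambiguity in stabilization is identical, so the orientation $K_{X,s}^{1/2}$ resolves it uniformly. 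For $\K \neq \C$ of characteristic zero, one uses the $\el$-adic nearby/vanishing cycle formalism on the \'etale site in place of the classical topology, with Theorem \ref{sa2thm3} providing the necessary Zariski-local Darboux models since $k=-1$ is odd.

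\textbf{Main obstacle.} The genuinely non-formal input is Step 3: constructing the canonical comparison isomorphism of vanishing cycle perverse sheaves under a stabilization $(U,f) \rightsquigarrow (U \times \bA^n,\, f \boxplus (z_1^2 + \cdots + z_n^2))$ and identifying its sign indeterminacy precisely with the $\Z/2$-torsor of square roots of the determinantal line appearing in Proposition \ref{sa3prop2} and Theorem \ref{sa3thm3}(c). This is exactly where the hypothesis on $\det(\bL_{\bs X})\vert_X^{1/2}$ is forced upon us, and all other steps are essentially bookkeeping around this core computation.
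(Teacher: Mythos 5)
Your outline is essentially the argument of the cited source: this statement is not proved in the present paper but quoted from \cite{BBDJS} (Cor.~6.10 there), and the proof given there proceeds exactly as you describe — reduce to an oriented algebraic d-critical locus via the truncation functor of Theorem \ref{da6thm4}, take $\PV_{U,f}^\bu$ on critical charts as local models, and glue using stabilization (Thom--Sebastiani) isomorphisms whose $\Z/2$-ambiguity is the torsor of square roots of $K_{X,s}$, trivialized by the chosen orientation, with $\Si$ and $\Tau$ descending by naturality. Your identification of the core difficulty (the canonical stabilization isomorphism and its compatibility on overlaps) is also correct; the only cosmetic slip is that for schemes the descent is carried out in the Zariski/\'etale topology, the lisse-\'etale site being needed only for the stack version in \cite{BBBJ}.
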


\begin{thm} Let\/ $Y$ be a Calabi--Yau\/ $3$-fold over\/ $\C,$
and\/ $\cM$ a classical moduli\/ $\K$-scheme of simple coherent
sheaves in $\coh(Y),$ or simple complexes of coherent sheaves in
$D^b\coh(Y),$ with natural (symmetric) obstruction theory\/
$\phi:\cE^\bu\ra\bL_\cM$ as in Behrend\/ {\rm\cite{Behr},} Thomas\/
{\rm\cite{Thom},} or Huybrechts and Thomas\/ {\rm\cite{HuTh}}.
Suppose we are given a square root\/ $\det(\cE^\bu)^{1/2}$ for
$\det(\cE^\bu)$. Then we may define $P_\cM^\bu\in\Perv(\cM),$
uniquely up to canonical isomorphism, and isomorphisms
$\Si_\cM:P_\cM^\bu\ra \bD_\cM(P_\cM^\bu),$ $\Tau_\cM:P_\cM^\bu\ra
P_\cM^\bu$.
The same applies for $\cD$-modules and mixed Hodge modules on $\cM,$
and for $l$-adic perverse sheaves and\/ $\cD$-modules on $\cM$ if\/
$Y,\cM$ are over $\K$ with\/~$\mathop{\rm char}\K=0$.

\label{sm6cor3}
\end{thm}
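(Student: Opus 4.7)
\medskip

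\noindent\textbf{Proof proposal.} The plan is to reduce Theorem \ref{sm6cor3} to Theorem \ref{sm6cor2} via derived algebraic geometry. The classical moduli scheme $\cM$ is the classical truncation $t_0(\bs\cM)$ of a derived moduli $\K$-scheme $\bs\cM$ of simple coherent sheaves (or simple complexes) on the Calabi--Yau $3$-fold $Y$; the existence of $\bs\cM$ for simple objects is due to To\"en--Vaqui\'e, and in fact $\bs\cM$ is a derived $\K$-scheme (not only a derived stack) because simplicity eliminates the $\mathbb{G}_m$-stabilizers. By Pantev--To\"en--Vaqui\'e--Vezzosi \cite{PTVV}, the Calabi--Yau structure of $Y$ endows $\bs\cM$ with a natural $k$-shifted symplectic form $\om$ with $k = 2-3 = -1$.

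Next I would identify the obstruction-theoretic data on $\cM$ with its derived origin. The derived inclusion $t_0:\cM\hookra\bs\cM$ produces a canonical morphism $\bL_{\bs\cM}|_\cM\to\bL_\cM$, and the results of Thomas \cite{Thom} and Huybrechts--Thomas \cite{HuTh} show that this morphism is precisely (up to quasi-isomorphism) the natural symmetric obstruction theory $\phi:\cE^\bu\to\bL_\cM$ (more precisely, $\cE^\bu$ is quasi-isomorphic to $\bL_{\bs\cM}|_\cM$, viewed as a perfect complex in amplitude $[-1,0]$). Consequently there is a canonical isomorphism of line bundles
\[
\det(\cE^\bu)\cong\det(\bL_{\bs\cM})\vert_\cM
\]
on $\cM$. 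Under this isomorphism, a square root $\det(\cE^\bu)^{1/2}$ as in the hypothesis determines a square root $\det(\bL_{\bs\cM})\vert_\cM^{1/2}$, and hence by restriction also a square root on $\cM^\red$ (which is what matters for the d-critical structure, via Corollary \ref{da6cor1}).

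With this orientation in hand, Theorem \ref{sm6cor2} applied to $(\bs\cM,\om)$ produces a perverse sheaf $P_{\bs\cM,\om}^\bu\in\Perv(\cM)$, together with the self-duality isomorphism $\Si_{\bs\cM,\om}:P_{\bs\cM,\om}^\bu\ra\bD_\cM(P_{\bs\cM,\om}^\bu)$ and the monodromy involution $\Tau_{\bs\cM,\om}$; uniqueness up to canonical isomorphism is inherited. Defining $P_\cM^\bu:=P_{\bs\cM,\om}^\bu$, $\Si_\cM:=\Si_{\bs\cM,\om}$ and $\Tau_\cM:=\Tau_{\bs\cM,\om}$ gives the theorem. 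The analogues for $\cD$-modules, mixed Hodge modules, and $l$-adic perverse sheaves follow verbatim because Theorem \ref{sm6cor2} already produces them in each such category.

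The main obstacle I anticipate is the clean identification of the classical obstruction theory $\cE^\bu$ of \cite{Thom,HuTh} with the restriction $\bL_{\bs\cM}|_\cM$, including the compatibility of their respective symmetric/self-dual structures with the $-1$-shifted symplectic form $\om$; only with this compatibility does the square root hypothesis transfer correctly, ensuring that the perverse sheaf produced by Theorem \ref{sm6cor2} really is an invariant of the classical data $(\cM,\cE^\bu,\det(\cE^\bu)^{1/2})$ and not merely of the derived enhancement. Everything else is essentially a matter of applying Theorem \ref{sm6cor2} as a black box.
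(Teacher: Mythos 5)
Your proposal takes essentially the same route as the paper: Theorem \ref{sm6cor3} is deduced from Theorem \ref{sm6cor2} by writing $\cM=t_0(\bs\cM)$ for the derived moduli scheme, invoking the PTVV $-1$-shifted symplectic structure on $\bs\cM$, and identifying the obstruction theory $\phi:\cE^\bu\ra\bL_\cM$ of Thomas/Huybrechts--Thomas with $\bL_{t_0}:\bL_{\bs\cM}\vert_\cM\ra\bL_\cM$, so that $\det(\cE^\bu)\cong\det(\bL_{\bs\cM})\vert_\cM$ and the given square root transfers to the orientation needed in Theorem \ref{sm6cor2} (this is exactly how the paper treats the analogous Corollaries \ref{da6cor1}, \ref{mo5cor4} and \ref{sa4cor2}). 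The only small inaccuracy is your aside that simplicity eliminates the $\bG_m$-stabilizers: simple objects still have scalar automorphisms, and the paper instead passes to moduli with fixed determinant (and assumes $\cM$ is a scheme) to get a derived scheme rather than a stack, but this does not affect the argument.
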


Theorem \ref{sm6cor3} is relevant to the {\it categorification\/}
of Donaldson--Thomas theory as discussed in \S\ref{dt3.1.2}. As in \cite[\S 1.2]{Behr}, the perverse
sheaf $P_{\cM_{\rm st}^\al(\tau)}^\bu$ has pointwise Euler
characteristic $\chi\bigl(P_{\cM_{\rm st}^\al(\tau)}^\bu\bigr)=\nu$.
This implies that when $A$ is a field, say $A=\Q$, the
(compactly-supported) hypercohomologies $\bH^*\bigl(P_{\cM_{\rm
st}^\al(\tau)}^\bu\bigr), \bH^*_{\rm cs}\bigl(P_{\cM_{\rm
st}^\al(\tau)}^\bu\bigr)$ satisfy
\begin{align*}
\ts\sum\limits_{k\in\Z}(-1)^k\dim \bH^k\bigl(P_{\cM_{\rm
st}^\al(\tau)}^\bu\bigr)&= \ts\sum\limits_{k\in\Z}(-1)^k\dim
\bH^k_{\rm cs}\bigl(P_{\cM_{\rm st}^\al(\tau)}^\bu\bigr)=\chi\bigl(\cM_{\rm st}^\al(\tau),\nu\bigr)=DT^\al(\tau),
\end{align*}
where $\bH^k\bigl(P_{\cM_{\rm st}^\al(\tau)}^\bu\bigr) \cong
\bH^{-k}_{\rm cs}\bigl(P_{\cM_{\rm st}^\al(\tau)}^\bu\bigr){}^*$ by
Verdier duality. That is, we have produced a natural graded
$\Q$-vector space $\bH^*\bigl(P_{\cM_{\rm st}^\al(\tau)}^\bu\bigr)$,
thought of as some kind of generalized cohomology of $\cM_{\rm
st}^\al(\tau)$, whose graded dimension is $DT^\al(\tau)$. This gives
a new interpretation of the Donaldson--Thomas
invariant~$DT^\al(\tau)$.

\smallskip

In fact, as discussed at length in \cite[\S 3]{Szen}, the first
natural ``refinement'' or ``quantization'' direction of a
Donaldson--Thomas invariant $DT^\al(\tau)\in\Z$ is not the
Poincar\'e polynomial of this cohomology, but its weight polynomial
\begin{equation*}
w\bigl(\bH^*(P_{\cM_{\rm st}^\al(\tau)}^\bu), t\bigr)
\in\Z\bigl[t^{\pm\frac{1}{2}}\bigr],
\end{equation*}
defined using the mixed Hodge structure on the cohomology of the
mixed Hodge module version of $P_{\cM_{\rm st}^\al(\tau)}^\bu$,
which exists assuming that $\cM_{\rm st}^\al(\tau)$ is projective.

\smallskip

The material above is related to work by other authors. The idea of
categorifying Donaldson--Thomas invariants using perverse sheaves or
$\cD$-modules is probably first due to Behrend \cite{Behr}, and for
Hilbert schemes $\mathop{\rm Hilb}^n(Y)$ of a Calabi--Yau 3-fold $Y$
is discussed by Dimca and Szendr\H oi \cite{DiSz} and Behrend, Bryan
and Szendr\H oi \cite[\S 3.4]{BBS}, using mixed Hodge modules.
Corollary \ref{sm6cor3} answers a question of Joyce and
Song~\cite[Question~5.7(a)]{JoSo}.

\smallskip

As in \cite{JoSo,KoSo1} representations of {\it quivers with
superpotentials\/} $(Q,W)$ give 3-Calabi--Yau triangulated
categories, and one can define Donaldson--Thomas type invariants
$DT^\al_{Q,W}(\tau)$ `counting' such representations, which are
simple algebraic `toy models' for Donaldson--Thomas invariants of
Calabi--Yau 3-folds. Kontsevich and Soibelman \cite{KoSo2} explain
how to categorify these quiver invariants $DT^\al_{Q,W}(\tau)$, and
define an associative multiplication on the categorification to make
a {\it Cohomological Hall Algebra}. This paper was motivated by the
aim of extending \cite{KoSo2} to define Cohomological Hall Algebras
for Calabi--Yau 3-folds.

\smallskip

The square root $\det(\cE^\bu)^{1/2}$ required in Corollary
\ref{sm6cor3} corresponds roughly to {\it orientation data\/} in the
work of Kontsevich and Soibelman \cite[\S 5]{KoSo1}, \cite{KoSo2}.

\smallskip

Finally, we point out that Kiem and Li \cite{KiLi} have recently proved an
analogue of Corollary \ref{sm6cor3} by complex analytic methods,
beginning from Joyce and Song's result \cite[Th.~5.4]{JoSo}, proved
using gauge theory, that $\cM_{\rm st}^\al(\tau)$ is locally
isomorphic to $\Crit(f)$ as a complex analytic space, for $V$ a
complex manifold and $f:V\ra\C$ holomorphic.

\medskip

Now, we summarizes the main results from \cite{BJM}.
The following theorems are \cite[Cor. 5.12 \& Cor. 5.13]{BJM}:

\begin{thm} Let\/ $(\bs X,\om)$ be a $-1$-shifted symplectic
derived scheme over\/ $\K$ in the sense of Pantev et al.\
{\rm\cite{PTVV},} and\/ $X=t_0(\bs X)$ the associated classical\/
$\K$-scheme, assumed of finite type. Suppose we are given a square
root\/ $\smash{\det(\bL_{\bs X})\vert_X^{1/2}}$ for $\det(\bL_{\bs
X})\vert_X$. Then we may define a natural motive $MF_{\bs X,\om}\in
\oM^{\hat\mu}_X$.
\label{mo5cor3}
\end{thm}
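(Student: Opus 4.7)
The plan is to mimic the construction of the perverse sheaf $P_{\bs X,\om}^\bu$ of Theorem \ref{sm6cor2}, but working in the motivic Grothendieck ring $\oM^{\hat\mu}_X$ of $\hat\mu$-equivariant motives on $X$ rather than with perverse sheaves of vanishing cycles. The two central inputs are the $k=-1$ case of the Darboux Theorem \ref{sa2thm3}, which gives a good local presentation, and a motivic Thom--Sebastiani / stabilization result (in the spirit of Kontsevich--Soibelman \cite[Conj.~4]{KoSo1} as proved by Le Quy Thuong), which provides the canonical comparison isomorphisms that allow gluing.

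First I would apply Theorem \ref{sa2thm3} with $k=-1$ to produce a Zariski open cover of $\bs X$ by Darboux charts $\bs f_i:(\bSpec A_i,\om_i)\ra(\bs X,\ti\om)$ with Hamiltonians $H_i\in A_i(0)$, such that on the classical truncation $U_i=\Spec A_i(0)$ is smooth and $t_0(\bSpec A_i)=\Crit(H_i)\subseteq U_i$. On each such chart I would set
\e
MF_{\bs X,\om}^{(i)}=\bL^{-\dim U_i/2}\od\bigl(MF_{H_i}^{\rm mot}-\mathbf 1_{\Crit(H_i)}\bigr)\in \oM^{\hat\mu}_{\Crit(H_i)},
\label{mf}
\e
where $MF_{H_i}^{\rm mot}$ is the Denef--Loeser motivic nearby/Milnor fibre of the regular function $H_i:U_i\ra\bA^1$, and the twist by $\bL^{-\dim U_i/2}$ (which lives in $\oM^{\hat\mu}_{\Crit(H_i)}$ after adjoining the square root $\bL^{1/2}$) is chosen so that when $H_i=0$ one recovers a normalization compatible with the Behrend function.

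The heart of the argument is then to construct, on each pairwise overlap of charts, a canonical isomorphism
\e
\phi_{ij}:MF_{\bs X,\om}^{(i)}\big|_{R_{ij}}\,{\buildrel\cong\over\longra}\,MF_{\bs X,\om}^{(j)}\big|_{R_{ij}} \quad\text{in}\quad \oM^{\hat\mu}_{R_{ij}},
\label{gluing}
\e
which uses the orientation data. By the ``standard form'' comparison results of \cite[\S 5]{BBJ} combined with Theorem \ref{da6thm4}, two Darboux charts agreeing on $R_{ij}$ are related, up to an étale local change of coordinates, by a stabilization $H\mapsto H+q(z_1,\ldots,z_r)$ with $q$ a nondegenerate quadratic form. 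The motivic Thom--Sebastiani theorem then identifies $MF_{H+q}^{\rm mot}$ with $MF_H^{\rm mot}\od MF_q^{\rm mot}$, and $MF_q^{\rm mot}\sim\bL^{r/2}$ with an associated $\mu_2$-action encoded by the discriminant of $q$. The chosen square root $\det(\bL_{\bs X})|_X^{1/2}$ precisely trivializes this discriminant sign ambiguity in a coherent way, producing the canonical $\phi_{ij}$ in \eqref{gluing}.

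Finally I would verify the cocycle identity $\phi_{ik}=\phi_{jk}\ci\phi_{ij}$ on triple overlaps by reducing to a statement about composition of quadratic stabilizations, where it follows from associativity of motivic Thom--Sebastiani together with the multiplicativity of the square-root trivialization. The descent/gluing property of $\oM^{\hat\mu}_X$ in the Zariski (or étale, after a further cocycle refinement) topology then produces the global motive $MF_{\bs X,\om}\in\oM^{\hat\mu}_X$; naturality in $(\bs X,\ti\om)$ equipped with orientation follows because each step was canonical. The main obstacle is the second step: establishing that $\phi_{ij}$ is genuinely canonical, i.e.\ independent of the auxiliary choices in the Darboux comparison and strictly satisfying the triangle identities, rather than merely well-defined up to sign. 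This is exactly the point where the hypothesis of a square root of $\det(\bL_{\bs X})|_X$ enters essentially, mirroring the role of orientation data in \cite[\S 5]{KoSo1} and the square-root hypothesis in Corollary \ref{sm6cor2}.
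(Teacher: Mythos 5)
Your strategy is, at its core, the same as the one used to prove this result in \cite{BJM}, from which the theorem is quoted: local presentations as critical loci of regular functions, the normalized class $\bL^{-\dim U/2}\od\bigl([MF^{\rm mot}_{U,f}]-[\Crit(f)]\bigr)$ on each chart, comparison of overlapping charts via quadratic stabilization plus the motivic Thom--Sebastiani theorem, with the square root of $\det(\bL_{\bs X})\vert_X$ absorbing the resulting $\mu_2$-twists (the motive of a nondegenerate quadratic form is $\bL^{r/2}$ only up to a discriminant $\Z_2$-bundle), and finally Zariski gluing. The main structural difference is that the actual proof does not compare Darboux charts of Theorem \ref{sa2thm3} directly: it first applies the truncation result, Theorem \ref{da6thm4}, to replace $(\bs X,\om)$ by an algebraic d-critical locus $(X,s)$, observes that $K_{X,s}\cong\det(\bL_{\bs X})\vert_{X^\red}$ so that your hypothesis is precisely an orientation in the sense of Definition \ref{sa3def2}, and then invokes the d-critical machinery: Theorem \ref{sa3thm2} embeds any two critical charts, after shrinking, into a common third chart, and it is this common stabilization --- not a direct comparison of two presentations --- that makes the overlap identity precise and independent of the auxiliary choices you worry about at the end. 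Your route is viable, but at that step you would effectively be re-deriving the d-critical chart-comparison theorems; also, the motivic Thom--Sebastiani theorem of Denef--Loeser and Looijenga suffices here, and the Kontsevich--Soibelman integral identity \cite[Conj.\,4]{KoSo1} proved by Thuong \cite{Thuong} plays no role in this construction.

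Two points in your write-up should be corrected. First, elements of $\oM^{\hat\mu}_X$ form a ring, not a category: there are no comparison isomorphisms $\phi_{ij}$ and no cocycle condition on triple overlaps to verify. What must be proved is an \emph{equality} of the two locally defined classes on each pairwise overlap (this is where the orientation enters, trivializing the product of the $\Z_2$-bundles attached to the two stabilizations into the common chart), after which only pairwise agreement is needed. Second, the ``descent/gluing property of $\oM^{\hat\mu}_X$'' that you invoke at the end is true but not formal: it is established in \cite{BJM} by cutting $X$ into locally closed pieces subordinate to the cover and using the scissor relations, together with a well-definedness check, and treating it as standard is the one genuine gap in your argument as written.
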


\begin{thm} Suppose $Y$ is a Calabi--Yau\/ $3$-fold over\/ $\K,$
and\/ $\cM$ is a finite type moduli\/ $\K$-scheme of simple coherent
sheaves in $\coh(Y),$ or simple complexes of coherent sheaves in
$D^b\coh(Y),$ with obstruction theory\/ $\phi:\cE^\bu\ra\bL_\cM$ as
in Thomas\/ {\rm\cite{Thom}} or Huybrechts and Thomas\/
{\rm\cite{HuTh}}. Suppose we are given a square root\/
$\det(\cE^\bu)^{1/2}$ for $\det(\cE^\bu)$. Then we may define a
natural motive $MF_\cM\in\oM^{\hat\mu}_\cM.$
\label{mo5cor4}
\end{thm}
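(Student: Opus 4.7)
The plan is to deduce Theorem \ref{mo5cor4} from Theorem \ref{mo5cor3} by passing through derived algebraic geometry, exactly in the spirit of Corollary \ref{da6cor1} and Theorem \ref{sm6cor3}. The input data is purely classical, namely the moduli $\K$-scheme $\cM$ equipped with the Thomas/Huybrechts--Thomas obstruction theory $\phi:\cE^\bu\to\bL_\cM$, together with a square root $\det(\cE^\bu)^{1/2}$. The output $MF_\cM\in\oM^{\hat\mu}_\cM$ should be constructed by exhibiting $\cM$ as the classical truncation of a $-1$-shifted symplectic derived scheme, lifting $\det(\cE^\bu)^{1/2}$ to the required square root of $\det(\bL_{\bs\cM})\vert_\cM$, and then invoking Theorem \ref{mo5cor3}.

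First, I would appeal to the work of To\"en--Vaqui\'e and Pantev--To\"en--Vaqui\'e--Vezzosi to produce the derived moduli scheme $\bs\cM$ of simple (complexes of) coherent sheaves on $Y$, with classical truncation $t_0(\bs\cM)\simeq\cM$. Since $Y$ is a Calabi--Yau $3$-fold, \cite[\S 2.1]{PTVV} endows $\bs\cM$ with a canonical $-1$-shifted symplectic form $\om$, obtained by transferring the Serre-duality pairing along the universal family and using the Calabi--Yau trivialization $K_Y\cong\cO_Y$. Standard comparison results (as already used in Corollary \ref{da6cor1}) identify the restriction $\bL_{\bs\cM}\vert_\cM$ with the classical symmetric obstruction theory $\cE^\bu\to\bL_\cM$ of \cite{Thom,HuTh}, and under this identification the $0$-shifted piece $\om^0\vert_\cM$ recovers the Behrend symmetric pairing on~$\cE^\bu$.

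Next, under the identification $\bL_{\bs\cM}\vert_\cM\simeq\cE^\bu$ in the appropriate derived category, one gets a canonical isomorphism $\det(\bL_{\bs\cM})\vert_\cM\cong \det(\cE^\bu)$ of line bundles on $\cM$ (up to the usual shift conventions which are absorbed into the definition of the determinant). Thus the given square root $\det(\cE^\bu)^{1/2}$ supplies a square root $\det(\bL_{\bs\cM})\vert_\cM^{1/2}$, which is precisely the orientation data required to apply Theorem \ref{mo5cor3} to $(\bs\cM,\om)$. Doing so yields a motive $MF_{\bs\cM,\om}\in\oM^{\hat\mu}_\cM$, and we simply set $MF_\cM:=MF_{\bs\cM,\om}$.

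The genuinely delicate point, and the main obstacle, is the compatibility of the two sources of determinant/orientation data: the classical one coming from $\cE^\bu$ and the derived one coming from $\bL_{\bs\cM}$. One must check that the isomorphism $\det(\bL_{\bs\cM})\vert_\cM\cong\det(\cE^\bu)$ respects the canonical nondegenerate pairings (symmetric on the classical side, coming from $\om$ on the derived side), so that a square root on one side really does give orientation data in the sense of \cite{BJM}. This is a local verification that can be reduced, via Theorem \ref{sa2thm3} and Corollary \ref{da5cor1}, to an explicit Darboux chart $\bSpec A$ around any $[F]\in\cM$ where $\cM\cong\Crit(f)$ on a smooth $U$ with $\dim U=\dim\Ext^1(F,F)$; in such a chart the comparison of determinants is a direct computation with $T^*U\vert_{\Crit(f)}$, and the naturality of $MF_\cM$ (independence of derived enhancement, independence of Darboux chart) then follows from the naturality statements already built into Theorems \ref{sa2thm3} and~\ref{mo5cor3}.
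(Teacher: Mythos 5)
Your proposal is correct and follows essentially the same route as the paper: the result is obtained by realizing $\cM$ as $t_0(\bs\cM)$ for the derived moduli scheme, equipping $\bs\cM$ with the $-1$-shifted symplectic structure of \cite[\S 2.1]{PTVV}, identifying $\phi:\cE^\bu\ra\bL_\cM$ with $\bL_{t_0}:\bL_{\bs\cM}\vert_\cM\ra\bL_\cM$ so that $\det(\cE^\bu)^{1/2}$ gives the required square root of $\det(\bL_{\bs\cM})\vert_\cM$, and then applying Theorem \ref{mo5cor3}. Your extra local Darboux-chart verification of compatibility with the pairings is harmless but not needed, since Theorem \ref{mo5cor3} only asks for a square root of the determinant line bundle, not for compatibility with the symplectic/symmetric structures.
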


Kontsevich and Soibelman define a motive over $\cM_{\rm
st}^\al(\tau)$, by associating a formal power series to each (not
necessarily closed) point, and taking its motivic Milnor fibre. The
question of how these formal power series and motivic Milnor fibres
vary in families over the base $\cM_{\rm st}^\al(\tau)$ is not
really addressed in \cite{KoSo1}. Corollary \ref{mo5cor4} answers
this question, showing that Zariski locally in $\cM_{\rm
st}^\al(\tau)$ we can take the formal power series and motivic
Milnor fibres to all come from a regular function $f:U\ra\bA^1$ on a
smooth $\K$-scheme~$U$.
As before, the square root $\det(\cE^\bu)^{1/2}$ required in Corollary
\ref{mo5cor4} corresponds roughly to {\it orientation data\/} in
Kontsevich and Soibelman \cite[\S 5]{KoSo1}, \cite{KoSo2}.

\subsection{Generalization to symplectic derived stacks}

Here we summarizes the main results from \cite{BBBJ}.
The following theorems are \cite[Cor. 2.11 \& Cor. 2.12]{BBBJ}:

\begin{thm} Let\/ $(\bX,\om_\bX)$ be a $-1$-shifted
symplectic derived Artin $\K$-stack, and\/ $X=t_0(\bX)$ the
corresponding classical Artin $\K$-stack. Then for each\/ $p\in X$
there exist a smooth\/ $\K$-scheme $U$ with dimension $\dim
H^0\bigl(\bL_X\vert_p\bigr),$ a point\/ $t\in U,$ a regular function
$f:U\ra\bA^1$ with\/ $\dd f\vert_t=0,$ so that\/
$T:=\Crit(f)\subseteq U$ is a closed\/ $\K$-subscheme with\/ $t\in
T,$ and a morphism $\vp:T\ra X$ which is smooth of relative
dimension $\dim H^1\bigl(\bL_X\vert_p\bigr),$ with\/ $\vp(t)=p$. We
may take\/~$f\vert_{T^\red}=0$.
\label{sa2cor1}
\end{thm}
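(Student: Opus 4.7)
The strategy is to reduce to the case of derived schemes by choosing a suitable smooth atlas, then apply the Darboux theorem for $-1$-shifted symplectic derived schemes, Theorem~\ref{sa2thm3}, and finally pass to classical truncations.

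First I would choose a smooth atlas $\bs\pi:\bs V\ra\bX$ with $\bs V$ an affine derived $\K$-scheme, together with a lift $w\in\bs V$ of $p$. Such atlases exist for derived Artin stacks, and one may further arrange that the atlas is \emph{minimal} at $w$, in the sense that $H^0(\bT_{\bs V}\vert_w)\to H^0(\bT_\bX\vert_p)$ is an isomorphism. Taking cohomology of the triangle $\bT_{\bs V/\bX}\to\bT_{\bs V}\to\bs\pi^*(\bT_\bX)$ at $w$, and using that $\bT_{\bs V/\bX}$ is a vector bundle in degree $0$ whose rank equals the relative dimension $d$ of $\bs\pi$, one computes $d=\dim H^{-1}(\bT_\bX\vert_p)=\dim H^1(\bL_\bX\vert_p)$. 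The pulled-back form $\bs\pi^*(\om_\bX)$ is a closed $(-1)$-shifted $2$-form on $\bs V$ whose degeneracy is concentrated along the rank-$d$ relative tangent directions. To convert this into a genuine $(-1)$-shifted symplectic structure, I would canonically enlarge $\bs V$ to an affine derived scheme $\bs V^+$, built by adjoining in degree $-1$ coordinates dual to $\bL_{\bs V/\bX}$, equipped with a smooth map $\bs V^+\ra\bs V$ and a $(-1)$-shifted symplectic form $\om^+$ compatible with $\om_\bX$. This is a Koszul-type construction already implicit in the PTVV-style manipulations of \cite{PTVV,BBJ}, and its effect is to cancel the isotropic kernel of $\bs\pi^*(\om_\bX)$ against the newly introduced cotangent variables.

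Theorem~\ref{sa2thm3} applied to $(\bs V^+,\om^+)$ then yields a standard-form cdga $A$ minimal at a point lying over $w$, a Zariski open inclusion $\bSpec A\hookra\bs V^+$ of $(-1)$-shifted symplectic derived schemes, and a Hamiltonian $H\in A(0)$ in Darboux form, such that $t_0(\bSpec A)\cong\Crit(H)$ inside the smooth $\K$-scheme $\Spec A(0)$. Setting $U:=\Spec A(0)$ and $T:=\Crit(H)\subseteq U$, the composition
\[
\vp:\;T\;\cong\;t_0(\bSpec A)\;\hookra\;t_0(\bs V^+)\;\ra\;t_0(\bs V)\;\ra\;t_0(\bX)\;=\;X
\]
is smooth of the desired relative dimension $\dim H^1(\bL_\bX\vert_p)$, since it factors through classical truncations of the smooth maps above. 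The minimality condition at $w$ forces $\dim U=\dim H^0(\bL_\bX\vert_p)$ and gives $\dd H\vert_t=0$ at the image $t$ of $w$. Finally, writing $H=H^0+c$ with $c:U\ra\bA^1$ locally constant and $H^0\vert_{T^\red}=0$ as in the remark following Definition~\ref{sa3def1}, and replacing $H$ by $f:=H^0$, one obtains the vanishing $f\vert_{T^\red}=0$ claimed in the statement.

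The principal obstacle will be the correct construction of the enlargement $\bs V^+$ and the verification that $\om^+$ is closed, non-degenerate, and appropriately compatible with $\om_\bX$ up to a specified equivalence; this requires careful bookkeeping with the cotangent triangle and a concrete model for the PTVV Lagrangian/isotropic data canonically attached to a smooth atlas of a shifted symplectic stack, as well as checking that minimality of the atlas $\bs V$ at $w$ can be propagated to a minimal standard-form cdga presentation of $\bs V^+$. Once this technical step is in place, the remainder of the proof is a direct application of Theorem~\ref{sa2thm3} together with an elementary dimension count along the long exact sequence of the cotangent triangle for $\vp$.
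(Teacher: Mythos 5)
Your opening reduction is the right one, and it matches how the cited source \cite{BBBJ} (this paper only quotes the result, it does not reprove it) begins: choose a smooth atlas $\bs\pi:\bV\ra\bX$ from an affine derived scheme which is minimal at a lift $w$ of $p$; the long exact sequence of $\bT_{\bV/\bX}\ra\bT_\bV\ra\bs\pi^*(\bT_\bX)$ then gives relative dimension $d=\dim H^1(\bL_\bX\vert_p)$ and $\dim H^0(\bL_\bV\vert_w)=\dim H^0(\bL_\bX\vert_p)$. The proof breaks at the central step, the enlargement $(\bV^+,\om^+)$. For a derived $\K$-scheme, nondegeneracy of a $-1$-shifted $2$-form forces the cotangent complex to have perfect amplitude $[-1,0]$ at every point, so in particular $H^{-2}(\bL\vert_x)=0$ and $\dim H^{-1}(\bL\vert_x)=\dim H^0(\bL\vert_x)$. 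On your atlas the pullback triangle gives $H^{-2}(\bL_\bV\vert_w)\cong H^{-2}(\bL_\bX\vert_p)\cong H^1(\bL_\bX\vert_p)^*$, of dimension $d>0$ whenever $p$ has positive-dimensional isotropy, and adjoining free generators in degree $-1$ cannot repair this: if their differentials vanish, then $H^{-2}(\bL_{\bV^+}\vert_w)=H^{-2}(\bL_\bV\vert_w)\ne 0$ and $\dim H^{-1}$ becomes $\dim H^0+d\ne\dim H^0$, so no $-1$-shifted symplectic structure can exist on $\bV^+$ at all; if their differentials are nonzero, they land in degree $0$, hence change $H^0$ of the cdga, i.e.\ the classical truncation, which destroys the identification of $t_0(\bSpec A)$ with $\Crit(f)\cong t_0(\bV)$ and the smoothness and dimension claims for $\vp$. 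Moreover the projection $\bV^+\ra\bV$ as described has relative cotangent concentrated in degree $-1$, so it is quasi-smooth rather than smooth, and the final assertion that $\vp$ is smooth "because it factors through truncations of smooth maps" does not apply to it. In short, the kernel of $\bs\pi^*(\om_\bX)$ lives in tangent degrees $0$ and $2$, and it cannot be cancelled against degree $-1$ cotangent variables; this is not a formal Koszul manipulation implicit in \cite{PTVV,BBJ}.

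The actual argument of \cite{BBBJ} goes in the opposite direction: instead of enlarging $\bV$ to make the pulled-back form nondegenerate, one proves a Darboux-type normal form for the \emph{degenerate} closed form $\bs\pi^*(\om_\bX)$ on the minimal atlas itself, by adapting the proof of Theorem \ref{sa2thm3} rather than applying it to an auxiliary symplectic derived scheme. This produces a standard form cdga presentation of $\bV$ with extra generators in degree $-2$ dual to $\bT_{\bV/\bX}$, equivalently a morphism $\bs i:\bV\ra\bU=\bs\Crit(f:U\ra\bA^1)$ with $\bL_{\bV/\bU}\simeq\bT_{\bV/\bX}[2]$; this is exactly the compatibility datum that reappears in Theorem \ref{sa4cor1}. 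Since $\bL_{\bV/\bU}$ is concentrated in degree $-2$, one gets $t_0(\bV)\cong\Crit(f)$ with $\dim U=\dim H^0(\bL_\bX\vert_p)$ by minimality, and $\vp=t_0(\bs\pi)$ is smooth of relative dimension $d$. Your first and last paragraphs survive unchanged (including the normalization $f\vert_{T^\red}=0$), but the middle step must be replaced by this analysis of the degenerate pullback form, or some equivalent construction of the map $\bV\ra\bs\Crit(f)$.
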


Thus, the underlying classical stack $X$ of a $-1$-shifted symplectic derived
stack $(\bX,\om_\bX)$ admits an atlas consisting of critical loci of
regular functions on smooth schemes.

\smallskip

Now let $Y$ be a Calabi--Yau 3-fold over $\K$, and $\cM$ a classical
moduli stack of coherent sheaves $F$ on $Y$, or complexes $F^\bu$ in
$D^b\coh(Y)$ with $\Ext^{<0}(F^\bu,F^\bu)=0$. Then
$\cM=t_0(\bs\cM)$, for $\bs\cM$ the corresponding derived moduli
stack. The (open) condition $\Ext^{<0}(F^\bu,F^\bu)=0$ is needed to
make $\bs\cM$ $1$-geometric and $1$-truncated (that is, a derived Artin stack, in our terminology);
without it, $\cM,\bs\cM$ would be a higher derived stack. Pantev et al.\ \cite[\S 2.1]{PTVV}
prove $\bs\cM$ has a $-1$-shifted symplectic structure
$\om_{\bs\cM}$. Applying Theorem \ref{sa2cor1} and using
$H^i\bigl(\bL_{\bs\cM}\vert_{[F]}\bigr)\cong \Ext^{1-i}(F,F)^*$
yields a new result on classical 3-Calabi--Yau moduli stacks,
the statement of which involves no derived geometry:

\begin{cor} Suppose $Y$ is a Calabi--Yau\/ $3$-fold over\/
$\K,$ and\/ $\cM$ a classical moduli $\K$-stack of coherent sheaves
$F,$ or more generally of complexes $F^\bu$ in $D^b\coh(Y)$ with
$\Ext^{<0}(F^\bu,F^\bu)=0$. Then for each\/ $[F]\in\cM,$ there exist
a smooth\/ $\K$-scheme $U$ with\/ $\dim U=\dim\Ext^1(F,F),$ a
point\/ $u\in U,$ a regular function $f:U\ra\bA^1$ with\/ $\dd
f\vert_u=0,$ and a morphism $\vp:\Crit(f)\ra\cM$ which is smooth of
relative dimension $\dim\Hom(F,F),$ with\/~$\vp(u)=[F]$.
\label{sa2cor2}
\end{cor}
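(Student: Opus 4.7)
\medskip

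\noindent\textbf{Proof plan for Corollary \ref{sa2cor2}.} The plan is to deduce this as a direct application of Theorem \ref{sa2cor1} to the derived enhancement of $\cM$, together with the $-1$-shifted symplectic structure produced by Pantev--To\"en--Vaqui\'e--Vezzosi \cite{PTVV}. First I would invoke the derived moduli stack $\bs\cM$ of coherent sheaves (resp.\ of complexes) on $Y$, so that $\cM = t_0(\bs\cM)$. The hypothesis $\Ext^{<0}(F^\bu,F^\bu)=0$ is exactly what is needed to ensure that $\bs\cM$ is $1$-geometric and $1$-truncated, i.e.\ a derived Artin $\K$-stack in the sense used in Theorem \ref{sa2cor1} (rather than a higher derived stack); without it the tangent complex would acquire positive cohomology in degrees $>1$ and the statement would have to be reformulated.

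Next I would use the main result of \cite[\S 2.1]{PTVV}: since $Y$ is a Calabi--Yau $3$-fold, the derived moduli stack $\bs\cM$ carries a canonical $-1$-shifted symplectic form $\om_{\bs\cM}$, obtained by transferring the $3$-Calabi--Yau pairing on $Y$ through the natural evaluation map. At this point the hypotheses of Theorem \ref{sa2cor1} are satisfied, and applying it to $(\bs\cM,\om_{\bs\cM})$ at the point $p=[F]\in X = \cM$ produces a smooth $\K$-scheme $U$, a point $u\in U$, a regular function $f:U\to\bA^1$ with $\dd f\vert_u=0$ and $f\vert_{\Crit(f)^\red}=0$, and a morphism $\varphi:\Crit(f)\to\cM$ smooth of relative dimension $\dim H^1(\bL_{\bs\cM}\vert_{[F]})$, with $\varphi(u)=[F]$ and $\dim U = \dim H^0(\bL_{\bs\cM}\vert_{[F]})$.

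It then remains to convert the derived-geometric dimension data into the $\Ext$-theoretic form stated in the corollary. For this I would use the standard identification of the tangent complex of $\bs\cM$ at $[F]$: deformation theory for (complexes of) sheaves gives $\bT_{\bs\cM}\vert_{[F]}\simeq R\Hom(F^\bu,F^\bu)[1]$, and dualizing yields
\[
H^i\bigl(\bL_{\bs\cM}\vert_{[F]}\bigr)\;\cong\;\Ext^{1-i}(F,F)^*.
\]
Specializing to $i=0,1$ gives $\dim H^0(\bL_{\bs\cM}\vert_{[F]})=\dim\Ext^1(F,F)$ and $\dim H^1(\bL_{\bs\cM}\vert_{[F]})=\dim\Hom(F,F)$, matching exactly the dimensions asserted for $U$ and for the relative dimension of $\varphi$.

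The main obstacle, such as it is, is not the identification of the $\Ext$-groups (which is formal), but rather making sure the hypotheses of the input results apply: one must verify that $\bs\cM$ is genuinely a (truncated) derived Artin stack so that both Theorem \ref{sa2cor1} and the PTVV construction apply, which is why the assumption $\Ext^{<0}(F^\bu,F^\bu)=0$ is essential. Beyond this the argument is a straightforward reduction: all the local structure is already provided by Theorem \ref{sa2cor1}, and the corollary is essentially a reformulation in classical language of that result, specialized to the particular $-1$-shifted symplectic derived stack coming from a Calabi--Yau $3$-fold.
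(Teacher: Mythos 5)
Your proposal is correct and follows essentially the same route as the paper: the corollary is obtained by applying Theorem \ref{sa2cor1} to the derived moduli stack $\bs\cM$ with $\cM=t_0(\bs\cM)$, using the $-1$-shifted symplectic structure of Pantev--To\"en--Vaqui\'e--Vezzosi, with $\Ext^{<0}(F^\bu,F^\bu)=0$ guaranteeing that $\bs\cM$ is a genuine derived Artin stack, and then translating dimensions via $H^i(\bL_{\bs\cM}\vert_{[F]})\cong\Ext^{1-i}(F,F)^*$. Nothing further is needed.
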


This is an analogue of \cite[Cor.~5.19]{BBJ}. When $\K=\C$, a
related result for coherent sheaves only, with $U$ a complex
manifold and $f$ a holomorphic function, was proved by Joyce and
Song \cite[Th.~5.5]{JoSo} using gauge theory and transcendental
complex methods.

\smallskip

Here is \cite[Thm. 3.18]{BBBJ}, a stack version of Theorem \ref{da6thm4}.

\begin{thm} Let\/ $\K$ be an algebraically closed field of
characteristic zero, $(\bX,\om_\bX)$ a $-1$-shifted symplectic
derived Artin $\K$-stack, and\/ $X=t_0(\bX)$ the corresponding
classical Artin $\K$-stack. Then there exists a unique d-critical
structure $s\in H^0(\cSz_X)$ on $X,$ making $(X,s)$ into a
d-critical stack, with the following properties:
\begin{itemize}
\setlength{\itemsep}{0pt}
\setlength{\parsep}{0pt}
\item[{\bf(a)}] Let\/ $U,$ $f:U\ra\bA^1,$ $T=\Crit(f)$ and\/
$\vp:T\ra X$ be as in Corollary\/ {\rm\ref{sa2cor1},} with\/
$f\vert_{T^\red}=0$. There is a unique
$s_T\in H^0(\cSz_T)$ on $T$ with\/
$\io_{T,U}(s_T)=i^{-1}(f)+I_{T,U}^2,$ and\/ $(T,s_T)$ is an
algebraic d-critical locus. Then $s(T,\vp)=s_T$ in
$H^0(\cSz_T)$.
\item[{\bf(b)}] The canonical bundle $K_{X,s}$ of\/ $(X,s)$ from
Theorem\/ {\rm\ref{sa3thm5}} is naturally isomorphic to the
restriction $\det(\bL_\bX)\vert_{X^\red}$ to $X^\red\subseteq
X\subseteq\bX$ of the determinant line bundle $\det(\bL_\bX)$ of
the cotangent complex\/ $\bL_\bX$ of\/~$\bX$.
\end{itemize}
\label{sa3thm6}
\end{thm}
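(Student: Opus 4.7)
My plan is to reduce Theorem \ref{sa3thm6} to the scheme case Theorem \ref{da6thm4} by working on a smooth atlas of $X$ and using Proposition \ref{sa3prop5} as the descent criterion.

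First I would produce a smooth atlas $\vp\colon T\ra X$ by taking $T$ to be the disjoint union of the Darboux-type charts furnished by Corollary \ref{sa2cor1}: for each $p\in X$ we obtain a smooth morphism $\vp_\al\colon T_\al\ra X$ with $T_\al=\Crit(f_\al)\subseteq U_\al$ for $f_\al\colon U_\al\ra\bA^1$ a regular function with $f_\al\vert_{T_\al^\red}=0$. At the derived level this presentation is constructed so that $\bs\vp_\al\colon\bs\Crit(f_\al)\ra\bX$ is smooth and $\bs\vp_\al^*(\om_\bX)$ is equivalent to the canonical $-1$-shifted symplectic form on $\bs\Crit(f_\al)$. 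Applying Theorem \ref{da6thm4} to each $\bs\Crit(f_\al)$ gives a canonical d-critical locus structure $s_{T_\al}\in H^0(\cSz_{T_\al})$ with $\io_{T_\al,U_\al}(s_{T_\al})=f_\al+I_{T_\al,U_\al}^2$, and these assemble into a section $s_T\in H^0(\cSz_T)$.

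Next I would verify the descent condition of Proposition \ref{sa3prop5}(i), namely $\pi_1^\star(s_T)=\pi_2^\star(s_T)$ on $W:=T\t_{X}T$ where $\pi_1,\pi_2\colon W\ra T$ are the smooth projections. Since $\vp$ is smooth but not \'etale, $W$ itself need not be $-1$-shifted symplectic, but Corollary \ref{sa2cor1} applied to the derived fiber product supplies a further smooth cover $\{\psi\colon V=\Crit(h)\ra W\}$ by critical loci such that each composition $\pi_i\ci\psi\colon V\ra T$ admits a derived lift intertwining the canonical symplectic forms on $\bs V$ and $\bs T$. The uniqueness clause in Theorem \ref{da6thm4} then forces both $\psi^\star\pi_1^\star(s_T)$ and $\psi^\star\pi_2^\star(s_T)$ to coincide with the canonical $s_V$ on $V$, and the converse part of Proposition \ref{sa3prop1} gives $\pi_1^\star(s_T)=\pi_2^\star(s_T)$ since the family $\{\psi\}$ is smooth and surjective. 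Proposition \ref{sa3prop5}(i) produces the desired $s\in H^0(\cSz_X)$ with $\vp^\star(s)=s_T$, Proposition \ref{sa3prop5}(ii) upgrades $(X,s)$ to a d-critical stack, property (a) is built into the construction via $\io_{T,U}(s_T)=f+I_{T,U}^2$, and uniqueness of $s$ follows from the injectivity of $\vp^\star$ in \eq{sa3eq11}.

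The hard part will be the derived-level coherence of Darboux charts under passage to the fiber product $W$: one must know that the smooth morphisms $\bs\vp_\al$ can be chosen so that their pulled-back symplectic data agree up to homotopy coherently enough for the uniqueness in Theorem \ref{da6thm4} to apply twice in succession. Once this is in hand, property (b) follows by combining the scheme-level identification $K_{T,s_T}\cong\det(\bL_{\bs T})\vert_{T^\red}$ of Theorem \ref{da6thm4} with the distinguished triangle \eq{sa3eq14} $\vp^*(\bL_\bX)\ra\bL_{\bs T}\ra\bL_{\bs T/\bX}$ and the canonical bundle formula \eq{sa3eq13} of Theorem \ref{sa3thm5}; taking determinants pins down the canonical isomorphism $K_{X,s}\cong\det(\bL_\bX)\vert_{X^\red}$, and stalkwise compatibility with \eq{sa3eq12} is verified by chasing the long exact sequence \eq{sa3eq15} through these identifications.
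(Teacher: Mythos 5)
You have reproduced the right overall architecture, but note first that this paper does not prove Theorem \ref{sa3thm6} at all: it is quoted from \cite[Th.~3.18]{BBBJ}, and the strategy there is indeed the one you outline --- present $X$ by critical charts carrying the tautological section $f_\al+I^2$ as in Remark \ref{dc2ex1}, then glue through Proposition \ref{sa3prop5}. The gap is in the step you lean on to make the gluing work. You assume the charts of Corollary \ref{sa2cor1} admit derived enhancements $\bs\vp_\al:\bs\Crit(f_\al)\ra\bX$ that are smooth and satisfy $\bs\vp_\al^*(\om_\bX)\sim\om_{\bs\Crit(f_\al)}$. No such enhancement exists at any point of $X$ with positive-dimensional isotropy group: there $-1$-shifted nondegeneracy gives $h^{-2}(\bL_\bX\vert_x)\cong\fIso_x(X)\ne 0$, whereas for a smooth morphism from a derived scheme whose cotangent complex has perfect amplitude $[-1,0]$ (as $\bs\Crit(f_\al)$ does), the triangle $\bs\vp_\al^*(\bL_\bX)\ra\bL_{\bs\Crit(f_\al)}\ra\bL_{\bs\Crit(f_\al)/\bX}$, with third term a vector bundle in degree $0$, forces $h^{-2}(\bL_\bX\vert_{\vp_\al(p)})=0$. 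This is exactly why the stacky Darboux theorem of \cite{BBBJ} has the weaker two-step form already quoted in this paper (see the diagrams in Theorems \ref{sa4cor1} and \ref{sa5cor1}): one only obtains $\bs i:\bV\ra\bU=\bs\Crit(f)$ and a smooth $\bs\vp:\bV\ra\bX$ with $\bL_{\bV/\bU}\simeq\bT_{\bV/\bX}[2]$ and $\bs\vp^*(\om_\bX)\sim\bs i^*(\om_\bU)$; the symplectic comparison lives on the auxiliary $\bV$, not on $\bs\Crit(f)$, and Corollary \ref{sa2cor1} is a purely classical statement about $t_0$.

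Consequently the mechanism you propose for the cocycle condition does not function: the $2$-form pulled back to the derived fibre product of two charts over $\bX$ is degenerate along the stabilizer directions, so that fibre product is not $-1$-shifted symplectic, Corollary \ref{sa2cor1} cannot be applied to it to produce your covers $V=\Crit(h)$, and the uniqueness clause of Theorem \ref{da6thm4} --- which characterizes the d-critical structure only through Zariski-open equivalences of symplectic derived \emph{schemes} with Darboux models --- cannot be applied ``twice'' to the two compositions $V\ra T$. What replaces this, and what is the real content of \cite[Th.~3.18]{BBBJ}, is a direct comparison of two overlapping charts $(\bU_i,\bV_i,\bs\vp_i)$, carried out on the intermediate derived schemes $\bV_i$ by passing to minimal models at a point and invoking the scheme-level machinery of \cite{BBJ}; only this shows that the induced sections on any scheme smooth over $X$ are independent of the chart, after which Proposition \ref{sa3prop5} and the injectivity in \eq{sa3eq11} give existence and uniqueness of $s$, and part (b) follows from determinant comparisons along \eq{sa3eq13}--\eq{sa3eq15} roughly as you sketch. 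You flag this coherence of charts as ``the hard part'' and defer it, but it is precisely the theorem; the shortcut through Corollary \ref{sa2cor1} on the fibre product plus Theorem \ref{da6thm4} is not available.
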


We can think of Theorem \ref{sa3thm6} as defining a {\it truncation
functor}
\begin{align*}
F:\bigl\{&\text{$\iy$-category of $-1$-shifted symplectic derived
Artin $\K$-stacks $(\bX,\om_\bX)$}\bigr\}\\
&\longra\bigl\{\text{2-category of d-critical stacks $(X,s)$ over
$\K$}\bigr\}.
\end{align*}

Let $Y$ be a Calabi--Yau 3-fold over $\K$, and $\cM$ a classical
moduli $\K$-stack of coherent sheaves in $\coh(Y)$, or complexes of
coherent sheaves in $D^b\coh(Y)$. There is a natural obstruction
theory $\phi:\cE^\bu\ra\bL_\cM$ on $\cM$, where $\cE^\bu\in
D_{\qcoh}(\cM)$ is perfect in the interval $[-2,1]$, and
$h^i(\cE^\bu)\vert_F\cong\Ext^{1-i}(F,F)^*$ for each $\K$-point
$F\in\cM$, regarding $F$ as an object in $\coh(Y)$ or $D^b\coh(Y)$.
Now in derived algebraic geometry $\cM=t_0(\bcM)$ for $\bcM$ the
corresponding derived moduli $\K$-stack, and
$\phi:\cE^\bu\ra\bL_\cM$ is $\bL_{t_0}:\bL_{\bcM}
\vert_\cM\ra\bL_\cM$. Pantev et al.\ \cite[\S 2.1]{PTVV} prove
$\bcM$ has a $-1$-shifted symplectic structure $\om$. Thus Theorem
\ref{sa3thm6} implies \cite[Cor. 3.19]{BBBJ}:

\begin{cor} Suppose $Y$ is a Calabi--Yau\/ $3$-fold over\/ $\K$ of
characteristic zero, and\/ $\cM$ a classical moduli\/ $\K$-stack of
coherent sheaves $F$ in $\coh(Y),$ or complexes of coherent sheaves
$F^\bu$ in $D^b\coh(Y)$ with $\Ext^{<0}(F^\bu,F^\bu)=0,$ with
obstruction theory\/ $\phi:\cE^\bu\ra\bL_\cM$. Then $\cM$ extends
naturally to an algebraic d-critical locus $(\cM,s)$. The canonical
bundle $K_{\cM,s}$ from Theorem\/ {\rm\ref{sa3thm5}} is naturally
isomorphic to $\det(\cE^\bu)\vert_{\cM^\red}$.
\label{sa3cor2}
\end{cor}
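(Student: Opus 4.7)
The plan is to deduce this corollary as a direct application of Theorem \ref{sa3thm6} to the derived moduli stack, following the blueprint already laid out in the paper for the scheme case (Corollary \ref{da6cor1}). First I would invoke the derived enhancement: by the general construction of derived moduli of objects in a dg-category, the classical moduli stack $\cM$ arises as the truncation $\cM = t_0(\bcM)$ of a derived moduli $\K$-stack $\bcM$ parametrizing coherent sheaves (respectively complexes in $D^b\coh(Y)$) on $Y$. The open condition $\Ext^{<0}(F^\bu,F^\bu)=0$ imposed in the hypothesis is exactly what guarantees that $\bcM$ is $1$-geometric and $1$-truncated, i.e.\ a derived Artin $\K$-stack in the sense required by Theorem \ref{sa3thm6}, rather than a higher derived stack.

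Next I would appeal to Pantev--To\"en--Vaqui\'e--Vezzosi \cite{PTVV}, which constructs a canonical $-1$-shifted symplectic form $\om_{\bcM}$ on $\bcM$ from the Calabi--Yau structure on $Y$ (the shift $2-\dim Y = -1$ coming from the $3$-fold hypothesis). Thus $(\bcM,\om_{\bcM})$ is a $-1$-shifted symplectic derived Artin $\K$-stack in characteristic zero, so Theorem \ref{sa3thm6} applies: it produces a unique d-critical structure $s\in H^0(\cSz_\cM)$ on $\cM$, making $(\cM,s)$ into a d-critical stack in the sense of Definition \ref{sa3def3}. This gives the first assertion.

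For the canonical bundle identification, Theorem \ref{sa3thm6}(b) provides a natural isomorphism
\[
K_{\cM,s}\;\cong\;\det(\bL_{\bcM})\big\vert_{\cM^\red}.
\]
On the other hand, by construction the obstruction theory $\phi:\cE^\bu\to\bL_\cM$ of Thomas \cite{Thom} and Huybrechts--Thomas \cite{HuTh} is identified with the truncation morphism $\bL_{t_0}:\bL_{\bcM}\vert_\cM\to\bL_\cM$, so there is a canonical quasi-isomorphism $\cE^\bu\simeq \bL_{\bcM}\vert_\cM$ in the relevant range. Taking determinants and restricting to $\cM^\red$ yields a canonical isomorphism $\det(\cE^\bu)\vert_{\cM^\red}\cong \det(\bL_{\bcM})\vert_{\cM^\red}$, which combined with the previous display gives the desired $K_{\cM,s}\cong\det(\cE^\bu)\vert_{\cM^\red}$.

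The main obstacle, and the only step that requires genuine care rather than invocation of the cited machinery, is the comparison between the classical obstruction theory $\cE^\bu$ coming from \cite{Thom,HuTh} and the truncation of the derived cotangent complex $\bL_{\bcM}\vert_\cM$. For coherent sheaves this identification is classical, but for complexes in $D^b\coh(Y)$ one must check that the obstruction theory of Huybrechts--Thomas agrees (up to canonical quasi-isomorphism) with $\bL_{t_0}$ after the truncation; this is known but should be made explicit. Once this compatibility is in place, the corollary is simply the stacky analogue of Corollary \ref{da6cor1}, with Theorem \ref{sa3thm6} playing the role that Theorem \ref{da6thm4} played in the scheme case.
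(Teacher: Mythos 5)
Your proposal is correct and follows essentially the same route as the paper: identify $\cM=t_0(\bcM)$ with $\phi:\cE^\bu\ra\bL_\cM$ given by $\bL_{t_0}:\bL_{\bcM}\vert_\cM\ra\bL_\cM$, use the $-1$-shifted symplectic structure of \cite{PTVV} on $\bcM$, and apply Theorem \ref{sa3thm6}, with part (b) giving $K_{\cM,s}\cong\det(\bL_{\bcM})\vert_{\cM^\red}\cong\det(\cE^\bu)\vert_{\cM^\red}$. Your closing remark about checking the compatibility of the Huybrechts--Thomas obstruction theory with the truncation of the derived cotangent complex is a reasonable point of care, but it is exactly the identification the paper also invokes, so there is no divergence in method.
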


Here is \cite[Cor. 4.13]{BBBJ}, the stack version of Theorem \ref{sm6cor2}:

\begin{thm} Let\/ $\K$ be an algebraically closed field of
characteristic zero, $(\bX,\om)$ a $-1$-shifted symplectic derived
Artin $\K$-stack, and\/ $X=t_0(\bX)$ the associated classical
Artin\/ $\K$-stack. Suppose we are given a square root\/
$\smash{\det(\bL_\bX)\vert_X^{1/2}}$.
Then working in $l$-adic perverse sheaves on stacks
\cite[\S 4]{BBBJ} we may define a perverse sheaf\/
$\check P_{\bX,\om}^\bu$ on $X$ uniquely up to canonical
isomorphism, and Verdier duality and monodromy isomorphisms
$\check\Si_{\bX,\om}:\check P_{\bX,\om}^\bu\ra \bD_X(\check
P_{\bX,\om}^\bu)$ and\/ $\check\Tau_{\bX,\om}:\check
P_{\bX,\om}^\bu\ra\check P_{\bX,\om}^\bu$. These are characterized
by the fact that given a diagram
\begin{equation*}
\xymatrix@C=60pt{ \bU=\bs\Crit(f:U\ra\bA^1) & \bV \ar[l]_(0.3){\bs i}
\ar[r]^{\bs\vp} & \bX }
\end{equation*}
such that\/ $U$ is a smooth\/ $\K$-scheme, $\bs\vp$ smooth of
dimension $n,$ $\bL_{\bV/\bU} \simeq \bT_{\bV/\bX}[2],$
$\bs\vp^*(\om_\bX)\sim \bs i^*(\om_\bU)$ for $\om_\bU$ the natural\/
$-1$-shifted symplectic structure on $\bU=\bs\Crit(f:U\ra\bA^1),$
and\/ $\vp^*(\det(\bL_\bX)\vert_X^{1/2})\!\cong\!
i^*(K_U) \ot \La^n\bT_{\bV/\bX},$ then $\vp^*(\check
P_{\bX,\om}^\bu)[n],$ $\vp^*(\check\Si_{\bX,\om}^\bu)[n],$
$\vp^*(\check\Tau_{\bX,\om}^\bu)[n]$ are canonically isomorphic to
$i^*(\PV_{U,f}),$ $i^*(\si_{U,f}),$ $i^*(\tau_{U,f}),$ for\/
$\PV_{U,f},\si_{U,f},\tau_{U,f}$ as in \cite{BBBJ} . The same
applies in the other theories of perverse sheaves and\/
$\cD$-modules on stacks.
\label{sa4cor1}
\end{thm}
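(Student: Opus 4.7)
The plan is to reduce the construction on the stack $X$ to the already-established construction on a smooth atlas, using smooth descent for $l$-adic perverse sheaves on stacks. The derived scheme version (the local perverse sheaves $\PV_{U,f}$ and their Verdier/monodromy structure used in Theorem \ref{sm6cor2}) provides the local building blocks; the derived symplectic geometry machinery of \cite{PTVV,BBJ,BBBJ} provides the compatibility data needed to glue. The square root $\det(\bL_\bX)\vert_X^{1/2}$ plays the role of the orientation data required to trivialize the monodromy ambiguity inherent in the local models.

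First I would pick a smooth atlas for $X$: by Corollary \ref{sa2cor2} (or more precisely its derived enhancement implicit in Theorem \ref{sa2cor1}), every point $p\in X$ admits an open neighbourhood of the form $\vp:T=\Crit(f)\hookrightarrow U$ with $\vp$ smooth of some relative dimension $n$, and crucially this chart lifts to a diagram $\bU=\bs\Crit(f)\leftarrow \bV\to\bX$ of derived stacks in which $\bV\to\bX$ is smooth, $\bV\to\bU$ has a shifted cotangent identification $\bL_{\bV/\bU}\simeq\bT_{\bV/\bX}[2]$, and the pullbacks of the $-1$-shifted symplectic forms agree up to the specified equivalence. On such a chart the local perverse sheaf is forced to be $\vp^*\check P^\bu_{\bX,\om}[n]\cong i^*(\PV_{U,f})$, with the Verdier duality and monodromy maps given by $i^*(\si_{U,f}),i^*(\tau_{U,f})$, after twisting by the chosen isomorphism $\vp^*(\det(\bL_\bX)\vert_X^{1/2})\cong i^*(K_U)\otimes\La^n\bT_{\bV/\bX}$.

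Next I would promote these local objects to a descent datum. Choose a smooth surjection $T\to X$ that exhibits $X$ locally as unions of such critical charts, and let $T^{(\bullet)}$ denote its Čech nerve in the smooth topology on Artin stacks. On $T$ we have a canonical candidate perverse sheaf $P_T^\bu$ from the scheme version of Theorem \ref{sm6cor2}; the key step is to equip $P_T^\bu$ with an isomorphism between its two pullbacks to $T\times_X T$, satisfying the cocycle condition on $T\times_X T\times_X T$. Because both pullbacks arise as the scheme-level construction applied to the derived scheme $T\times_X T$, each equipped with an induced $-1$-shifted symplectic structure (via Darboux-type models from Theorem \ref{sa2thm3}) and an induced square root from $\det(\bL_\bX)\vert_X^{1/2}$, the canonical isomorphism between local critical chart presentations established in \cite{BBDJS} supplies the required gluing isomorphism. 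Smooth descent for $l$-adic perverse sheaves on stacks then yields a perverse sheaf $\check P^\bu_{\bX,\om}$ on $X$, and the same descent procedure applied to the compatible systems $(\si_{U,f})$ and $(\tau_{U,f})$ produces $\check\Si_{\bX,\om}$ and $\check\Tau_{\bX,\om}$.

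The hard part will be verifying the cocycle condition, which amounts to checking that the "stabilization" isomorphisms of \cite{BBDJS}, comparing perverse sheaves of vanishing cycles for $f$ and for $f\boxplus(z_1^2+\cdots+z_m^2)$, are strictly compatible under further pullback along smooth morphisms and under the change-of-Darboux-model equivalences of \cite{BBJ}. In \cite{BBDJS} this was handled by reducing any two critical chart presentations to a common "stabilized" one and tracking the orientation-twist cocycle; here, because the smooth atlas maps $\bs\vp:\bV\to\bX$ need not arise from étale maps of derived schemes, one must extend that analysis to the 2-categorical setting, using the homotopy coherence of the Darboux presentations from Theorem \ref{sa2thm3} together with the compatibility of $\det(\bL_\bX)^{1/2}$ under pullback. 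Once this compatibility is in hand, uniqueness up to canonical isomorphism follows immediately from the characterization on critical charts, since any two candidates must agree on a smooth atlas by construction, and the gluing isomorphism is dictated by the square-root data.
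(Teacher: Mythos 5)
A point of context first: this paper does not prove Theorem \ref{sa4cor1} at all --- it is quoted as \cite[Cor.~4.13]{BBBJ}, as part of the survey of results from \cite{BBBJ} in \S\ref{ourpapers}. So the comparison is really with the proof given there. Your outline --- local models $i^*(\PV_{U,f})$ on critical charts, smooth descent for $l$-adic perverse sheaves on Artin stacks, with the square root $\det(\bL_\bX)\vert_X^{1/2}$ supplying the orientation data that makes the gluing isomorphisms cocycle-compatible --- is indeed the architecture of the actual argument, so in broad terms you have the right strategy.

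The substantive divergence is in the step you yourself flag as ``the hard part''. You propose to verify the cocycle condition by extending the stabilization analysis of \cite{BBDJS} to a $2$-categorical/derived setting, tracking homotopy coherence of the Darboux presentations of Theorem \ref{sa2thm3} and the change-of-model equivalences of \cite{BBJ}. The established argument avoids precisely this: one first truncates $(\bX,\om)$ to a classical d-critical stack $(X,s)$ via Theorem \ref{sa3thm6}, so that every smooth chart $t:T\ra X$ carries an algebraic d-critical locus structure $\bigl(T,s(T,t)\bigr)$, with orientation induced from $\det(\bL_\bX)\vert_X^{1/2}\cong K_{X,s}^{1/2}$ by Lemma \ref{sa3lem1}. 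The scheme-level theorem of \cite{BBDJS} then attaches to each oriented chart a canonical perverse sheaf with its Verdier duality and monodromy, and --- crucially --- its behaviour under smooth morphisms of oriented d-critical loci (with the shift $[n]$ and the twist by $\La^n\bT_{\bV/\bX}$ you wrote down) is a classical statement already established in \cite{BBDJS,BBBJ}; the descent datum and its cocycle condition therefore live entirely at the level of classical schemes, line bundles and their square roots, and no derived homotopy coherence needs to be invoked. As written, your proposal does not actually close this step --- it names it and gestures at a harder route --- so to make the argument complete you should replace the derived-coherence step by the d-critical truncation plus the smooth-pullback naturality of the scheme-level perverse sheaves, which is exactly what the square-root hypothesis is designed to feed.
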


Here is \cite[Cor. 4.14]{BBBJ}, the stack version of Theorem \ref{sm6cor3}:

\begin{thm} Let\/ $Y$ be a Calabi--Yau\/ $3$-fold over an
algebraically closed field\/ $\K$ of characteristic zero, and\/
$\cM$ a classical moduli\/ $\K$-stack of coherent sheaves $F$ in
$\coh(Y),$ or of complexes $F^\bu$ in $D^b\coh(Y)$ with\/
$\Ext^{<0}(F^\bu,F^\bu)=0,$ with obstruction theory\/
$\phi:\cE^\bu\ra\bL_\cM$. Suppose we are given a square root\/
$\det(\cE^\bu)^{1/2}$.
Then working in $l$-adic perverse sheaves on stacks
\cite[\S 4]{BBBJ}, we may define a natural perverse
sheaf\/ $\check P_\cM^\bu\in\Perv(\cM),$ and Verdier duality and
monodromy isomorphisms $\check\Si_\cM:\check
P_\cM^\bu\ra\bD_\cM(\check P_\cM^\bu)$ and\/ $\check\Tau_\cM:\check
P_\cM^\bu\ra\check P_\cM^\bu$. The pointwise Euler characteristic
of\/ $\check P_\cM^\bu$ is the Behrend function $\nu_\cM$ of\/ $\cM$
from Joyce and Song {\rm\cite[\S 4]{JoSo},} so that\/ $\check
P_\cM^\bu$ is in effect a categorification of the Donaldson--Thomas
theory of $\cM$. The same applies in the other theories of perverse
sheaves and\/ $\cD$-modules on stacks.
\label{sa4cor2}
\end{thm}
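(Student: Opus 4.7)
The plan is to obtain the result as a direct specialization of Theorem \ref{sa4cor1} to the derived moduli stack, following exactly the pattern by which Corollary \ref{sa3cor2} was deduced from Theorem \ref{sa3thm6}, and then checking the statement about the pointwise Euler characteristic by reducing to the scheme-level Milnor fibre formula.

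First I would pass from the classical stack $\cM$ to its derived enhancement $\bcM$ with $t_0(\bcM)=\cM$. The condition $\Ext^{<0}(F^\bu,F^\bu)=0$ is precisely what ensures $\bcM$ is a derived Artin stack rather than a higher derived stack. By Pantev--To\"en--Vaqui\'e--Vezzosi \cite{PTVV}, the Calabi--Yau 3-fold condition on $Y$ endows $\bcM$ with a $-1$-shifted symplectic structure $\om_{\bcM}$, and the given obstruction theory is recovered as $\phi=\bL_{t_0}:\bL_{\bcM}\vert_\cM\ra\bL_\cM$, so that $\cE^\bu\simeq\bL_{\bcM}\vert_\cM$. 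Hence a square root $\det(\cE^\bu)^{1/2}$ is the same datum as a square root $\det(\bL_{\bcM})\vert_\cM^{1/2}$ required in Theorem \ref{sa4cor1}. Applying that theorem to $(\bcM,\om_{\bcM})$ and this square root produces a perverse sheaf $\check P_{\bcM,\om_{\bcM}}^\bu\in\Perv(\cM)$ together with the Verdier duality and monodromy isomorphisms; I would set $\check P_\cM^\bu:=\check P_{\bcM,\om_{\bcM}}^\bu$, $\check\Si_\cM:=\check\Si_{\bcM,\om_{\bcM}}$, $\check\Tau_\cM:=\check\Tau_{\bcM,\om_{\bcM}}$. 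Uniqueness up to canonical isomorphism, and the analogous statement in $\cD$-modules, mixed Hodge modules, or $l$-adic sheaves, is inherited directly from Theorem \ref{sa4cor1}.

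Next I would verify the pointwise Euler characteristic identity $\chi(\check P_\cM^\bu)=\nu_\cM$. The characterization in Theorem \ref{sa4cor1} says that for every smooth atlas chart $\bs\vp:\bV\ra\bcM$ of relative dimension $n$ with $\bV\simeq\bs\Crit(f:U\ra\bA^1)$ compatibly with the symplectic forms, one has $\vp^*(\check P_\cM^\bu)[n]\cong i^*(\PV_{U,f})$. By Corollary \ref{sa2cor2} such charts cover $\cM$: around any $[F]\in\cM$ there is a smooth morphism $\vp:\Crit(f)\ra\cM$ of relative dimension $n=\dim\Hom(F,F)=\dim\Ext^0(F,F)$, with $\dim U=\dim\Ext^1(F,F)$, and $f\vert_{\Crit(f)^\red}=0$. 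Using Theorem \ref{dt3thm1}, the pointwise Euler characteristic of the perverse sheaf of vanishing cycles $\PV_{U,f}=\phi_f(\underline{\Q}[\dim U-1])$ restricted to $\Crit(f)$ equals $\nu_{\Crit(f)}$. Taking the $[n]$-shift multiplies Euler characteristics by $(-1)^n$, so $\chi\bigl(\vp^*(\check P_\cM^\bu)\bigr)=(-1)^n\,\nu_{\Crit(f)}$. On the other hand, by Theorem \ref{dt3thm3}(ii) applied to the smooth morphism $\vp$ of relative dimension $n$, one has $\nu_{\Crit(f)}=(-1)^n\vp^*(\nu_\cM)$. Combining these gives $\chi\bigl(\vp^*(\check P_\cM^\bu)\bigr)=\vp^*(\nu_\cM)$, and since $\vp$ is a smooth surjection locally and both sides are constructible, this forces $\chi(\check P_\cM^\bu)=\nu_\cM$ on $\cM$.

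The main obstacle I anticipate is purely bookkeeping: keeping track of the various sign and shift conventions that intervene between the derived cotangent complex $\bL_\bcM$, the classical obstruction theory $\cE^\bu$, the symmetric structure on $\cE^\bu$ recovered from $\om_{\bcM}^0$, and the shift $[n]$ appearing in the characterization of $\check P_\cM^\bu$ on a smooth chart. Once these sign conventions are pinned down — and they have been, by Theorems \ref{da6thm4}, \ref{sa3thm6}, Corollary \ref{sa3cor2}, and Theorem \ref{sa4cor1} — the proof is essentially a concatenation of the statements already available, with the Euler characteristic identity $\chi(\check P_\cM^\bu)=\nu_\cM$ providing the only piece of content that requires independent checking, via the local Milnor fibre description of $\nu_{\Crit(f)}$ and the smooth pullback behaviour of Behrend functions.
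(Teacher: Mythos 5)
Your proposal is correct and follows exactly the route the paper intends: Theorem \ref{sa4cor2} is quoted from \cite{BBBJ} and is obtained there precisely by applying Theorem \ref{sa4cor1} to the derived moduli stack $\bcM$ with its $-1$-shifted symplectic structure from \cite{PTVV}, identifying $\cE^\bu$ with $\bL_{\bcM}\vert_\cM$ so that $\det(\cE^\bu)^{1/2}$ supplies the required square root, just as Corollary \ref{sa3cor2} follows from Theorem \ref{sa3thm6}. Your Euler-characteristic check via the local charts and smooth pullback of Behrend functions is the standard verification; the only cosmetic caveat is that over general $\K$ one should invoke the $l$-adic analogue of the Milnor-fibre formula rather than Theorem \ref{dt3thm1} itself, which is stated complex-analytically.
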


Here is \cite[Cor. 5.16]{BBBJ}, the stack version of Theorem \ref{mo5cor3}:

\begin{thm} Let\/ $(\bX,\om)$ be a $-1$-shifted symplectic derived
Artin $\K$-stack in the sense of Pantev et al.\ {\rm\cite{PTVV},}
and\/ $X=t_0(\bX)$ the associated classical Artin\/ $\K$-stack,
assumed of finite type and locally a global quotient. Suppose we are
given a square root\/ $\det(\bL_\bX)\vert_X^{1/2}$ for
$\det(\bL_\bX) \vert_X$. Then we may define a natural motive
$MF_{\bX,\om}\in\oM^\stm_X,$ which is characterized
by the fact that given a diagram
\begin{equation*}
\xymatrix@C=60pt{ \bU=\bs\Crit(f:U\ra\bA^1) & \bV \ar[l]_(0.3){\bs i}
\ar[r]^{\bs\vp} & \bX }
\end{equation*}
such that\/ $U$ is a smooth\/ $\K$-scheme, $\bs\vp$ is smooth of
dimension $n,$ $\bL_{\bV/\bU} \simeq \bT_{\bV/\bX}[2],$
$\bs\vp^*(\om_\bX)\sim \bs i^*(\om_\bU)$ for $\om_\bU$ the natural\/
$-1$-shifted symplectic structure on $\bU=\bs\Crit(f:U\ra\bA^1),$
and\/ $\vp^*(\det(\bL_\bX)\vert_X^{1/2})\cong
i^*(K_U)\ot\La^n\bT_{\bV/\bX},$ then~$\vp^*(MF_{\bX,\om})=\bL^{n/2}\od i^*(MF^{{\rm mot},\phi}_{U,f})$ in $\oM^\stm_V$.
\label{sa5cor1}
\end{thm}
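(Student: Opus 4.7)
The plan is to mirror the construction of the perverse sheaf $\check P_{\bX,\om}^\bu$ in Theorem \ref{sa4cor1}, but working with equivariant motives in $\oM^\stm_X$ in place of perverse sheaves, and using the scheme-level motivic vanishing cycle from Theorem \ref{mo5cor3} as the local input. First I would combine Theorem \ref{sa2cor1} with the hypothesis that $X$ is locally a global quotient to produce a smooth atlas $\{\bs\vp_\al:\bV_\al\ra\bX\}$ in which every $\bV_\al$ fits into a diagram of the required form $\bU_\al=\bs\Crit(f_\al:U_\al\ra\bA^1)\,\stackrel{\bs i_\al}{\longleftarrow}\,\bV_\al\,\stackrel{\bs\vp_\al}{\longra}\,\bX$ with $\bs\vp_\al$ smooth of relative dimension $n_\al$ and the $-1$-shifted symplectic compatibility $\bs\vp_\al^*(\om)\sim\bs i_\al^*(\om_{\bU_\al})$. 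The square root $\det(\bL_\bX)\vert_X^{1/2}$ pulls back on each chart to provide the orientation data $K_{U_\al}\ot\La^{n_\al}\bT_{\bV_\al/\bX}$ required to apply the scheme-level construction.

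On each chart, define the candidate local motive
\[
M_\al\;=\;\bL^{n_\al/2}\od i_\al^*\bigl(MF^{{\rm mot},\phi}_{U_\al,f_\al}\bigr)\;\in\;\oM^\stm_{V_\al},
\]
where $MF^{{\rm mot},\phi}_{U_\al,f_\al}$ is the $\hat\mu$-equivariant motivic vanishing cycle from \cite{BBDJS,BJM}, and the half-dimensional Lefschetz twist $\bL^{n_\al/2}$ (which requires the chosen square root) compensates for the smooth relative dimension of $\bs\vp_\al$. By construction, each $M_\al$ satisfies the characterizing formula in the statement on the chart~$V_\al$.

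The heart of the proof is showing that the $M_\al$ glue. Given two charts $(\bV_\al,\bs\vp_\al)$ and $(\bV_\be,\bs\vp_\be)$, I would argue that on the fibre product $V_\al\t_X V_\be$ the two pullbacks of $M_\al$ and $M_\be$ agree. By Theorem \ref{sa2thm3} applied to the pulled-back symplectic structure on $\bV_\al\t_\bX\bV_\be$, any two such Darboux-type presentations can be compared locally by a chain of moves: stabilization by a nondegenerate quadratic form in auxiliary variables, and $\bs$-étale equivalences between Darboux forms. The scheme-level invariance of $MF^{{\rm mot},\phi}_{U,f}$ under these moves---controlled by a motivic Thom--Sebastiani theorem of Le Quy Thuong, which handles exactly the $\hat\mu$-equivariance entering $\oM^\stm$---then shows that the two candidate motives differ only by a Tate twist exactly cancelled by the difference $\bL^{(n_\al-n_\be)/2}$ coming from the smoothness dimensions. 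This yields cocycle-compatible isomorphisms on double overlaps; on triple overlaps one invokes the naturality (up to canonical isomorphism) in the analogous scheme-level Theorem \ref{mo5cor3}. Descending along the smooth atlas then defines $MF_{\bX,\om}\in\oM^\stm_X$, and uniqueness follows from the characterization on~$V_\al$.

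The main obstacle will be the descent/gluing step. Unlike perverse sheaves, motives are not known to form a stack for the smooth topology in general, and the functor $V\mapsto\oM^\stm_V$ has subtle descent properties. This is precisely why the hypothesis that $X$ be locally a global quotient is included: it lets one replace the naive smooth descent question by an equivariant descent along a presentation $[T/G]\simeq X$, where $\hat\mu$-equivariant motives behave well. A further technical difficulty is that the isomorphisms used to glue the $M_\al$ are not canonical at the level of individual motivic Milnor fibres but only at the level of their classes in $\oM^\stm$; verifying that the cocycle condition holds at this level, rather than on nose, is where the equivariant stabilization built into $\stm$ is indispensable.
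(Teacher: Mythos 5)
You should first note that this paper does not prove Theorem \ref{sa5cor1} at all: it is quoted verbatim as \cite[Cor.~5.16]{BBBJ}, and the proof there does not glue Darboux charts directly on the derived stack. Instead it factors through the d-critical machinery: one truncates $(\bX,\om)$ to a d-critical stack $(X,s)$ by Theorem \ref{sa3thm6}, identifies $K_{X,s}\cong\det(\bL_\bX)\vert_{X^\red}$ so that the given square root becomes an orientation, and then invokes a theorem constructing $MF_{X,s}$ for any oriented d-critical stack of finite type which is locally a global quotient, built on the scheme-level case of \cite{BJM}. Your outline has the same overall shape (local models $\Crit(f)$, the twist $\bL^{n/2}\od i^*(MF^{{\rm mot},\phi}_{U,f})$, Thom--Sebastiani to control stabilization, the global-quotient hypothesis for the stacky step), but two of your steps would fail as written.

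First, you cannot apply Theorem \ref{sa2thm3} to $\bV_\al\t_\bX\bV_\be$: the pullback of a $-1$-shifted symplectic form along a smooth morphism of positive relative dimension is a closed $2$-form which is no longer nondegenerate, so the fibre product carries no shifted symplectic structure and admits no Darboux presentation. In the source the comparison of local models is carried out after truncation, at the level of critical charts of the induced d-critical structure (Theorem \ref{sa3thm2} and its equivariant refinements), together with the result of \cite{BJM} that $MF_{U,f}$ depends only on the d-critical datum plus orientation. Second, the discrepancy between two charts is not just the Tate factor $\bL^{(n_\al-n_\be)/2}$: it is a twist by a principal $\Z/2$-bundle comparing local square roots of canonical bundles, and the hypothesis $\det(\bL_\bX)\vert_X^{1/2}$ is needed precisely to trivialize these bundles coherently; it is not needed to define $\bL^{n/2}$, which already exists in the $\hat\mu$-equivariant ring. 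Relatedly, elements of $\oM^\stm$ are classes in a Grothendieck group, not objects of a category, so there are no gluing isomorphisms or cocycle conditions on triple overlaps to verify, and motives do not satisfy smooth descent; the actual mechanism is that $\oM^\stm_X$ is defined via the Zariski-local presentations $[T/\GL(n,\K)]$ with $\GL(n,\K)$ special, and one proves well-definedness (independence of all choices) there, rather than performing sheaf-theoretic descent along the atlas as your proposal suggests.
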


Here is \cite[Cor. 5.17]{BBBJ}, the stack version of Theorem \ref{mo5cor4}:

\begin{thm} Let\/ $Y$ be a Calabi--Yau\/ $3$-fold over\/ $\K,$
and\/ $\cM$ a finite type classical moduli\/ $\K$-stack of coherent
sheaves in $\coh(Y),$ with natural obstruction theory\/
$\phi:\cE^\bu\ra\bL_\cM$. Suppose we are given a square root\/
$\det(\cE^\bu)^{1/2}$ for $\det(\cE^\bu)$. Then we may define a
natural motive $MF_\cM\in\oM^\stm_\cM$.
\label{sa5cor2}
\end{thm}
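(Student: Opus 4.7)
The plan is to obtain Theorem \ref{sa5cor2} as a direct specialization of the abstract stack version Theorem \ref{sa5cor1} to the geometric setting of moduli stacks of coherent sheaves on a Calabi--Yau 3-fold. The same strategy was used to pass from Theorem \ref{sm6cor2} to Corollary \ref{sa4cor2}, and from Theorem \ref{mo5cor3} to Theorem \ref{mo5cor4}, so we follow that template.

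First, I would recall that by Pantev--To\"en--Vaqui\'e--Vezzosi \cite[\S 2.1]{PTVV}, the derived moduli stack $\bcM$ of coherent sheaves on a Calabi--Yau $3$-fold $Y$ carries a canonical $-1$-shifted symplectic form $\om_{\bcM}$, and its classical truncation $t_0(\bcM)$ is precisely the stack $\cM$. Moreover, under this identification, the truncated cotangent complex $\bL_{\bcM}\vert_\cM$ is quasi-isomorphic to the obstruction-theory complex $\cE^\bu$, so that there is a canonical isomorphism $\det(\bL_{\bcM})\vert_{\cM^\red}\cong\det(\cE^\bu)\vert_{\cM^\red}$. Hence the given square root $\det(\cE^\bu)^{1/2}$ provides a square root $\det(\bL_{\bcM})\vert_\cM^{1/2}$ in the sense needed to invoke Theorem \ref{sa5cor1}.

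Next I would verify the remaining hypotheses of Theorem \ref{sa5cor1}: finite type is assumed, and the local global-quotient property of $\cM$ follows from the standard construction of moduli of (Gieseker semistable or more generally bounded) coherent sheaves as a $\GL(P(n),\K)$-quotient of a locally closed subscheme of Grothendieck's Quot scheme $\Quot_X(\K^{P(n)}\!\otimes\!\cO_X(-n),P)$, as recalled in the Introduction via Huybrechts--Lehn \cite{HuLe2}. With this in place, Theorem \ref{sa5cor1} directly produces a motive $MF_{\bcM,\om_{\bcM}}\in\oM^\stm_\cM$, and we define
\e
MF_\cM:=MF_{\bcM,\om_{\bcM}}.
\label{sa5eqProp}
\e

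The only nontrivial verification is compatibility of the two square-root data, i.e. that the isomorphism $\det(\bL_{\bcM})\vert_{\cM^\red}\cong\det(\cE^\bu)\vert_{\cM^\red}$ is compatible with the characterizing property of Theorem \ref{sa5cor1}, namely that whenever we have a local presentation
\begin{equation*}
\xymatrix@C=40pt{ \bU=\bs\Crit(f:U\ra\bA^1) & \bV \ar[l]_(0.3){\bs i} \ar[r]^{\bs\vp} & \bcM }
\end{equation*}
coming from the Darboux-type local model (Theorem \ref{sa2cor1} and Theorem \ref{sa2thm3}), the identification $\vp^*(\det(\bL_{\bcM})\vert_\cM^{1/2})\cong i^*(K_U)\ot\La^n\bT_{\bV/\bcM}$ agrees with the one induced by $\det(\cE^\bu)^{1/2}$. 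This is the step I expect to be the main obstacle, and it is exactly the stack analogue of the compatibility checked in the proof of Theorem \ref{mo5cor4}. I would argue it by working locally on a smooth atlas $t:T\to\cM$, pulling back to an algebraic d-critical locus via Theorem \ref{sa3thm6} and Corollary \ref{sa3cor2}, and then applying the \'etale descent already proved for motives in \cite{BJM}; the well-definedness of the glued motive on $\cM$ then follows from the uniqueness clauses in Theorem \ref{sa5cor1} together with the 2-functorial properties of the truncation functor $F$ discussed after Theorem \ref{sa3thm6}.
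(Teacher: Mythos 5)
Your proposal is correct and follows essentially the same route as the source: the paper simply quotes this as \cite[Cor.~5.17]{BBBJ}, which is deduced exactly as you do, by applying Theorem \ref{sa5cor1} to the derived moduli stack with its $-1$-shifted symplectic structure from \cite[\S 2.1]{PTVV}, identifying $\phi:\cE^\bu\ra\bL_\cM$ with $\bL_{t_0}:\bL_{\bcM}\vert_\cM\ra\bL_\cM$ so that $\det(\cE^\bu)^{1/2}$ supplies the required square root of $\det(\bL_{\bcM})\vert_\cM$, and using the Quot-scheme presentation for the locally-a-global-quotient hypothesis. The extra compatibility verification you flag is harmless but essentially automatic, since after the canonical identification $\det(\bL_{\bcM})\vert_{\cM^\red}\cong\det(\cE^\bu)\vert_{\cM^\red}$ the two square-root data are literally the same, and the characterization in Theorem \ref{sa5cor1} is stated in terms of that line bundle.
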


Theorem \ref{sa5cor2} is relevant to Kontsevich and Soibelman's
theory of {\it motivic Donaldson--Thomas invariants\/} \cite{KoSo1}.
Again, our square root $\det(\cE^\bu)^{1/2}$ roughly coincides with their
{\it orientation data\/} \cite[\S 5]{KoSo1}. In \cite[\S
6.2]{KoSo1}, given a finite type moduli stack $\cM$ of coherent
sheaves on a Calabi--Yau 3-fold $Y$ with orientation data, they
define a motive $\int_\cM 1$ in a ring $D^\mu$ isomorphic to our
$\oM^\stm_\K$. We expect this should agree with $\pi_*(MF_\cM)$ in
our notation, with $\pi:\cM\ra\Spec\K$ the projection. This
$\int_\cM 1$ is roughly the motivic Donaldson--Thomas invariant of
$\cM$. Their construction involves expressing $\cM$ near each point
in terms of the critical locus of a formal power series. Kontsevich
and Soibelman's constructions were partly conjectural, and our
results may fill some gaps in their theory.

\section{The main results} 
\markboth{Statements of main results}{Statements of main results}
\label{main.1}

We will prove and use the algebraic analogue of Theorem \ref{dt5thm1}, 
which we can state as follows:

\begin{thm} 
Let\/ $X$ be a Calabi--Yau $3$-fold over $\K,$ and write $\fM$ for the moduli stack of coherent sheaves on $X$. Then for each\/ $[E]\in\fM(\K),$ there exists a smooth\/ affine $\K$-scheme $U,$ a point\/ $p\in\U(\K),$ an \'etale morphism $u:U\ra\Ext^1(E,E)$ with $u(p)=0,$ a regular function $f:U\ra\bA^1$ with\/ $f\vert_p=\pd f\vert_p=0,$ and a $1$-morphism $\xi:\Crit(f)\ra\fM$ smooth of relative dimension $\dim\Aut(E),$ with $\xi(p)=[E]\in\fM(\K),$ such that if $\io:\Ext^1(E,E)\ra T_{[E]}\fM$ is the natural isomorphism, then $\d\xi\vert_p=\io\ci\d u\vert_p:T_pU\ra T_{[E]}\fM$.
  Moreover, let $G$ be a maximal algebraic torus in $\Aut(E)$, acting on $\Ext^1(E,E)$ by $\ga:\ep\mapsto\ga\ci\ep\ci\ga^{-1}$. Then we can choose $U,p,u,f,\xi$ and a $G$-action on $U$ such that $u$ is $G$-equivariant and $p,f$ are $G$-invariant, so that $\Crit(f)$ is $G$-invariant, and\/ $\xi:\Crit(f) \ra\fM$ factors through the projection $\Crit(f)\ra[\Crit(f)/G]$.
\label{dt5thm2}
\end{thm}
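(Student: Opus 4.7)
The plan is to deduce this as an algebraic analogue of the complex-analytic Theorem \ref{dt5thm1}, using the derived-symplectic machinery of Pantev--To\"en--Vaqui\'e--Vezzosi and its classical truncation developed in \cite{BBJ,BBBJ}, together with Joyce's theory of equivariant d-critical loci in \S\ref{eqdcr}. The non-equivariant core of the statement is essentially Corollary \ref{sa2cor2}: since $X$ is a Calabi--Yau $3$-fold, the derived moduli stack $\bcM$ of coherent sheaves carries a natural $-1$-shifted symplectic form by \cite{PTVV}, and Corollary \ref{sa2cor2} (applied at the point $[E]\in\fM$, with $\Ext^{<0}(E,E)=0$ since $E$ is a sheaf) produces a smooth $\K$-scheme $U$ with $\dim U=\dim\Ext^1(E,E)$, a point $p\in U$, a regular function $f:U\to\bA^1$ with $f|_p=\d f|_p=0$, and a smooth morphism $\xi:\Crit(f)\to\fM$ of relative dimension $\dim\Hom(E,E)=\dim\Aut(E)$ with $\xi(p)=[E]$. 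The smoothness of $\xi$, combined with the identification $h^i(\bL_\bcM)|_{[E]}\cong\Ext^{1-i}(E,E)^*$, forces $\d\xi|_p:T_pU\to T_{[E]}\fM\cong\Ext^1(E,E)$ to be an isomorphism.

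Next I would construct the required étale map $u:U\to\Ext^1(E,E)$. Since $\dim U=\dim\Ext^1(E,E)$ and $\d\xi|_p$ is an isomorphism, one can, after shrinking $U$ Zariski-locally around $p$ and using that $\Ext^1(E,E)$ is an affine space, build a regular map $u:U\to\Ext^1(E,E)$ with $u(p)=0$ whose differential at $p$ agrees with $\io^{-1}\ci\d\xi|_p$ (for instance, lift a system of linear coordinates on $\Ext^1(E,E)$ to regular functions on $U$ via a section of a smooth projection to affine space tangent at $p$). The infinitesimal criterion then gives that $u$ is étale at $p$, so after further Zariski shrinking we may assume $u$ is étale, and by replacing $U$ with an affine open neighbourhood of $p$ we retain affineness. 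This establishes the assertions before ``Moreover''.

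The equivariant upgrade is the crux. By Corollary \ref{sa3cor2}, $\fM$ inherits a canonical d-critical stack structure $s$. A maximal algebraic torus $G\subseteq\Aut(E)\cong\Iso_\fM([E])$ acts on a suitable presentation of $\fM$ around $[E]$; one such presentation is given by writing $\fM$ locally as a quotient of a $\GL(P(n),\K)$-invariant, locally closed subscheme of the Quot scheme $\Quot_X(\K^{P(n)}\ot\cO_X(-n),P)$, as sketched in the introduction. The d-critical structure is manifestly $G$-invariant because it arises from the $G$-equivariant shifted symplectic structure on $\bcM$. Now apply Proposition \ref{dc2prop14}(a): provided the $G$-action on a Zariski open neighbourhood of the preimage of $[E]$ in the local atlas is \emph{good} in the sense of Definition \ref{dc2def8}, one obtains a $G$-equivariant critical chart $(R,U',f',i'),\rho$ on $(\fM,s)$ with $\dim U'=\dim T_xX=\dim\Ext^1(E,E)$, hence matches the dimension in the first half; I would then identify $U',f'$ with the $U,f$ above up to replacing them by a common $G$-equivariant subchart. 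Finally the étale map $u$ must be made $G$-equivariant: $G$ acts linearly on $\Ext^1(E,E)$ via $\ga:\ep\mapsto\ga\ci\ep\ci\ga^{-1}$ and on $U$ via $\rho$; since $G$ is a torus (hence linearly reductive), one can average an arbitrary $u$ over $G$, or, equivalently, construct $u$ from a $G$-equivariant splitting of $\m_p/\m_p^2\cong\Ext^1(E,E)$ lifted to $G$-invariant regular functions on $U$ using the reductivity of $G$ acting on $H^0(\cO_U)$ of affine $U$.

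The principal obstacle is verifying the goodness hypothesis needed for Proposition \ref{dc2prop14}, i.e. producing a $G$-invariant affine Zariski-open neighbourhood of $[E]$ in the local atlas for $\fM$. This is where the Quot-scheme description and the reductivity of $G$ enter essentially: $G\subset\GL(P(n),\K)$ acts on an affine slice to the $\GL(P(n),\K)$-orbit through the point over $[E]$, and by Luna-type slice arguments together with the Sumihiro theorem on linearization of torus actions (or equivalently by choosing a $G$-stable affine Luna \'etale slice), the action on a Zariski-open $G$-neighbourhood of that point is good. Once this is in hand, the remaining compatibility $\d\xi|_p=\io\ci\d u|_p$ is automatic from the construction of $u$ from $\d\xi|_p$, and the factorization of $\xi$ through $[\Crit(f)/G]$ follows because $\xi$ is $G$-equivariant and the $G$-action on $\fM$ at $[E]$ is inner (given by $G\hookrightarrow\Aut(E)=\Iso_\fM([E])$), which the quotient stack $[\Crit(f)/G]$ absorbs into its stabilizer.
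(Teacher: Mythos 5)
Your proposal is correct and is essentially the paper's own argument: the paper likewise combines the $-1$-shifted symplectic/d-critical machinery of \cite{PTVV,BBBJ} (Corollary \ref{sa2cor2}, Corollary \ref{sa3cor2}, Theorem \ref{sa3thm6}) with a Luna-slice construction, from the Quot-scheme (locally global quotient) presentation of $\fM$, of a $G$-invariant affine atlas $S$ with $T_sS\cong\Ext^1(E,E)$ carrying a $G$-invariant d-critical structure, and then applies Proposition \ref{dc2prop14} to obtain a minimal $G$-equivariant critical chart and $G$-equivariant \'etale coordinates $u:U\ra\Ext^1(E,E)$. The only differences are organizational: the paper works with the equivariant chart from the outset (the atlas already maps smoothly to $\fM$ with relative dimension $\dim\Aut(E)$, factoring through $[S/G]$), so your separate non-equivariant step and the subsequent chart-matching are unnecessary, and goodness of the torus action is secured simply by shrinking to a $G$-equivariant affine \'etale neighbourhood rather than by invoking Sumihiro (which would anyway require normality).
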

\index{reductive group!maximal}
\index{almost closed $1$-form}
\index{Zariski topology}
\nomenclature[1oi]{$\xi$}{the $1$-morphism $\xi:\om^{-1}(0)\ra\fM$}\index{coarse moduli scheme}\index{moduli
scheme!coarse} 

Note that you can regard $u:U\ra\Ext^1(E,E)$ as an \'etale open neighbourhood of $0$ in $\Ext^1(E,E).$
Theorem \ref{dt5thm2} will be proved in \S\ref{localdes}, using \S\ref{dcr}.
Next, we will use this to prove the algebraic analogue of Theorem \ref{dt6thm1}:

\begin{thm} Let\/ $X$ be a Calabi--Yau $3$-fold over $\K,$ and
$\fM$ the moduli stack of coherent sheaves on\/ $X$. The
Behrend function $\nu_{\fM}:
\fM(\K)\ra\Z$ is a natural locally constructible function on $\fM$.
For all\/ $E_1,E_2\in\coh(X),$ it satisfies:
\bigskip
\begin{equation}
\nu_{\fM}(E_1\op E_2)=(-1)^{\bar\chi([E_1],[E_2])}
\nu_{\fM}(E_1)\nu_{\fM}(E_2), 
\label{dt6eq1.1}
\end{equation}
\smallskip
\begin{equation}
\displaystyle \int\limits_{\small{\begin{subarray}{l}  [\la]\in\mathbb{P}(\Ext^1(E_2,E_1)):\\
 \la\; \Leftrightarrow\; 0\ra E_1\ra F\ra E_2\ra
0\end{subarray}}}\!\!\!\!\! \!\!\!\! \!\!\!\! \!\!\!\! \nu_{\fM}(F)\,\rd\chi \quad - \!\!\!\! \!\!\!\! \!\!\!\! 
\displaystyle \int\limits_{\small{\begin{subarray}{l}[\mu]\in\mathbb{P}(\Ext^1(E_1,E_2)):\\
\mu\; \Leftrightarrow\; 0\ra E_2\ra D\ra E_1\ra
0\end{subarray}}}\!\!\!\!\! \!\!\!\! \!\!\!\! \!\!\!\! \nu_{\fM}(D)\,\rd\chi \;\;
= \;\; (e_{21}-e_{12})\;\;
\nu_{\fM}(E_1\op E_2),
\label{dt6eq2.1}
\end{equation}
\bigskip

where $e_{21}=\dim\Ext^1(E_2,E_1)$ and $e_{12}=\dim\Ext^1(E_1,E_2)$ for $E_1,E_2\in\coh(X).$ 
Here\/ $\bar\chi([E_1],[E_2])$ in \eq{dt6eq1.1} is the Euler form as in \eq{eu}, \index{Euler form}
and in \eq{dt6eq2.1} the correspondence between\/
$[\la]\in\mathbb{P}(\Ext^1(E_2,E_1))$ and\/ $F\in\coh(X)$ is that\/
$[\la]\in\mathbb{P}(\Ext^1(E_2,E_1))$ lifts to some\/
$0\ne\la\in\Ext^1(E_2,E_1),$ which corresponds to a short exact
sequence\/ $0\ra E_1\ra F\ra E_2\ra 0$ in\/ $\coh(X)$ in the usual
way. The function $[\la]\mapsto\nu_{\fM}(F)$ is a constructible
function\/ $\mathbb{P}(\Ext^1(E_2,E_1))\ra\Z,$ and the integrals in
\eq{dt6eq2.1} are integrals of constructible functions using the Euler
characteristic as measure. \index{constructible function} \index{Euler characteristic}
\label{dt6thm1.1}
\end{thm}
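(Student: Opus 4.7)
The plan is to prove Theorem \ref{dt6thm1.1} by combining the $T$-equivariant local structure provided by Theorem \ref{dt5thm2} with the algebraic blowup formula of Theorem \ref{blowup}, using a $\bG_m$-localization argument that replaces Joyce and Song's transcendental Milnor-fibre computation. First I would apply Theorem \ref{dt5thm2} to $E = E_1 \op E_2$. Choose the maximal torus $G \subset \Aut(E)$ to contain the one-parameter subgroup $T = \{\id_{E_1} + \la\,\id_{E_2} : \la \in \bG_m\} \cong \bG_m$. Then Theorem \ref{dt5thm2} yields a smooth affine $\K$-scheme $U$, a $G$-equivariant \'etale map $u : U \ra \Ext^1(E,E)$ with $u(p) = 0$, a $G$-invariant regular function $f : U \ra \bA^1$, and a smooth $1$-morphism $\xi : \Crit(f) \ra \fM$ with $\xi(p) = [E]$. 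Under the induced $T$-action by conjugation $\la : \ep \mapsto \la \ci \ep \ci \la^{-1}$, one has the weight decomposition
\[
\Ext^1(E,E) = \Ext^1(E_1,E_1) \op \Ext^1(E_2,E_2) \op \Ext^1(E_1,E_2) \op \Ext^1(E_2,E_1),
\]
with weights $0,0,-1,+1$, so $\Ext^1(E,E)^T = \Ext^1(E_1,E_1) \op \Ext^1(E_2,E_2)$. By $T$-equivariance, $U^T \subset U$ is smooth and \'etale-locally maps to the fixed part, and $f|_{U^T}$ provides a local d-critical chart for $\fM$ near $[E_1 \op E_2]$.

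Next I would attack the second identity \eq{dt6eq2.1}, which I take as the main computational step. Extensions $0 \ra E_1 \ra F \ra E_2 \ra 0$ parametrised by $[\la] \in \bP(\Ext^1(E_2, E_1))$ and $0 \ra E_2 \ra D \ra E_1 \ra 0$ parametrised by $[\mu] \in \bP(\Ext^1(E_1,E_2))$ correspond precisely to $T$-orbits in the weight $+1$ and weight $-1$ eigenspaces of $\Ext^1(E,E)$, and the objects $F$, $D$ themselves are recovered from points of $\Crit(f)$ in those directions via the smooth map $\xi$. The strategy is then:
\begin{itemize}
\setlength{\itemsep}{0pt}
\item Blow up $U$ along the smooth $T$-invariant subscheme $U^T$, obtaining $\ti U \ra U$ with $\ti f = f \ci \pi$;
\item Apply Theorem \ref{blowup} at $p$ to express $\nu_{\Crit(f)}(p)$ as an integral of $\nu_{\Crit(\ti f)}$ over $\bP(T_p U / T_p U^T) = \bP(\Ext^1(E_1,E_2) \op \Ext^1(E_2,E_1))$ plus an explicit correction involving $(1 - \dim U + \dim U^T)\,\nu_{\Crit(f|_{U^T})}(p)$;
\item Stratify $\bP(T_p U / T_p U^T)$ by the two $T$-invariant projective subspaces $\bP(\Ext^1(E_2,E_1))$ and $\bP(\Ext^1(E_1,E_2))$ and use $\xi$ to identify the fibres of $\nu_{\Crit(\ti f)}$ over these strata with $\nu_\fM(F)$ and $\nu_\fM(D)$ respectively.
\end{itemize}
Performing this computation twice -- once with the roles of $E_1, E_2$ symmetric, once with them reversed -- and subtracting cancels the $\nu_{\Crit(f|_{U^T})}(p)$ correction term, leaving the difference of integrals on the left of \eq{dt6eq2.1} and producing the coefficient $e_{21} - e_{12}$ on the right from the dimension factor $1 - \dim U + \dim U^T$ of Theorem \ref{blowup}.

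Finally I would derive the first identity \eq{dt6eq1.1} as a corollary, via the trick alluded to above: the equivariant d-critical structure together with the preceding analysis reduces \eq{dt6eq1.1} to the purely local $\bG_m$-localization statement
\[
\nu_{\Crit(f)}(p) = (-1)^{\dim T_p \Crit(f) \,-\, \dim T_p \Crit(f)^T}\, \nu_{\Crit(f)^T}(p)
\]
at a (not necessarily isolated) $T$-fixed point. Once this is established, applying it in the present situation where $\Crit(f)^T$ is \'etale-locally $\Crit(f_1) \t \Crit(f_2)$ with $f_i := f|_{\Ext^1(E_i,E_i)}$ and invoking Theorem \ref{dt3thm3}(iii) factorises $\nu_{\Crit(f)^T}(0)$ as $\nu_{\Crit(f_1)}(0) \cdot \nu_{\Crit(f_2)}(0)$, giving \eq{dt6eq1.1} with the sign $(-1)^{\bar\chi([E_1],[E_2])}$ arising from the Calabi--Yau Serre duality pairing between $\Ext^1(E_1,E_2)$ and $\Ext^1(E_2,E_1)$. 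The hard part will be precisely this algebraic generalisation of the Behrend--Fantechi localisation \cite{BeFa2} to non-isolated fixed points over a general algebraically closed $\K$ of characteristic zero: it requires a careful induction on the weight decomposition of $T_pU$ using Theorem \ref{blowup}, and must be reconciled with the Kennedy--MacPherson algebraic definition of the Behrend function in \S\ref{dt3.1.1} so that Euler-characteristic integration of constructible functions makes sense over $\K$.
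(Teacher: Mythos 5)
Your high-level skeleton (Theorem \ref{dt5thm2} for a $T$-invariant critical chart, Theorem \ref{blowup}, $\bG_m$-localization, and deducing \eq{dt6eq1.1} from a fixed-point localization at a non-isolated fixed point) matches the paper, but the central mechanism is wrong, in two places. First, you blow up $U$ along the full fixed locus $U^T$ and apply Theorem \ref{blowup} \emph{at the origin} $p$, then claim that $\xi$ identifies the values of $\nu_{\Crit(\ti f)}$ at exceptional points over $p$ lying in $\bP(\Ext^1(E_2,E_1))$ and $\bP(\Ext^1(E_1,E_2))$ with $\nu_{\fM}(F)$ and $\nu_{\fM}(D)$. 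There is no such identification: the exceptional divisor of the blowup does not map to $\fM$, and the Behrend function is not preserved under specializing $(0,0,0,t\ep_{21})$ to the exceptional point as $t\ra 0$. In the paper the equality $\nu_{\fM}(F)=(-1)^{\dim\Aut(E)}\nu_{\Crit(f)}(0,0,0,\ep_{21})$ holds at honest points of $\Crit(f)$ away from the origin (Proposition \ref{dt10prop2}), and Theorem \ref{blowup} is applied \emph{at those points}, for two different partial blowups: $U'=\{\ep_{21}\ne 0\}$ blown up along $\{\ep_{12}=0\}$, and $U''=\{\ep_{12}\ne 0\}$ blown up along $\{\ep_{21}=0\}$. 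Second, your ``do the computation twice with $E_1,E_2$ reversed and subtract'' cannot produce the antisymmetric coefficient $e_{21}-e_{12}$: blowing up along $U^T$ is symmetric in $E_1,E_2$, the correction coefficient $1-\dim U+\dim U^T=1-e_{12}-e_{21}$ is symmetric as well, so swapping the roles gives the identical identity and the subtraction collapses to $0=0$. The asymmetry in the paper comes from comparing the two different blowups $\ti U',\ti U''$ through the common space $W$ of points $(\ep_{11},\ep_{22},[\ep_{12}],[\ep_{21}],\la\,\ep_{12}\ot\ep_{21})$: the map $\Pi$ is smooth of relative dimension $1$ on each of $\ti U',\ti U''$, so $\nu_{\Crit(\ti f')}=-\Pi^*(\nu_Q)=\nu_{\Crit(\ti f'')}$ (equation \eq{dt6eq29}), and it is this identification, after integrating \eq{dt6eq26} and \eq{dt6eq27} over the two projective spaces and subtracting, that yields \eq{dt6eq25} with coefficient $(-1)^{e_{12}+e_{21}}(e_{21}-e_{12})$. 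Nothing in your proposal plays this role.

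The same omission undermines your route to \eq{dt6eq1.1}. You treat the localization identity $\nu_{\Crit(f)}(p)=(-1)^{\dim T_p\Crit(f)-\dim T_p\Crit(f)^T}\nu_{\Crit(f)^T}(p)$ as an input to be proved ``by a careful induction on the weight decomposition,'' but you never give that argument, and it is exactly the hard content of the theorem. In the paper this identity (equation \eq{dt6eq7}) is an \emph{output}: one applies the already-established integrated identity \eq{dt6eq25} a second time with $\Ext^1(E_2,E_1)\op\K$ in place of $\Ext^1(E_2,E_1)$ (a dimension-shift trick, using that free $\bG_m$-orbits contribute zero to the weighted Euler characteristic and that $\nu_{\Crit(\ti{\ti f})}=-\nu_{\Crit(f)}$), and adds the two identities to isolate $\nu_{\Crit(f)}(0)$. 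Also note that your factorization $\nu_{\Crit(f)^T}(0)=\nu_{\Crit(f_1)}(0)\,\nu_{\Crit(f_2)}(0)$ via a splitting $f\vert_{U^T}=f_1+f_2$ is only the transcendental heuristic; the algebraic statement actually used is \eq{dt6eq6}, i.e.\ $\nu_\fM(E_1)\nu_\fM(E_2)=(-1)^{\dim\Aut(E_1)+\dim\Aut(E_2)}\nu_{\Crit(f^G)}(0)$, imported from the algebraic part of \cite[\S 10.1]{JoSo}, and it needs to be cited or proved rather than asserted through a Thom--Sebastiani-type splitting which is not available algebraically. As it stands, your proposal would establish neither \eq{dt6eq1.1} nor \eq{dt6eq2.1} without the two-blowup comparison through $Q$ and the dimension-shift argument.
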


As in \S\ref{dt4}, the identities \eq{dt6eq1.1}--\eq{dt6eq2.1} are crucial for the whole program in \cite{JoSo}, and will be proved in \S\ref{dt.1}.

\medskip

In the next theorem, the condition that $\Ext^{<0}(E^\bu,E^\bu)=0$ is necessary for $\ti\fM$ to be an Artin stack, rather than a higher stack. Note that this condition is automatically satisfied by complexes $E^\bu$ which are semistable in any stability condition, for example Bridgeland stability conditions \cite{Brid1}. Therefore to prove wall-crossing formulae for Donaldson-Thomas invariants in the derived category $D^b\coh(X)$ under change of stability condition by the ``dominant stability condition'' method of \cite{Joyc.4,Joyc.5,Joyc.6,Joyc.7,KaSc}, it is enough to know the Behrend function identities \eq{dt6eq1.1}--\eq{dt6eq2.1} for complexes $E^\bu$ with $\Ext^{<0}(E^\bu,E^\bu)=0$, and we do not need to deal with complexes $E^\bu$ with  $\Ext^{<0}(E^\bu,E^\bu)\ne 0$, or with higher stacks.

\begin{thm} 
Let\/ $X$ be a Calabi--Yau 3-fold over $\K,$ and write $\ti\fM$ for the moduli stack of complexes $E^\bu$ in $D^b\coh(X)$ with $\Ext^{<0}(E^\bu,E^\bu)=0$. This is an Artin stack by \cite{HuTh}.
Let\/ $[E^\bu]\in\ti\fM(\K),$ and suppose that a Zariski open neighbourhood of $[E^\bu]$ in $\ti\fM(\K)$ is equivalent to a global quotient $[S/\GL(n,\K)]$ for $S$ a $\K$-scheme with a $\GL(n,\K)$-action.
Then the analogues of Theorems \ref{dt5thm2} and \ref{dt6thm1.1}
 hold with $\ti\fM,E^\bu$ in place of\/ $E,\fM$.
\label{dt6thm1.1.bis}
\end{thm}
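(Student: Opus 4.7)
The plan is to prove both analogues simultaneously by first establishing an equivariant local d-critical structure on $\ti\fM$ at $[E^\bu]$, and then observing that the proof of the Behrend function identities \eq{dt6eq1.1}--\eq{dt6eq2.1} in \S\ref{dt.1} depends only on this local structure, hence transfers verbatim to complexes. Since $\Ext^{<0}(E^\bu,E^\bu)=0$ is an open condition, the derived moduli stack $\bs{\ti\fM}$ of such complexes is a derived Artin $\K$-stack ($1$-geometric and $1$-truncated) with classical truncation $t_0(\bs{\ti\fM})=\ti\fM$, and by \cite[\S 2.1]{PTVV} it carries a natural $-1$-shifted symplectic form $\om$. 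Applying the stack Darboux theorem (Theorem \ref{sa2cor1}, in the Calabi--Yau form recorded as Corollary \ref{sa2cor2}) at $[E^\bu]$ produces a smooth affine $\K$-scheme $U$ with $\dim U=\dim\Ext^1(E^\bu,E^\bu)$, a point $p\in U(\K)$, a regular function $f:U\ra\bA^1$ with $f(p)=\pd f\vert_p=0$, and a smooth morphism $\xi:\Crit(f)\ra\ti\fM$ of relative dimension $\dim\Hom(E^\bu,E^\bu)=\dim\Aut(E^\bu)$ with $\xi(p)=[E^\bu]$. The minimality of the Darboux chart identifies $T_pU$ with $\Ext^1(E^\bu,E^\bu)$, yielding the \'etale map $u:U\ra\Ext^1(E^\bu,E^\bu)$ and the derivative relation $\d\xi\vert_p=\io\ci\d u\vert_p$.

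To upgrade this to a $G$-equivariant chart, with $G$ a maximal torus of $\Aut(E^\bu)$, I invoke the local quotient hypothesis. Writing a neighbourhood of $[E^\bu]$ as $[S/\GL(n,\K)]$ and using Corollary \ref{sa3cor2}, $\ti\fM$ acquires a d-critical stack structure near $[E^\bu]$; by Example \ref{sa3ex} this descends to a $\GL(n,\K)$-invariant algebraic d-critical structure on $S$. The torus $G\subseteq\mathrm{Stab}_{\GL(n,\K)}(s)\cong\Aut(E^\bu)$ fixes the point $s\in S$ over $[E^\bu]$ and preserves the d-critical structure. After restricting to a $G$-invariant affine \'etale slice at $s$ (which exists since $G$ is a torus and the ambient group is reductive), the $G$-action becomes good in the sense of Definition \ref{dc2def8}, so Proposition \ref{dc2prop14}(a) yields a $G$-equivariant critical chart of minimal Zariski tangent dimension $\dim\Ext^1(E^\bu,E^\bu)$ at $s$. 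Uniqueness of critical charts up to stabilization (Theorem \ref{sa3thm2}) lets me identify this with the chart produced in the previous step, after possibly shrinking, making $u$ $G$-equivariant, $f$ $G$-invariant, and $\xi$ factor through $[\Crit(f)/G]\ra\ti\fM$.

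The Behrend function identities now follow by the same argument that will prove Theorem \ref{dt6thm1.1} in \S\ref{dt.1}: applied to $E^\bu=E_1^\bu\op E_2^\bu$ with a maximal torus $T\subseteq\Aut(E_1^\bu\op E_2^\bu)$ containing the canonical copy of $\bG_m$ acting by $\id_{E_1^\bu}+\la\,\id_{E_2^\bu}$, the $T$-equivariant chart $(U,f,\xi)$ reduces \eq{dt6eq1.1} to a Behrend function identity between $\d f^{-1}(0)$ and its $T$-fixed locus, provable via Theorem \ref{blowup} together with a Thom--Sebastiani-type product formula for Behrend functions at $T$-fixed points, while \eq{dt6eq2.1} follows from analyzing Milnor fibres of $f$ at non-fixed points along $\bP(\Ext^1(E_j^\bu,E_i^\bu))$. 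Neither step depends on whether $E_i^\bu$ lies in $\coh(X)$ or $D^b\coh(X)$; only the local critical-locus data $(U,f,T)$ enters. The main obstacle throughout is the $G$-equivariance of the chart: for $\fM$ this comes for free from the $\GL$-equivariant Quot-scheme atlas of Huybrechts--Lehn, whereas for $\ti\fM$ no comparably natural global atlas is available, and this is precisely the gap closed by the local quotient hypothesis; once it is granted, the derived symplectic geometry of \S\ref{ourpapers} and the equivariant d-critical locus machinery of \S\ref{eqdcr} take care of the rest.
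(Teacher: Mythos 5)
Your proposal is correct and follows essentially the paper's own route: the paper proves this theorem by observing that the arguments of Theorems \ref{dt5thm2} and \ref{dt6thm1.1} go through verbatim for complexes, with the local quotient hypothesis supplying the atlas that the Quot-scheme/Huybrechts--Lehn construction provides in the sheaf case, the $-1$-shifted symplectic structure of \cite{PTVV} and Corollary \ref{sa3cor2} giving the $\GL(n,\K)$-equivariant d-critical structure on $S$, and Proposition \ref{dc2prop14} giving the minimal $G$-equivariant critical chart, after which the Behrend identity argument of \S\ref{dt.1} only sees the local data $(U,f,T)$. Your detour through a separate (non-equivariant) Darboux chart from Corollary \ref{sa2cor2} and its identification via Theorem \ref{sa3thm2} is harmless but unnecessary, since Proposition \ref{dc2prop14}(a) already produces the equivariant chart with $\dim U=\dim\Ext^1(E^\bu,E^\bu)$ directly, which is how the paper proceeds.
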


The condition on $\ti\fM$ that it should be {\it locally a global quotient}, is known for the moduli stack of coherent sheaves $\fM$ using Quot schemes.
A proof of that can be found in \cite[\S 9.3]{JoSo},
where 
Joyce and Song uses 
the standard method for constructing coarse
moduli schemes\index{coarse moduli scheme}\index{moduli
scheme!coarse} of semistable coherent sheaves in Huybrechts and Lehn
\cite{HuLe2}, adapting it for Artin stacks, and an argument similar to parts of that of
Luna's Etale Slice Theorem~\cite[\S III]{Luna}.\index{Luna's Etale Slice
Theorem} 
However, this is not known for the moduli stack of complexes. The author expects Theorem \ref{dt6thm1.1.bis} to hold without this technical assumption, but currently can't prove it.

\smallskip

The proof of Theorem \ref{dt6thm1.1.bis} is the same as the proof of Theorem \ref{dt6thm1.1},
substituting sheaves with complexes of sheaves, and accordingly making the obvious modifications.

\medskip

Finally, in \S\ref{def} we will characterize the numerical Grothendieck group of a Calabi--Yau 3-fold
in terms of a deformation invariant lattice described using the Picard group.
First of all, using existence results, and smoothness and properness 
properties of the relative Picard scheme in a family of Calabi--Yau 3-folds, 
one proves that the Picard groups form a local system. 
Actually, it is a local system with finite monodromy, so it can be made trivial after passing to a finite \'etale cover of the base scheme, as formulated in the analogue of 
\cite[Thm. 4.21]{JoSo}, which studies the monodromy of the Picard scheme 
instead of the numerical Grothendieck group in a family.
Then, Theorem \ref{definv}, a substitute for \cite[Thm. 4.19]{JoSo}, which does not need 
the integral Hodge conjecture result by Voisin \cite{Vois} for Calabi--Yau 3-folds over $\C$
and which is valid over $\K,$ characterizes the numerical Grothendieck 
group of a Calabi--Yau 3-fold in terms of a globally constant lattice described using the Picard scheme:

\begin{thm} Let\/ $X$ be a Calabi--Yau $3$-fold over $\K$ with\/
$H^1(\cO_X)\!=\!0$. Define\nomenclature[1l]{$\La_X$}{lattice associated to a Calabi--Yau 3-fold $X$}
\begin{equation*}
\La_X=\ts\bigl\{ (\la_0,\la_1,\la_2,\la_3) 
\textrm{ where } \la_0,\la_3\in\Q, \; \la_1\in\Pic(X)\ot_{\Z}\Q, \; \la_2\in \Hom(\Pic(X),\Q)
\textrm{ such that } 
\end{equation*}
\begin{equation*}
\la_0\in \Z,\;\>
\la_1\in \Pic(X)/ {\textrm{torsion}}, \;
 \la_2-\ha\la_1^2\in  \Hom(\Pic(X),\Z),\;\> \la_3+\ts\frac{1}{12}\la_1
c_2(TX)\in \Z\bigr\},
\end{equation*}
\smallskip
where $\la_1^2$ is defined as the map $\al\in\Pic(X)\ra \frac{1}{2}c_1(\la_1)\cdot c_1(\la_1)\cdot c_1(\al)\in A^3(X)_{\Q}\cong \Q,$
and $\frac{1}{12}\la_1 c_2(TX)$ is defined as $\frac{1}{12}c_1(\la_1)\cdot c_2(TX)\in A^3(X)_{\Q}\cong\Q.$
Then\/ 
for any family of Calabi-Yau 3-folds $\pi : {\cal X} \ra S$ over a connected base $S$ with $X=\pi^{-1}(s_0),$ the lattices $\La_{X_s}$ form a local system of abelian groups over $S$ with fibre $\La_X$. Furthermore, the monodromy of this system lies in a finite subgroup of $\Aut(\La_X)$, so after passing to an \'etale cover $\ti S\ra S$ of S, we can take the local system to be trivial, and coherently identify $\La_{X_{\ti s}}\cong \La_X$ for all $\ti s\in \ti S$.
Finally, the Chern character gives an injective morphism
$\ch:K^\num(\coh(X))\!\hookra\!\La_X$.
\label{definv}
\end{thm}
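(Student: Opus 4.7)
The plan is to proceed in three stages. The core of the argument is that the integrality conditions defining $\La_X$ are precisely those forced on Chern characters of coherent sheaves by the Hirzebruch--Riemann--Roch theorem on a Calabi--Yau 3-fold, so $\ch(K^\num(\coh(X)))$ is visibly contained in $\La_X$; then the deformation theory of the Picard scheme takes care of the rest. First I would unpack $\La_X$: identifying $A^1(X)_\Q \cong \Pic(X)\ot\Q$ (using $H^1(\cO_X)=0$, so $\Pic^0=0$), identifying $A^2(X)_\Q \hookra \Hom(A^1(X),A^3(X)_\Q)=\Hom(\Pic(X),\Q)$ via the intersection pairing, and identifying $A^3(X)_\Q\cong\Q$. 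Under these identifications, $\ch(E) = (\ch_0,\ch_1,\ch_2,\ch_3)$ has components in the ambient $\Q$-vector space of $\La_X$.

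Next I would verify the integrality conditions one by one. The rank gives $\la_0=\ch_0(E)\in\Z$; the first Chern class gives $\la_1=\ch_1(E)=c_1(E)$ which lies in $\Pic(X)/\text{torsion}$. For $\la_2$: Newton's identity yields $\ch_2=\frac12(c_1^2-2c_2)$, so $\ch_2-\frac12 c_1(E)^2 = -c_2(E)$, which pairs integrally with any line bundle $L$ since $c_1(L)\cdot c_2(E)\in \Z$. For $\la_3$: on a Calabi--Yau 3-fold $c_1(TX)=0$, so the Todd class is $\td(X)=1+\tfrac{1}{12}c_2(TX)+0$, and HRR reads $\chi(E)=\ch_3(E)+\tfrac{1}{12}c_1(E)c_2(TX)\in\Z$, which is exactly the last condition. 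Hence $\ch$ lands in $\La_X$. Well-definedness on $K^\num(\coh(X))$ and injectivity follow from the standard fact that on a smooth projective variety the Chern character induces an injection $K^\num(\coh(X))\hookra A^*(X)_\Q$: pairing via Euler form $\bar\chi$ and using HRR to express $\bar\chi(E,F)$ in terms of Chern characters, any class killed by $\ch$ pairs trivially with every other class, hence is zero in $K^\num$.

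For the deformation statement, let $\pi:\cX\ra S$ be a smooth proper family of Calabi--Yau 3-folds with connected base. Since $H^1(\cO_{X_s})=0$ for every fibre, the relative Picard scheme $\Pic_{\cX/S}$ has trivial identity component, so $\Pic_{\cX/S}\ra S$ is \'etale (in fact a separated group scheme locally of finite type with $\Pic^0=0$), and taking sections $\underline{\Pic}_{\cX/S}$ gives a local system of finitely generated abelian groups on $S$ with fibre $\Pic(X_s)$. The Chern class $c_2(T_{\cX/S})\in A^2(\cX/S)$ varies in a flat family, and its pairing with the relative Picard group defines a locally constant homomorphism $\Pic(X_s)\ra\Z$ on $S$; similarly the relative intersection form $(L,M)\mapsto L\cdot M$ valued in $\Hom(\Pic(X_s),\Z)$ is locally constant. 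Assembling these data, the ambient group $\Q\op(\Pic(X_s)\ot\Q)\op\Hom(\Pic(X_s),\Q)\op\Q$ and all the integrality conditions defining $\La_{X_s}$ are preserved by parallel transport, so $\{\La_{X_s}\}$ forms a local system of abelian groups with fibre $\La_X$. Monodromy acts through $\Aut(\Pic(X)/\text{torsion})$ preserving the intersection form and the linear functional $c_2(TX)$; since there exists an ample class in $\Pic(X)$ whose monodromy orbit is finite (the relative Picard scheme is of finite type and monodromy permutes the finitely many connected components containing a bounded subset around an ample class, and preserves the K\"ahler cone), and since the automorphism group of a lattice fixing an element in its positive cone and preserving the form is finite, the monodromy lies in a finite subgroup of $\Aut(\La_X)$. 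Passing to the corresponding finite \'etale cover $\ti S\ra S$ trivialises the local system.

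The hard part will be the finite monodromy assertion: making rigorous the claim that the monodromy on $\Pic(X)$ preserves enough structure to be finite. One needs to exploit the ampleness constraint (the fact that relative ample classes form a locally constant subset of the Picard lattice, up to finite index) together with the deformation invariance of $c_2(TX)$ and the trilinear intersection form $\Pic(X)^{\ot 3}\ra\Z$, to cut the monodromy down to a finite group. A direct approach is to fix a relatively ample line bundle $\cO_\cX(1)$ on $\cX/S$ (possibly after restricting to an open of $S$ and \'etale base change), observe that its fibrewise class is monodromy-invariant, and then apply a lattice-theoretic argument: the stabilizer in $\Aut(\Pic(X))$ of an ample class and of the trilinear intersection form is finite, by essentially the same reasoning as in \cite[Thm.~4.21]{JoSo} but applied to $\Pic$ rather than to $H^{\rm even}(X;\Q)$.
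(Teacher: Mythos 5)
Your proposal is essentially correct, and for two of the three assertions it follows the same route as the paper (which largely defers to \cite[Th.\ 4.19]{JoSo}): the containment $\ch(K^\num(\coh(X)))\subseteq\La_X$ is exactly the HRR/Newton-identity bookkeeping you perform, injectivity is the nondegeneracy of $\bar\chi$ on $K^\num$ combined with HRR, and the local system structure comes, as you say, from $H^1(\cO_{X_s})=0$ forcing $\Pic_{\fX/T}$ to be smooth with $0$-dimensional fibres, together with Grothendieck's properness of the locus of fixed Hilbert polynomial \cite{Grot5} (this properness input is also what legitimises your phrase ``taking sections gives a local system''; \'etaleness alone does not rule out jumping of the fibre groups). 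You are in fact slightly more explicit than the paper in noting that the trilinear form and the functional $c_2(TX_s)\cdot(-)$ must themselves be locally constant for $\La_{X_s}$ to be a local system.

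Where you genuinely diverge is the finiteness of monodromy. The paper's Theorem \ref{finmon} fixes generators $[L_1],\dots,[L_k]$ of $\Pic(X_s)$, notes that monodromy preserves Hilbert polynomials with respect to the global $\cO_\fX(1)$, and uses properness to see that only finitely many classes realise each $P_i$, so each monodromy element has finitely many possible values on the generators. You instead fix the monodromy-invariant ample class $h=[\cO_{X_s}(1)]$ and invoke finiteness of its stabiliser in $\Aut(\Pic(X)/\textrm{torsion})$ preserving the cubic form. That works, but your stated justification (``finitely many components containing a bounded subset around an ample class'') is muddled; the clean argument is the Hodge index theorem: monodromy preserves $q_h(x)=h\cdot x\cdot x$, of signature $(1,\rho-1)$, and fixes $h$ with $q_h(h)>0$, so it lies in the intersection of the compact stabiliser of a positive vector in $O(1,\rho-1)$ with $\GL(\rho,\Z)$, which is finite. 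Either mechanism suffices; the paper's boundedness argument avoids lattice theory altogether, while yours, once completed with Hodge index, is more self-contained on the arithmetic side.
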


Following \cite{JoSo}, this yields

\begin{thm}
The generalized Donaldson--Thomas invariants $\bar{DT}{}^\al(\tau)$ over $\K$ for $\al\in\Lambda_X$
are unchanged under deformations of the underlying Calabi--Yau 3-fold X, 
by which we mean the following: let $\fX\stackrel{\vp}{\longra}T$ a smooth projective morphism of
algebraic $\K$-varieties $X,T$, with $T$ connected. 
Let $\cO_\fX(1)$
be a relative very ample line bundle for
$\fX\stackrel{\vp}{\longra}T$. For each $t\in T(\K)$, write $X_t$ for
the fibre $X\times_{\vp,T,t}\Spec\K$ of $\vp$ over $t$, and
$\cO_{X_t}(1)$ for $\cO_\fX(1)\vert_{X_t}$. Suppose that $X_t$ is a
smooth Calabi--Yau 3-fold over $\K$ for all $t\in T(\K)$, with $H^1(\cO_{X_t})= 0$. 
Then the generalized Donaldson--Thomas invariants
$\bar{DT}{}^\al(\tau)_t$ are independent of $t\in T(\K).$
\label{defthm}
\end{thm}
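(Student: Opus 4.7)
The plan is to follow the strategy of \cite[\S 5.4]{JoSo}, now that both its prerequisites are available over an algebraically closed field $\K$ of characteristic zero: Theorem \ref{dt6thm1.1} provides the Behrend function identities, and Theorem \ref{definv} provides the deformation-invariant target lattice $\Lambda_X$ in which classes $\alpha$ are required to live. First I would use Theorem \ref{definv} to make sense of the statement: after passing to a connected finite \'etale cover $\tilde T\to T$ the local system $\{\Lambda_{X_t}\}$ is trivialized, and under this trivialization a fixed $\alpha\in\Lambda_X$ determines a well-defined class $\alpha_t\in\Lambda_{X_t}$ for every $t\in\tilde T(\K)$; in particular whenever $\alpha$ lies in $\ch(K^{\rm num}(\coh(X_t)))$ the invariant $\bar{DT}^{\alpha_t}(\tau)_t$ is well-defined, and we need to show this is independent of $t$. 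Since deformation-invariance is a local statement on $T$ and $\tilde T\to T$ is \'etale surjective, it suffices to argue over $\tilde T$, so I assume $T$ connected and the lattice globally identified with $\Lambda_X$.

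The next step is to introduce the auxiliary stable pair invariants $PI^{\alpha,n}(\tau')$ of \eqref{dt4eq11} in the relative setting. For $n\gg 0$ and the relative ample line bundle $\cO_\fX(1)$, the moduli spaces $\M_\stp^{\alpha,n}(\tau')_t$ fit into a proper morphism $\bs\M_\stp^{\alpha,n}(\tau')\to T$ carrying a relative symmetric obstruction theory, exactly as in \cite{Thom} and \cite[\S 5.4]{JoSo}, because the constructions of Huybrechts--Thomas \cite{HuTh} and the definition of stable pairs are algebraic and work over $\K$. Thomas' argument \cite{Thom} (properness plus relative virtual fundamental class) then gives that
\[
PI^{\alpha,n}(\tau')_t=\int_{[\M_\stp^{\alpha,n}(\tau')_t]^\vir}1
\]
is independent of $t\in T(\K)$; equivalently, by Theorem \ref{dt3thm4} extended to $\K$, it equals the weighted Euler characteristic of $\M_\stp^{\alpha,n}(\tau')_t$ by its Behrend function.

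Now I would run the Joyce--Song wall-crossing identity expressing $PI^{\alpha,n}(\tau')$ as a universal $\Q$-linear combination, with combinatorial coefficients of the form $V(I,\Gamma,\kappa;\tau,\ti\tau)$, of products of the $\bar{DT}^\beta(\tau)$'s with $\beta\in\Lambda_X$ of lower rank. The derivation of this identity proceeds entirely inside the Ringel--Hall Lie algebra $\SFai(\fM)$ and its image under the weighted Lie algebra morphism $\ti\Psi:\SFai(\fM)\to\ti L(X)$; the only ingredients that require $\K=\C$ in \cite{JoSo} are the local critical-locus structure of $\fM$ and the Behrend identities \eqref{dt6eq1.1}--\eqref{dt6eq2.1}, both of which are now available over $\K$ by Theorems \ref{dt5thm2} and \ref{dt6thm1.1}. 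The coefficients $V(I,\Gamma,\kappa;\tau,\ti\tau)$ are universal rationals depending only on the combinatorial data, and in particular are independent of $t$; and the formal manipulation of stack functions is the same in every fibre, because the construction of Joyce \cite{Joyc.1,Joyc.2,Joyc.3,Joyc.4,Joyc.5,Joyc.6} is purely categorical.

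Finally, the deformation invariance of $\bar{DT}^\alpha(\tau)_t$ follows by induction on $\rank\alpha$ with $\dim\alpha$ fixed, exactly as in \cite[Cor.~5.28]{JoSo}: the wall-crossing relation expresses $\bar{DT}^\alpha(\tau)_t$ in terms of $PI^{\alpha,n}(\tau')_t$ and $\bar{DT}^\beta(\tau)_t$ for $\rank\beta<\rank\alpha$, all of which are deformation-invariant by the previous steps and the inductive hypothesis. The step I expect to be the main obstacle is the rigorous bookkeeping in the relative setting: one must verify that the substacks $\fM_\rss^\alpha(\tau)_t$, $\fM_\st^\alpha(\tau)_t$ and the elements $\bar\epsilon^\alpha(\tau)_t\in\SFai(\fM_t)$ vary in $t$ in a way compatible with the trivialization of $\{\Lambda_{X_t}\}$ (so that the invariants $\bar{DT}^\alpha(\tau)_t$ really are being compared for the same class $\alpha$ in each fibre), and that the relative stable-pair moduli problem yields a \emph{single} proper $T$-scheme with a global relative symmetric obstruction theory whose fibrewise restriction agrees with the absolute construction. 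Once these are pinned down, the rest of the proof is essentially a transcription of \cite[\S 5.4]{JoSo} into the purely algebraic setting permitted by Theorems \ref{dt5thm2}, \ref{dt6thm1.1} and \ref{definv}.
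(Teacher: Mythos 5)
Your proposal follows essentially the same route as the paper, which proves Theorem \ref{defthm} by citing the Joyce--Song argument of \cite[Cor.~5.28]{JoSo}: deformation-invariance of the pair invariants $PI^{\al,n}(\tau')$ via Thomas' relative obstruction-theory argument, the wall-crossing relation now valid over $\K$ thanks to Theorems \ref{dt5thm2} and \ref{dt6thm1.1}, induction on $\rank\al$, and Theorem \ref{definv} (with its finite monodromy statement, handled in the paper by noting the identification $\La_{X_t}\cong\La_X$ is canonical up to the finite group $\Ga$, which matches your passage to the \'etale cover $\ti T$). Your proposal is correct and matches the paper's intended proof.
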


\smallskip

More precisely, the isomorphism $\La_{X_t} = \La_X$ is canonical up to action of a finite group $\Gamma,$ the monodromy on $T,$ and $DT^\al(\tau)_t$ are independent of the action of $\Gamma$ on $\alpha,$ so whichever identification $\La_{X_t}=\La_X$ is chosen, it is still true $DT^\al(\tau)_t$ independent of $t.$

\medskip

Now, recall that in \cite{JoSo} Joyce and Song used the assumption that the base field is the field of complex numbers $\K=\C$
for the Calabi--Yau 3-fold $X$ in three main ways:\index{field $\K$|(} \index{field $\K$!algebraically closed}
\begin{itemize}\index{gauge theory}\index{Behrend function!Behrend identities}
\setlength{\itemsep}{0pt}
\setlength{\parsep}{0pt}
\item[\bf(a)] Theorem \ref{dt5thm1} in
\S\ref{dt4} is proved using gauge theory and transcendental
complex analytic methods, and work only over $\K=\C$. It is
used to prove the Behrend function identities 
\eq{dt6eq1}--\eq{dt6eq2}, which are vital for
much of their results, including the wall crossing
formula for the $\bar{DT}{}^\al(\tau)$, and the
relation between
$PI^{\al,n}(\tau'),\bar{DT}{}^\al(\tau)$. 
\item[\bf(b)] In \cite[\S 4.5]{JoSo}, when $\K=\C$ the Chern
character\index{Chern character} embeds $K^\num(\coh(X))$ in $H^{\rm
even}(X;\Q)$, and they use this to show $K^\num(\coh(X))$ is
unchanged under deformations of $X$. This is important for the
results that $\bar{DT}{}^\al(\tau)$ and
$PI^{\al,n}(\tau')$ for $\al\in K^\num(\coh(X))$ are invariant
under deformations of $X$ even to make sense.
\item[\bf(c)] Their notion of `compactly embeddable' noncompact Calabi-Yau 3-folds  \index{compactly embeddable} in \cite[\S 6.7]{JoSo} is complex
analytic and does not make sense for general~$\K$. This constrains the noncompact Calabi--Yau 3-folds they can define 
generalized Donaldson--Thomas invariants for.
\end{itemize}

Now Theorem \ref{dt5thm2} and Theorem \ref{dt6thm1.1} extend the results in {\bf(a)} 
over algebraically closed field $\K$
of characteristic zero.
As noted in \cite{Joyc.1}, constructible
functions\index{constructible function!in positive characteristic}
methods fail for $\K$ of positive characteristic.\index{field
$\K$!positive characteristic} Because of this, the alternative
descriptions \eq{dt3eq5} and \eq{dt4eq11},  for
$DT^\al(\tau)$ and $PI^{\al,n}(\tau')$ as weighted Euler characteristics,
and the definition of $\bar{DT}{}^\al(\tau)$ in \S\ref{dt4}, cannot work in positive characteristic, so working over an algebraically closed field of characteristic zero is about as general as is reasonable. 

\smallskip

The point {\bf(a)} above has consequences also on {\bf(c)}, because Joyce and Song only need the notion of `compactly embeddable'
as their complex analytic proof of \eq{dt6eq1}--\eq{dt6eq2} requires
$X$ compact. Unfortunately 
the given algebraic version of \eq{dt6eq1}--\eq{dt6eq2} in Theorem \ref{dt6thm1.1} 
uses results from derived algebraic geometry, and the author does not know if
they apply also for compactly supported sheaves 
\index{coherent sheaf!compactly supported} on a
noncompact~$X$.\index{field $\K$|)}
\index{Calabi--Yau 3-fold!noncompact}
We can prove a version of that under some technical assumptions, as stated in 
\S\ref{dt7}.
Observe, also, that in the noncompact case you cannot expect to have the deformation invariance property 
unless in some particular cases in which the moduli space is proper.
The extension of {\bf(b)} to $\K$ is given in Section \ref{def}, which yields Theorem \ref{defthm}, thanks to which
it is possible to extend \cite[Cor. 5.28]{JoSo} about the deformation invariance of the generalized Donaldson--Thomas 
invariants in the compact case to algebraically closed fields $\K$ of characteristic zero.
Thus, this proves our main theorem:

\begin{thm}The theory of generalized Donaldson--Thomas invariants defined in \cite{JoSo} is valid 
over algebraically closed fields of characteristic zero.
\label{mainthm}
\end{thm}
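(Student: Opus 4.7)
The plan is to establish Theorem \ref{mainthm} by systematically replacing each use of the hypothesis $\K=\C$ in the construction of generalized Donaldson--Thomas invariants in \cite{JoSo} with an algebraic argument valid over any algebraically closed field of characteristic zero. As noted in the excerpt, Joyce and Song's construction uses $\K=\C$ in exactly three places: (a) the gauge-theoretic and transcendental proof of the local structure theorem \cite[Thm.~5.5]{JoSo} and its consequence, the Behrend function identities \cite[Thm.~5.11]{JoSo}; (b) the use of the Voisin integral Hodge conjecture to show that $K^\num(\coh(X))$ is deformation invariant; and (c) the complex analytic notion of `compactly embeddable' noncompact Calabi--Yau 3-folds. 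Theorems \ref{dt5thm2}, \ref{dt6thm1.1} and \ref{definv} provide algebraic substitutes for (a) and (b); the issue (c) lies outside the present scope, so we restrict to compact Calabi--Yau 3-folds.

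First, I would note that all the preparatory machinery of Joyce's papers \cite{Joyc.1,Joyc.2,Joyc.3,Joyc.4,Joyc.5,Joyc.6,Joyc.7} already works over an arbitrary algebraically closed field $\K$ of characteristic zero: the theory of constructible functions and Euler characteristic integration \cite{Joyc.1}, the Ringel--Hall algebras $\SFa(\fM)$ and $\SFai(\fM)$, the elements $\bar\de_\rss^\al(\tau), \bar\ep^\al(\tau)$ defined by \eq{dt4eq3}, the fact that $\bar\ep^\al(\tau)\in\SFai(\fM)$, the Lie algebra morphism $\Psi$, and the transformation laws under change of stability condition. Moreover, $\nu_\fM$ is a well-defined constructible function on the Artin $\K$-stack $\fM$ by \cite[Prop.~4.4]{JoSo}, and the virtual Gauss--Bonnet formula (Theorem \ref{dt3thm4}) is valid over $\K$.

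With these foundations in hand, the construction of the weighted Lie algebra morphism $\ti\Psi:\SFai(\fM)\ra\ti L(X)$ of \eq{dt4eq5} and of the generalized invariants $\bar{DT}{}^\al(\tau)$ via \eq{dt4eq8} follows \cite[\S 5.3]{JoSo} verbatim, with Theorem \ref{dt6thm1.1} replacing the transcendental Theorem \ref{dt6thm1}. The proof that $\ti\Psi$ respects brackets uses only the identities \eq{dt6eq1.1}--\eq{dt6eq2.1} together with Joyce's algebraic machinery for stack functions, so it carries over. Wall-crossing formulas for $\bar{DT}{}^\al(\tau)$ then follow by transporting Joyce's transformation laws \cite{Joyc.6} across this Lie algebra morphism; the pair invariants $PI^{\al,n}(\tau')$ are defined algebraically via \eq{dt4eq11} using the symmetric obstruction theory on $\M_\stp^{\al,n}(\tau')$, and the wall-crossing identity expressing $PI^{\al,n}(\tau')$ in terms of the $\bar{DT}{}^\be(\tau)$ is proved over $\K$ by an identical argument. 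For deformation invariance, Theorem \ref{definv} supplies the essential replacement for \cite[Thm.~4.19]{JoSo}: the deformation-invariant lattice $\La_X$ contains $\ch(K^\num(\coh(X)))$, so $\bar{DT}{}^\al(\tau)$ can be indexed by classes $\al\in\La_X$ uniformly across a family $\fX\to T$ of Calabi--Yau 3-folds; the deformation invariance of $PI^{\al,n}(\tau')$ is proved by Thomas's purely algebraic method \cite{Thom}, and the relation between $PI^{\al,n}(\tau')$ and the $\bar{DT}{}^\be(\tau)$ allows a descending induction on $\rank\al$ with $\dim\al$ fixed, as in \cite[\S 5.4]{JoSo}, to deduce the deformation invariance of each $\bar{DT}{}^\al(\tau)$, yielding Theorem \ref{defthm}.

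The main obstacle, and the step requiring the most care, will be auditing Joyce--Song's arguments in \cite[\S 5--\S 6]{JoSo} to certify that no hidden dependence on $\K=\C$ remains beyond the three identified places. In particular, the construction of Seidel--Thomas twists used to reduce the local structure of $\fM$ to that of vector bundle moduli, the combinatorial wall-crossing formulas involving coefficients $V(I,\Ga,\ka;\tau,\ti\tau)$, and the various compatibility verifications between the Ringel--Hall Lie algebra and the explicit Lie algebra $\ti L(X)$, must all be checked to rely only on algebraic inputs. This is largely bookkeeping, but forms the substantive technical content that would need to be carried out to complete the proof rigorously.
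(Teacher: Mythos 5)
Your proposal is correct and follows essentially the same route as the paper: the paper likewise identifies the three uses of $\K=\C$ in \cite{JoSo}, substitutes Theorems \ref{dt5thm2} and \ref{dt6thm1.1} for the gauge-theoretic local structure result and Behrend identities, substitutes Theorem \ref{definv} (the deformation-invariant lattice $\La_X$ via the Picard scheme) for the Voisin/Hodge-conjecture input, restricts to compact Calabi--Yau 3-folds to avoid the `compactly embeddable' issue, and observes that the remaining Ringel--Hall, stack-function and wall-crossing machinery of Joyce's papers is already algebraic over any algebraically closed field of characteristic zero.
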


Next, we will respectively prove Theorems \ref{dt5thm2}, \ref{dt6thm1.1} and \ref{definv}
in \S\ref{localdes}, \S\ref{dt.1} and \S\ref{def}.

\subsection{Local description of the Donaldson--Thomas moduli space}
\label{localdes}

Let us fix a moduli stack $\fM$ which is locally a global quotient. 
In particular, $\fM$ can be the moduli stack of coherent sheaves
over a Calabi-Yau 3-fold $X$, so that
the theory exposed in \S\ref{dcr} and \S\ref{ourpapers} applies.

\smallskip

The first step in order to proving Theorem \ref{dt5thm2} is 
to show the existence of a quasiprojective $\K$-scheme $S,$ an action of\/
$G$ on $S,$ a point\/ $s\in S(\K)$ fixed by $G,$
and a $1$-morphism of Artin $\K$-stacks $\xi:[S/G]\ra\fM,$
which is smooth of relative dimension $\dim\Aut(E)-\dim G,$ 
\index{Artin stack}\index{quotient stack}\index{stabilizer group}
where $[S/G]$ is the quotient stack, such that\/
$\xi(s\,G)=[E],$ the induced morphism on stabilizer groups
$\xi_*:\Iso_{[S/G]}(s\,G)\ra\Iso_{\fM}([E])$ is
the natural morphism $G\hookra\Aut(E)\cong\Iso_{\fM}([E]),$
and\/ $\rd\xi\vert_{s\,G}:T_sS\cong T_{s\,G}
[S/G]\ra T_{[E]}\fM\cong \Ext^1(E,E)$ is an isomorphism.

\smallskip

As $\fM$ is locally a global quotient,
let's say
$\fM$ is locally $[Q/H]$ with $H=\GL(n,\K),$ and a $\K$-scheme $Q$ 
which is $H$-invariant, so that 
the projection $[Q/H]\ra\fM$ is a 1-isomorphism with an open $\K$-substack
${\mathfrak Q}$ of $\fM$. This $1$-isomorphism
identifies the stabilizer groups $\Iso_{\fM}([E])=\Aut(E)$ and\/
$\Iso_{[Q/H]}(sH)=\Stab_H(s),$ and the Zariski tangent spaces
$T_{[E]}\fM\cong\Ext^1(E,E)$ and\/ $T_{sH}[Q/H]\cong T_sQ/T_s(sH),$
so one has natural isomorphisms $\Aut(E)\cong\Stab_H(s)$
and\/~$\Ext^1(E,E)\cong T_sQ/T_s(sH)$, and $G$ is identified as a subgroup of $H.$

\smallskip

To obtain the 1-morphism with the required properties, following \cite[\S 9.3]{JoSo} 
and Luna's Etale Slice Theorem~\cite[\S III]{Luna}, we obtain an atlas 
$S$ as a $G$-invariant, locally closed\/ $\K$-subscheme in $Q$ with\/
$s\in S(\K),$ such that\/ $T_sQ=T_sS\op T_s(sH),$ 
and the morphism $\mu: S\times H\ra Q$ induced by the inclusion $S\hookra Q$ and the $H$-action on $Q$ is smooth of relative dimension $\dim\Aut(E).$
Here $s\in Q(\K)$ project to the point\/ $sH$ in ${\mathfrak
Q}(\K)$ identified with\/ $[E]\in\fM(\K)$ under the
$1$-isomorphism\/ ${\mathfrak Q}\cong [Q/H]$ and $G$, a $\K$-subgroup of the $\K$-group H, is as in the statement 
of Theorem \ref{dt5thm2}, that is, a maximal torus in $\Aut(E).$ 
Since $S$ is invariant under the $\K$-subgroup $G$ of the $\K$-group
$H$ acting on $Q$, the inclusion $i:S\hookra Q$ induces a
representable 1-morphism of quotient stacks $i_*:[S/G]\ra [Q/H]$. In \cite{JoSo},
Joyce and Song found that $i_*$ is smooth of relative dimension $\dim\Aut(E)-\dim
G$. 
Combining the 1-morphism
$i_*:[S/G]\ra [Q/H]$, the 1-isomorphism ${\mathfrak Q}\cong [Q/H]$,
and the open inclusion ${\mathfrak Q}\hookra\fM$, yields a
1-morphism $\xi:[S/G]\ra\fM$, as required for Theorem \ref{dt5thm2}.
This $\xi$ is smooth of relative dimension $\dim\Aut(E)-\dim
G$, as $i_*$ is. If $\Aut(E)$ is reductive, so that
$G=\Aut(E)$, then $\xi$ is smooth of dimension 0, that is,
$\xi$ is \'etale.\index{etale morphism@\'etale morphism}
The conditions that $\xi(s\,G)=[E]$ and that
$\xi_*:\Iso_{[S/G]}(s\,G)\ra\Iso_{\fM}([E])$ is
the natural $G\hookra\Aut(E) \cong\Iso_{\fM}([E])$ in
Theorem \ref{dt5thm1} are immediate from the construction. That
$\rd\xi\vert_{s\,G}:T_sS\cong T_{s\,G}
[S/G]\ra T_{[E]}\fM\cong \Ext^1(E,E)$ is an isomorphism
follows from $T_{[E]}\fM\cong T_{sH}[Q/H]\cong T_sQ/T_s(sH)$ and
$T_sQ=T_sS\op T_s(sH)$.\index{Artin
stack!atlas|)}\index{moduli stack!atlas|)} 

\smallskip

In conclusion, we can summarize as follows: 
given a point $[E]\in \fM(\K)$, that is an equivalence class of a (complex of)
coherent sheaves, we will denote by $G$ a maximal torus in $\Aut(E).$ 
As $\fM$ is locally a global quotient,
there exists an atlas $S$, which is a scheme over $\K$, and a smooth morphism $\pi : S \ra \fM$, with $\pi$ smooth of relative dimension $\dim G.$ 
If $x \in S$ is the point corresponding to $E\in \fM(\K),$ then $\pi$ smooth of $\dim G$ means that $\pi$ has {\it minimal dimension} near $E,$ that is $T_x S = \Ext^1(E,E).$ 
Moreover, the atlas $S$ is endowed with a $G$-action, so that $\pi$ descends to a morphism $[S/G] \ra \fM.$ 

\smallskip

Note next that the maximal torus $G$ acts on $S$ preserving $s$ and fixing $x.$ By replacing $S$ by a $G$-equivariant \'etale open neighbourhood $S'$ of $s,$ we can suppose $S$ is affine.
Then, from material in \S\ref{dcr} and \S\ref{ourpapers} we deduce that 
the atlas $S'$
in the sense of Theorems \ref{sa2cor1} and \ref{sa3thm6} for the moduli stack $\fM$
carries a d-critical locus structure $(S',s_{S'})$ which is $\GL(n,\K)$-equivariant in the sense of \S\ref{eqdcr}.

\smallskip

Using Proposition \ref{dc2prop14}, there exists a $G$-invariant critical chart $(R,U,f,i)$ in the sense of \S\ref{dcr} for $(S,s)$ with $x$ in $R,$ and $\dim U$ to be minimal so that 
$T_{i(x)}U = T_x R = \Ext^1(E,E)$. 

\smallskip

Making $U$ smaller if necessary, we can choose $G$-equivariant \'etale coordinates $U \ra \bA^n = \Ext^1(E,E)$ near $i(x),$ sending $i(x)$ to $0,$ and with $T_{i(x)}U = \Ext^1(E,E)$ the given identification. Then we can regard $U \ra \Ext^1(E,E)$ as a $G$-equivariant \'etale open neighbourhood of $0$ in $\Ext^1(E,E),$ which concludes the proof of Theorem \ref{dt5thm2}.

\subsection{Behrend function identities}
\label{dt.1}

Now we are ready to prove Theorem \ref{dt6thm1.1}.
Let $X$ be a Calabi--Yau $3$-fold over an algebraically closed field $\K$ of 
characteristic zero, $\fM$ the moduli stack of
coherent sheaves on $X$, and $E_1,E_2$ be coherent sheaves on $X$.
Set $E=E_1\op E_2$. Using the
splitting 
\e
\Ext^1(E,E)\!=\!\Ext^1(E_1,E_1)\!\op\!\Ext^1(E_2,E_2)\!\op\!\Ext^1(E_1,E_2)
\!\op\!\Ext^1(E_2,E_1),
\label{dt6eq3}
\e
write elements of $\Ext^1(E,E)$ as
$(\ep_{11},\ep_{22},\ep_{12},\ep_{21})$ with
$\ep_{ij}\in\Ext^1(E_i,E_j)$. For simplicity, we will write $e_{ij}=\dim\Ext^1(E_i,E_j).$
Choose a maximal torus \index{
reductive group!maximal} $G$ of $\Aut(E)$ which contains the
subgroup 
 $T=\bigl\{\id_{E_1}+\la\id_{E_2}:
\la\in\bG_m\bigr\}$, which acts on $\Ext^1(E,E)$ by
\e
\la:(\ep_{11},\ep_{22},\ep_{12},\ep_{21})
\mapsto(\ep_{11},\ep_{22},\la^{-1}\ep_{12},\la\ep_{21}).
\label{dt6eq4}
\e

Apply Theorem \ref{dt5thm2} with these $E$ and $G$. This gives an
\'etale morphism $u:U\ra \Ext^1(E,E)$ with $U$ a smooth affine $\K$-scheme, and $u(p)=0,$
for $p\in U(\K),$
a $G$-invariant regular function $f:U\ra \bA^1_\K$ on $U$ with
$f\vert_p=\partial f\vert_p=0,$ an open
neighbourhood\/ $V$ of\/ $s$ in $S,$ and a $1$-morphism $\xi:\Crit(f) \ra\fM$ smooth of relative
dimension $\dim\Aut(E),$ with\/ $\xi(p)=[E]\in\fM(\K)$ and\/
$\rd\xi\vert_p:T_p(\Crit(f))=\Ext^1(E,E)\ra T_{[E]}\fM$ the
natural isomorphism.
Then the Behrend function $\nu_\fM$ at $[E]=[E_1\op E_2]$ satisfies \index{moduli stack!local structure}
\e
\nu_{\fM}(E_1\op E_2)=(-1)^{\dim\Aut(E)}
\nu_{\Crit(f)}(0),
\label{dt6eq5}
\e
where one uses that $\xi$ is
smooth of relative dimension $\dim\Aut(E),$ and Theorem \ref{dt3thm3}
to say that $$\nu_{\Crit(f)}=(-1)^{\dim(\Aut(E))}\xi^*(\nu_{\fM}).$$

On the other hand, the last part of the proof of \eq{dt6eq1.1} in \cite[Section 10.1]{JoSo} uses algebraic methods and gives
\e
\nu_\fM(E_1)\nu_\fM(E_2)=\nu_{\fM\times\fM}(E_1,E_2)=(-1)^{\dim\Aut(E_1)+\dim\Aut(E_2)}\nu_{Crit(f^G)}(0),
\label{dt6eq6}
\e
where $\nu_{Crit(f^G)}(0)=\nu_{\Crit(f)^G}(0)=\nu_{\Crit(f\vert_{U\cap\Ext^1(E,E)^G})}(0)$ and $U$ is as in Theorem \ref{dt5thm2} 
and $\Ext^1(E,E)^G$ denotes the fixed point locus of $\Ext^1(E,E)$ for the $G$-action.
Thus what actually remains to prove in order to establish identity \eq{dt6eq1.1} is 
\e
\nu_{\Crit(f)}(0)=(-1)^{\dim\Ext^1(E_1,E_2)+\dim\Ext^1(E_2,E_1)}\nu_{Crit(f^G)}(0).
\label{dt6eq7}
\e

This is a generalization of a result in \cite{BeFa2} over $\C$ in the case of an isolated $\C^*$-fixed point.
Combining equations \eq{dt6eq5}, \eq{dt6eq6} and \eq{dt6eq7} and sorting out the signs as in \cite[Section 10.1]{JoSo} proves equation \eq{dt6eq1.1}. Equation \eq{dt6eq7} will be crucial also for the proof of the second Behrend identity \eq{dt6eq2.1}.

\medskip

Let us start by recalling an easy result similar to \cite[Prop. 10.1]{JoSo},
but now in the \'etale topology.
Let $u:U \ra \Ext^1(E,E)$ be the \'etale map as in \S\ref{localdes}, and $p\in U$ such that $u(p)=0.$
We will consider points $(0,0,\ep_{12},0),(0,0,0,\ep_{21})\in \Ext^1(E,E)$
like basically points in $U$. This is because we consider a unique lift $\al(e_{12})$
of $(0,0,\ep_{12},0)\in \Ext^1(E,E)$ to $U$, such that $u(\al(e_{12}))=(0,0,e_{12},0)$
and $\lim_{\la \ra 0} \la . \al(e_{12}) =p,$ using that
$\lim_{\la\ra 0} (0,0,\la^{-1}\ep_{12},0) = (0,0,0,0).$
So we can state the following result, for the proof of which we 
cite \cite[Prop.10.1]{JoSo}, with appropriate obvious modifications, working in the \'etale topology.

\begin{prop} Let\/ $\ep_{12}\in\Ext^1(E_1,E_2)$ and\/
$\ep_{21}\in\Ext^1(E_2,E_1)$. Then
\begin{itemize}
\setlength{\itemsep}{0pt}
\setlength{\parsep}{0pt}
\item[{\bf(i)}] $(0,0,\ep_{12},0),(0,0,0,\ep_{21})\in\Crit(f)
\subseteq U\subseteq\Ext^1(E,E),$ and\/ $(0,0,\ep_{12},0),\ab
(0,\ab 0,\ab 0,\ab\ep_{21})\in V\subseteq
S(\K)\subseteq\Ext^1(E,E);$
\item[{\bf(ii)}] $\xi$ maps $(0,0,\ep_{12},0) \mapsto
(0,0,\ep_{12},0)$ and\/
$(0,0,0,\ep_{21})\mapsto(0,0,0,\ep_{21});$ and
\item[{\bf(iii)}] the
induced morphism on closed points $[S/\Aut(E)] (\K)\ra\fM(\K)$ maps $[(0,\ab 0,\ab
0,\ab\ep_{21})]\mapsto[F]$ and\/ $[(0,0,\ep_{12},0)]\mapsto
[F'],$ where the exact sequences $0\ra E_1\ra F\ra E_2\ra 0$
and\/ $0\ra E_2\ra F'\ra E_1\ra 0$ in $\coh(X)$ correspond to
$\ep_{21}\in\Ext^1(E_2,E_1)$ and\/ $\ep_{12}\in\Ext^1(E_1,E_2),$
respectively.
\end{itemize}
\label{dt10prop2}
\end{prop}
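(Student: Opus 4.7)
By the symmetry that exchanges $E_1\leftrightarrow E_2$ (which interchanges the two weight-$\pm 1$ summands $\Ext^1(E_1,E_2)$ and $\Ext^1(E_2,E_1)$ of the $T$-action \eq{dt6eq4}), it suffices to establish all three claims for the point $(0,0,0,\ep_{21})$. The plan is to exhibit the line $L_{21}=\{(0,0,0,t\ep_{21}):t\in\bA^1\}\subset\Ext^1(E,E)$ as the image under $u$ of a canonically constructed $T$-equivariant morphism $\tilde\zeta:\bA^1\to\Crit(f)\subset U$ obtained by lifting a natural $\bA^1$-family of coherent sheaves.

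First I would construct the test family. Given $\ep_{21}$, form the extensions $0\to E_1\to F_t\to E_2\to 0$ with Ext class $t\cdot\ep_{21}$, for $t\in\bA^1$: then $F_0=E_1\op E_2=E$, and for each $t\neq 0$ the automorphism $\id_{E_1}+t^{-1}\id_{E_2}\in\Aut(E)$ gives a canonical isomorphism $F_t\cong F_1=:F$. Assembling this data yields a morphism $\zeta:\bA^1\to\fM$ with $\zeta(0)=[E]$ and $\zeta(t)=[F]$ for $t\neq 0$. Equipping $\bA^1$ with the $T$-action of weight $+1$, the canonical isomorphisms $F_{\la t}\cong F_t$ induced by $\la\in T\subset\Aut(E)$ make $\{F_t\}$ a $T$-equivariant family, so $\zeta$ is $T$-invariant in the sense that it factors through $\bA^1/T$.

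Next I would produce the lift. Since $\xi:\Crit(f)\to\fM$ is smooth with $\xi(p)=[E]=\zeta(0)$, and since the universal family $\cal D$ on $S\cong\Crit(f)$ is $G$-equivariantly formally versal at $p$ with $G\supseteq T$ (Theorem \ref{dt5thm2}), the morphism $\zeta$ admits a $T$-equivariant lift $\tilde\zeta:\bA^1\to\Crit(f)$ with $\tilde\zeta(0)=p$ and $\xi\ci\tilde\zeta=\zeta$; the tangent $\d\tilde\zeta|_0\in T_pU=\Ext^1(E,E)$ is, via $\d\xi|_p$, the first-order deformation class of $\zeta$, namely $\ep_{21}\in\Ext^1(E_2,E_1)$. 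Using the $T$-equivariant \'etale coordinates on $U$ given by $u$, write each coordinate of $\tilde\zeta$ as a power series in $t$: $T$-equivariance forces every coordinate of weight $w$ to be a scalar multiple of $t^w$. Because $T$ acts on $T_pU=\Ext^1(E,E)$ with weights confined to $\{0,-1,+1\}$ and $\tilde\zeta$ is a regular morphism with $\tilde\zeta(0)=p$, the weight-$0$ and weight-$(-1)$ coordinates vanish identically, while the weight-$(+1)$ coordinates recover the tangent vector $\ep_{21}$. Hence $u\ci\tilde\zeta(t)=(0,0,0,t\ep_{21})$, so $L_{21}$ lifts through $u$ to a $T$-orbit closure $\tilde L_{21}\subset\Crit(f)$. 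This immediately yields (i); part (iii) follows because $\xi(\tilde\zeta(t))=\zeta(t)=[F]$ for $t\neq 0$; and (ii) records the identification made above between the lifted point in $V\subseteq S(\K)$ and the point $(0,0,0,\ep_{21})\in\Ext^1(E,E)$ provided by the \'etale chart $u$.

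The main obstacle will be constructing the $T$-equivariant lift $\tilde\zeta$ and pinning down its tangent map; this is where the $G$-equivariance of the formally versal family $\cal D$ delivered by Theorem \ref{dt5thm2} is essential, together with a mild rigidification to ensure the lift is unique enough that $T$-invariance of $\zeta$ descends to $T$-equivariance of $\tilde\zeta$. Once the lift is available, the conclusion is a purely formal consequence of the weight decomposition of $\Ext^1(E,E)$ under $T$ having no weights outside $\{0,\pm 1\}$, which prevents any higher-order correction to the naive linear lift $t\mapsto(0,0,0,t\ep_{21})$.
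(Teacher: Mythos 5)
Your weight-space computation is the right engine, and your overall route is genuinely different from the paper's: the paper does not reprove this statement at all, but simply cites \cite[Prop.~10.1]{JoSo} ``with obvious modifications, working in the \'etale topology'', after fixing the convention that $(0,0,\ep_{12},0)$ and $(0,0,0,\ep_{21})$ are regarded as points of $U$ via the unique lift $q$ with $u(q)$ the given point and $\lim_{\la\to0}\la\cdot q=p$ (resp.\ $\la\to\infty$); Joyce--Song's own proof is a direct verification in their explicit gauge-theoretic chart, which is exactly what is not available here. So an intrinsic algebraic argument like yours is welcome, but as written it has a genuine gap at its central step: the existence of the $T$-equivariant lift $\tilde\zeta:\bA^1\to\Crit(f)$ of $\zeta$ with $\tilde\zeta(0)=p$. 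Smoothness of $\xi$ only produces lifts \'etale-locally (or formally) near $t=0$, not over all of $\bA^1$, and such lifts are wildly non-unique because $\xi$ has positive relative dimension $\dim\Aut(E)$, so there is no canonical lift to which the $T$-invariance of $\zeta$ could ``descend''; ``a mild rigidification'' is not an argument. Moreover, the input you invoke --- a $G$-equivariant formally versal family $\cal D$ --- is part of the complex-analytic Theorem \ref{dt5thm1}, not of Theorem \ref{dt5thm2}, so it is precisely the datum the algebraic setting does not hand you. Without equivariance a local lift only gives $u\ci\tilde\zeta(t)=(O(t^2),O(t^2),O(t^2),t\ep_{21}+O(t^2))$, which does not place the actual points $(0,0,0,t\ep_{21})$ in $u(\Crit(f))$, so your weight argument (the correct part, which does all the work) cannot even start.

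The gap is repairable, and this is where the real content lies. Form $P=\bA^1\times_{\zeta,\fM,\xi}\Crit(f)$; the factorization of $\xi$ through $[\Crit(f)/G]$ and the isomorphisms $F_{\la t}\cong F_t$ give $P$ a $T=\bG_m$-action with a fixed point $q_0$ over $0$ mapping to $p$, and $P\to\bA^1$ is smooth and equivariant. Since $T$ is linearly reductive, one can build a $T$-equivariant \emph{formal} section through $q_0$ order by order in $t$: at each order the extensions form a torsor under a finite-dimensional $T$-representation, which therefore contains an invariant element. Your weight computation, applied to this formal section, forces its image under $u$ to be the formal germ of the line $t\mapsto(0,0,0,t\ep_{21})$, hence that germ lies in $\Crit(f)$ at $p$. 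Finally, note that $u(U)$ is a $T$-invariant open neighbourhood of $0$, so the whole line admits a unique $T$-equivariant lift $\hat L_{21}\cong\bA^1$ through $p$ in $U$; since $\Crit(f)$ is closed and $T$-invariant and $\hat L_{21}$ is irreducible, containment of the germ forces $\hat L_{21}\subseteq\Crit(f)$, in particular the point over $(0,0,0,\ep_{21})$ lies there, giving (i); and (iii) follows because $\xi$ is constant on $T$-orbits, so its value there equals $\xi$ near $p$ along the lift, namely $[F]$. You also need to check (which you assert but do not verify) that the Kodaira--Spencer class of $\{F_t\}$ at $t=0$ is $\ep_{21}$ and that $T_p\Crit(f)=T_pU$ (minimality of the chart), both true but worth a line each. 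With these repairs your argument is a complete algebraic proof, arguably more self-contained than the paper's citation of \cite{JoSo}.
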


Now use the idea in \cite[\S10.2]{JoSo}. 
Set $U'=\bigl\{(\ep_{11},\ep_{22},\ep_{12},\ep_{21})\in
U:\ep_{21}\ne 0\bigr\}$, an open set in $U$, and write $V'$ for the
submanifold of $(\ep_{11},\ep_{22},\ep_{12},\ep_{21})\in U'$ with
$\ep_{12}=0$. Let $\ti U'$ be the blowup of $U'$ along $V'$, with
projection $\pi':\ti U'\ra U'$. Points of $\ti U'$ may be written
$(\ep_{11},\ep_{22},[\ep_{12}],\la\ep_{12},\ep_{21})$, where
$[\ep_{12}]\in\mathbb{P} (\Ext^1(E_1,E_2))$, and $\la\in\K$, and
$\ep_{21}\ne 0$. Write $f'=f\vert_{U'}$ and $\ti f'=f'\ci\pi'$. Then
applying Theorem \ref{blowup} to $U',V',f',\ti U',\pi',\ti f'$ at
the point $(0,0,0,\ep_{21})\in U'$ gives
\e
\begin{split}
\nu_{\Crit(f)}(0,0,0,\ep_{21}) \quad=\displaystyle\int\limits_{\small{[\ep_{12}]
\in\mathbb{P}(\Ext^1(E_1,E_2))}}\!\!\!\!\!\!\nu_{\Crit(\ti f')}(0,0,[\ep_{12}],0,\ep_{21})\,\d\chi +  (-1)^{e_{12}}\bigl(1- e_{12}\bigr)
\nu_{\Crit(f\vert_{V'})}(0,0,0,\ep_{21}).
\label{dt6eq26}
\end{split}
\e
Here $\nu_{\Crit(f)}(0,0,0,\ep_{21})$ is independent of the choice of
$\ep_{21}$ representing the point $[\ep_{21}]\in\mathbb{P}
(\Ext^1(E_2,E_1))$, and is a constructible function\index{constructible
function} of $[\ep_{21}]$, so the integrals in \eq{dt6eq26} are
well-defined. Note that $\nu_{\Crit(f)}$ and the other Behrend functions in the sequel are nonzero just on the zero loci of the corresponding functions, so here and in the sequel the integrals over the whole $\mathbb{P}(\Ext^1(\ldots))$ actually are just over the points that lie in these zero loci. Adopt this convention for the whole section.

\smallskip

Similarly consider the analogous situation exchanging the role of $\ep_{12}$ and $\ep_{21}.$
Set $U''=\bigl\{(\ep_{11},\ep_{22},\ep_{12},\ep_{21})\in
U :\ep_{12}\ne 0\bigr\}$, an open set in $U$, and write $V'' =\bigl\{(\ep_{11},\ep_{22},\ep_{12},\ep_{21})\in U'' : \ep_{21}=0\bigr\}$. 
Let $\ti U''$ be the blowup of $U''$ along $V''$, with
projection $\pi'':\ti U''\ra U''$. Points of $\ti U''$ may be written
$(\ep_{11},\ep_{22},\ep_{12},[\ep_{21}],\la\ep_{21})$, where
$[\ep_{21}]\in\mathbb{P} (\Ext^1(E_2,E_1))$, and $\la\in\K$, and
$\ep_{12}\ne 0$. Write $f''=f\vert_{U''}$ and $\ti f''=f''\ci\pi''$. 
Similarly to the previous situation, we can apply
Theorem \ref{blowup} to $U'',V'',f'',\ti U'',\pi'',\ti f''$ at
the point $(0,0,\ep_{12},0)\in U''$ which gives
\e
\begin{split}
\nu_{\Crit(f)}(0,0,\ep_{12},0)\quad =\displaystyle\int\limits_{\small{[\ep_{21}]
\in\mathbb{P}(\Ext^1(E_2,E_1))}}\!\!\!\!\!\!\nu_{\Crit(\ti f'')}(0,0,\ep_{12},0,[\ep_{21}])\,\d\chi + (-1)^{e_{21}}\bigl(1- e_{21}\bigr)
\nu_{\Crit(f\vert_{V''})}(0,0,\ep_{12},0).
\label{dt6eq27}
\end{split}
\e
Let $L_{12}\ra\mathbb{P}(\Ext^1(E_1,E_2))$ and $L_{21}\ra\mathbb{P}
(\Ext^1(E_2,E_1))$ be the tautological line bundles, so that the
fibre of $L_{12}$ over a point $[\ep_{12}]$ in $\mathbb{P}
(\Ext^1(E_1,E_2))$ is the 1-dimensional subspace $\{\la\,\ep_{12}:
\la\in\K\}$ in $\Ext^1(E_1,E_2)$. Consider the fibre product
$$
\xymatrix@C=70pt@R=25pt{Z\;  \ar[r]^{\textrm{\'etale}\quad\quad\quad\quad\quad\quad\quad\quad\quad}  \ar[d] & \Ext^1(E_1,E_1)\times \Ext^1(E_2,E_2)\times (L_{12}\op L_{21}) \ar[d] \\
U \; \ar[r]^{\textrm{\'etale}\quad}   & \Ext^1(E,E)}
$$
where the horizontal maps are \'etale morphisms. Informally, this defines 
$Z\subseteq\Ext^1(E_1,E_1)\times \Ext^1(E_2,E_2)\times (L_{12}\op
L_{21})$ to be the \'etale open subset of points
$\bigl(\ep_{11},\ep_{22},[\ep_{12}],\la_{1} \, \ep_{12},
[\ep_{21}],\la_{2} \, \ep_{21}\bigr)$ for $\la_i \in \K,$ for which
$(\ep_{21},\ep_{22},\la_{1}\,\ep_{12},\la_{2} \, \ep_{21})$ lies in $U.$
Observe that $Z$ contains both $\ti U'$ and $\ti U'',$ which respectively have subspaces $\Crit(\ti f')$ and $\Crit(\ti f'').$

\smallskip

Define also an \'etale open set of points 
$W\subseteq\Ext^1(E_1,E_1)\times \Ext^1(E_2,E_2)\times (L_{12}\ot
L_{21})$ fitting into the following cartesian square:
$$
\xymatrix@C=70pt@R=25pt{Z\;  \ar[r]^{\textrm{\'etale}\quad\quad\quad\quad\quad\quad\quad\quad\quad}  \ar[d]_\Pi & \Ext^1(E_1,E_1)
\times \Ext^1(E_2,E_2)\times (L_{12}\op L_{21}) \ar[d]^{\Pi'} \\
W \; \ar[r]^{\textrm{\'etale}\quad\quad\quad\quad\quad\quad\quad\quad\quad}   & \Ext^1(E_1,E_1)\times \Ext^1(E_2,E_2)\times (L_{12}\ot L_{21}) }
$$
where the line bundle
$$L_{12}\ot L_{21}\ra\mathbb{P}(\Ext^1(E_1,E_2))\times \mathbb{P}
(\Ext^1(E_2,E_1))$$ has fibre over
$([\ep_{12}],[\ep_{21}])$ which is $\{\la\,\ep_{12}\ot\ep_{21}:
\la\in\K\}$. Write points of the total space of $L_{12}\ot L_{21}$ as
$\bigl([\ep_{12}],[\ep_{21}],\la\,\ep_{12}\ot\ep_{21}\bigr)$. 
Informally, $W$ is defined as the open subset of points
$\bigl(\ep_{11},\ep_{22},[\ep_{12}],
[\ep_{21}],\la\,\ep_{12}\ot\ep_{21}\bigr)$ for which
$(\ep_{21},\ep_{22},\la\,\ep_{12},\ep_{21})$ lies in $U$. Since $U$
is $G$-invariant, this definition is independent of the
choice of representatives $\ep_{12},\ep_{21}$ for $[\ep_{12}],
[\ep_{21}]$, since any other choice would replace
$(\ep_{11},\ep_{22},\la\,\ep_{12},\ep_{21})$ by $(\ep_{11},\ep_{22},
\la\mu\,\ep_{12},\mu^{-1}\ep_{21})$ for some $\mu\in\bG_m$. 
The map $\Pi: Z\ra W$ is \'etale equivalent to $$\Pi':(\ep_{11},\ep_{22},[\ep_{12}],\la_{1} \, \ep_{12},
[\ep_{21}],\la_{2} \, \ep_{21})\mapsto(\ep_{11},\ep_{22},\ab
[\ep_{12}],\ab[\ep_{21}],\la\ep_{12}\ot\ep_{21})$$ which is a
smooth projection of relative dimension $1$ except at the points such that $\la_{1} =\la_{2} =0.$
However it is smooth at $(0 ,\la_{2} )$ with $\la_{2} \neq 0$ and similarly at $(\la_{1},0 )$ with $\la_{1} \neq 0,$ 
that is, the two restrictions of $\Pi$ to $\ti U'$ and $\ti U''$ are both
smooth of relative dimension $1.$ \index{almost closed $1$-form!invariant}

\smallskip

Here is the crucial point: $\Crit(\ti f')\subset \ti U'$ and $\Crit(\ti f'')\subset \ti U''$ are $\bG_m$-invariant subschemes, so there exists a subscheme $Q$ of $W$ such that $\Crit(\ti f')=\Pi^{-1}(Q)\cap\ti U'$ and $\Crit(\ti f'')=\Pi^{-1}(Q)\cap\ti U''$
and both $\Pi: \Crit(\ti f')\ra Q$ and $\Pi: \ti \om''^{-1}(0)\ra Q$ are smooth of relative dimension $1.$
Thus Theorem \ref{dt3thm3} yields that $\nu_{\Crit(\ti f')}=-\Pi^*(\nu_Q)$ and $\nu_{\Crit(\ti f'')}=-\Pi^*(\nu_Q)$
and then
\e
\nu_{\Crit(\ti f')}(0,0,[\ep_{12}],0,\ep_{21})=-\nu_{Q}(0,0,[\ep_{12}],[\ep_{21}],0)=\nu_{\Crit(\ti f'')}(0,0,\ep_{12},0,[\ep_{21}]),
\label{dt6eq29}
\e
where the sign comes from the fact that the map $\Pi$ is smooth of relative dimension $1.$ Moreover observe that
\e
\nu_{\Crit(f\vert_{V'})}(0,0,0,\ep_{21})=(-1)^{ e_{21}}\nu_{\Crit(f)^G}(0,0,0,0).
\label{dt6eq30}
\e

This is because the $T$-invariance of $f$ imply that its values on $(\ep_{11},\ep_{22},0,\ep_{21})$ and $(\ep_{11},\ep_{22},0,0)$ are the same 
and the projection $\Crit(f\vert_{V'}) \ra\Crit(f\vert_{U^T})$
is smooth of relative dimension $e_{21}.$
For the same reason, one has 
\e
\nu_{\Crit(f\vert_{V''})}(0,0,\ep_{12},0)=(-1)^{e_{12}}\nu_{\Crit(f)^G}(0,0,0,0).
\label{dt6eq31}
\e

\smallskip

Now, substitute equations (\ref{dt6eq29}), (\ref{dt6eq30}) and (\ref{dt6eq31}) into (\ref{dt6eq26}) and (\ref{dt6eq27}).
One gets
\e
\begin{split}
\nu_{\Crit(f)}(0,0,0,\ep_{21})\quad=\quad-\displaystyle\int\limits_{\small{[\ep_{12}]
\in\mathbb{P}(\Ext^1(E_1,E_2))}}\!\!\!\!\!\!\nu_{Q}(0,0,[\ep_{12}],[\ep_{21}],0) \,\d\chi
+(-1)^{e_{12}+e_{21}}\bigl(1-e_{12}\bigr)
\nu_{\Crit(f)^G}(0,0,0,0),
\end{split}
\label{dt6eq32}
\e
\e
\begin{split}
\nu_{\Crit(f)}(0,0,\ep_{12})\quad=\quad-\displaystyle\int\limits_{\small{[\ep_{21}]
\in\mathbb{P}(\Ext^1(E_2,E_1))}}\!\!\!\!\!\!\nu_{Q}(0,0,[\ep_{12}],[\ep_{21}],0) \,\d\chi
+(-1)^{e_{12}+e_{21}}\bigl(1-e_{21}\bigr)
\nu_{\Crit(f)^G}(0,0,0,0).
\end{split}
\label{dt6eq33}
\e

\smallskip

Finally integrating \eq{dt6eq32} over $[\ep_{21}]\in\mathbb{P}(\Ext^1(E_2,E_1))$ and \eq{dt6eq33} over $[\ep_{12}]
\in\mathbb{P}(\Ext^1(E_1,E_2)),$ yields

\e
\begin{split}
\int\limits_{\small{[\ep_{21}]\in\mathbb{P}(\Ext^1(E_2,E_1))}} &
\!\!\!\!\!\!\!\!\!\nu_{\Crit(f)}(0,0,0,\ep_{21}) \,\d\chi\quad  =\quad - \int\limits_{\small{([\ep_{12}],[\ep_{21}])\in\mathbb{P}(\Ext^1(E_1,E_2))\times
\mathbb{P}(\Ext^1(E_2,E_1))}}
\!\!\!\!\!\!\!\!\!\!\!\!\!\!\!\!\!\!\!\!\!\!\!\!\!\!\!\nu_{Q}(0,0,[\ep_{12}],[\ep_{21}],0) \,\d\chi 
\\ & +(-1)^{e_{12}+e_{21}}\bigl(1-e_{12}\bigr) 
e_{21}
\nu_{\Crit(f)^G}(0),
\label{dt6eq34}
\end{split}
\e
\e
\begin{split}
\int\limits_{\small{[\ep_{12}]\in\mathbb{P}(\Ext^1(E_1,E_2))}} &
\!\!\!\!\!\!\!\!\!\nu_{\Crit(f)}(0,0,\ep_{12},0) \,\d\chi\quad=\quad - \int\limits_{\small{([\ep_{12}],[\ep_{21}])\in\mathbb{P}(\Ext^1(E_1,E_2))\times
\mathbb{P}(\Ext^1(E_2,E_1))}}
\!\!\!\!\!\!\!\!\!\!\!\!\!\!\!\!\!\!\!\!\!\!\!\!\!\!\!\nu_{Q}(0,0,[\ep_{12}],[\ep_{21}],0) \,\d\chi 
\\& +(-1)^{e_{12}+e_{21}}\bigl(1-e_{21}\bigr) 
e_{12}
\nu_{\Crit(f)^G}(0),
\label{dt6eq35}
\end{split}
\e

since $\chi\bigl(\mathbb{P}(\Ext^1(E_2,E_1))\bigr)=e_{21}$ and  $\chi\bigl(\mathbb{P}(\Ext^1(E_1,E_2))\bigr)=e_{12}.$ 
Subtracting \eq{dt6eq34} from \eq{dt6eq35}, gives 

\e
\begin{split}
\int\limits_{\small{[\ep_{21}]\in\mathbb{P}(\Ext^1(E_2,E_1))}} &
\!\!\!\!\!\!\!\!\!\nu_{\Crit(f)}(0,0,0,\ep_{21}) \,\d\chi\quad - \int\limits_{\small{[\ep_{12}]\in\mathbb{P}(\Ext^1(E_1,E_2))}} 
\!\!\!\!\!\!\!\!\!\nu_{\Crit(f)}(0,0,\ep_{12},0) \,\d\chi= \\ & (-1)^{e_{12}+e_{21}}\bigl(e_{21}-e_{12}\bigr) 
\nu_{\Crit(f)^G}(0).
\end{split}
\label{dt6eq25}
\e

Consider equation \eq{dt6eq25} applied substituting
$\mathbb{P}(\Ext^1(E_2,E_1)\op\K)$ to $\mathbb{P}(\Ext^1(E_2,E_1)).$
This adds one dimension to $\Ext^1(E,E).$ Denote $\ti{\ti f}$ the lift of $f$ to $\Ext^1(E,E)\op\K.$
In this case equation \eq{dt6eq25} becomes 
\e
\begin{split}
\int\limits_{\small{[\ep_{21}]\in\mathbb{P}(\Ext^1(E_2,E_1)\op\K)}} &
\!\!\!\!\!\!\!\!\!\nu_{\Crit({\ti{\ti f}})}(0,0,0,\ep_{21}\op\la) \,\d\chi\quad - \int\limits_{\small{[\ep_{12}]\in\mathbb{P}(\Ext^1(E_1,E_2))}} 
\!\!\!\!\!\!\!\!\!\nu_{\Crit({\ti{\ti f}})}(0,0,\ep_{12},0) \,\d\chi= \\ &(-1)^{1+e_{12}+e_{21}}\bigl(1+e_{21}-e_{12}\bigr) 
\nu_{\Crit({\ti{\ti f}})^G}(0),
\end{split}
\label{dt6eq25.1.1}
\e

Now, observe that $\nu_{\Crit(f)}=-\nu_{\Crit({\ti{\ti f}})}$ from Theorem \ref{dt3thm3} and 
$\nu_{\Crit({\ti{\ti f}})^G}(0)=\nu_{\Crit(f)^G}(0)$ 
as $(\Ext^1(E,E)\op\K)^G=\Ext^1(E,E)^G\op 0$
and the map $\Crit({\ti{\ti f}})^G \ra \Crit(f)^G$ is \'etale. Thus
\e
\begin{split}
-\!\!\!\!\!\!\!\!\!\int\limits_{\small{[\ep_{21}]\in\mathbb{P}(\Ext^1(E_2,E_1))}} &
\!\!\!\!\!\!\!\!\!\nu_{\Crit(f)}(0,0,0,\ep_{21}) \,\d\chi \; -\;  \nu_{\Crit(f)}(0,0,0,0) \quad + \int\limits_{\small{[\ep_{12}]\in\mathbb{P}(\Ext^1(E_1,E_2))}} 
\!\!\!\!\!\!\!\!\!\nu_{\Crit(f)}(0,0,\ep_{12},0) \,\d\chi= \\ & (-1)^{1+e_{12}+e_{21}}\bigl(1+e_{21}-e_{12}\bigr) 
\nu_{\Crit(f)^G}(0).
\end{split}
\label{dt6eq25.1}
\e

\smallskip

Here, $\nu_{\Crit(f)}(0)$ on the l.h.s. comes from the fact that the $\bG_m$-action 
over $\mathbb{P}(\Ext^1(E_2,E_1)\op\K)$ fixes
$\mathbb{P}(\Ext^1(E_2,E_1))$ and $[0,1];$ the free orbits of the $\bG_m$-action 
contribute zero to the weighted Euler characteristic. Then one uses that $\nu_{\Crit({\ti{\ti f}})}$ valued over $[0,1]$ 
is equal to $-\nu_{\Crit(f)}(0).$
Adding \eq{dt6eq25} and \eq{dt6eq25.1} yields \eq{dt6eq7}, which concludes the proof of identity \eq{dt6eq1.1}.

\medskip

The conclusion of the proof of identity \eq{dt6eq2.1} is now easy. 
Let $0\ne\ep_{21}\in\Ext^1(E_2,E_1)$ correspond to the short exact
sequence $0\ra E_1\ra F\ra E_2\ra 0$ in $\coh(X)$. Then
\e
\nu_{\fM}(F)=(-1)^{\dim\Aut(E)}\nu_{\Crit(f)}(0,0,0,\ep_{21})
\label{dt6eq24}
\e
using $\xi_*:[(0,0,0,\ep_{21})]\mapsto[F]$ from Proposition
\ref{dt10prop2} and $\xi$ smooth of relative dimension $\dim(\Aut(E))$ 
and properties of Behrend function in Theorem \ref{dt3thm3}.
Substituting \eq{dt6eq24} and its analogue for $D$ in the place of $F$ into
\eq{dt6eq2.1}, using equation \eq{dt6eq5} and identity \eq{dt6eq7} to substitute for $\nu_{\fM}(E_1\op E_2)$, and
cancelling factors of $(-1)^{\dim\Aut(E)} $, one gets
that \eq{dt6eq2.1} is equivalent to \eq{dt6eq25}, which concludes the proof.

\subsection{Deformation invariance issue}
\markboth{Deformation invariance issue}{Deformation invariance issue}
\label{def} \index{Picard scheme}\index{Picard scheme!relative}

Thomas' original definition \eq{dt2eq1} of \index{Donaldson--Thomas invariants!original $DT^\al(\tau)$}
$DT^\al(\tau)$, and Joyce and Song's definition \eq{dt4eq11} of the pair \index{stable pair invariants $PI^{\al,n}(\tau')$}
invariants $PI^{\al,n}(\tau')$, are both valid over $\K$. Joyce and Song suggest to solve problem (b) in \S\ref{main.1} to work in \cite[Rmk 4.20 (e)]{JoSo}, replacing $H^*(X;\Q)$ by the {\it algebraic
de Rham cohomology\/}\index{algebraic de Rham cohomology} $H^*_{\rm
dR}(X)$ of Hartshorne \cite{Hart1}.  \index{cohomology}
Here we suggest another argument which is based on 
the theory of {\it Picard schemes} by Grothendieck \cite{Grot4,Grot5}.
Other references are \cite{Art,Kle}.
Even if our argument will not prove that the numerical Grothendieck groups
are deformation invariant, as this last fact depend deeply on
the integral Hodge conjecture type result \cite{Vois} which we are not able to 
prove in this more general context, we will however find a deformation invariant lattice $\Lambda_{X_t}$
containing its image through the Chern character map and define $\bar{DT}{}^\al(\tau)_t$ for $\al\in \Lambda_{X_t}$ which will be deformation invariant.

\nomenclature[Pic]{$\Pic_{\fX/T}$}{relative Picard scheme of a family $\fX\ra T$}
\nomenclature[Picc]{$\Pic(X)$}{Picard scheme of a $\K$-scheme}
\medskip

To prove deformation-invariance we
need to work not with a single Calabi--Yau 3-fold $X$ over $\K$, but
with a {\it family\/} of Calabi--Yau 3-folds
$\fX\stackrel{\vp}{\longra}T$ over a base $\K$-scheme $T$. Taking
$T=\Spec\K$ recovers the case of one Calabi--Yau 3-fold. Here are
our assumptions and notation for such families.
Let $\fX\stackrel{\vp}{\longra}T$ be a smooth projective morphism of
algebraic $\K$-varieties $X,T$, with $T$ connected. Let $\cO_\fX(1)$
be a relative very ample line bundle for
$\fX\stackrel{\vp}{\longra}T$. For each $t\in T(\K)$, write $X_t$ for
the fibre $X\times_{\vp,T,t}\Spec\K$ of $\vp$ over $t$, and
$\cO_{X_t}(1)$ for $\cO_\fX(1)\vert_{X_t}$. Suppose that $X_t$ is a
smooth Calabi--Yau 3-fold over $\K$ for all $t\in T(\K)$, with $H^1(\cO_{X_t})= 0$. 

\medskip

There are some important existence theorems which refine the original Grothendieck's theorem \cite[Thm. 3.1]{Grot4}.
In \cite[Thm. 7.3]{Art}, Artin proves that given $f:X\ra S$ 
a flat, proper, and finitely presented map of algebraic spaces
cohomologically flat in dimension zero, then 
the relative Picard scheme 
$\Pic_{X/S}$ exists as an algebraic space 
which is locally of finite presentation over S.
Its fibres are the Picard schemes $\Pic(X_s)$ of the fibres. They form a family whose total space is $\Pic_{X/S}.$
In \cite[Prop. 2.10]{Grot5} Grothendieck shows that if $H^2(X_s,\cO_{X_{s}})=0$ for some $s\in S,$ there exists a neighborhood $U$ of $s$ such that the scheme $\Pic_{{X/S}_{|_U}}$ is smooth, and in this case $\dim(\Pic(X_s))=\dim(H^1(X_s,\cO_{X_s})).$

\medskip

In our case, $\Pic_{\fX/T}$ exists and is smooth with $0$-dimensional fibres which are the Picard schemes $\Pic(X_t).$
Moreover the morphism $(\pi,P): \Pic_{\fX/T}\longra T\times \Q[s],$ 
where $\pi$ is the projection to the base scheme and $P$ assigns 
to an isomorphism class of a line bundle $[L]$ its Hilbert polynomial $P_L(s)$ with respect to $\cO_\fX(1),$
is proper. This implies an upper semicontinuity result for $t\ra\dim(\Pic(X_t))$ \cite[Cor. 2.7]{Grot5}.
These results yield that the Picard
schemes $\Pic(X_t)$ for $t\in T(\K)$ are canonically
isomorphic {\it locally} in $T(\K)$. Observe that at the moment we don't have canonical
isomorphisms $\Pic(X_t)\cong \Pic(X)$ for all $t\in
T(\K)$ (this would be canonically isomorphic {\it globally} in
$T(\K)$). Instead, we mean that the groups $\Pic(X_t)$ for
$t\in T(\K)$ form a {\it local system of abelian groups\/} over
$T(\K)$, with fibre $\Pic(X)$.

\medskip

When $\K=\C$, Joyce and Song proved \cite[\S 4]{JoSo}
that  $K^\num(\coh(X_t))$ form a local system of abelian groups over
$T(\K)$, with fibre $K^\num(\coh(X))$.
This means that in simply-connected regions of $T(\C)$
in the complex analytic topology the $K^\num(\coh(X_t))$ are all
canonically isomorphic, and isomorphic to $K(\coh(X))$. But around
loops in $T(\C)$, this isomorphism with $K(\coh(X))$ can change by
{\it monodromy},\index{monodromy} by an automorphism
$\mu:K(\coh(X))\ra K(\coh(X))$ of $K(\coh(X))$. In \cite[Thm 4.21]{JoSo} they showed that the group
of such monodromies $\mu$ is finite, and so it is possible to make it trivial by
passing to a finite cover $\ti T$ of $T$. If they worked instead with
invariants $PI^{P,n}(\tau')$ counting pairs $s:\cO_{X}(-n)\ra E$ in
which $E$ has fixed Hilbert polynomial\index{Hilbert polynomial}
$P$, rather than fixed class $\al\in K^\num(\coh(X))$, as in Thomas'
original definition of Donaldson--Thomas invariants \cite{Thom},
then they could drop the assumption on $K^\num(\coh(X_t))$ in
Theorem \cite[Thm. 5.25]{JoSo}.

\medskip

Similarly, we now study monodromy phenomena for $\Pic(X_t)$ in families
of smooth $\K$-schemes $\fX\ra T$ following the idea of \cite[Thm. 4.21]{JoSo}. We
find that we can always eliminate such monodromy by passing to a
finite cover $\ti T$ of $T$. This is crucial to prove deformation-invariance of
the~$\bar{DT}{}^\al(\tau),PI^{\al,n}(\tau')$ in \cite[\S 12]{JoSo}.

\begin{thm} Let\/ $\K$ be an algebraically closed field of characteristic zero, $\vp:\fX\ra
T$ a smooth projective morphism of\/ $\K$-schemes with\/ $T$
connected, and\/ $\cO_\fX(1)$ a relative very ample line bundle on $\fX,$ so
that for each\/ $t\in T(\K),$ the fibre $X_t$ of\/ $\vp$ is a smooth
projective $\K$-scheme with very ample line bundle\/ $\cO_{X_t}(1)$.
Suppose the Picard schemes $\Pic(X_t)$ are
locally constant in $T(\K),$ so that\/ $t\mapsto \Pic(X_t)$
is a local system of abelian groups on\/~$T$.
Fix a base point\/ $s\in T(\K),$ and let\/ $\Ga$ be the monodromy  group of $\Pic(X_s).$ 
Then $\Ga$ is a finite group. There exists a finite
\'etale cover $\pi:\ti T\ra T$ of degree $\md{\Ga},$ with\/ $\ti T$
a connected $\K$-scheme, such that writing $\ti \fX=\fX\times_T\ti T$
and $\ti\vp:\ti \fX\ra\ti T$ for the natural projection, with fibre
$\ti X_{\ti t}$ at $\ti t\in\ti T(\K),$ then $\Pic(\ti X_{\ti
t})$ for all $\ti t\in\ti T(\K)$ are all globally canonically
isomorphic to $\Pic(X_s)$. That is, the local system $\ti
t\mapsto \Pic(\ti X_{\ti t})$ on $\ti T$ is trivial.
\label{finmon}
\end{thm}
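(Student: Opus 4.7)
The strategy is to adapt the argument used in \cite[Thm.\,4.21]{JoSo} for $K^\num(\coh(X_s)),$ replacing the numerical Grothendieck group by the Picard group and the Hodge-theoretic input by classical intersection theory on a polarized $3$-fold. Since $H^1(\cO_{X_t})=0$ for every $t\in T(\K),$ the tangent space to $\Pic(X_t)$ at the identity vanishes and $\Pic^0(X_t)=0,$ so $\Pic(X_t)=\mathrm{NS}(X_t)$ is a finitely generated abelian group. The hypothesis that $t\mapsto\Pic(X_t)$ is a local system of abelian groups on $T$ then translates into the datum of a continuous representation $\rho:\pi_1^{\text{\'et}}(T,s)\ra\Aut(\Pic(X_s))$ of the \'etale fundamental group of $T$ at the base point $s,$ whose image is $\Ga$ by definition.

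Next I would identify the auxiliary structure which $\Ga$ is forced to preserve. The relative line bundle $\cO_\fX(1)$ provides a distinguished global section of the relative Picard scheme $\Pic_{\fX/T},$ so the class $H:=[\cO_{X_s}(1)]\in\Pic(X_s)$ is fixed by every monodromy transformation. Triple intersection numbers $c_1(L_1)\cdot c_1(L_2)\cdot c_1(L_3)\in\Z$ are locally constant in our flat projective family, hence the induced $\Z$-valued symmetric trilinear form on $\Pic(X_s)$ is $\Ga$-equivariant; in particular so is the symmetric bilinear pairing
\begin{equation*}
q_H:\Pic(X_s)\t\Pic(X_s)\longra\Z,\qquad q_H(L_1,L_2):=c_1(L_1)\cdot c_1(L_2)\cdot c_1(H).
\end{equation*}
By the Hodge index theorem for smooth projective $3$-folds, the pairing $q_H$ on $\mathrm{NS}(X_s)\ot_\Z\Q$ is non-degenerate of signature $(1,r-1)$ with $H$ spanning the positive direction, where $r$ is the rank of $\mathrm{NS}(X_s)$. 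Thus the subgroup of $\Aut(\mathrm{NS}(X_s))$ which preserves $q_H$ and fixes $H$ acts faithfully on the negative definite orthogonal complement $H^\perp\subset\mathrm{NS}(X_s)\ot_\Z\Q,$ so it is finite; combined with the automatic finiteness of the automorphism group of the torsion subgroup of $\Pic(X_s),$ this shows that $\Ga$ is finite.

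Finally, since $\Ga$ is a finite continuous quotient of $\pi_1^{\text{\'et}}(T,s),$ Grothendieck's equivalence between finite continuous $\pi_1^{\text{\'et}}(T,s)$-sets and finite \'etale covers of $T$ produces a connected finite \'etale Galois cover $\pi:\ti T\ra T$ of degree $\md{\Ga}$ with Galois group $\Ga,$ namely the one corresponding to the normal open subgroup $\ker\rho\subseteq\pi_1^{\text{\'et}}(T,s).$ By construction the monodromy of the local system $\ti t\mapsto\Pic(\ti X_{\ti t})$ on $\ti T,$ namely the restriction of $\rho$ to $\pi_1^{\text{\'et}}(\ti T,\ti s)\subseteq\ker\rho,$ is trivial, yielding the canonical global identification $\Pic(\ti X_{\ti t})\cong\Pic(X_s)$ for all $\ti t\in\ti T(\K)$ required by the theorem.

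The step I expect to be most delicate is the finiteness assertion, which rests on the positivity input provided by Hodge index. For $\K=\C$ this is the classical Hodge index theorem for algebraic threefolds and is immediate. Over an arbitrary algebraically closed field of characteristic zero one must either spread $X_s$ and $H$ out to a finitely generated subring and base change to $\C$ via a chosen embedding -- all relevant invariants (the rank of $\mathrm{NS},$ the signature of $q_H,$ and the $\Ga$-action) being stable under such flat base change -- or else reduce directly to the Hodge index theorem on surfaces, by intersecting with a general member of a sufficiently ample multiple of $|H|$ and using Grothendieck's form of the Hodge index theorem for surfaces, which holds over any base field. Once this positivity is in hand, the remainder of the argument is essentially formal.
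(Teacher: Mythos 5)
Your argument is correct in substance, but your finiteness proof takes a genuinely different route from the paper's. The paper argues as follows: pick finitely many generators $[L_1],\ldots,[L_k]$ of $\Pic(X_s)$, note that their Hilbert polynomials with respect to $\cO_{X_s}(1)$ are monodromy-invariant because $\cO_\fX(1)$ is defined globally over $T$, and then invoke the properness of $(\pi,P):\Pic_{\fX/T}\ra T\t\Q[s]$ (established just before the theorem) to conclude that only finitely many classes in $\Pic(X_s)$ share a given Hilbert polynomial; since $\ga\in\Ga$ is determined by $(\ga\cdot[L_1],\ldots,\ga\cdot[L_k])$, the group $\Ga$ is finite. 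You instead use the monodromy-invariance of the triple intersection form and of the polarization class $H$, and then the Hodge index theorem to place the image of $\Ga$ in $\Aut(\Pic(X_s)/\mathrm{torsion})$ inside the orthogonal group of the negative definite lattice $H^\perp$, which is finite. Both proofs hinge on the same geometric input, namely that the relative polarization is global and hence $\Ga$-invariant; the paper's version buys economy (it reuses the properness statement already proved and needs no positivity, no reduction to surfaces, and no separate treatment of torsion), while yours buys a more classical and portable ``polarized monodromy is finite'' argument, at the cost of having to justify the Hodge index theorem over general $\K$ (as you note, by spreading out or by cutting with a general member of $|mH|$) and of the small correction you already supply: an automorphism fixing $H$, preserving $q_H$ and acting trivially on $H^\perp$ is only the identity modulo torsion, so one must also use finiteness of $\Aut$ of the torsion subgroup and of $\Hom(\Pic/\mathrm{torsion},\mathrm{torsion})$. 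Also note that $H^1(\cO_{X_t})=0$ is not a hypothesis of the theorem as stated, though it is the standing assumption in the paper's setting and is what makes $\Pic(X_s)$ finitely generated — a fact the paper's own proof likewise uses without comment. Your construction of the cover via the Galois correspondence applied to $\ker\rho\subseteq\pi_1^{\text{\'et}}(T,s)$ produces exactly the same cover the paper builds by hand as the $\Ga$-torsor of pairs $(t,\io)$ with $\io:\Pic(X_t)\ra\Pic(X_s)$ an isomorphism, so the second half of your argument matches the paper's.
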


\begin{proof} 
As $\Pic(X_s)$ is finitely generated, one can choose classes 
$[L_1],\ldots, [L_k]\in \Pic(X_s)$ as generators. Let $P_1,\ldots, P_k$ be the Hilbert polynomials 
respectively of $[L_1],\ldots, [L_k]$ with respect to $\cO_{X_s}(1)$.
Let $\ga\in\Ga$, and consider the images $\ga\cdot [L_i] \in
\Pic(X_s)$ for $i=1,\ldots,k$. As we assume $\cO_\fX(1)$ is
globally defined on $T$ and does not change under monodromy, it
follows that the Hilbert polynomials $P_1,\ldots, P_k$
do not change under monodromy. Hence
$\ga\cdot [L_i]$ has Hilbert polynomial $P_i$.
Again one uses properness to show that the set $\Pic^{P_i}(X_s)$ composed 
by isomorphism classes of line bundles in $\Pic(X_s)$ with Hilbert polynomial $P_i$ for some $i=1,\ldots, k$
is a finite set, that is, every $P_i$ is the Hilbert polynomial of 
only finitely many classes $[R_1],\ldots,[R_{n_i}]$ in
$\Pic(X_s).$ It follows that for each $\ga\in\Ga$ we have
$\ga\cdot [L_i]\in\{[R_1],\ldots,[R_{n_i}]\}$. So there are at
most $n_1\cdots n_k$ possibilities for $(\ga\cdot [L_1],\ldots,
\ga\cdot [L_k])$. But $(\ga\cdot [L_1],\ldots,\ga\cdot [L_k])$
determines $\ga$ as $[L_1],\ldots, [L_k]$ generate
$\Pic(X_s)$. Hence $\md{\Ga}\le n_1\cdots n_k$, and $\Ga$ is
finite.

We can now construct an \'etale cover $\pi:\ti T\ra T$ which is a
principal $\Ga$-bundle, and so has degree $\md{\Ga}$, such that the
$\K$-points of $\ti T$ are pairs $(t,\io)$ where $t\in T(\K)$ and
$\io:\Pic(X_t)\ra \Pic(X_s)$ is an isomorphism
from the properness and smoothness argument above, and $\Ga$ acts freely on $\ti
T(\K)$ by $\ga:(t,\io)\mapsto (t,\ga\ci\io)$, so that the
$\Ga$-orbits correspond to points $t\in T(\K)$. Then for $\ti
t=(t,\io)$ we have $\ti X_{\ti t}=X_t$, with canonical
isomorphism~$\io:\Pic(\ti X_{\ti t})\ra \Pic(X_s)$.
\end{proof}
\index{Grothendieck group!numerical}\index{monodromy}
\index{cohomology}\index{Picard scheme}

So the conclusion is that from properness and smoothness argument,
$\Pic(X_t)$ are canonically isomorphic locally in
$T(\K)$. But by Theorem \ref{finmon}, one can pass to a finite cover
$\ti T$ of $T$, so that the $\Pic(\ti X_{\ti t})$ are
canonically isomorphic globally in $\ti T(\K)$. So,
replacing $\fX,T$ by $\ti \fX,\ti T$, we will assume from here 
that the Picard schemes $\Pic(X_t)$ for $t\in
T(\K)$ are all canonically isomorphic globally in $T(\K)$,
and we write $\Pic(X)$ for this group $\Pic(X_t)$ up to
canonical isomorphism. 

\medskip

In Theorem \cite[Thm. 4.19]{JoSo} Joyce and Song showed that when $\K=\C$ and
$H^1(\cO_X)=0$ the numerical Grothendieck group $K^\num(\coh(X))$ is
unchanged under small deformations of $X$ up to canonical
isomorphism. As we said, here we will not prove this result.
So, the idea is to construct a globally constant 
lattice $\Lambda_{X}$ using the globally constancy of the Picard schemes 
such that there exist an inclusion $K^\num(\coh(X))\hookra\Lambda_{X}.$
It could happen that the image of the numerical Grothendieck group varies with $t$ as
it has to do with the integral Hodge conjecture as in \cite[Thm. 4.19]{JoSo}, but this does not affect the deformation invariance of
$\bar{DT}{}^\al(\tau)$ as for them to be deformation invariant is enough to find a deformation invariant lattice 
in which the classes $\al$ vary. Next, we describe such lattice $\Lambda_X$ and explain how 
the numerical Grothendieck group $K^\num(\coh(X))$ is contained in it. Our idea follows \cite[Thm. 4.19]{JoSo}.

\medskip\index{Chern character}

Let $X$ be a Calabi--Yau 3-fold over $\K$, with $H^1(\cO_X)=0$ and consider
the {\it Chern character},
as in Hartshorne \cite{Hart2}: for
each $E\in\coh(X)$ we have the rank $r(E)\in A^0(X)\cong \Z,$ 
and the
Chern classes $c_i(E)\in A^{i}(X)$ for $i=1,2,3$. It is useful
to organize these into the Chern character
$\ch(E)$\nomenclature[ch(E)]{$\ch(E)$}{Chern character of a coherent sheaf $E$}
\nomenclature[chi(E)]{$\ch_i(E)$}{$i^{\rm th}$
component of Chern character of $E$} in $A^{*}(X)_\Q$,
\nomenclature[Heven(X)]{$H_{dR}^{\rm even}(X)$}{even cohomology of
a $\K$-scheme $X$} where
$\ch(E)=\ch_0(E)+\ch_1(E)+\ch_2(E)+\ch_3(E)$ with $\ch_i(E)\in A^{i}(X)_\Q:$ 
\e
\ch_0(E)=r(E),\quad \ch_1(E)=c_1(E),\quad
\ch_2(E)=\ts\frac{1}{2}\bigl(c_1(E)^2-2c_2(E)\bigr),\quad
\ch_3(E)=\ts\frac{1}{6}\bigl(c_1(E)^3-3c_1(E)c_2(E)+3c_3(E)\bigr).
\label{chernch}
\e

By the Hirzebruch--Riemann--Roch Theorem\index{Hirzebruch--Riemann--Roch
Theorem} \cite[Th.~A.4.1]{Hart2}, the Euler form\index{Euler form} on
coherent sheaves $E,F$ is given in terms of their Chern characters
by
\e
\bar\chi\bigl([E],[F]\bigr)=\deg\bigl(\ch(E)^\vee\cdot\ch(F)\cdot{\rm
td}(TX)\bigr){}_3,
\label{euform}
\e
where $(\cdot)_3$ denotes the component of degree $3$ 
in $A^*(X)_\Q$ and where ${\rm td}(TX)$\nomenclature[td(TX)]{${\rm td}(TX)$}{Todd class of $TX$ in
$H_{dR}^{\rm even}(X)$} is the {\it Todd class\/}\index{Todd class} of
$TX$, which is $1+\frac{1}{12}c_2(TX)$ as $X$ is a Calabi--Yau
3-fold, and
$(\la_0,\la_1,\la_2,\la_3)^\vee=(\la_0,-\la_1,\la_2,-\la_3)$,
writing $(\la_0,\ldots,\la_3)\in A^{*}(X)$ 
with $\la_i\in A^{i}(X)$.
Define:

\begin{equation*}
\La_X=\ts\bigl\{ (\la_0,\la_1,\la_2,\la_3) 
\textrm{ where } \la_0,\la_3\in\Q, \; \la_1\in\Pic(X)\ot_{\Z}\Q, \; \la_2\in \Hom(\Pic(X),\Q)
\textrm{ such that } 
\end{equation*}
\begin{equation*}
\la_0\in \Z,\;\>
\la_1\in \Pic(X)/ {\textrm{torsion}}, \;
 \la_2-\ha\la_1^2\in  \Hom(\Pic(X),\Z),\;\> \la_3+\ts\frac{1}{12}\la_1
c_2(TX)\in \Z\bigr\},
\end{equation*}

\smallskip

where $\la_1^2$ is defined as the map $\al\in\Pic(X)\ra \frac{1}{2}c_1(\la_1)\cdot c_1(\la_1)\cdot c_1(\al)\in A^3(X)_{\Q}\cong \Q,$
and $\frac{1}{12}\la_1 c_2(TX)$ is defined as $\frac{1}{12}c_1(\la_1)\cdot c_2(TX)\in A^3(X)_{\Q}\cong\Q.$
Theorem \ref{definv} states that $\La_X$ is deformation invariant and the Chern character gives an injective morphism
$\ch:K^\num(\coh(X))\!\hookra\!\La_X$. \index{Picard scheme} \index{deformation invariance}
The proof of Theorem \ref{definv} is straightforward:

\begin{proof} 
The proof 
follows exactly as in \cite[Thm. 4.19]{JoSo} and 
the fact the Picard scheme $\Pic(X)$ is globally constant in families from the argument above yields that the lattice $\Lambda_X$ is deformation invariant.
Moreover, the proof that $\ch\bigl(K^\num(\coh(X))\bigr)\subseteq\La_X$ is again as in \cite[Thm. 4.19]{JoSo}.
Observe that we do not prove that $\ch\bigl(K^\num(\coh(X))\bigr)=\La_X,$ fact which uses Voisin's Hodge conjecture proof for Calabi--Yau 3-folds over $\C$ \cite{Vois}.
\end{proof}

\begin{quest}
Does Voisin's result \cite{Vois} work over $\K$ in terms of $\Hom(\Pic(X),\Z)$?
\end{quest}

This concludes the discussion of problem (b) in \S\ref{main.1} and yields the deformation-invariance of
$DT^\al(\tau),$ $PI^{\al,n}(\tau')$ over $\K.$

\section{Implications and conjectures}
\markboth{Implications and conjectures}{Implications and conjectures}
\label{dt7}\index{non-Archimedean geometry}\index{formal power series}

In this final section we sketch some exciting and far reaching implications of the theory and propose new ideas for further research.
One proposal is in the direction of extending Donaldson--Thomas invariants to
compactly 
supported coherent sheaves on noncompact quasi-projective Calabi--Yau 3-folds.
A second idea is in the derived categorical framework trying to establish a theory of generalized Donaldson--Thomas invariants for objects in the derived category of coherent sheaves. Here we expose the problems and illustrate some possible approaches when known.

\subsection{Noncompact Calabi--Yau 3-folds}

We start by recalling the following definition from \cite[Def. 6.27]{JoSo}:

\begin{dfn}
Let $X$ be a noncompact Calabi-Yau $3$-fold over $\C.$ We call $X$ compactly embeddable if whenever $K \subset X$ is a compact subset, in the analytic topology, there exists an open neighbourhood $U$ of $K$ in $X$ in the analytic topology, a compact Calabi-Yau 3-fold $Y$ over $\C$ with $H^1(\cO_Y ) = 0,$ an open subset $V$ of $Y$ in the analytic topology, and an isomorphism of complex manifolds $\varphi : U \ra V.$
\end{dfn}

Joyce and Song only need the notion of `compactly embeddable'
as their complex analytic proof of \eq{dt6eq1}--\eq{dt6eq2} 
recalled in \S\ref{dtBehid}, requires
$X$ compact; but unfortunately 
the given algebraic version of \eq{dt6eq1}--\eq{dt6eq2} in Theorem \ref{dt6thm1.1} 
uses results from derived algebraic geometry  \cite{ToVe1,ToVe2,PTVV,Toen,Toen2,Toen3,Toen4}, and the author does not know if
they apply also for compactly supported sheaves 
\index{coherent sheaf!compactly supported} on a
noncompact~$X$.\index{field $\K$|)}
\index{Calabi--Yau 3-fold!noncompact}

\medskip

More precisely, in \cite{PTVV} it is shown that if $X$ is a projective Calabi-Yau $m$-fold then the derived moduli stack $\fM_{\Perf(X)}$ of perfect complexes of coherent sheaves on $X$ is $(2-m)$-shifted symplectic.
It is not obvious that if $X$ is a quasi-projective Calabi-Yau $m$-fold, possibly noncompact, then the derived moduli stack $\fM_{\Perf_{cs}(X)}$ of perfect complexes on $X$ with compactly-supported cohomology is also $(2-m)$-shifted symplectic.

\medskip

At the present, we can state the following result. We thank Bertrand To\"en for explaining this to us. 

\begin{thm}
Suppose $Z$ is smooth projective of dimension $m,$ and $s \in H^0(K_Z^{-1}),$ and $X \subset Z$ is Zariski open with $s$ nonvanishing on $X,$ so that $X$ is a (generally non compact) quasi-projective Calabi-Yau $m$-fold. Then the derived moduli stack $\fM_{\Perf_{cs}}(X)$ of compactly-supported coherent sheaves on $X,$ or of perfect complexes on $X$ with compactly-supported cohomology, is $(2-m)$-shifted symplectic.
\label{noncpt}
\end{thm}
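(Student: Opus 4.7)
The plan is to exhibit the $(2-m)$-shifted symplectic structure on $\fM_{\Perf_{cs}(X)}$ by showing that the underlying dg category $\Perf_{cs}(X)$ carries a natural left $m$-Calabi--Yau structure, and then invoking the general result (due to Brav--Dyckerhoff, refining the PTVV/AKSZ formalism) that the derived moduli stack of objects of a smooth left $d$-Calabi--Yau dg category is canonically $(2-d)$-shifted symplectic. Concretely, I would first identify $\Perf_{cs}(X)$ with the full subcategory of $\Perf(Z)$ consisting of perfect complexes whose cohomology sheaves have set-theoretic support contained in $X$ (equivalently, disjoint from $D = Z \setminus X$); the pushforward $i_{*}$ along the open immersion $i:X\hookrightarrow Z$ gives an equivalence because proper support in $X$ is the same as closed support in $Z$ contained in $X$. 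This allows one to do computations on $Z$, which is smooth and projective, while recording only the data intrinsic to $X$.

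Next I would construct the left $m$-CY structure. The trivialization $s^{-1}:\cO_{X}\xrightarrow{\sim} K_{X}$ induced by the nonvanishing section $s$ of $K_Z^{-1}$ on $X$, together with compactly--supported Serre--Grothendieck duality on the smooth variety $X$, produces, for each $F\in\Perf_{cs}(X)$, a canonical self-duality
\[
\RHom_{X}(F,F)^{\vee}\;\simeq\;\RHom_{X}(F,F)[m],
\]
since $K_X$ is trivial and the support of $F$ is proper. Promoting these pointwise dualities to a global datum amounts to producing a class $\eta\in HC_{m}^{-}\bigl(\Perf_{cs}(X)\bigr)$ whose image in Hochschild homology $HH_{m}\bigl(\Perf_{cs}(X)\bigr)$ induces, via cap product, the inverse dualizing bimodule isomorphism $\Perf_{cs}(X)^{!}\simeq \Perf_{cs}(X)[-m]$. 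I would obtain $\eta$ from the composition of the trace map on compactly--supported cohomology with the $s$-trivialization of $K_X$, using a HKR-type identification
\[
HH_{*}\bigl(\Perf_{cs}(X)\bigr)\;\simeq\;\bigoplus_{p-q=*}H^{q}_{c}\bigl(X,\Omega^{p}_{X}\bigr),
\]
so that the class $[s^{-1}]\in H^{0}(X,K_{X})$, paired with the fundamental class in $H^{m}_{c}(X,\cO_{X})$ (which exists because $X$ is of dimension $m$), gives the required distinguished element in $HH_{m}$. Lifting from $HH$ to $HC^{-}$ and verifying the non-degeneracy of the induced bimodule map is where the main technical work lies.

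Once the CY structure $\eta$ is in hand, I would apply the Brav--Dyckerhoff machine (together with its extension to the $\infty$--categorical setting compatible with To\"en--Vaqui\'e moduli of objects) to conclude that $\fM_{\Perf_{cs}(X)}$ is naturally $(2-m)$-shifted symplectic. Smoothness of $\Perf_{cs}(X)$ as a dg category, required for Brav--Dyckerhoff, follows from the smoothness of $X$ together with the compact generation of $\Perf_{cs}(X)$ by objects of finite Tor-dimension with compact support (one reduces to a cover by affine opens in $Z$). For $m=3$ this specialises to the $-1$-shifted symplectic result needed to equip the classical truncation $\cM=t_{0}(\fM_{\Perf_{cs}(X)})$ with a d-critical structure in the sense of \S\ref{dcr}, via Theorems \ref{sa2thm3} and \ref{sa3thm6}.

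The main obstacle I anticipate is the non-degeneracy of the CY pairing at the level of the whole dg category rather than pointwise. The pointwise self-duality of $\RHom(F,F)$ is classical Serre duality with supports and essentially automatic, but upgrading it to an isomorphism of bimodules, and in particular checking that the would-be CY class $\eta$ lifts to negative cyclic homology and is non-degenerate in the Brav--Dyckerhoff sense, requires the HKR identification above in the compactly--supported setting and a careful analysis of the cyclic structure. A parallel, more geometric route would be to realise $\fM_{\Perf_{cs}(X)}$ as a derived mapping stack with proper support into $\Perf$ equipped with its $2$-shifted symplectic form, and to use the $m$-orientation on $X$ furnished by $(s,[X])$ in compactly--supported cohomology to apply an AKSZ--type integration procedure; the crux would again be to make the notion of ``orientation with compact support'' precise enough for the PTVV transgression argument to go through in a non-proper setting.
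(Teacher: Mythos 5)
Your proposal takes a genuinely different route from the one in the paper. The paper's argument, due to To\"en, stays entirely inside the PTVV theory of Lagrangian correspondences: the derived zero locus $Y\subset Z$ of $s$ is an $O$-oriented derived divisor of dimension $m-1$; the restriction $\fM_{\Perf}(Z)\to\fM_{\Perf}(Y)$ is Lagrangian; the inclusion of the zero object $\ast\to\fM_{\Perf}(Y)$ is an \'etale (and trivially Lagrangian) map; and the homotopy fibre product --- precisely $\fM_{\Perf_{cs}}(X)$, perfect complexes on $Z$ whose restriction to $Y$ vanishes --- acquires a shifted symplectic structure from PTVV's Lagrangian-intersection theorem. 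You instead invoke the Brav--Dyckerhoff machinery, turning the Calabi--Yau orientation into a cyclic-homology class. In principle this is a legitimate alternative and is more flexible, but as formulated it has a real gap.

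The gap: you claim that $\Perf_{cs}(X)$ is smooth and propose to put a left $m$-CY structure on it; both halves of this are wrong for non-compact $X$. Smoothness of a dg category $\cA$ means that the diagonal bimodule is perfect over $\cA\ot\cA^{op}$, and for $\cA=\Perf_{cs}(X)$ the diagonal is represented by $\cO_\Delta$ on $X\t X$, which is perfect but is \emph{not} compactly supported --- $\Delta$ is not proper over $\K$ unless $X$ is. (Dually, $\Perf_{cs}(X)$ \emph{is} proper, since compactly supported complexes have finite-dimensional Ext.) Moreover the self-duality $R\Hom_X(F,F)^\vee\simeq R\Hom_X(F,F)[m]$ you extract from compactly supported Serre duality is precisely a \emph{right} CY datum on a proper category --- a trace on Hochschild homology --- rather than a class in negative cyclic homology inducing an equivalence of inverse dualizing bimodules, which is what a left CY structure on a smooth category requires. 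So the theorem you want to invoke does not apply to the category you hand it.

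The fix is standard and makes your route work. Put the left $m$-CY structure on the smooth category $\Perf(X)$: HKR gives $HH_m(\Perf(X))\simeq H^0(X,K_X)$ (the only summand $H^q(X,\Omega^p_X)$ with $p-q=m$ on an $m$-fold), the trivialisation $s^{-1}$ is a top-degree, hence $d_{dR}$-closed, form and therefore lifts to $HC^-_m(\Perf(X))$, and nondegeneracy of the induced map on inverse dualizing bimodules is exactly smoothness of $X$ plus triviality of $K_X$. Brav--Dyckerhoff then equips the derived moduli stack of \emph{pseudo-perfect} $\Perf(X)$-modules with a $(2-m)$-shifted symplectic structure, and since $X$ is smooth the pseudo-perfect objects are exactly the compactly supported perfect complexes, so this stack is $\fM_{\Perf_{cs}}(X)$. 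With this correction your approach is a real alternative to the Lagrangian-correspondence argument: it does not require choosing a compactification $Z$ with anticanonical boundary, at the price of the heavier cyclic-homology and bimodule formalism.
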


\begin{proof}
Let $Z$ be smooth and projective of dimension $m,$ and $s$ be any section of $K_Z^{-1}$. Let $Y$ be the derived scheme of zeros of $s$ and $X=Z\setminus Y.$ Then, $Y$ is equipped with a canonical $O$-orientation in the sense of \cite{PTVV} of
dimension $m-1,$ so $\fM_{\Perf}(Y)$ is $(2-m-1)$-symplectic, even if $Y$ is not smooth. The restriction map $\fM_{\Perf}(Z) \ra \fM_{\Perf}(Y)$
is moreover Lagrangian. The map $\ast \ra \fM_{\Perf}(Y),$ corresponding to the zero
object is \'etale, and thus its pull-back provides a Lagrangian map $\fM_{\Perf_{cs}}(X) \ra\ast$, or, equivalently, a $(2-m)$-symplectic structure on $\fM_{\Perf_{cs}}(X).$ Now if $X'$  is open in
$X,$ then $\fM_{\Perf_{cs}}(X') \ra \fM_{\Perf_{cs}}(X)$ is an open immersion, so $\fM_{\Perf_{cs}}(X')$ is also $(2-m)$-symplectic.
\end{proof}

We remark the following:

\begin{itemize}
\item[$(a)$] We point out that the condition of Theorem \ref{noncpt} is similar to the 
compactly-embeddable condition in \cite[Def. 6.27]{JoSo}, but more general, as we do not require $Z$
to be a Calabi-Yau.
\smallskip
\item[$(b)$] Observe that in the non-compact case we cannot expect to have the deformation invariance property 
unless in some particular cases in which the moduli space is proper.
\smallskip
\item[$(c)$] Note that we need the noncompact Calabi--Yau to be quasi-projective in order to have a quasi
projective Quot scheme \cite[Thm. 6.3]{NN}.
\end{itemize}


We conclude the section with the following:

\begin{conj}The theory of generalized Donaldson--Thomas invariants defined in \cite{JoSo} is valid 
over algebraically closed fields of characteristic zero for compactly 
supported coherent sheaves on noncompact quasi-projective Calabi--Yau 3-folds.
In this last case, one can define $\bar{DT}{}^\al(\tau)$ and prove the wall--crossing formulae 
and the relation with $PI^{\al,n}(\tau')$ is still valid, while one loses the deformation invariance property and the 
properness of moduli spaces. 
\end{conj}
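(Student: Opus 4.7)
The plan is to adapt the strategy of Section \ref{main.1} to the compactly supported setting, using Theorem \ref{noncpt} as the new derived-geometric input. First, fix a quasi-projective Calabi--Yau 3-fold $X$ over $\K$, realised as in Theorem \ref{noncpt} as an open subscheme of a smooth projective 3-fold $Z$ on which a section $s\in H^0(K_Z^{-1})$ is nonvanishing on $X$, and write $\fM_{\rm cs}$ for the moduli stack of compactly-supported coherent sheaves on $X$, with its derived enhancement $\bs\fM_{\rm cs}$. By Theorem \ref{noncpt}, $\bs\fM_{\rm cs}$ carries a canonical $-1$-shifted symplectic structure. Applying Corollary \ref{sa2cor2} and Theorem \ref{sa3thm6} then equips $\fM_{\rm cs}$ with the structure of an algebraic d-critical stack, and produces, for each $[E]\in\fM_{\rm cs}(\K)$, a smooth $\K$-scheme $U$ with $\dim U=\dim\Ext^1(E,E)$, a regular $f:U\to\bA^1$ with $\d f\vert_p=0$, and a smooth $1$-morphism $\Crit(f)\to\fM_{\rm cs}$ of relative dimension $\dim\Hom(E,E)$ sending $p\in U$ to $[E]$.

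Next, I would verify that $\fM_{\rm cs}$ is locally a global quotient, so that the hypotheses of Theorem \ref{dt6thm1.1.bis} hold. Because $X$ is quasi-projective, compactly-supported sheaves carry a well-defined Hilbert polynomial with respect to a fixed polarization on $Z$, and the relevant Quot schemes exist as quasi-projective $\K$-schemes by \cite[Thm.~6.3]{NN}; the Huybrechts--Lehn construction \cite{HuLe2}, as adapted in \cite[\S 9.3]{JoSo}, then produces $\GL(n,\K)$-invariant atlases $Q\to\fM_{\rm cs}$ near each $[E]$. Combining this atlas with the $G$-equivariant d-critical chart structure supplied by Proposition \ref{dc2prop14}, and running the argument of \S\ref{localdes} verbatim, yields the full analogue of Theorem \ref{dt5thm2}: for a maximal torus $G\subseteq\Aut(E)$ there is a $G$-equivariant \'etale $u:U\to\Ext^1(E,E)$, a $G$-invariant $f:U\to\bA^1$ with $f\vert_p=\pd f\vert_p=0$, and a smooth $1$-morphism $\Crit(f)\to\fM_{\rm cs}$ of relative dimension $\dim\Aut(E)$.

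With this local description available, the algebraic proof of the Behrend function identities given in \S\ref{dt.1} transports to $\fM_{\rm cs}$ without modification, since that proof is entirely local on the moduli stack and uses only the $G$-equivariant critical chart, the blow-up identity of Theorem \ref{blowup}, and the splitting \eq{dt6eq3}. One thus obtains the analogues of \eq{dt6eq1.1}--\eq{dt6eq2.1} for compactly-supported sheaves. Substituting these into the Ringel--Hall formalism of \cite{Joyc.1,Joyc.2,Joyc.3,Joyc.4,Joyc.5,Joyc.6,JoSo} produces a Lie algebra morphism $\ti\Psi:\SFai(\fM_{\rm cs})\to\ti L(X)$, the definition \eq{dt4eq8} of $\bar{DT}{}^\al(\tau)\in\Q$, the wall-crossing formulae under change of $\tau$, and the relation with the pair invariants $PI^{\al,n}(\tau')$, since those derivations in \cite[\S 5]{JoSo} are purely algebraic consequences of $\ti\Psi$ being a Lie algebra morphism together with the pair-moduli construction, both of which make sense for finite-type moduli substacks cut out by a choice of $\tau$-boundedness without requiring the ambient $\fM_{\rm cs}$ to be of finite type.

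Finally, one must identify precisely what is lost. Properness of $\M_\rss^\al(\tau)$ fails in general, so Thomas' original integral \eq{dt2eq1} is unavailable; the weighted Euler characteristic \eq{dt3eq5} nevertheless defines $DT^\al(\tau)$ whenever $\M_\rss^\al(\tau)=\M_\st^\al(\tau)$, and one takes \eq{dt4eq8} as the definition of $\bar{DT}{}^\al(\tau)$ in general. Deformation invariance is lost because the argument of \S\ref{def} relied on properness and smoothness of the relative Picard scheme in a projective family $\fX\to T$, which need not hold for quasi-projective fibres; consequently the lattice $\La_X$ can no longer be guaranteed to be locally constant in families, and the pair-invariant bootstrap for deformation invariance in \cite[\S 5.4]{JoSo} breaks down at the step requiring properness of $\M_\stp^{\al,n}(\tau')$. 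I expect this last point to be the chief obstacle in any subsequent attempt to recover partial deformation invariance: one would need to restrict to families admitting a simultaneous smooth projective compactification $\fZ\to T$ carrying a flat family of sections of $K_{\fZ/T}^{-1}$ cutting out $\fX\to T$, and then adapt Theorem \ref{finmon} to a sublattice of $\La_X$ supported on compactly-supported classes; developing such a framework rigorously, in the absence of properness of the relevant moduli, is the substantive technical point left open by the present approach.
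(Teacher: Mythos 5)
You should first be aware that the statement you are arguing for is left as a \emph{conjecture} in the paper: the paper's own support for it consists only of Theorem \ref{noncpt} (the $-1$-shifted symplectic structure on the derived stack of compactly supported complexes, under the hypothesis that $X$ is Zariski open in a smooth projective $Z$ carrying a section $s\in H^0(K_Z^{-1})$ nonvanishing on $X$) together with the remarks following it, and your outline is essentially the strategy the author has in mind. As a proof, however, it has a gap at the very first step: you ``fix $X$ realised as in Theorem \ref{noncpt}'', whereas the conjecture is asserted for arbitrary noncompact quasi-projective Calabi--Yau 3-folds. The existence of such a pair $(Z,s)$ is an extra hypothesis --- the paper's remark (a) after Theorem \ref{noncpt} explicitly compares it to Joyce--Song's ``compactly embeddable'' condition --- and nothing in your argument produces it for a general quasi-projective $X$ with trivial canonical bundle. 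So at best you obtain a conditional version of the statement, not the conjecture itself.

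Even granting that hypothesis, several load-bearing steps are asserted rather than proved, and they are precisely the verifications whose absence makes this a conjecture rather than a theorem in the paper. First, you need the classical stack $\fM_{\rm cs}$ of compactly supported sheaves to be the truncation of (an open substack of) the derived stack of Theorem \ref{noncpt} and to be locally a global quotient, so that Theorem \ref{dt6thm1.1.bis} and the argument of \S\ref{localdes} apply; your route via pushforward to a compactification and the quasi-projective Quot schemes of \cite[Thm.~6.3]{NN} is the reasonable one, but it is only sketched. Second, to run the Ringel--Hall machinery you need the semistable substacks $\fM^\al_\rss(\tau)$ of compactly supported sheaves to be of finite type (boundedness), the stabilizers to be affine, the stability conditions to be permissible, and the Lie algebra morphism $\ti\Psi$ to exist --- its construction uses Serre duality, which here holds only because one argument has compact support, and this must be checked, not merely invoked; without these, $\bar\de_\rss^\al(\tau)$, $\bar{DT}{}^\al(\tau)$, the wall-crossing formulae and the comparison with $PI^{\al,n}(\tau')$ are not even defined. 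Your closing paragraph correctly identifies what is lost (properness and deformation invariance), but the positive part of the conjecture is not established by the argument as written; it is a plausible programme, matching \S\ref{dt7}, with the substantive verifications still open.
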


\subsection{Derived categorical framework} 
\label{dercat}

Our algebraic method could lead to the extension of generalized Donaldson--Thomas theory to the derived categorical context. 
The plan to extend from abelian to derived categories the theory of Joyce and Song \cite{JoSo}
starts by reinterpreting the series of papers by Joyce
\cite{Joyc.1,Joyc.2,Joyc.3,Joyc.4,Joyc.5,Joyc.6,Joyc.7,Joyc.8} in this new general setup. In particular:

\smallskip

\begin{itemize}

\item[$(a)$]Defining configurations in triangulated categories $\mathcal{T}$ requires to replace the exact sequences by distinguished triangles.

\smallskip

\item[$(b)$]Constructing moduli stacks of objects and configurations in $\mathcal{T}$. Again, the theory of derived algebraic geometry \cite{Toen,Toen2,Toen3,Toen4,ToVe1,ToVe2,PTVV} can give us a satisfactory answer.

\smallskip

\item[$(c)$]Defining stability conditions on triangulated categories can be approached using  Bridgeland's results, and its extension by Gorodentscev et al., which combines Bridgeland's idea with Rudakov's definition for abelian categories \cite{Rud}. Since Joyce's stability conditions \cite{Joyc.3} are based on Rudakov, the modifications should be straightforward.

\end{itemize}

\begin{itemize}
\item[$(d)$]The `nonfunctoriality of the cone' in triangulated categories causes that the triangulated category versions of some operations on configurations are defined up to isomorphism, but not canonically, which yields that corresponding diagrams may be commutative, but not Cartesian as in the abelian case. In particular, one loses the associativity of the Ringel-Hall algebra of stack functions, which is a crucial object in Joyce and Song framework. 
We expect that derived Hall algebra approach of To\"en \cite{Toen3} resolve this issue. See also \cite{PL}.
\end{itemize}

\smallskip

The list above does not represent a big difficulty.  
The main issues actually are: proving existence of Bridgeland stability conditions (or other type) on the derived category; proving that semistable moduli schemes and stacks are finite type (permissible), and proving that two stability conditions can be joined by a path of permissible stability conditions.

\smallskip

Theorem \ref{dt6thm1.1.bis} is just one of the steps in developing this program. The author thus expects that a well-behaved theory of invariants counting $\tau$-semistable objects in triangulated categories in the style of Joyce's theory exists, that is, Theorem \ref{mainthm} should be valid also in the derived categorical context:

\begin{conj} 
The theory of generalized Donaldson--Thomas invariants defined in \cite{JoSo} is valid 
for complexes of coherent sheaves on Calabi-Yau $3$-folds over algebraically closed fields of characteristic zero.
\end{conj}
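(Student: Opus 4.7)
The plan is to upgrade each ingredient of the Joyce--Song construction \cite{JoSo} to the triangulated setting, using Theorem \ref{dt6thm1.1.bis} as the core local analytic input and the framework of \cite{PTVV,Toen,Toen3} as the global geometric backbone. First I would remove the residual ``locally a global quotient'' hypothesis in Theorem \ref{dt6thm1.1.bis}: the derived moduli stack $\bs{\ti\fM}$ of objects $E^\bu\in D^b\coh(X)$ with $\Ext^{<0}(E^\bu,E^\bu)=0$ is a $-1$-shifted symplectic derived Artin stack by \cite{PTVV, ToVa}, so Corollary \ref{sa2cor2} and Theorem \ref{sa3thm6} already provide the local critical chart description required in \S\ref{localdes}. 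What remains is to produce, near each $[E^\bu]\in\ti\fM$ with a chosen maximal torus $G\subseteq\Aut(E^\bu)$, a $G$-equivariant smooth atlas; this should follow from an equivariant version of the Darboux-type result of \cite{BBBJ} combined with Proposition \ref{dc2prop14} once one checks that the $G$-action on the classical truncation is good in the sense of Definition \ref{dc2def8}. With this in hand, the proof of Theorem \ref{dt6thm1.1} given in \S\ref{dt.1} transposes verbatim, yielding the Behrend function identities \eq{dt6eq1.1}--\eq{dt6eq2.1} for all $E_1^\bu,E_2^\bu\in D^b\coh(X)$ satisfying the vanishing $\Ext^{<0}=0$.

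Next I would replay Joyce's program \cite{Joyc.1,Joyc.2,Joyc.3,Joyc.4,Joyc.5,Joyc.6,Joyc.7} in the triangulated setting, following the outline $(a)$--$(d)$ of \S\ref{dercat}. Configurations would be defined by distinguished triangles (rather than short exact sequences), and their moduli by derived fibre products of derived stacks in the sense of \cite{Toen,Toen3}, guaranteeing Cartesian squares at the derived level that become only $2$-commutative after truncation. To bypass the loss of associativity of the classical Ringel--Hall multiplication one should work throughout with To\"en's derived Hall algebra \cite{Toen3}, and define stack function variants by repeating the constructions of \cite{Joyc.2} in this derived associative framework. Given a Bridgeland stability condition $\sigma$ on $D^b\coh(X)$ inducing heart $\sA_\sigma$, the analogue of formula \eq{dt4eq3} produces elements $\bar\epsilon^\alpha(\sigma)\in\SFai(\bs{\ti\fM})$, and composing with the Behrend-weighted Lie algebra morphism $\ti\Psi$ (well-defined thanks to the Behrend identities just established) gives candidate invariants $\bar{DT}^\alpha(\sigma)$. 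The wall-crossing formulae should then follow from Joyce's algebraic identities applied in the derived Hall algebra, via the ``dominant stability condition'' trick already used in \cite{Joyc.4,Joyc.5,Joyc.6,Joyc.7}; note that only objects in the heart occur in these manipulations, so the hypothesis $\Ext^{<0}=0$ is harmless.

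The hard part will be the purely algebro-geometric input, not the Hall algebra formalism. Specifically, one needs: (i) existence of a nonempty open subset $\Stab^\circ\subset\Stab(D^b\coh(X))$ of Bridgeland stability conditions for which the moduli stacks $\bs{\ti\fM}_\rss^\alpha(\sigma)$ are of finite type, equivalently, that semistable objects of fixed numerical class are bounded; (ii) path-connectedness of $\Stab^\circ$ by paths crossing only walls of finite type across which the Joyce wall-crossing formula applies termwise; and (iii) an ``openness of semistability'' result ensuring that the family of hearts $\sA_\sigma$ behaves well in families. For generic projective Calabi--Yau $3$-folds none of these is known in full generality: boundedness is the crucial obstacle, and it is closely linked to the conjectural Bogomolov--Gieseker type inequalities of Bayer--Macr\`\i--Toda. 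A reasonable intermediate target would therefore be to establish the conjecture under the assumption that $X$ admits a ``full'' open subset of good stability conditions, and to verify this hypothesis in known examples (quintic, local surfaces, abelian threefolds), where all of (i)--(iii) are either proven or accessible.

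Deformation invariance of $\bar{DT}^\alpha(\sigma)$ would then follow as in \S\ref{def}: one checks that the lattice $\Lambda_X$ of Theorem \ref{definv} continues to house the natural classes of objects of $D^b\coh(X)$ (the Chern character being a derived invariant), and that the pair-invariant comparison argument of \cite[\S 12]{JoSo} goes through once one replaces abelian extensions by distinguished triangles, reducing deformation invariance of $\bar{DT}^\alpha(\sigma)$ to that of the pair invariants $PI^{\alpha,n}(\tau')$, which retain their definition via proper moduli of stable pairs with a symmetric obstruction theory. Combining this with Theorem \ref{mainthm} would yield the conjecture in full strength wherever the boundedness hypothesis (i) is available.
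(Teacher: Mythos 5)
The statement you are trying to prove is stated in the paper as a \emph{conjecture}: the paper offers no proof, only the programme sketched in \S\ref{dercat} (items $(a)$--$(d)$) together with Theorem \ref{dt6thm1.1.bis} as a partial step. Your proposal is essentially a more detailed version of that same programme, and as such it is a reasonable research plan, but it is not a proof: several of its key steps are themselves open problems, and you in effect concede this when you retreat to ``a reasonable intermediate target''. Concretely, your first step claims one can drop the ``locally a global quotient'' hypothesis of Theorem \ref{dt6thm1.1.bis} because Corollary \ref{sa2cor2} and Theorem \ref{sa3thm6} already give local critical charts. That misses where the hypothesis is actually used: the argument of \S\ref{localdes} needs a smooth atlas which is \emph{equivariant} under a maximal torus $G\subseteq\Aut(E^\bu)$, with a \emph{good} action in the sense of Definition \ref{dc2def8} so that Proposition \ref{dc2prop14} applies, and it is precisely the quotient presentation $[S/\GL(n,\K)]$ (via Quot schemes and a Luna-slice argument) that produces such an atlas for sheaves. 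For complexes no such presentation is known, and an ``equivariant Darboux theorem'' for $-1$-shifted symplectic derived Artin stacks strong enough to replace it is not available in \cite{BBJ,BBBJ}; the paper itself says it expects but cannot prove Theorem \ref{dt6thm1.1.bis} without this assumption. So your step (i) of the local theory is an open problem, not a deduction.

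Beyond that, the global half of your argument is conditional at every stage. The Behrend-weighted morphism $\ti\Psi$ of \cite{JoSo} is built on the stack-function Ringel--Hall algebra of an abelian category, including the `virtual rank'/`virtual indecomposable' technology; asserting that one can ``repeat the constructions of \cite{Joyc.2}'' inside To\"en's derived Hall algebra \cite{Toen3} is exactly the unresolved issue flagged in item $(d)$ of \S\ref{dercat}, and no construction of a motivic/stack-function version of the derived Hall algebra with the required Lie subalgebra and Euler-characteristic specialization exists. Likewise your inputs (i)--(iii) --- existence of suitable Bridgeland stability conditions on $D^b\coh(X)$ for a projective Calabi--Yau $3$-fold, boundedness/finite-type of semistable moduli, and permissible wall-crossing paths --- are, as you note, unknown in general (and existence of even one stability condition on the quintic is open). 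A proof conditional on unproved hypotheses, with its central local ingredient (the equivariant critical chart for the stack of complexes) also unproved, does not establish the conjecture; what you have written is an elaboration of the paper's own outline, together with a correct identification of the obstacles, rather than an argument that closes any of them.
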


\small{

 }

\end{document}